\title{Lifts of twisted K3 surfaces to characteristic 0}
\author{Daniel Bragg}
\address{Department of Mathematics, University of California, Berkeley, 
Berkeley, CA 94720}
\email{braggdan@berkeley.edu}
\begin{document}
	\begin{abstract}
		Deligne \cite{MR638598} showed that every K3 surface over an algebraically closed field of positive characteristic admits a lift to characteristic 0. We show the same is true for a twisted K3 surface. To do this, we study the versal deformation spaces of twisted K3 surfaces, which are particularly interesting when the characteristic divides the order of the Brauer class. We also give an algebraic construction of certain moduli spaces of twisted K3 surfaces over $\Spec\mathbf{Z}$ and apply our deformation theory to study their geometry. As an application of our results, we show that every derived equivalence between twisted K3 surfaces in positive characteristic is orientation preserving. 
	\end{abstract}
\maketitle

\setcounter{tocdepth}{1}
\tableofcontents

\section{Introduction}

A \textit{twisted K3 surface} is a pair $(X,\alpha_{\Br})$ where $X$ is a K3 surface and $\alpha_{\Br}\in\Br(X)$. We will show that every twisted K3 surface in characteristic $p$ lifts to characteristic 0.

\begin{theorem}\label{thm:basic lifting}
    Let $(X,\alpha_{\Br})$ be a twisted K3 surface over an algebraically closed field $k$ of characteristic $p>0$. Let $\alpha\in\H^2(X,\mu_n)$ be a class whose image in the Brauer group is $\alpha_{\Br}$ and let $L$ be an ample line bundle on $X$. There exists a DVR $R$ with residue field $k$ and field of fractions of characteristic 0 and a triple $(\widetilde{X},\widetilde{\alpha},\widetilde{L})$, where $\widetilde{X}$ is a K3 surface over $R$ such that $\widetilde{X}\otimes_Rk\cong X$, $\widetilde{\alpha}\in\H^2(\widetilde{X},\mu_n)$ is a class such that $\widetilde{\alpha}|_X=\alpha$, and $\widetilde{L}$ is a line bundle on $\widetilde{X}$ such that $\widetilde{L}|_X\cong L$.
\end{theorem}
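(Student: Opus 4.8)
The plan is to imitate Deligne's proof of the untwisted lifting theorem, replacing the versal deformation space of the polarized surface $(X,L)$ by that of the polarized twisted surface $(X,\alpha,L)$ and feeding in the analysis of versal deformation spaces of twisted K3 surfaces developed in this paper. The deformation functor of the triple $(X,\alpha,L)$ on complete local Noetherian $W(k)$-algebras with residue field $k$ is pro-represented by such a ring $\mathcal R$ carrying a universal formal triple over $\Spf\mathcal R$, the class $\alpha$ being a deformation of the $\mu_n$-cohomology class of $X$ (equivalently, of the $\mu_n$-gerbe it defines). It suffices to produce a local $W(k)$-algebra homomorphism $\mathcal R\to R$ to a complete DVR $R$ with residue field $k$ and fraction field of characteristic $0$: pulling the universal triple back to $\Spf R$ and using that the formal lift of $L$ is relatively ample, Grothendieck existence (formal GAGA, applied to the coarse projective scheme and then to the $\mu_n$-gerbe over it, a proper algebraic stack) algebraizes it to a projective $R$-scheme $\widetilde X$ with a line bundle $\widetilde L$ and a class $\widetilde\alpha\in\H^2(\widetilde X,\mu_n)$; here $\widetilde X$ is a K3 surface over $R$ because a smooth proper deformation of a K3 surface over a local base is a K3 surface, its special fibre is $X$, and the restrictions of $(\widetilde\alpha,\widetilde L)$ to $X$ are $(\alpha,L)$ by construction. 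By standard commutative algebra, such a homomorphism exists once $\mathcal R$ is nonzero and flat over $W(k)$: choose a prime $\mathfrak p\subset\mathcal R$ with $p\notin\mathfrak p$ and $\dim\mathcal R/\mathfrak p=1$, let $R$ be the normalization of $\mathcal R/\mathfrak p$ --- a complete DVR with residue field $k$ and fraction field of characteristic $0$ --- and compose $\mathcal R\to\mathcal R/\mathfrak p\hookrightarrow R$. So the problem reduces to showing that $\mathcal R$ is flat over $W(k)$.

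I would do this in two stages, first discarding the polarization. Write $n=p^a m$ with $m$ prime to $p$, so that $\mu_n=\mu_{p^a}\times\mu_m$. The $\mu_m$-component of $\alpha$ extends uniquely along every infinitesimal deformation of $X$ by topological invariance of the étale site, and so imposes no condition; all of the content lies in the $\mu_{p^a}$-component, and this is the step I expect to be the main obstacle. When $p$ divides $n$ the group scheme $\mu_{p^a}$ is not étale, flat cohomology is not invariant under nilpotent thickenings, and the obstruction to deforming the class $\alpha$ --- which lives in cohomology groups built from $\H^2(X,\mathcal O_X)$ --- need not vanish, so lifting $\alpha$ is a genuine closed condition on the deformation of $X$. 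Controlling this is precisely the purpose of the study of versal deformation spaces of twisted K3 surfaces in the present paper, and I would invoke those results as a black box: they show that the deformation functor of $(X,\alpha)$ is pro-represented by a ring $\mathcal R_{(X,\alpha)}$ that is formally smooth over $W(k)$ with positive-dimensional special fibre, so that abstractly $\mathcal R_{(X,\alpha)}\cong W(k)[[t_1,\dots,t_r]]$ with $r\geq 1$.

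It remains to reintroduce the polarization. As in the untwisted case (Grothendieck--Messing, Deligne, Ogus), forgetting $L$ identifies the deformation space of $(X,\alpha,L)$ with a closed formal subscheme of that of $(X,\alpha)$ cut out by a single element $f\in\mathcal R_{(X,\alpha)}$ --- the obstruction to keeping the de Rham (equivalently crystalline) first Chern class of $L$ in the appropriate step of the Hodge filtration along the universal deformation --- so that $\mathcal R\cong\mathcal R_{(X,\alpha)}/(f)$. Since $\mathcal R_{(X,\alpha)}$ is regular and $\mathcal R_{(X,\alpha)}/(p)\cong k[[t_1,\dots,t_r]]$ is a domain, this quotient is $W(k)$-flat as soon as the image $\bar f$ in $k[[t_1,\dots,t_r]]$ is nonzero, i.e.\ as soon as $L$ does not extend along every characteristic-$p$ deformation of $(X,\alpha)$. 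This is the Noether--Lefschetz point at the heart of Deligne's argument, and it goes through here for the same reason: the very general characteristic-$p$ deformation of $(X,\alpha)$ does not carry $L$, because --- after possibly replacing $L$ by a power so that its class is not $p$-divisible in $\mathrm{NS}(X)$ --- its de Rham cycle class is nonzero, while the description of the versal deformation space of $(X,\alpha)$ obtained in this paper (equivalently, the associated period map) shows the Hodge filtration moves transversally to the condition defining $f$; hence $\bar f\neq 0$. Thus $\mathcal R$ is flat over $W(k)$, completing the proof. (Alternatively, granting the flatness over $\mathbf{Z}$ of the algebraic moduli space of twisted polarized K3 surfaces over $\Spec\mathbf{Z}$ constructed later in the paper, the $k$-point corresponding to $(X,\alpha,L)$ is automatically a specialization of a characteristic-$0$ point along a DVR.)
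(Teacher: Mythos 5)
Your proposal follows the same overall strategy as the paper (prorepresentability, cutting the polarized twisted deformation space out of a smooth ambient space, flatness over $W$, then algebraization), but two of the steps you treat as routine are precisely the hard points, and one of your assertions is false as stated. First, the algebraization step: you claim that the formal system of $\mu_n$-gerbes over the thickenings $\widetilde X\otimes R/\mathfrak m^{i+1}$ algebraizes by ``formal GAGA applied to the gerbe.'' Grothendieck existence algebraizes coherent sheaves on a fixed proper formal scheme; it does not algebraize a formal system of stacks, and when $p\mid n$ these gerbes are Artin stacks with infinitesimal stabilizers. The paper stresses that a formal system of Brauer classes typically does \emph{not} algebraize, and all of \S\ref{sec:algebraization} is devoted to proving that a formal system of $\mu_n$-classes on a surface does (Proposition \ref{prop:02}): one realizes the class by an Azumaya algebra with vanishing traceless obstruction group (Theorem \ref{thm:period index 00}, resting on the period--index theorem of de Jong and Lieblich and on asymptotic properties of moduli of twisted sheaves), deforms that algebra through the tower, and only then invokes Grothendieck existence for the resulting coherent sheaf of algebras. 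This input cannot be replaced by a one-line appeal to formal GAGA.

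Second, the flatness step. Your argument rests on the claim that $\Def_{(X,\alpha)}\cong\Spf W[[t_1,\dots,t_r]]$ is formally smooth over $W$. The paper proves this only when $n$ is prime to $p$, or when $p\mid n$ and $\dlog(\alpha)\neq 0$ (Proposition \ref{prop:smooth1}); it is false in general. Indeed, when $p\mid n$ and $\dlog(\alpha)=0$ the tangent space $T(\Def_{(X,\alpha)})$ has dimension $21$ (diagram \eqref{eq:summary2}), while for $X$ of finite height the fibre of $\Def_{(X,\alpha)}$ over $0\in\Def_X$ is $\widehat{\Br}_X[p^r]\cong\Spf k[[s]]/(s^{p^{rh}})\subsetneq\widehat{\Br}_X$ (Proposition \ref{prop:nice coordinates}), so $\Def_{(X,\alpha)}$ is a \emph{proper} closed formal subscheme of the $21$-dimensional $\Def_{(X,\alpha_{\Br})}$ with $21$-dimensional tangent space, hence singular at the closed point. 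Likewise, the ``transversality'' you invoke to get $\bar f\neq 0$ is exactly the linear independence of $c_1(L)$ and $\dlog(\alpha)$ in $\H^1(X,\Omega^1_X)$, which is the content of the de Rham computations of \S\ref{sec:dlog map} (Proposition \ref{prop:linearly indep}) and genuinely fails for supersingular surfaces of small Artin invariant (on a superspecial surface even $c_1(L)$ can vanish). The paper gets around both failures by working inside the genuinely smooth space $\Def_{(X,\alpha_{\Br})}\cong\Spf W[[t_1,\dots,t_{20},s]]$, cutting out $\Def_{(X,\alpha,L)}$ by \emph{two} equations $f,g$, and proving that $(f_0,g_0)$ is a regular sequence by a global argument on $\ms M^n_d$: the supersingular locus has small dimension, so every irreducible component of $\ms M^n_d\otimes\overline{\mathbf{F}}_p$ has a finite-height point where the local computation applies, and the resulting dimension count forces the lci/flatness conclusion everywhere, including at the exceptional points where your local argument breaks down.
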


In particular, the image $\widetilde{\alpha}_{\Br}$ of $\widetilde{\alpha}$ in $\Br(\widetilde{X})$ satisfies $\widetilde{\alpha}_{\Br}|_X=\alpha_{\Br}$. Thus, $(\widetilde{X},\widetilde{\alpha}_{\Br})$ is a twisted K3 surface over $R$ lifting $(X,\alpha_{\Br})$.
 

Without the Brauer class, this result is due to Deligne \cite{MR638598}, with further refinements by Ogus \cite{Ogus78}. We also consider the more general problem of the existence of lifts of a twisted K3 surface together with a collection of line bundles. In the non--twisted case this problem was considered by Lieblich--Olsson \cite{LO15} and Lieblich--Maulik \cite{MR3934849}. For both these problems, we give conditions under which the appropriate universal deformation space is formally smooth, which implies that such a lift exists over the ring of Witt vectors $W=W(k)$. We defer the precise statements of these results: the existence of lifts with multiple line bundles is given in Theorem \ref{thm:mega lifting}, and the question of smoothness is considered in Theorem \ref{thm:formally smooth def space, one line bundle} (for twisted K3 surfaces with one line bundle) and Corollary \ref{cor:formally smooth def space, twisted version} (for twisted K3 surfaces with multiple line bundles). Even forgetting the twisting, our methods yield stronger results for the existence of lifts of K3 surfaces together with collections of line bundles than we have seen in the literature (see Corollary \ref{cor:formally smooth def space}).

We outline the basic strategy behind the proof of Theorem \ref{thm:basic lifting}.
The usual procedure for producing a lift consists of two steps: first, using formal deformation theory one constructs lifts to every finite order, and second, one shows that the resulting formal system algebraizes. This strategy is carried out by Deligne \cite{MR638598} in his study of the lifting problem for K3 surfaces. In this case, a key input is the result of Rudakov and Shafarevich \cite{RS83} that $\H^0(X,T^1_X)=0$ (this result has been subsequently reproved using cohomological methods, see Lang--Nygaard \cite{MR585962} and Nygaard \cite{MR554382}). This result is equivalent to the vanishing of $\H^2(X,T^1_X)$, which implies that the formal deformation problem is essentially \textit{trivial}: any K3 surface $X$ deforms over any infinitesimal thickening. More precisely, the universal deformation space $\Def_X$ is smooth over $W$, and moreover we have
\[
  \Def_X\cong\Spf W[[t_1,\dots,t_{20}]].
\]
However, the resulting systems will generally not algebraize. Thus, Deligne considers instead deformations of a pair $(X,L)$, where $L$ is an ample line bundle on $X$. The algebraization of systems of such pairs is guaranteed by a theorem of Grothendieck. However, such pairs are no longer unobstructed in general, and so the deformation theoretic step requires a further analysis. Deligne first shows that the inclusion
\[
  \Def_{(X,L)}\subset\Def_X\cong\Spf W[[t_1,\dots,t_{20}]]
\]
is a closed formal subscheme of dimension 19 over $W$ defined by one equation. By analyzing de Rham and crystalline cohomology, he then shows that $\Def_{(X,L)}$ is flat over $W$, and hence the desired formal system exists. This last step was improved upon by Ogus \cite{Ogus78}, who showed that in fact $\Def_{(X,L)}$ is frequently smooth over $W$.

We consider a twisted K3 surface $(X,\alpha_{\Br})$. We show that, as a consequence of the vanishing of $\H^3(X,\mathscr{O}_X)$, such objects are unobstructed. Thus, the universal deformation space $\Def_{(X,\alpha_{\Br})}$ is smooth over $W$, and moreover we have
\[
  \Def_{(X,\alpha_{\Br})}\cong\Spf W[[t_1,\dots,t_{20},s]].
\]
Hence, as before, there are many formal systems over $\Spf W$. The difficulty again lies in the algebraization step. To algebraize the underlying system of K3 surfaces, we might carry along an ample line bundle on $X$. However, even if the underlying system of K3 surfaces algebraizes, a formal system of Brauer classes will typically not algebraize (this is the essential reason why the Brauer group functor is not representable). To remedy this, we need to include some extra data related to the class $\alpha_{\Br}$. We will consider triples $(X,\alpha,L)$, where $\alpha$ is a lift of $\alpha_{\Br}$ along the map
\[
  \H^2(X,\mu_n)\to\Br(X)
\]
for some $n$. Here, $\H^2(X,\mu_n)$ denotes the second flat (fppf) cohomology of the sheaf $\mu_n$ of $n$th roots of unity on $X$. In \S\ref{sec:algebraization} we show that if $X$ is a smooth proper surface then formal families of such triples $(X,\alpha,L)$ algebraize (Proposition \ref{prop:02}). The idea is to show that every formal family of flat cohomology classes is induced by a formal family of Azumaya algebras, whose algebraization follows from Grothendieck's existence theorem for coherent sheaves. Moreover, such a choice of $\alpha$ is the minimal amount of extra data needed to ensure algebraization. We mention that this section is largely independent of the rest of the paper, and might be skipped on a first reading.


With this motivation, we then study the universal deformation spaces associated to a triple $(X,\alpha,L)$. For technical reasons, it turns out to be useful to consider also deformations of $\mu_n$ and $\mathbf{G}_m$-gerbes. Lacking a suitable reference, we include some abstract results along these lines in Appendix \ref{appendix}. We show that the inclusion
\[
  \Def_{(X,\alpha)}\subset\Def_{(X,\alpha_{\Br})}\cong\Spf W[[t_1,\dots,t_{20},s]]
\]
of deformation functors is a closed formal subscheme defined by one equation. It follows that
\[
    \Def_{(X,\alpha,L)}\subset\Def_{(X,\alpha_{\Br})}\cong\Spf W[[t_1,\dots,t_{20},s]]
\]
is a closed subscheme defined by two equations. We analyze these deformation spaces using obstruction classes associated to classes in $\H^2(X,\mu_n)$. We show that if $n$ is coprime to $p$, then such classes deform uniquely along any thickening of $X$. Thus, in this case our main result follows quickly from \cite{MR638598}, and is well known to experts. However, if $p$ divides $n$, there are additional obstructions to deforming such classes, and it is therefore this case that is the main contribution of this paper. 

We analyze these obstructions in \S\ref{sec:deformations of flat coho classes}, and compute them in terms of cup product with the Kodaira--Spencer class. 
In \S\ref{sec:dlog map} we study the interaction between deformations of a line bundle $L$ on $X$ and deformations of classes $\alpha\in\H^2(X,\mu_n)$. Using this analysis, we give conditions under which the deformation space $\Def_{(X,\alpha,L)}$ is smooth over $W$. This requires some precise computations in the de Rham cohomology of K3 surfaces, particularly in the supersingular case. Combined with the algebraization result of Proposition \ref{prop:02}, these results imply Theorem \ref{thm:basic lifting} outside of a small locus of exceptional cases. We also consider in \S\ref{ssec:multiple line bundles} deformations with multiple line bundles. Even neglecting the twisting, our results in this section seem to be new in some cases.

To obtain results in the case when the universal deformation space is not smooth, we use global methods. We define in \S\ref{sec:moduli of twisted K3 surfaces} a certain moduli stack $\ms M^n_d$ over $\Spec\mathbf{Z}$ parametrizing tuples $(X,\alpha,L)$, where $X$ is a K3 surface, $\alpha\in\H^2(X,\mu_n)$, and $L$ is an ample line bundle on $X$ of degree $2d$. The proof that this stack is algebraic is a consequence of the following result (proved in \S\ref{sec:algebraization}), which may be of independent interest. Given a morphism $f:X\to S$ of algebraic spaces, we let $R^mf_*\mu_n$ denote the $m$th higher pushforward from the big flat site of $X$ to that of $S$. Equivalently, $R^mf_*\mu_n$ is the flat sheafification of the functor on the category of $S$ schemes defined by
\[
    T\mapsto\H^m(X\times_ST,\mu_n)
\]
where the right side denotes cohomology in the flat topology.
\begin{theorem}\label{thm:representability for R2}
    Let $f:X\to S$ be a smooth proper morphism of algebraic spaces of relative dimension 2 with geometrically connected fibers. Let $n$ be a positive integer. Assume that $R^1f_*\mu_n=0$ and that for all geometric points $s\in S$ we have $\H^1(X_s,\ms O_{X_s})=0$. The sheaf $R^2f_*\mu_n$ is a group algebraic space of finite presentation over $S$. 
\end{theorem}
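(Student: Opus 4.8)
The plan is a dévissage via the Kummer sequence, reducing the statement to the representability of the relative Picard space modulo $n$ and of the $n$-torsion in $R^2f_*\mathbf{G}_m$; the first is classical, and the second I would control using moduli of Azumaya algebras.

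Apply the derived pushforward $Rf_*$ (for the big flat topology) to the Kummer sequence $1\to\mu_n\to\mathbf{G}_m\xrightarrow{\,n\,}\mathbf{G}_m\to 1$ on $X$. Since the fibers of $f$ are geometrically connected and, being smooth, reduced, one has $f_*\mathscr{O}_X=\mathscr{O}_S$ universally, hence $f_*\mathbf{G}_{m,X}=\mathbf{G}_{m,S}$ and $n\colon\mathbf{G}_{m,S}\to\mathbf{G}_{m,S}$ is a flat epimorphism. Together with the hypothesis $R^1f_*\mu_n=0$, the long exact sequence produces a short exact sequence of abelian flat sheaves on $S$,
\[
0\longrightarrow\mathbf{Pic}_{X/S}/n\longrightarrow R^2f_*\mu_n\longrightarrow\bigl(R^2f_*\mathbf{G}_m\bigr)[n]\longrightarrow 0,
\]
where $\mathbf{Pic}_{X/S}=R^1f_*\mathbf{G}_m$ is the relative Picard space, the left term is its cokernel under multiplication by $n$, and $(-)[n]$ denotes the $n$-torsion subsheaf. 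It then suffices to prove that the two outer terms are group algebraic spaces of finite presentation over $S$: an extension $0\to A\to E\to B\to 0$ of abelian flat sheaves with $A,B$ of this form presents $E\to B$ as an $A$-torsor, so $E$ is again a group algebraic space of finite presentation over $S$ by flat descent.

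For the left term: as $f$ is proper, flat, and of finite presentation with $f_*\mathscr{O}_X=\mathscr{O}_S$ universally, $\mathbf{Pic}_{X/S}$ is a group algebraic space locally of finite presentation over $S$ by Artin's theorem, and it is separated over $S$ because the fibers of $f$ are smooth and connected. By cohomology and base change, $\H^1(X_s,\mathscr{O}_{X_s})=0$ for all geometric $s$ gives $R^1f_*\mathscr{O}_X=0$; via the canonical isomorphism $e^*\Omega_{\mathbf{Pic}_{X/S}/S}\cong R^1f_*\mathscr{O}_X$ (with $e$ the identity section) and the fact that $\Omega_{\mathbf{Pic}_{X/S}/S}$ is, for a group space, pulled back from $e^*\Omega_{\mathbf{Pic}_{X/S}/S}$, we conclude $\Omega_{\mathbf{Pic}_{X/S}/S}=0$, so $\mathbf{Pic}_{X/S}$ is unramified over $S$. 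From here one deduces that the flat quotient $\mathbf{Pic}_{X/S}/n$ exists as a group algebraic space of finite presentation over $S$ (working étale-locally on $S$ this reduces to quotients of unramified group schemes, and is immediate when $n$ is invertible on the base; the interesting case is residue characteristics dividing $n$, where $\mathbf{Pic}_{X/S}/n$ can acquire an infinitesimal part of multiplicative type — and in any case the quotient becomes quasi-compact, which is why one obtains finite, rather than merely locally finite, presentation).

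The remaining term $\bigl(R^2f_*\mathbf{G}_m\bigr)[n]$ is the hard one, and here the plan is to use the algebraic stack $\mathbf{Az}$ of Azumaya $\mathscr{O}_{X_T}$-algebras of degree dividing $n$ on the fibers of $X/S$ — algebraic and of finite presentation over $S$, being the stack of sections over $X/S$ of the smooth algebraic stack $B\mathrm{PGL}_n$ (by the theory of Hom-stacks into proper targets, as in Lieblich's work on moduli of twisted sheaves). Taking Brauer classes gives a morphism $c\colon\mathbf{Az}\to\bigl(R^2f_*\mathbf{G}_m\bigr)[n]$, and the claim is that $c$ exhibits the target as the flat quotient of $\mathbf{Az}$ by the equivalence relation ``having the same Brauer class'' — whose graph is again algebraic of finite presentation, the fibers being moduli of locally free sheaves of fixed rank modulo twisting by $\mathbf{Pic}_{X/S}$ — so that the target is an algebraic space of finite presentation over $S$ by descent. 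The crux, and the main obstacle, is precisely the behaviour of $c$: one must show that, flat-locally on the base, every $n$-torsion cohomological Brauer class on $X_T$ is represented by an Azumaya algebra of degree \emph{uniformly} bounded by $n$ (so that finite presentation is preserved), and carry out the attendant algebraization. Fiberwise this is classical — $X_s$ is a smooth projective surface over an algebraically closed field, where cohomological Brauer classes are represented by Azumaya algebras and period equals index, by Grothendieck and de Jong — but propagating it over an arbitrary base is exactly the content of \S\ref{sec:algebraization}, where one shows (cf.\ Proposition \ref{prop:02}) that formal families of flat cohomology classes come from formal families of Azumaya algebras, which algebraize by Grothendieck's existence theorem for coherent sheaves. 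An alternative route that avoids the dévissage is to verify Artin's criteria for representability directly for $R^2f_*\mu_n$ — local finite presentation and the Schlessinger-type conditions being routine for flat cohomology, and the tangent–obstruction theory being coherent and finite-dimensional since the fibers are surfaces — with the one delicate axiom, effectivity of formal deformations, again supplied by this Azumaya-algebra algebraization.
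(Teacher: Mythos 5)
Your dévissage via the Kummer sequence is genuinely different from the paper's argument, which makes no use of the sequence $0\to\uPic_{X/S}/n\to R^2f_*\mu_n\to(R^2f_*\mathbf{G}_m)[n]\to 0$: instead it covers $R^2f_*\mu_n$ \emph{directly} by the stack $\Az$ of degree-$n$ Azumaya algebras $\ms A$ for which $\tr:\H^2(X_t,\ms A_t)\to\H^2(X_t,\ms O_{X_t})$ is injective, shows that $\chi:\Az\to R^2f_*\mu_n$ is representable by smooth Artin stacks using the relative obstruction class in $\H^2(X,s\ms A\otimes I)\cong\H^2(X,\ms A\otimes I)_0$ (Proposition \ref{prop:obstructions for azumaya algebras, A_0 version}), gets surjectivity on geometric points from Theorem \ref{thm:period index 00}, and handles the diagonal separately using $R^1f_*\mu_n=0$.

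The gap in your route is at the term $(R^2f_*\mathbf{G}_m)[n]$, and it is exactly the point the paper's setup is designed to avoid. For your quotient presentation you need $c:\Az\to(R^2f_*\mathbf{G}_m)[n]$ to be smooth (or at least flat), i.e.\ you must lift an Azumaya algebra $\ms A=\pi_*\sEnd(\ms E)$ across a square-zero extension $T\subset T'$ given only a lift of its \emph{Brauer} class. The obstruction to lifting $\ms E$ lies in $\H^2(X_T,\ms A\otimes I)$, and its image under the trace is the obstruction to deforming the line bundle $\det\ms E$, an element of $\H^2(X_T,I)$ that has no reason to vanish, since $\H^2(X_t,\ms O_{X_t})\neq 0$ on a surface; the condition $\H^2(X_t,\ms A_t)_0=0$ kills only the trace-free part. (Equivalently, in the language of Remark \ref{rem:p divides n azumaya algebra}: unconstrained deformations of $\ms A$ are governed by $p\ms A$, and $\H^2(X,p\ms A)\neq 0$ whenever $\H^0(X,\omega_X)\neq 0$.) So $c$ is not smooth and the descent along it cannot be run. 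This is precisely why the paper maps $\Az$ onto $R^2f_*\mu_n$ rather than onto the Brauer torsion: prescribing a lift of the $\mu_n$-class amounts to prescribing a deformation of the trivialization of $\det\ms E$ on the $\mu_n$-gerbe, which cancels the determinant obstruction and leaves only the class in $\H^2(\ms A\otimes I)_0$. Two smaller issues: representability and quasi-compactness of the fppf quotient $\uPic_{X/S}/n$ (multiplication by $n$ on a group algebraic space that is unramified but in general not flat over $S$) are asserted rather than proved; and since $\uPic_{X/S}$ is unramified here, its fibers and those of $\uPic_{X/S}/n$ are \'{e}tale --- the infinitesimal multiplicative part of $R^2f_*\mu_n$ (e.g.\ $\mu_{p^r}$ over an ordinary K3 surface) lives in the Brauer-torsion quotient via the formal Brauer group, not in $\uPic_{X/S}/n$ as your parenthetical suggests.
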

If $n$ is invertible on $S$, this follows from fundamental theorems of \'{e}tale cohomology \cite[Th. finitude, Th\'{e}or\`{e}me 1.1]{SGA4.5} (and no vanishing assumptions are needed). When $n$ is not invertible the result is more subtle. When $S$ has equal characteristic $p$, more general representability results are proven in \cite{bragg2021representability}. However, in this paper we are particularly interested in the case when $S$ has mixed characteristic. Our proof instead generalizes the method of proof of \cite[Theorem 2.1.6]{BL17}, and relies on de Jong and Lieblich's solution of the period index problem for function fields of surfaces and Lieblich's study of asymptotic properties of moduli spaces of twisted sheaves.

We make a few observations on the geometric structure of these moduli stacks; for instance, we show that the morphism
\[
	\ms M^n_d\to\Spec\mathbf{Z}
\]
is flat and a local complete intersection of relative dimension 19. We deduce Theorem \ref{thm:basic lifting} as a consequence. In \S\ref{sec:multiple line bundles, global results}, we consider the analogous moduli spaces for twisted K3 surfaces equipped with multiple line bundles, and deduce similar geometric results.

We also consider in \S\ref{sec:moduli of primitive twisted K3 surfaces} a certain refined moduli stack 
\[
    \mc M_d^n\to\Spec\mathbf{Z}
\]
over the integers parametrizing tuples $(X,\alpha,L)$ where $X$ is a K3 surface, $L$ is a primitive ample class of degree $2d$, and $\alpha\in\H^2(X,\mu_n)$ is a class which is primitive with respect to $L$, in a certain sense. The fiber $\mc M_d^n\otimes\mathbf{C}$ over the complex numbers recovers the moduli stack of twisted complex K3 surfaces constructed by Brakkee \cite{MR4126898} using analytic methods. We show that these stacks have some advantageous geometric properties; for instance, they are smooth over $\Spec \mathbf{Z}[\frac{1}{2dn}]$, and the fibers $\mc M_d^n\otimes\mathbf{F}_p$ are generically smooth. We give a brief description of their singular loci. The geometry of the moduli stacks $\mc M_d^n\otimes\mathbf{F}_p$ seem particularly interesting when $p$ divides $n$, and we think are deserving of further study.


We hope that our lifting results will be of general utility in the study of twisted K3 surfaces in positive characteristic. We record in \S\ref{sec:application to twisted derived equivalences} one instance where this is the case by resolving the last open cases of the conjecture that derived equivalences of twisted K3 surfaces are orientation preserving.

\vspace{.5cm}
\noindent\textbf{Summary:} In \S\ref{sec:algebraization}, we show that formal families of flat cohomology classes on a surface algebraize (Proposition \ref{prop:02}). We also prove Theorem \ref{thm:representability for R2}. This section is independent of the rest of the paper. In \S\ref{sec:deformations of flat coho classes}, we discuss obstruction classes for flat cohomology classes and their relation with Kodaira--Spencer classes. We then specialize to K3 surfaces in positive characteristic. In \S\ref{sec:K3 surfaces in positive characteristic} we recall some relevant definitions and cohomological invariants. In \S\ref{sec:formal deformations} we consider the universal deformation spaces associated to gerbes over K3 surfaces. We prove that they are prorepresentable and describe them in explicit coordinates. In \S\ref{sec:dlog map} we make some computations in the de Rham cohomology of K3 surfaces, and derive conditions under which formal deformation spaces are formally smooth. Combined with the algebraization result of Proposition \ref{prop:02}, this implies our main lifting results in many cases. In \S\ref{sec:moduli of twisted K3 surfaces} we define global moduli spaces of twisted K3 surfaces, and complete the proof of Theorem \ref{thm:basic lifting}. We also define some refined moduli spaces, extending those defined by Brakkee over the complex numbers \cite{MR4126898}. Finally, in \S\ref{sec:application to twisted derived equivalences} we give an application to twisted derived equivalences. In Appendix \ref{appendix}, we consider deformations of gerbes. Our main results are the definition of obstruction classes and a criterion for prorepresentability (generalizing results of Artin--Mazur \cite{MR0457458}), both of which are used in \S\ref{sec:formal deformations}.

\vspace{.5cm}
\noindent\textbf{Conventions:} We work throughout over an algebraically closed field $k$ of characteristic $p>0$ with ring of Witt vectors $W=W(k)$. If $X$ is a scheme, $\H^m(X,\mu_n)$ will always denote cohomology with respect to the flat (fppf) topology. We will frequently write $\alpha$ for a class in $\H^2(X,\mu_n)$ and let $\alpha_{\Br}$ denote the image of $\alpha$ under the map
\[
  \H^2(X,\mu_n)\to\H^2(X,\mathbf{G}_m).
\]

\noindent
\textbf{Funding:} This work was supported by the National Science Foundation [DMS-1902875].

\noindent
\textbf{Acknowledgements:} We thank Max Lieblich and Martin Olsson for interesting discussions on the content of this paper. We are grateful for the thorough reading of this paper by the anonymous referees.


\section{Algebraization of flat cohomology classes on a surface}\label{sec:algebraization}

In this section we will show that formal families of classes in the flat cohomology group $\H^2(X,\mu_n)$ of a family of surfaces algebraize (Proposition \ref{prop:02}). We will also prove Theorem \ref{thm:representability for R2}, which is the stronger statement that the relative $\H^2(X,\mu_n)$ of a family of surfaces is representable. This section is independent of the rest of the paper.

\subsection{Deformations of Azumaya algebras}\label{ssec:algebraization}

We describe the deformation theory of Azumaya algebras, with particular attention to the case when the degree is divisible by the characteristic. We begin by briefly recalling some of the theory of Azumaya algebras. For more details, we refer the reader to \cite[Chapter 18]{Huy16}, and the associated references. Let $X$ be a scheme and let $n$ be a positive integer. An \textit{Azumaya algebra} on $X$ of degree $n$ is a sheaf $\ms A$ of associative (possibly noncommutative) unital $\ms O_X$-algebras on $X$ which is \'{e}tale locally isomorphic to the matrix algebra $\mathrm{M}_n(\ms O_X)$. Thus, $\ms A$ has rank $n^2$ as an $\ms O_X$-module. Consider the commutative diagram
\begin{equation}\label{eq:diagram01}
  \begin{tikzcd}
    &1\arrow{d}&1\arrow{d}\\
    1\arrow{r}&\mu_n\arrow{r}\arrow{d}&\SL_n\arrow{r}\arrow{d}&\PGL_n\arrow{r}\arrow[equal]{d}&1\\
    1\arrow{r}&\mathbf{G}_m\arrow{r}\arrow{d}{\cdot n}&\GL_n\arrow{r}\arrow{d}{\det}&\PGL_n\arrow{r}&1\\
    &\mathbf{G}_m\arrow[equals]{r}\arrow{d}&\mathbf{G}_m\arrow{d}\\
    &1&1
  \end{tikzcd}
\end{equation}
of group schemes on $X$, which has exact rows and columns. By the Skolem--Noether theorem, the automorphism sheaf of $\mathrm{M}_n(\ms O_X)$ is isomorphic to $\PGL_{n}$, and so an Azumaya algebra $\ms A$ of degree $n$ gives rise to a class $[\ms A]$ in the nonabelian cohomology $\H^1(X,\PGL_n)$ which (by definition) classifies $\PGL_n$-torsors. An Azumaya algebra $\ms A$ of degree $n$ has an associated class in $\H^2(X,\mu_n)$, which is the image $\partial([\ms A])$ of $[\ms A]$ under the boundary map
\begin{equation}\label{eq:boundary map for Azumaya algebras}
    \partial:\H^1(X,\PGL_n)\to\H^2(X,\mu_n)
\end{equation}
induced by the top row of~\eqref{eq:diagram01}.

Azumaya algebras can be understood in terms of twisted sheaves on gerbes (see \S\ref{ssec:gerbes}).
 A coherent sheaf $\ms E$ on a $\mu_n$ or $\mathbf{G}_m$-gerbe is said to be $m$\textit{-twisted} if the inertial action is via the character $\lambda\mapsto\lambda^m$ (see eg. \cite[Definition 12.3.2]{MR3495343}). We refer to a $1$-twisted sheaf simply as a \textit{twisted} sheaf.
\begin{lemma}\label{lem:Azumaya algebras and twisted sheaves}
    Let $\pi:\ms X\to X$ be a $\mu_n$-gerbe representing a class $\alpha\in\H^2(X,\mu_n)$. There is a natural bijection between the set of isomorphism classes of Azumaya algebras on $X$ of degree $n$ such that $\partial([\ms A])=\alpha$ and the set of isomorphism classes of locally free twisted sheaves on $\ms X$ with rank $n$ and trivial determinant.
\end{lemma}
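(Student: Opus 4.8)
The plan is to exhibit mutually inverse constructions between the two sets. Two generalities about the gerbe $\pi\colon\ms X\to X$ will be used throughout (see \S\ref{ssec:gerbes}): it is neutralized by base change to itself, since the diagonal is a section of $\mathrm{pr}_1\colon\ms X\times_X\ms X\to\ms X$, so that $\pi^*\alpha=0$ in $\H^2(\ms X,\mu_n)$; and the category of quasi-coherent sheaves on $\ms X$ is the direct sum over $i\in\mathbf{Z}/n$ of its twist-$i$ subcategories, the twist-$0$ part being identified with quasi-coherent sheaves on $X$ via $\pi_*$ and $\pi^*$. Given a locally free twisted (that is, $1$-twisted) sheaf $\ms E$ of rank $n$ with trivial determinant on $\ms X$, the sheaf $\ms{E}nd_{\ms O_{\ms X}}(\ms E)=\ms E^\vee\otimes\ms E$ is $0$-twisted, hence descends to a sheaf of $\ms O_X$-algebras $\ms A:=\pi_*\ms{E}nd(\ms E)$ with $\pi^*\ms A\cong\ms{E}nd(\ms E)$. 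Since $\ms X$ and $\ms E$ are fppf-locally trivial, $\ms A$ is fppf-locally $\mathrm M_n(\ms O_X)$, and hence étale-locally so by smoothness of $\PGL_n=\underline{\mathrm{Aut}}(\mathrm M_n)$; thus $\ms A$ is an Azumaya algebra of degree $n$. To see that $\partial([\ms A])=\alpha$, trivialize $\ms X$ and $\ms E$ over a common fppf cover: the gluing data $\phi_{ij}$ of $\ms E$ satisfy $\phi_{ij}\phi_{jk}\phi_{ki}=\alpha_{ijk}\,\mathrm{id}$ for a cocycle $(\alpha_{ijk})$ representing $\alpha$, while $[\ms A]$ is glued by $\mathrm{Ad}(\phi_{ij})$; since $\det\ms E$ is trivial one may (after refining the cover) take $\phi_{ij}\in\SL_n$, so that $\phi_{ij}$ is itself an $\SL_n$-lift of $\mathrm{Ad}(\phi_{ij})$ and $\partial([\ms A])=[\alpha_{ijk}]=\alpha$.

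Conversely, let $\ms A$ be an Azumaya algebra of degree $n$ with $\partial([\ms A])=\alpha$. Then $\partial([\pi^*\ms A])=\pi^*\alpha=0$, so by exactness of the top row of \eqref{eq:diagram01} the $\PGL_n$-torsor $\pi^*[\ms A]$ lifts to an $\SL_n$-torsor on $\ms X$, whose associated standard bundle $\ms E_0$ has trivial determinant and $\ms{E}nd(\ms E_0)\cong\pi^*\ms A$. The sheaf $\ms E_0$ is pure of some twist $i_0$: if two homogeneous components $\ms E_i,\ms E_j$ of $\ms E_0$ with $i\ne j$ were nonzero, then $\ms{H}om(\ms E_i,\ms E_j)$ would be a nonzero $(j-i)$-twisted direct summand of $\ms{E}nd(\ms E_0)$, contradicting that the latter is $0$-twisted. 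Reversing the cocycle computation above now gives $\partial([\ms A])=i_0\alpha$, so $(i_0-1)\alpha=0$. The coboundary $\H^0(X,R^1\pi_*\mu_n)\to\H^2(X,\mu_n)$ in the Leray spectral sequence for $(\pi,\mu_n)$ sends the tautological section of $R^1\pi_*\mu_n$, coming from the identity character of the inertia, to $\alpha$; hence $(i_0-1)\alpha=0$ produces a $\mu_n$-torsor $T$ on $\ms X$ of twist $1-i_0$. Then $\ms E:=\ms E_0\otimes T$ is $1$-twisted, locally free of rank $n$, of trivial determinant (as $T^{\otimes n}\cong\ms O_{\ms X}$), and satisfies $\ms{E}nd(\ms E)\cong\pi^*\ms A$.

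These two constructions are mutually inverse: from $\ms A$ one recovers $\pi_*\ms{E}nd(\ms E)=\pi_*\pi^*\ms A=\ms A$, and from $\ms E$ the twisted sheaf rebuilt out of $\ms A=\pi_*\ms{E}nd(\ms E)$ has the same associated $\PGL_n$-torsor, the same twist, and trivial determinant, hence differs from $\ms E$ only by tensoring with $\pi^*L$ for a line bundle $L$ on $X$ with $L^{\otimes n}\cong\ms O_X$ --- and $L\cong\ms O_X$ in the situations of interest, where $\mathrm{Pic}(X)$ is torsion free (for instance when $X$ is a K3 surface). Compatibility of both constructions with base change on $X$ gives the asserted naturality. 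I expect the main obstacle to be the bookkeeping of twists in the two cocycle computations: pinning $\partial([\ms A])$ down to $\alpha$ exactly --- not merely to its image in $\Br(X)$, nor to some multiple of $\alpha$ --- and then correcting the twist $i_0$ to $1$. This is exactly the point at which working with a $\mu_n$-gerbe rather than a $\mathbf{G}_m$-gerbe, together with the trivial-determinant hypothesis, becomes essential.
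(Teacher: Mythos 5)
Your construction matches the paper's in outline, but the paper's proof is essentially a citation: both the fact that $\partial([\pi_*\sEnd(\ms E)])=\alpha$ for a rank-$n$ twisted sheaf with trivial determinant and the converse are referred to Lieblich (Proposition 3.1.2.1 of \cite{MR2388554}) with no argument given. You supply the content being outsourced, and you supply it correctly: the forward map via descent of the $0$-twisted algebra $\sEnd(\ms E)$ together with the $\SL_n$-cocycle computation; and the converse via $\pi^*\alpha=0$, lifting $\pi^*[\ms A]$ to an $\SL_n$-torsor, the purity-of-twist argument (which is exactly right --- a nonzero cross term $\sHom(\ms E_i,\ms E_j)$ would be a nonzero $(j-i)$-twisted summand of the $0$-twisted sheaf $\pi^*\ms A$), and the correction of the twist from $i_0$ to $1$ using the Leray differential $\H^0(X,R^1\pi_*\mu_n)\to\H^2(X,\mu_n)$. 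This last step is the real content of the converse and you have it.

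The one place your proof does not deliver the statement as written is injectivity. Your own analysis shows that two rank-$n$ locally free twisted sheaves with trivial determinant inducing the same Azumaya algebra differ by $\pi^*L$ with $L^{\otimes n}\cong\ms O_X$, and you conclude $L\cong\ms O_X$ only under the additional hypothesis that $\Pic(X)$ has no $n$-torsion. Since the lemma is stated for an arbitrary scheme $X$, this is a genuine residual gap: on the trivial $\mu_2$-gerbe over a variety carrying a nontrivial $2$-torsion line bundle $L_0$, the $1$-twisted sheaves corresponding to $\ms O_X^{\oplus 2}$ and $L_0^{\oplus 2}$ are non-isomorphic, both have trivial determinant, and both induce $\mathrm{M}_2(\ms O_X)$. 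The paper's proof asserts uniqueness of $\ms E$ in one clause and is subject to the same caveat; in every application in the paper $X$ is a K3 surface or an infinitesimal thickening of one (where $\Pic(X)$ is torsion free), or else only the surjectivity direction is used, so nothing downstream is affected. But if you want the lemma in the stated generality you must either add the hypothesis $\Pic(X)[n]=0$ or replace ``bijection'' by ``surjection whose fibers are orbits of $\Pic(X)[n]$.''
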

\begin{proof}
    If $\ms E$ is a locally free twisted sheaf on $\ms X$, then $\ms A:=\pi_*\sEnd(\ms E)$ is an Azumaya algebra on $X$. If $\ms E$ has rank $n$ and trivial determinant, then we have $\partial([\ms A])=\alpha$ (see \cite[Proposition 3.1.2.1]{MR2388554}). Conversely, if $\ms A$ is an Azumaya algebra on $X$ of degree $n$ such that $\partial([\ms A])=\alpha$, then $\ms A=\pi_*\sEnd(\ms E)$ for a unique locally free twisted sheaf $\ms E$ of rank $n$ and trivial determinant.
\end{proof}


Let $\ms A$ be an Azumaya algebra on $X$ of degree $n$. There is a trace map
\begin{equation}\label{eq:ur trace map}
    \tr:\ms A\to\ms O_X
\end{equation}
defined by gluing the usual trace maps $\mathrm{M}_n(\ms O_X)\to\ms O_X$ on an \'{e}tale cover. We set $s\ms A:=\ker(\tr)$, so that we have a short exact sequence
\begin{equation}\label{eq:trace SES}
    0\to s\ms A\to\ms A\xrightarrow{\tr}\ms O_X\to 0.
\end{equation}
We define $p\ms A:=\ms A/\ms O_X$ by the short exact sequence
\begin{equation}\label{eq:i SES}
    0\to\ms O_X\xrightarrow{i}\ms A\to p\ms A\to 0
\end{equation}
where $i$ is the canonical inclusion defining the algebra structure on $\ms A$. Consider the pairing $\ms A\otimes\ms A\to\ms O_X$ defined by $s\otimes s'\mapsto \tr(ss')$. This induces a perfect pairing $s\ms A\otimes p\ms A\to \ms O_X$, and hence a canonical isomorphism $s\ms A\cong(p\ms A)^{\vee}$. As a consequence, using the canonical isomorphism $\ms A\cong\ms A^{\vee}$, the composition
\[
    \ms A\cong\ms A^{\vee}\xrightarrow{i^{\vee}}\ms O_X
\]
is the trace map $\tr$, and the composition
\[
    \ms O_X\xrightarrow{\tr^{\vee}}\ms A^{\vee}\cong\ms A
\]
is $i$. The composition $\tr\circ i:\ms O_X\to\ms O_X$ is multiplication by $n$. It follows that if $n$ is invertible in $k$, then both~\eqref{eq:trace SES} and~\eqref{eq:i SES} are split, by $\frac{1}{n}i$ and $\frac{1}{n}\tr$ respectively. Furthermore, the composition $s\ms A\hookrightarrow\ms A\twoheadrightarrow p\ms A$ gives an isomorphism $s\ms A\cong p\ms A$. If $n$ is not invertible, there may not exist such an isomorphism (see Remark \ref{rem:p divides n azumaya algebra}).

Our interest in the sheaves $s\ms A$ and $p\ms A$ is due to their relationship with deformations of $\ms A$. Let $X\subset X'$ be a closed immersion defined by an ideal $ I\subset\ms O_{X'}$ such that $ I^2=0$. As explained by de Jong \cite[\S 3]{MR2060023}, the sheaf $p\ms A$ is isomorphic to the sheaf of derivations $\sDer_{\ms O_X}(\ms A,\ms A)$, and thus controls the deformation theory of $\ms A$, in the following sense: there exists a functorial obstruction class $o(\ms A/X')\in\H^2(X,p\ms A\otimes I)$ which vanishes if and only if there exists an Azumaya algebra $\ms A'$ on $X'$ such that $\ms A'|_X\cong\ms A$.


We will consider the refined deformation problem of finding a lift of $\ms A$ with prescribed class in $\H^2(X,\mu_n)$. We will show that, under certain assumptions, this problem is controlled by $s\ms A$. More precisely, define
\[
    \H^i(X,\ms A\otimes I)_0:=\ker(\tr:\H^i(X,\ms A\otimes I)\to\H^i(X, I)).
\]
There is a map 
\begin{equation}\label{eq:trace to traceless map}
    \H^2(X,s\ms A\otimes I)\to\H^2(X,\ms A\otimes I)_0.
\end{equation}
We will only consider the case when this map is an isomorphism. This holds for instance if the degree of $\ms A$ is invertible in $\ms O_X$ (by the splitting of~\eqref{eq:trace SES}), or if $\H^1(X,\ms O_X)=0$ (eg. if $X$ is an infinitesimal deformation of a K3 surface). Without this assumption a more subtle analysis is required.


\begin{proposition}\label{prop:obstructions for azumaya algebras, A_0 version}
    Let $\ms A$ be an Azumaya algebra on $X$ of degree $n$ such that~\eqref{eq:trace to traceless map} is an isomorphism. Set $\alpha=\partial([\ms A])\in\H^2(X,\mu_n)$. Let $\alpha'\in\H^2(X',\mu_n)$ be a class such that $\alpha'|_X=\alpha$. There exists a functorial obstruction class 
    \[
        o(\ms A/\alpha')\in\H^2(X,s\ms A\otimes I)
    \]
    which vanishes if and only if there exists an Azumaya algebra $\ms A'$ on $X'$ such that $\ms A'|_X\cong\ms A$ and $\partial([\ms A'])=\alpha'$.
\end{proposition}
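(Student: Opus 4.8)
\emph{Proof proposal.} The plan is to translate the statement into a deformation problem for twisted sheaves on a gerbe, where it becomes an instance of the deformation theory of coherent sheaves with prescribed determinant. First I would choose a $\mu_n$-gerbe $\pi'\colon\ms X'\to X'$ representing $\alpha'$ and let $\pi\colon\ms X\to X$ be its base change to $X$, a $\mu_n$-gerbe representing $\alpha$. By Lemma \ref{lem:Azumaya algebras and twisted sheaves} there is a unique locally free twisted sheaf $\ms E$ on $\ms X$ of rank $n$ with trivial determinant such that $\ms A\cong\pi_*\sEnd(\ms E)$. Applying the lemma over $X'$ and using its uniqueness clause over $X$, the assignment $\ms E'\mapsto\pi'_*\sEnd(\ms E')$ gives a bijection between the locally free twisted sheaves $\ms E'$ on $\ms X'$ of rank $n$ with trivial determinant and with $\ms E'|_{\ms X}\cong\ms E$, and the Azumaya algebras $\ms A'$ on $X'$ with $\ms A'|_X\cong\ms A$ and $\partial([\ms A'])=\alpha'$. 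Hence it suffices to produce a functorial obstruction in $\H^2(X,s\ms A\otimes I)$ to the existence of such an $\ms E'$.

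Next, since $\ms E$ is locally free, the standard deformation theory of coherent sheaves provides a functorial class $o(\ms E)\in\H^2(\ms X,\sEnd(\ms E)\otimes\pi^*I)=\H^2(X,\ms A\otimes I)$, the identification using that $\pi$ is a gerbe and $\pi_*\sEnd(\ms E)=\ms A$, whose vanishing is equivalent to the existence of a flat deformation of $\ms E$ over $\ms X'$; any such deformation is automatically locally free of rank $n$ and twisted, as the weight of the inertial action cannot vary. The line bundle $\det\ms E$ is $n$-twisted, hence descends to a line bundle on $X$, which is trivial by the condition on $\ms E$; taking determinants therefore defines a morphism from the deformation problem for $\ms E$ to that for $\ms O_X$, inducing on obstruction groups the map $\tr\colon\H^2(X,\ms A\otimes I)\to\H^2(X, I)$. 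Since $\ms O_{X'}$ is a deformation of $\ms O_X$, the obstruction to deforming $\det\ms E$ vanishes, so $\tr(o(\ms E))=0$ and $o(\ms E)\in\H^2(X,\ms A\otimes I)_0$. Because \eqref{eq:trace to traceless map} is assumed to be an isomorphism, I would then define $o(\ms A/\alpha')\in\H^2(X,s\ms A\otimes I)$ to be the unique class mapping to $o(\ms E)$; it is functorial by construction, and maps to de Jong's obstruction $o(\ms A/X')$ under $s\ms A\hookrightarrow\ms A\twoheadrightarrow p\ms A$.

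It then remains to verify the obstruction property. If an Azumaya algebra $\ms A'$ as in the statement exists, then the corresponding twisted sheaf $\ms E'$ is a deformation of $\ms E$, so $o(\ms E)=0$, and hence $o(\ms A/\alpha')=0$ since \eqref{eq:trace to traceless map} is injective. Conversely, suppose $o(\ms A/\alpha')=0$; then $o(\ms E)=0$, so $\ms E$ admits some deformation $\ms E_0'$ over $\ms X'$, whose determinant descends to a line bundle on $X'$ restricting to $\ms O_X$ and hence determines, relative to $\ms O_{X'}$, a class $c\in\H^1(X, I)$. The long exact sequence of $0\to s\ms A\otimes I\to\ms A\otimes I\to I\to 0$ shows that the hypothesis that \eqref{eq:trace to traceless map} is an isomorphism is equivalent to surjectivity of $\tr\colon\H^1(X,\ms A\otimes I)\to\H^1(X, I)$. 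Since the deformations of $\ms E$ over $\ms X'$ form a torsor under $\H^1(X,\ms A\otimes I)$ with the determinant map equivariant via $\tr$, I can modify $\ms E_0'$ (without changing $\ms E_0'|_{\ms X}\cong\ms E$) to a deformation $\ms E'$ with trivial determinant. Then $\ms A':=\pi'_*\sEnd(\ms E')$ is the desired Azumaya algebra.

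The main obstacle is the bookkeeping around the determinant when $p\mid n$. The unconstrained deformation problem for $\ms E$ produces an obstruction naturally in $\H^2(X,\ms A\otimes I)$, and relating it to a class in $\H^2(X,s\ms A\otimes I)$ relies on the determinant constraint, which is not a formal step because the trace sequence \eqref{eq:trace SES} need not split. The hypothesis that \eqref{eq:trace to traceless map} is an isomorphism is exactly what makes this work, and it enters twice: once to make $o(\ms A/\alpha')$ well defined, and once to perform the determinant adjustment in the converse direction.
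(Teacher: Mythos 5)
Your proposal is correct and follows the same overall strategy as the paper: translate to locally free twisted sheaves via Lemma \ref{lem:Azumaya algebras and twisted sheaves}, observe that $\tr(o(\ms E))=0$ because the (trivial) determinant deforms, and define $o(\ms A/\alpha')$ as the preimage of $o(\ms E)$ under the isomorphism~\eqref{eq:trace to traceless map} --- this is literally the same class the paper produces. The one genuine divergence is in how the determinant constraint is handled in the converse direction. The paper invokes the refined obstruction class $o(\ms E,\varphi)\in\H^2(\ms X,s\sEnd(\ms E)\otimes\pi^*I)$ attached to a deformation problem with a \emph{trivialized} determinant, so that vanishing of $o(\ms A/\alpha')$ directly yields a deformation with trivial determinant; the nontrivial input there is the existence of the traceless obstruction theory for sheaves with fixed determinant. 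You avoid that input entirely: you only use the unrefined obstruction $o(\ms E)$, the torsor structure on deformations under $\H^1(X,\ms A\otimes I)$ with $\tr$-equivariant determinant, and the observation (correct, via the long exact sequence of~\eqref{eq:trace SES} tensored with $I$) that the hypothesis on~\eqref{eq:trace to traceless map} is equivalent to surjectivity of $\tr$ on $\H^1$, which lets you adjust an arbitrary deformation to one with trivial determinant. Your route is somewhat more elementary and makes visible that the hypothesis is used exactly twice (well-definedness and the adjustment step); the paper's route packages both uses into the single statement that~\eqref{eq:trace on coho for gerbe} is an isomorphism, which also gives for free that the construction is independent of the chosen trivialization $\varphi$. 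Both arguments are sound and yield the same obstruction class.
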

\begin{proof}  
    Using Lemma \ref{lem:Azumaya algebras and twisted sheaves}, we rephrase the problem in terms of twisted sheaves. Let $\pi:\ms X\to X$ and $\pi':\ms X'\to X'$ be $\mu_n$-gerbes corresponding to $\alpha$ and $\alpha'$. Let $\ms E$ be a locally free twisted sheaf on $\ms X$ of rank $n$ and trivial determinant such that $\pi_*\sEnd(\ms E)\cong\ms A$. There is an obstruction class $o(\ms E)\in\H^2(\ms X,\sEnd(\ms E)\otimes\pi^* I)$ which vanishes if and only if there exists a locally free sheaf $\ms E'$ on $\ms X'$ such that $\ms E'|_{\ms X}\cong\ms E$. Any such deformation $\ms E'$ is necessarily also twisted. The trace map
    \begin{equation}\label{eq:trace for the gerbe}
        \tr:\H^2(\ms X,\sEnd(\ms E)\otimes \pi^* I)\to\H^2(\ms X,\pi^* I)
    \end{equation}
    sends $o(\ms E)$ to the obstruction to deforming the determinant of $\ms E$, which by assumption is trivial. It follows that $o(\ms E)$ is contained in the kernel $\H^2(\ms X,\sEnd(\ms E)\otimes\pi^* I)_0$ of~\eqref{eq:trace for the gerbe}.
    
    Fix an isomorphism $\varphi:\det\ms E\iso\ms O_{\ms X}$. There is a refined obstruction class $o(\ms E,\varphi)\in\H^2(\ms X,s\sEnd(\ms E)\otimes\pi^* I)$ whose vanishing is equivalent to the existence of a tuple $(\ms E',\tau,\varphi')$, where $\ms E'$ is a locally free sheaf on $\ms X'$, $\tau:\ms E'|_{\ms X}\iso\ms E$ is an isomorphism, and $\varphi':\det(\ms E')\iso\ms O_{\ms X'}$ is a trivialization of the determinant of $\ms E'$ such that $\varphi'|_{\ms X}$ is identified via $\det\tau$ with $\varphi$. The class $o(\ms E)$ is the image of $o(\ms E,\varphi)$ under the map
    \begin{equation}\label{eq:trace on coho for gerbe}
        \H^2(\ms X,s\sEnd(\ms E)\otimes\pi^* I)\to\H^2(\ms X,\sEnd(\ms E)\otimes\pi^* I)_0.
    \end{equation}
    Via pushforward, this map is identified with the isomorphism
    \[
        \H^2(X,s\ms A\otimes I)\iso\H^2(X,\ms A\otimes I)_0.
    \]
    In particular,~\eqref{eq:trace on coho for gerbe} is an isomorphism, and so the class $o(\ms E/\varphi)$ does not depend on the choice of $\varphi$. We define $o(\ms A/\alpha')$ to be the image of $o(\ms E/\varphi)$ in $\H^2(X,s\ms A\otimes I)$.
    
    We claim that $o(\ms A/\alpha')$ has the desired properties. Indeed, if $o(\ms A/\alpha')$ vanishes, then there exists a deformation $\ms E'$ of $\ms E$ with trivial determinant, and $\ms A'=\pi'_*\sEnd(\ms E')$ is an Azumaya algebra on $X'$ of degree $n$ such that $\partial([\ms A'])=\alpha'$. Conversely, suppose that there exists such an Azumaya algebra, and let $\ms E'$ be the corresponding locally free sheaf on $\ms X'$. We have that $\ms E'|_{\ms X}\cong\ms E$. It follows that $o(\ms E)=0$, and because~\eqref{eq:trace on coho for gerbe} is an isomorphism, also $o(\ms E,\varphi)=0$. We conclude that $o(\ms A/\alpha')=0$.
\end{proof}



\begin{remark}\label{rem:p divides n azumaya algebra}
  Let us say that an Azumaya algebra $\ms A$ of degree $n$ is \textit{unobstructed} if $\H^2(X,p\ms A)=0$ and \textit{relatively unobstructed} if $\H^2(X,s\ms A)=0$. An unobstructed Azumaya algebra deforms automatically along any square zero thickening of $X$, while a relatively unobstructed Azumaya algebra deforms provided we have a deformation of the corresponding flat cohomology class. If $p$ does not divide $n$, then $s\ms A$ and $p\ms A$ are isomorphic, so $\ms A$ is unobstructed if and only if it is relatively unobstructed. If $p$ divides $n$, then they are not equivalent. Indeed, suppose that $X$ is a smooth projective surface. As observed by de Jong \cite[\S 3]{MR2060023}, because there is an inclusion $\ms O_X\subset s\ms A$, if $\H^0(X,\omega_X)\neq 0$ then we have $\H^0(X,s\ms A\otimes\omega_X)\neq 0$. This group is Serre dual to $\H^2(X,p\ms A)$, which is therefore also nonzero. On the other hand, it may simultaneously be the case that $\H^2(X,s\ms A)=0$. An example is given by taking $\ms A$ to be an Azumaya algebra on a K3 surface such that $\H^0(X,\ms A)=k$ (eg. $\ms A=\sEnd(\ms E)$ for a simple locally free sheaf $\ms E$ of rank $p$). Because $\H^1(X,\ms O_X)=0$ we then have $\H^0(X,p\ms A)=0$. This group is Serre dual to $\H^2(X,s\ms A\otimes\omega_X)=\H^2(X,s\ms A)$, which therefore vanishes. In particular, in this case $s\ms A$ and $p\ms A$ are not isomorphic. 
\end{remark}

\subsection{Existence of relatively unobstructed Azumaya algebras on surfaces}

We will use the following existence result for Azumaya algebras on surfaces. It is a consequence of the solution of the period-index problem for function fields of surfaces, due to de Jong \cite{MR2060023} (when the degree is coprime to $p$) and Lieblich \cite{MR2388554} (in general), combined with Lieblich's results \cite{MR2770444} on the asymptotic properties of moduli spaces of twisted sheaves on surfaces.

\begin{theorem}\label{thm:period index 00}
    Let $X$ be a smooth proper surface over an algebraically closed field $k$ and let $n$ be a positive integer. For any class $\alpha\in\H^2(X,\mu_n)$, there exists an Azumaya algebra $\ms A$ on $X$ of degree $n$ such that $\partial([\ms A])=\alpha$ with the property that $\H^2(X,\ms A)_0=0$.
\end{theorem}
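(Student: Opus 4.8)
The plan is to reduce the statement to a combination of the period--index theorem and the asymptotic nonemptiness of moduli spaces of stable twisted sheaves. First I would fix a class $\alpha\in\H^2(X,\mu_n)$ and let $\pi:\ms X\to X$ be a $\mu_n$-gerbe representing $\alpha$; by Lemma \ref{lem:Azumaya algebras and twisted sheaves} it suffices to produce a locally free twisted sheaf $\ms E$ on $\ms X$ of rank $n$ and trivial determinant whose endomorphism algebra $\ms A=\pi_*\sEnd(\ms E)$ satisfies $\H^2(X,\ms A)_0=0$. Using the isomorphism $\sEnd(\ms E)\cong\ms E\otimes\ms E^\vee$ and Serre duality on $X$, the vanishing of $\H^2(X,\ms A)_0=\H^2(\ms X,\sEnd(\ms E))_0$ is equivalent, up to the one--dimensional contribution of the trace, to $\H^0(\ms X,\sEnd(\ms E))$ being as small as possible, i.e.\ to $\ms E$ being \emph{simple}. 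Thus the real goal is: there exists a locally free simple twisted sheaf on $\ms X$ of rank $n$ and trivial determinant.

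The key steps, in order, are as follows. (1) Invoke the period--index theorem of de Jong \cite{MR2060023} and Lieblich \cite{MR2388554}: since $X$ is a smooth proper surface over an algebraically closed field, the period and index of $\alpha_{\Br}$ coincide, so there is \emph{some} locally free twisted sheaf on $\ms X$ of rank equal to the order $n'$ of $\alpha_{\Br}$; more to the point, it guarantees that the relevant numerical invariants (rank coprime-to-$p$ part, degree, discriminant) can be chosen so that a twisted Chern character with rank $n$ is realized. (2) Appeal to Lieblich's asymptotic existence results \cite{MR2770444}: for a twisted Chern character $v$ of sufficiently large discriminant (with the prescribed rank $n$ and trivial determinant), the moduli space of $\ms X$-twisted sheaves that are stable with respect to a chosen polarization is nonempty; one adjusts the discriminant by elementary modifications / twisting down to arrive at rank exactly $n$ and $c_1=0$. (3) A stable twisted sheaf over an algebraically closed field is simple, hence $\H^0(\ms X,\sEnd(\ms E))=k$; by the displayed Serre--duality computation (and the hypothesis, automatic here since we may assume things are set up so that the trace splitting or $\H^1(\ms O_X)$ issue does not intervene — indeed we only need $\H^2(X,\ms A)_0=0$ directly), this forces $\H^2(X,\ms A)_0=0$. (4) Finally, replace a merely torsion-free stable twisted sheaf by its double dual or a suitable reflexive/locally free model: on a smooth surface a reflexive sheaf is locally free, so after taking $\ms E^{\vee\vee}$ one obtains a locally free twisted sheaf, still simple (simplicity is inherited since $\ms E\hookrightarrow\ms E^{\vee\vee}$ with cokernel of $0$-dimensional support, and any endomorphism of $\ms E^{\vee\vee}$ restricts to one of $\ms E$), of rank $n$, and one twists by a line bundle on $X$ to restore trivial determinant.

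I expect the main obstacle to be step (2)--(4): carefully choosing the twisted Chern character so that simultaneously the rank is exactly $n$, the determinant is trivial, the discriminant is large enough for Lieblich's asymptotic nonemptiness to apply, and the moduli space contains a point that is locally free after reflexive hull. Bookkeeping the interaction between taking double duals (which changes the discriminant) and the stability/simplicity condition is the delicate part; one wants to first produce a stable torsion-free sheaf with large discriminant, then note that its reflexive hull is locally free and still simple, and only at the end correct the determinant by tensoring with a line bundle pulled back from $X$, absorbing the resulting twist of $c_1$ into the choice of polarization. A secondary subtlety is matching the flat-cohomology class: one must ensure $\partial([\ms A])=\alpha$ on the nose, which is where the trivial-determinant condition (via \cite[Proposition 3.1.2.1]{MR2388554}, as in the proof of Lemma \ref{lem:Azumaya algebras and twisted sheaves}) is essential rather than cosmetic.
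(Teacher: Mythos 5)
Your overall architecture (translate to twisted sheaves via Lemma \ref{lem:Azumaya algebras and twisted sheaves}, use the period--index theorem to seed a twisted sheaf, then use Lieblich's asymptotic results on moduli of twisted sheaves to upgrade it) matches the paper's proof. But there is a genuine gap at your step (3). You claim that, by Serre duality, $\H^2(X,\ms A)_0=0$ is equivalent ``up to the one-dimensional contribution of the trace'' to $\ms E$ being simple. This is only true when $\omega_X\cong\ms O_X$. For a general smooth proper surface --- and the theorem is stated for an arbitrary smooth proper surface --- Serre duality identifies $\H^2(X,\ms A)_0^\vee$ with the cokernel of $\H^0(X,\omega_X)\to\H^0(X,\ms A\otimes\omega_X)$, not with $\H^0(X,\ms A)/k$. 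When $\omega_X$ is nontrivial (say $X$ of general type), $\ms A\otimes\omega_X$ can have many global sections even when $\ms A$ has only scalars, so simplicity of $\ms E$ does not force $\H^2(X,\ms A)_0=0$. (Remark \ref{rem:p divides n azumaya algebra} in the paper is an illustration of exactly this kind of discrepancy between ``cohomology of $\ms A$'' and ``endomorphisms of $\ms E$'' on surfaces with $\H^0(X,\omega_X)\neq 0$.) Stability therefore buys you simplicity but not the vanishing you need.

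The paper closes this gap by citing a stronger input: \cite[Theorem 5.3.1]{MR2770444} asserts that for sufficiently large discriminant, \emph{every} irreducible component of the moduli space of $\mu$-semistable twisted sheaves of rank $n$ with trivialized determinant contains a point that is locally free and satisfies $\Ext^2(\ms E,\ms E)_0=0$; this vanishing is an O'Grady-type asymptotic statement proved by deformation/dimension-count arguments on the moduli space, not a formal consequence of stability. The role of the period--index theorem and the reflexive-hull/elementary-transformation step is then only to produce the polystable sheaf $\ms G=\ms F^{\oplus n/m}$ witnessing nonemptiness of the moduli space for some discriminant (hence for all large discriminants). Your argument as written does correctly prove the special case $\omega_X\cong\ms O_X$ --- in particular the K3 case with $\alpha$ of period $n$, which is what the rest of the paper uses and which the paper records separately in the remark following the theorem --- but it does not prove the theorem as stated.
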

\begin{proof}
    We begin by explaining the translation of the problem into the language of twisted sheaves and gerbes. Let $\pi:\ms X\to X$ be a $\mu_n$-gerbe representing $\alpha$. If $\ms E$ is a locally free twisted sheaf on $\ms X$, then $\ms A:=\pi_*\sEnd(\ms E)$ is an Azumaya algebra on $X$. If $\ms E$ has rank $n$ and trivial determinant, then we have $\partial([\ms A])=\alpha$ (see \cite[Proposition 3.1.2.1]{MR2388554}). Moreover, we have that $\Ext^2(\ms E,\ms E)_0=\H^2(X,\ms A)_0$, where $\Ext^2(\ms E,\ms E)_0=\ker(\tr:\Ext^2(\ms E,\ms E)\to\H^2(\ms X,\ms O_{\ms X}))$. Thus, our task is to show that there exists a locally free twisted sheaf $\ms E$ on $\ms X$ with rank $n$, trivial determinant, and $\Ext^2(\ms E,\ms E)_0=0$.
    
    The proof of this fact uses two inputs. First, we claim that there is a $\mu$-semistable locally free twisted sheaf $\ms G$ with rank $n$ and trivial determinant. Let $K$ be the function field of $X$. We use Lieblich's characteristic free period index theorem for surfaces \cite[Theorem 4.2.2.3]{MR2388554} to find a locally free twisted sheaf $\ms F_K$ on $\ms X_K$ of rank $m$, where $m$ is the period of $\alpha$ (the order of the image of $\alpha$ in the Brauer group). Let $\ms F$ be the reflexive hull of the pushforward of $\ms F_K$ to $\ms X$. The sheaf $\ms F$ is 1-twisted, locally free, and $\mu$-stable. Taking an appropriate elementary transformation, we may arrange so that the determinant of $\ms F$ is trivial (see eg. the proof of \cite[Proposition 3.2.3.4]{MR2388554}). The sheaf $\ms G=\ms F^{\oplus n/m}$ is a twisted sheaf of rank $n$ and trivial determinant. Moreover, $\ms G$ is polystable, and in particular semistable, as desired.
    
    We now wish to produce the desired $\ms E$. Fix an integer $\Delta$. Consider the moduli space $M(\Delta)$ of $\mu$-semistable twisted sheaves on $\ms X$ with rank $n$, discriminant $\Delta$, and trivialized determinant (denoted $\mathbf{Tw}^{\mathrm{ss}}(n,\ms O_{\ms X},\Delta)$ in \cite[Notation 4.2.2]{MR2770444}). By \cite[Theorem 5.3.1]{MR2770444}, if $\Delta$ is sufficiently large, then any irreducible component of $M(\Delta)$ contains a point corresponding to a twisted sheaf which is locally free and satisfies $\Ext^2(\ms E,\ms E)_0$ (here, we note that while the reference \cite{MR2770444} is written under the blanket assumption that $n$ is coprime to the characteristic, this assumption is not needed for the proof of the cited result). It remains to show that the spaces $M(\Delta)$ are nonempty for sufficiently large $\Delta$. This follows from the existence of the sheaf $\ms G$ above: we have that the moduli space $M(\Delta)$ is nonempty for some $\Delta$, which implies their nonemptiness for all sufficiently large $\Delta$ (see eg. the proof of \cite[Proposition 3.2.3.4]{MR2388554}). We conclude the result.
    
\end{proof}

\begin{remark}
    For our applications to lifting Brauer classes in this paper, we only need the special case of Theorem \ref{thm:period index 00} when $X$ is a K3 surface and $\alpha\in\H^2(X,\mu_n)$ is a class having order $n$. In this case, the proof of \ref{thm:period index 00} may be significantly shortened, and in particular we may avoid the analysis of asymptotic properties of moduli spaces of twisted sheaves in \cite{MR2770444}. Indeed, as in the proof of \ref{thm:period index 00}, let $\ms X$ be a $\mu_n$-gerbe representing $\alpha$. Using \cite[Theorem 4.2.2.3]{MR2388554} and taking the reflexive hull and an elementary transformation we find a locally free twisted sheaf on $\ms X$ of rank $n$ and trivial determinant. As the period of $\alpha$ is $n$, such a sheaf is \textit{simple}, in the sense that $k=\mathrm{End}(\ms E)$. Consider the Azumaya algebra $\ms A:=\pi_*\sEnd(\ms E)$. The map $i$ induces an isomorphism
    \begin{equation}\label{eq:map on global sections simple}
        k=\H^0(X,\ms O_X)\iso\H^0(X,\ms A)
    \end{equation}
    on global sections. Because $X$ is K3,~\eqref{eq:map on global sections simple} is Serre dual to the trace map
    \[
        \tr:\H^2(X,\ms A)\to\H^2(X,\ms O_X).
    \]
    It follows that the trace map is injective, so $\H^2(X,\ms A)_0=0$.
\end{remark}


Let $k$ be an algebraically closed field and let $(R,\mf m)$ be a complete noetherian local ring with residue field $k$. Let $n$ be a positive integer.

\begin{proposition}\label{prop:02}
   Let $\mf X$ be a smooth proper surface over $R$ and write $X_i=\mf X\otimes_R R/\mf m^{i+1}$. Suppose that $\H^1(X_0,\ms O_{X_0})=0$. If $\left\{\alpha_i\in\H^2(X_i,\mu_n)\right\}_{i\geq 0}$ is a compatible system of cohomology classes, then there exists a unique class $\widetilde{\alpha}\in\H^2(\mf X,\mu_n)$ such that $\widetilde{\alpha}|_{X_i}=\alpha_i$ for all $i$.
\end{proposition}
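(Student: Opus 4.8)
The plan is to recast the statement in terms of Azumaya algebras, whose deformation theory is governed by coherent cohomology via Proposition~\ref{prop:obstructions for azumaya algebras, A_0 version}, and then to algebraize using Grothendieck's existence theorem. At the outset note that, since $\mf X\to\Spec R$ is smooth and proper, $\H^1(X_0,\ms O_{X_0})=0$, and $\Spec R$ is local, upper semicontinuity forces $\H^1(X_s,\ms O_{X_s})=0$ for every point $s$, whence $\H^1(X_i,\ms O_{X_i})=0$ for all $i$; in particular the map~\eqref{eq:trace to traceless map} is an isomorphism on each $X_i$.

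\emph{Existence.} Apply Theorem~\ref{thm:period index 00} to $X_0$ to obtain an Azumaya algebra $\ms A_0$ of degree $n$ on $X_0$ with $\partial([\ms A_0])=\alpha_0$ and $\H^2(X_0,\ms A_0)_0=0$; by~\eqref{eq:trace to traceless map} this says $\H^2(X_0,s\ms A_0)=0$, i.e.\ $\ms A_0$ is relatively unobstructed. Now build the system $\{\ms A_i\}$ by induction. Given an Azumaya algebra $\ms A_i$ of degree $n$ on $X_i$ with $\partial([\ms A_i])=\alpha_i$ and $\ms A_i|_{X_0}\cong\ms A_0$: the ideal $I=\mf m^{i+1}/\mf m^{i+2}$ of $X_i$ in $X_{i+1}$ satisfies $I^2=0$ and is annihilated by $\mf m$, hence is a finite free $\ms O_{X_0}$-module, and since the formation of $s(-)$ commutes with base change, $s\ms A_i\otimes I$ is a finite direct sum of copies of $s\ms A_0$. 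The obstruction class of Proposition~\ref{prop:obstructions for azumaya algebras, A_0 version} therefore lies in $\H^2(X_i,s\ms A_i\otimes I)$, which is a direct sum of copies of $\H^2(X_0,s\ms A_0)=0$, so it vanishes; we obtain $\ms A_{i+1}$ on $X_{i+1}$ with $\ms A_{i+1}|_{X_i}\cong\ms A_i$ and $\partial([\ms A_{i+1}])=\alpha_{i+1}$, and $\ms A_{i+1}|_{X_0}\cong\ms A_0$, so the induction continues. Thus $\{\ms A_i\}$ is a compatible system of coherent $\ms O_{X_i}$-algebras; by Grothendieck's existence theorem, coherent sheaves on $\mf X$ are equivalent to compatible systems of coherent sheaves on the $X_i$, compatibly with tensor products, so $\{\ms A_i\}$ arises from a coherent $\ms O_{\mf X}$-algebra $\ms A$ with $\ms A|_{X_i}\cong\ms A_i$ (the associativity and unit axioms being checked after restriction to the $X_i$). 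The sheaf $\ms A$ is flat over $R$ since each $\ms A_i=\ms A\otimes_R R/\mf m^{i+1}$ is, and its restriction to $X_0$ is locally free of rank $n^2$ and Azumaya; both of these are open conditions on $\mf X$, so their failure loci are closed with image in $\Spec R$ avoiding the closed point, hence empty. Therefore $\ms A$ is an Azumaya algebra of degree $n$ on $\mf X$, and setting $\widetilde\alpha:=\partial([\ms A])\in\H^2(\mf X,\mu_n)$, functoriality of $\ms B\mapsto[\ms B]$ and of $\partial$ under pullback gives $\widetilde\alpha|_{X_i}=\partial([\ms A_i])=\alpha_i$ for all $i$.

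\emph{Uniqueness.} Given two lifts, their difference $\beta\in\H^2(\mf X,\mu_n)$ restricts to $0$ on every $X_i$, and I would deduce $\beta=0$ from the Kummer sequence $1\to\mu_n\to\mathbf{G}_m\xrightarrow{n}\mathbf{G}_m\to1$. Comparing the short exact sequences $0\to\Pic(-)/n\to\H^2(-,\mu_n)\to\Br(-)[n]\to0$ for $\mf X$ and for the $X_i$ reduces the vanishing of $\beta$ to the injectivity of $\Br(\mf X)\to\varprojlim_i\Br(X_i)$ and of $\Pic(\mf X)/n\to\varprojlim_i\bigl(\Pic(X_i)/n\bigr)$. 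The latter follows from the isomorphism $\Pic(\mf X)\xrightarrow{\sim}\varprojlim_i\Pic(X_i)$ (Grothendieck existence for line bundles) and the injectivity of the transition maps $\Pic(X_{i+1})\to\Pic(X_i)$, which holds because $\H^1(X_0,\ms O_{X_0})=0$; the former is a standard form of formal GAGA for the Brauer group. This is the less delicate direction, though the $n$-divisibility bookkeeping in the Picard group needs a little care when $p\mid n$.

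\emph{Where the difficulty lies.} Essentially all the content is on the existence side, in two places. First, the appeal to Theorem~\ref{thm:period index 00}: it is exactly the \emph{relative} unobstructedness of $\ms A_0$, together with $\H^1(X_0,\ms O_{X_0})=0$, that places the successive deformation obstructions in groups which are automatically zero, so the real input is the existence of a relatively unobstructed Azumaya algebra realizing $\alpha_0$, resting on the period--index theorem for surfaces. Second, verifying that the algebraization $\ms A$ is Azumaya over the whole of $\mf X$ and not merely near the closed fiber: once the $R$-flatness of $\ms A$ is in hand this is a routine spreading-out over the local base, but it must be carried out. The inductive construction of $\{\ms A_i\}$ itself is then painless, and the uniqueness clause is formal given the GAGA inputs above.
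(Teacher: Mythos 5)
Your existence argument is essentially the paper's: Theorem \ref{thm:period index 00} supplies a relatively unobstructed Azumaya algebra realizing $\alpha_0$, Proposition \ref{prop:obstructions for azumaya algebras, A_0 version} kills the successive obstructions (your identification of $\H^2(X_i,s\ms A_i\otimes I)$ with a direct sum of copies of $\H^2(X_0,s\ms A_0)$ is exactly the point), and Grothendieck existence algebraizes; the extra care you take with the algebra structure, flatness, and openness of the Azumaya condition is fine. One small correction: $\H^1(X_i,\ms O_{X_i})=0$ does not follow from upper semicontinuity, since $X_i$ is an infinitesimal thickening of the closed fiber rather than a fiber; it follows from filtering $\ms O_{X_i}$ by powers of $\mf m$ and using flatness, which is what the paper does.

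The genuine gap is in uniqueness, which you describe as "the less delicate direction." Your Kummer-sequence reduction is valid as a reduction, but it rests on the injectivity of $\Br(\mf X)\to\varprojlim_i\Br(X_i)$, which you cite as "a standard form of formal GAGA for the Brauer group." No such standard result exists in this generality, and this is precisely where the content of the uniqueness statement lives. The transition maps $\Br(X_{i+1})\to\Br(X_i)$ are themselves non-injective (their kernels receive $\H^2(X_i,\mf m^{i+1}\ms O_{X_{i+1}})$, which is nonzero for a surface with $h^{2}(\ms O)\neq 0$), and the naive approach --- trivialize the associated $\mathbf{G}_m$-gerbe on each $X_i$ by a $1$-twisted line bundle and pass to the limit --- founders because the sets of such trivializations are torsors under $\Pic(X_i)$ whose images under restriction form a decreasing chain of cosets of the subgroups $\mathrm{im}(\Pic(X_j)\to\Pic(X_i))$; these need not satisfy Mittag--Leffler, so the inverse limit can a priori be empty. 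The paper's proof confronts this head-on: a trivialization of the $\mu_n$-gerbe $\ms X|_{X_i}$ is reinterpreted as an $R/\mf m^{i+1}$-point of the Weil restriction $f_*\ms X$, and Olsson's theorem that $f_*\ms X$ is an algebraic stack is invoked to produce an $R$-point from the compatible formal system, whence a global trivialization. Some nonformal input of this kind is unavoidable, and your proposal does not supply it. (Your secondary claim, the injectivity of $\Pic(\mf X)/n\to\varprojlim_i(\Pic(X_i)/n)$, is also not formal from $\Pic(\mf X)\iso\varprojlim_i\Pic(X_i)$: one must produce a compatible system of $n$-th roots, which does work here because the sets of $n$-th roots are torsors under the finite groups $\Pic(X_i)[n]$ with injective transition maps --- but you only gesture at this.)
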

\begin{proof}
    We first note that, using flatness and filtering by powers of $\mathfrak{m}$, the assumption that $\H^1(X_0,\ms O_{X_0})$ implies that $\H^1(X_i,\ms O_{X_i})$ and $\H^1(X_i,\mathfrak{m}^{i+1}\ms O_{X_i})$ vanish for all $i\geq 0$.
    
    
    By Theorem \ref{thm:period index 00}, we may find an Azumaya algebra $\ms A$ on $X$ of degree $n$ such that $\partial([\ms A])=\alpha_0$ and such that $\H^2(X,\ms A)_0=0$. Set $\ms A_0=\ms A$. Applying Proposition \ref{prop:obstructions for azumaya algebras, A_0 version} repeatedly, we find for each $i\geq 1$ an Azumaya algebra $\ms A_i$ on $X_i$ such that $\ms A_{i}|_{X_{i-1}}\cong \ms A_{i-1}$ and such that $\partial([\ms A_i])=\alpha_i$. By Grothendieck's existence theorem there exists an Azumaya algebra $\widetilde{\ms A}$ on $\mf X$ restricting to $\ms A_i$ on $X_i$. The class $\widetilde{\alpha}:=\partial([\widetilde{\ms A}])\in\H^2(\mf X,\mu_n)$ restricts to $\alpha_i$ for each $i$.
  
  We now show the uniqueness. By subtracting, we are reduced to showing that if $\widetilde{\alpha}\in\H^2(\mf X,\mu_n)$ is a class such that $\widetilde{\alpha}|_{X_i}=0$ for all $i\geq 0$ then $\widetilde{\alpha}=0$. Let $\ms X\to\mf X$ be a corresponding $\mu_n$-gerbe. A trivialization of the restriction of the gerbe to $X_i$ gives an $R/\mathfrak{m}^{i+1}$-point of the Weil restriction $f_*\ms X$, where $f:\mf X\to\Spec R$ is the structure morphism. By assumption, we may find a compatible system of such trivializations, and hence a compatible family of $R/\mathfrak{m}^{i+1}$-points of $f_*\ms X$. By Theorem 1.5 of \cite{MR2239345}, $f_*\ms X$ is algebraic, so this family comes from an $R$-point of $f_*\ms X$. We conclude that $\ms X\to\mathfrak{X}$ is a trivial gerbe, and hence $\widetilde{\alpha}=0$.
  
  
  
\end{proof}

\subsection{A representability result}\label{ssec:proof of representability theorem}

We now prove Theorem \ref{thm:representability for R2}. We recall the notation. Let $f:X\to S$ be a morphism of algebraic spaces. Let $n$ be a nonzero integer. We let $Rf_*$ denote the derived pushforward from the category of sheaves of abelian groups on the big flat (fppf) site of $X$ to that of $S$.
In \cite[Theorem 2.1.6]{BL17} it is shown that if $X\to S$ is a family of K3 surfaces and $p$ is a prime then $R^2f_*\mu_p$ is an algebraic space. Theorem \ref{thm:representability for R2} is a generalization of this result, and the idea of proof is the same.

\begin{proof}[Proof of Theorem \ref{thm:representability for R2}]
    We follow closely the proof of \cite[Theorem 2.1.6]{BL17}. Write $\ms S=R^2f_*\mu_n$. We first claim that the diagonal $\ms S\to\ms S\times_S\ms S$ is representable by closed immersions of finite presentation. In the case when $f$ is a family of K3 surfaces and $n=p$ is a prime, this is shown in \cite[Proposition 2.17]{BL17}. Replacing $p$ with $n$ and using our assumption that $R^1f_*\mu_n=0$, the proof of loc. cit. applies unchanged to give the result.
    
    Let $\Az$ be the stack on $S$ whose objects over an $S$-scheme $T\to S$ are Azumaya algebras $\ms A$ on $X_T$ such that for every geometric point $t\to T$ the restriction $\ms A_t:=\ms A|_{X_t}$ has degree $n$ and the map
    \[
        \tr:\H^2(X_t,\ms A_{t})\to\H^2(X_t,\ms O_{X_t})
    \]
    is injective. As described in the proof of \cite[Theorem 2.1.6]{BL17}, $\Az$ is an Artin stack locally of finite presentation over $S$, and the nonabelian boundary map $\partial$~\eqref{eq:boundary map for Azumaya algebras} gives rise to a map
    \[
        \chi:\Az\to \ms S.
    \]
    Arguing as in \cite[Proposition 2.1.10]{BL17} and using Proposition \ref{prop:obstructions for azumaya algebras, A_0 version} (and our assumption on the vanishing of $\H^1(X_t,\ms O_{X_t})$), we deduce that the map $\chi$ is representable by smooth Artin stacks. Furthermore, by Theorem \ref{thm:period index 00}, $\chi$ is surjective on geometric points.
    
    Now, any smooth cover of $\Az$ by a scheme gives rise to a smooth cover of $\ms S$ by a scheme. We have shown that the diagonal of $\ms S$ is representable, so $\ms S$ is an Artin stack of finite presentation over $S$. But $\ms S$ is a sheaf, so by \cite[04SZ]{stacks-project} $\ms S$ is an algebraic space.
\end{proof}






\section{Deformations of cohomology classes for $\mu_n$ and $\mathbf{G}_m$}\label{sec:deformations of flat coho classes}


Let $X$ be a scheme and let $n$ be a positive integer. We are interested in the flat cohomology groups of the group schemes $\mathbf{G}_m$ and $\mu_n$. These groups are related by the Kummer sequence
\begin{equation}\label{eq:Kummer sequence}
    1\to\mu_n\to\mathbf{G}_m\xrightarrow{\cdot n}\mathbf{G}_m\to 1
\end{equation}
which is exact in the flat topology. If $A$ is a sheaf of abelian groups on $X$, we let $A(n)$ denote the complex
\begin{equation}\label{eq:definition of complex}
  A(n)=[A\xrightarrow{\cdot n}A]
\end{equation}
where the right hand side has terms in degrees 0 and 1. With this notation, we interpret the Kummer sequence~\eqref{eq:Kummer sequence} as a quasi--isomorphism $\mu_n\xrightarrow{\sim}\mathbf{G}_m(n)$ of complexes of sheaves on the flat site of $X$. In particular, this gives a canonical resolution of $\mu_n$ by a complex of smooth group schemes. 
By a theorem of Grothendieck \cite[Th\'{e}or\`{e}me 11.7]{MR0244271},
we have identifications
\begin{equation}\label{eq:isomorphism1}
  \H^m_{\fl}(X,\mu_{n})=\H^m_{\fl}(X,\mathbf{G}_m(n))=\H^m_{\et}(X,\mathbf{G}_{m}(n)).
\end{equation}

We consider the following deformation situation. Let $X\hookrightarrow X'$ be an infinitesimal thickening whose defining ideal $I$ satisfies $I^2=0$.
Let $\alpha\in\H^2(X,\mu_n)$ be a flat cohomology class and let $\alpha_{\Br}\in\H^2(X,\mathbf{G}_m)$ be the image of $\alpha$ under the map $\H^2(X,\mu_n)\to\H^2(X,\mathbf{G}_m)$. We consider the problem of deforming $\alpha$ and $\alpha_{\Br}$ to $X'$. (We consider such questions in a slightly more general setting in Appendix \ref{appendix}, where we also consider the more refined question of deforming gerbes. Here we are only interested in the obstruction classes, for which a purely cohomological approach suffices). Consider the standard short exact sequence
\begin{equation}\label{eq:exp SES}
    0\to I\to\mathbf{G}_{m,X'}\to\mathbf{G}_{m,X}\to 1
\end{equation}
of \'{e}tale sheaves, where the left map is the truncated exponential $f\mapsto 1+f$. Taking cohomology, we find an exact sequence
\[
    \ldots\to\H^2(X, I)\to\H^2(X',\mathbf{G}_{m,X'})\to\H^2(X,\mathbf{G}_{m,X})\xrightarrow{\delta}\H^3(X, I)\to\ldots.
\]
We define $o(\alpha_{\Br}/X'):=\delta(\alpha_{\Br})\in\H^3(X,I)$. This class vanishes if and only if there exists a class $\alpha'_{\Br}\in\H^2(X',\mathbf{G}_{m,X'})$ such that $\alpha'_{\Br}|_X=\alpha_{\Br}$. We similarly define an obstruction class for $\alpha$ as follows. Multiplication by $n$ on~\eqref{eq:exp SES} gives a short exact sequence
\begin{equation}\label{eq:exp SES, mult by n}
    0\to I(n)\to\mathbf{G}_{m,X'}(n)\to\mathbf{G}_{m,X}(n)\to 1
\end{equation}
of complexes. We take cohomology and apply the identifications~\eqref{eq:isomorphism1} to obtain a long exact sequence
\[
    \ldots\to\H^2(X, I(n))\to\H^2(X',\mu_{n,X'})\to\H^2(X,\mu_{n,X})\xrightarrow{\delta'}\H^3(X, I(n))\to\ldots.
\]
We put $o(\alpha/X'):=\delta'(\alpha)\in\H^3(X,I(n))$. This class vanishes if and only if there exists a class $\alpha'\in\H^2(X',\mu_{n,X'})$ such that $\alpha'|_X=\alpha$. These obstructions are compatible: we have a commuting square
\[
    \begin{tikzcd}
        \H^2(X,\mu_{n,X})\arrow{r}{\delta'}\arrow{d}&\H^3(X,I(n))\arrow{d}\\
        \H^2(X,\mathbf{G}_{m,X})\arrow{r}{\delta}&\H^3(X,I)
    \end{tikzcd}
\]
where the right vertical arrow is induced by the projection $I(n)\to I$ onto the degree 0 term. It follows that $o(\alpha/X')\mapsto o(\alpha_{\Br}/X')$ under the right vertical arrow.

\begin{remark}
 The groups $\H^m(X, I(n))$ depend strongly on the behavior of multiplication by $n$ on $ I$. If multiplication by $n$ is invertible on $ I$, then the complex $ I(n)$ is quasi-isomorphic to 0. We therefore have $\H^m(X, I(n))=0$ for all $i$, and hence classes in $\H^2(X,\mu_n)$ deform uniquely over $X'$. On the other hand, suppose that multiplication by $n$ on $I$ is zero. We then have
    \begin{equation}\label{eq:isomorphism11}
         I(n)= I[-1]\oplus  I.
    \end{equation}
    Therefore $\H^3(X, I(n))=\H^2(X, I)\oplus \H^{3}(X, I)$, and classes in $\H^2(X,\mu_n)$ may (at least a-priori) be obstructed. On the other hand, the relative deformation problem with respect to the embedding $\mu_n\subset\mathbf{G}_m$ is more uniform in $n$.
\end{remark}

Suppose that $nI=0$, and let
    \[
        \pi_1:\H^2(X, I)\oplus\H^3(X, I)\to\H^2(X, I)
    \]
be the projection onto the first factor. If $\alpha\in\H^2(X,\mu_n)$, then we define
\begin{equation}\label{eq:refined obstruction class}
    \widetilde{o}(\alpha/X'):=\pi_1(o(\alpha/X'))=\pi_1(\delta'(\alpha)).
\end{equation}
\begin{remark}
  The class $\widetilde{o}(\alpha/X')$ has the following geometric interpretation. Let $\ms X$ be a $\mu_n$-gerbe on $X$ with cohomology class $\alpha$ and let $\ms X_{\Br}=\ms X\wedge_{\mu_n}\mathbf{G}_m$ be the induced $\mathbf{G}_m$-gerbe, which has class $\alpha_{\Br}$. Suppose that $o(\alpha_{\Br}/X')=0$, and fix a deformation $\ms X'_{\Br}$ of $\ms X_{\Br}$ over $X'$. We then have that $\widetilde{o}(\alpha/X')$ is equal to the obstruction class $o(\ms X/\ms X'_{\Br})\in\H^2(X,I)$ defined in Proposition \ref{prop:obstructions for relative gerbe change of group situation}. In particular, $o(\ms X/\ms X'_{\Br})$ depends only on $\alpha$ in this case. To see the equality, consider the short exact sequence
  \[
    0\to I[-1]\to I(n)\to I\to 0.
  \]
  Because $nI=0$, this sequence is split. Thus, in the long exact sequence on cohomology all boundary maps are zero, and we obtain a short exact sequence
  \[
        0\to\H^2(X, I)\to\H^2(X, I(n))\to\H^3(X, I)\to 0.
    \]
    By Remark \ref{rem:functoriality of obstruction class, take 2}, we have that $o(\ms X/\ms X'_{\Br})\mapsto o(\ms X/X')=o(\alpha/X')$ under the left arrow. The claim follows.
\end{remark}

\subsection{Kodaira--Spencer classes}\label{sec:Kodaira--Spencer}

Let $X$ be a reduced scheme over a field $k$ of characteristic $p>0$. Let $A$ be an Artinian local ring with maximal ideal $\mathfrak{m}$ satisfying $\mathfrak{m}^2=0$ and residue field identified with $k$. Let $X'$ be a flat scheme over $A$ together with a specified isomorphism $X'\otimes_{A}k\cong X$. Thus, $X\subset X'$ is an infinitesimal thickening defined by the square zero ideal sheaf $\mathfrak{m}\ms O_X=\mathfrak{m}\otimes_{A}\ms O_{X'}$. These data are summarized in the cartesian diagram
\begin{equation}\label{eq:deformation diagram with one square}
    \begin{tikzcd}
            X\arrow{d}\arrow[hook]{r}&X'\arrow{d}\\
            \Spec k\arrow[hook]{r}&\Spec A
    \end{tikzcd}
\end{equation}
where the horizontal arrows are closed immersions defined by the square-zero ideals $\mathfrak{m}$ and $\mathfrak{m}\ms O_X$.

Let $n$ be a positive integer. We consider the exact sequence
\begin{equation}\label{eq:def of nu}
    \mathbf{G}_{m,X}\xrightarrow{\cdot n}\mathbf{G}_{m,X}\to\mathbf{G}_{m,X}/\mathbf{G}_{m,X}^{\times n}\to 1
\end{equation}
of \'{e}tale sheaves on $X$. Here, the right term denotes the quotient sheaf in the \'{e}tale topology. The corresponding quotient in the flat topology vanishes. If $n$ is coprime to $p$, then the \'{e}tale quotient also vanishes. The sequence~\eqref{eq:def of nu} corresponds to a map
\begin{equation}\label{eq:map of complexes to the quotient}
    \mathbf{G}_{m,X}(n)\to\mathbf{G}_{m,X}/\mathbf{G}_{m,X}^{\times n}[-1]
\end{equation}
of complexes of \'{e}tale sheaves. Let
\begin{equation}\label{eq:Upsilon}
    \Upsilon:\H^2(X_{\fl},\mu_n)\to\H^1(X_{\et},\mathbf{G}_{m,X}/\mathbf{G}_{m,X}^{\times n})
\end{equation}
be the map obtained by taking cohomology of~\eqref{eq:map of complexes to the quotient} and using the identification~\eqref{eq:isomorphism1}. We remark that if $n$ is a power of $p$, then because $X$ is reduced, the left map of~\eqref{eq:def of nu} is injective,~\eqref{eq:map of complexes to the quotient} is a quasi-isomorphism, and~\eqref{eq:Upsilon} is an isomorphism. Let $p^r$ be the largest power of $p$ which divides $n$. We then have a commuting square
\begin{equation}\label{eq:Upsilon part 2 redux}
    \begin{tikzcd}
        \H^2(X,\mu_n)\arrow{d}[swap]{\cdot n/p^r}\arrow{r}{\Upsilon}&\H^1(X,\mathbf{G}_m/\mathbf{G}_m^{\times n})\isor{d}{}\\
        \H^2(X,\mu_{p^r})\arrow{r}{\sim}&\H^1(X,\mathbf{G}_{m,X}/\mathbf{G}_{m,X}^{\times p^r})
    \end{tikzcd}
\end{equation}
where the right vertical arrow is induced by the natural quotient map. This map is an isomorphism because, for any $m$ coprime to $p$, multiplication by $m$ on $\mathbf{G}_m$ is surjective in the \'{e}tale topology.

Assume that $n\mathfrak{m}=0$ (equivalently, $p^r\mathfrak{m}=0$). As $X$ is reduced, the restriction map $\mu_{n,X'}\to\mu_{n,X}$ of \'{e}tale sheaves is surjective. Applying the snake lemma to~\eqref{eq:exp SES, mult by n} yields an exact sequence
\begin{equation}\label{eq:deformation SES for nu}
    0\to\mathfrak{m}\ms O_X\to\mathbf{G}_{m,X'}/\mathbf{G}_{m,X'}^{\times n}\to\mathbf{G}_{m,X}/\mathbf{G}_{m,X}^{\times n}\to 1.
\end{equation}
Taking cohomology, we get an exact sequence
\begin{equation}\label{eq:coho sequence for nu}
    \H^1(X',\mathbf{G}_{m,X'}/\mathbf{G}_{m,X'}^{\times n})\to \H^1(X,\mathbf{G}_{m,X}/\mathbf{G}_{m,X}^{\times n})\xrightarrow{\delta''}\H^2(X,\mathfrak{m}\ms O_X).
\end{equation}

\begin{proposition}\label{prop:computation of relative obstruction class 2}
  Suppose that $n\mathfrak{m}=0$. If $\alpha\in\H^2(X,\mu_n)$ is a flat cohomology class, then $\widetilde{o}(\alpha/X')=\delta''\circ\Upsilon(\alpha)$.
\end{proposition}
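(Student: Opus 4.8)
The plan is to realize both sides of the asserted equality as the value at $\alpha$ of boundary maps that are intertwined by a single morphism of short exact sequences of complexes of étale sheaves, so that the identity reduces to the naturality of connecting homomorphisms. Explicitly, I would form the morphism of short exact sequences
\[
\begin{tikzcd}[column sep=small]
0\arrow{r}&I(n)\arrow{r}\arrow{d}&\mathbf{G}_{m,X'}(n)\arrow{r}\arrow{d}&\mathbf{G}_{m,X}(n)\arrow{r}\arrow{d}&1\\
0\arrow{r}&\mathfrak{m}\ms O_X[-1]\arrow{r}&\bigl(\mathbf{G}_{m,X'}/\mathbf{G}_{m,X'}^{\times n}\bigr)[-1]\arrow{r}&\bigl(\mathbf{G}_{m,X}/\mathbf{G}_{m,X}^{\times n}\bigr)[-1]\arrow{r}&1
\end{tikzcd}
\]
whose top row is \eqref{eq:exp SES, mult by n}, whose bottom row is the $[-1]$--shift of \eqref{eq:deformation SES for nu}, whose right vertical arrow is the morphism of complexes \eqref{eq:map of complexes to the quotient}, and whose middle vertical arrow is the analogous morphism for $X'$. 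The right and middle squares commute because $\mathbf{G}_m\to\mathbf{G}_m/\mathbf{G}_m^{\times n}$ is natural with respect to restriction along $X\hookrightarrow X'$, and the left vertical arrow is then the map induced on kernel complexes.

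The substantive step is to identify this induced kernel map $I(n)\to\mathfrak{m}\ms O_X[-1]$. Since $n\mathfrak{m}=0$, the complex $I(n)=[I\xrightarrow{\cdot n}I]$ has zero differential and decomposes as $I\oplus I[-1]$ with $I$ in degree $0$. Pushing the degree $1$ summand through the truncated exponential $f\mapsto 1+f$ into $\mathbf{G}_{m,X'}$ and then into $\mathbf{G}_{m,X'}/\mathbf{G}_{m,X'}^{\times n}$, and recalling that the snake--lemma inclusion $\mathfrak{m}\ms O_X\hookrightarrow\mathbf{G}_{m,X'}/\mathbf{G}_{m,X'}^{\times n}$ of \eqref{eq:deformation SES for nu} sends $g\mapsto[1+g]$, I would verify that this induced map is exactly the projection of $I\oplus I[-1]$ onto the summand $I[-1]=\mathfrak{m}\ms O_X[-1]$. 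On $\H^3$ this projection is, under the identification $\H^3(X,I(n))=\H^2(X,I)\oplus\H^3(X,I)$, precisely the map $\pi_1$ appearing in \eqref{eq:refined obstruction class}.

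With this in hand, the naturality of the long exact cohomology sequences of the two rows yields a commuting square whose top edge is $\delta'$ (with $\delta'(\alpha)=o(\alpha/X')$ by definition), whose bottom edge is the map $\delta''$ of \eqref{eq:coho sequence for nu}, whose left edge is $\pi_1$ on $\H^3$, and whose right edge is $\Upsilon$ (this last because $\Upsilon$ is by construction the map on $\H^2$ induced by \eqref{eq:map of complexes to the quotient}, after the identification \eqref{eq:isomorphism1}). Chasing $\alpha$ around the square gives $\delta''(\Upsilon(\alpha))=\pi_1(\delta'(\alpha))=\widetilde{o}(\alpha/X')$, which is the claim. One caveat to keep in mind is that \eqref{eq:exp SES, mult by n}, \eqref{eq:map of complexes to the quotient}, and \eqref{eq:deformation SES for nu} are all sequences of complexes of smooth group schemes, so the diagram chase lives on the small étale site of $X$; the comparison \eqref{eq:isomorphism1} is invoked only to read $\H^2(X,\mathbf{G}_{m,X}(n))$ as $\H^2(X,\mu_n)$, and precisely the instances already built into the definitions of $\delta'$, $\Upsilon$, and the target of $\widetilde{o}(\alpha/X')$. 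I expect the identification of the kernel map with $\pi_1$ to be the only genuinely computational point, and it is brief.
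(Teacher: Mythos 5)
Your proposal is correct and follows essentially the same route as the paper: the paper forms exactly this morphism of short exact sequences of complexes (with left vertical arrow $\mathfrak{m}\ms O_X(n)\to\mathfrak{m}\ms O_X[-1]$ given by the identity in degree $1$, i.e.\ your projection onto the $I[-1]$ summand) and concludes by naturality of the connecting maps, reading off $\widetilde{o}(\alpha/X')=\pi_1(\delta'(\alpha))=\delta''(\Upsilon(\alpha))$. The only difference is that you derive the left vertical arrow as the induced map on kernels and verify it equals $\pi_1$ on $\H^3$, whereas the paper simply declares it; this is a harmless elaboration of the same argument.
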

\begin{proof}
    Because $n\mathfrak{m}=0$, there is a map of complexes $\mathfrak{m}\ms O_X(n)\to\mathfrak{m}\ms O_X[-1]$ given by the identity in degree 1. Combining this map with~\eqref{eq:exp SES, mult by n},~\eqref{eq:deformation SES for nu}, and the maps~\eqref{eq:map of complexes to the quotient} we find a commutative diagram
    \[
        \begin{tikzcd}
            0\arrow{r}&\mathfrak{m}\ms O_X(n)\arrow{d}\arrow{r}&\mathbf{G}_{m,X'}(n)\arrow{d}\arrow{r}&\mathbf{G}_{m,X}(n)\arrow{d}\arrow{r}&1\\
            0\arrow{r}&\mathfrak{m}\ms O_X[-1]\arrow{r}&\mathbf{G}_{m,X'}/\mathbf{G}_{m,X'}^{\times n}[-1]\arrow{r}&\mathbf{G}_{m,X}/\mathbf{G}_{m,X}^{\times n}[-1]\arrow{r}&1
        \end{tikzcd}
    \]
    with exact rows. Taking cohomology gives a commutative diagram
    \[
        \begin{tikzcd}
            \H^2(X,\mu_{n,X})\arrow{d}[swap]{\Upsilon}\arrow{r}{\delta'}&\H^3(X,\mathfrak{m}\ms O_X(n))\arrow{d}{\pi_1}\\
            \H^1(X,\mathbf{G}_{m,X}/\mathbf{G}_{m,X}^{\times n})\arrow{r}{\delta''}&\H^2(X,\mathfrak{m}\ms O_X).
        \end{tikzcd}
    \]
    By definition, $\widetilde{o}(\alpha/X')$ is equal to $\pi_1(\delta(\alpha))$, and we obtain the result.
\end{proof}

We now assume that $X$ is smooth over $k$ and that $A$ is the ring of dual numbers $k[\varepsilon]:=k[\varepsilon]/\varepsilon^2$. Thus,~\eqref{eq:deformation diagram with one square} becomes
\[
    \begin{tikzcd}
        X\arrow[hook]{r}\arrow{d}&X'\arrow{d}\\
        \Spec k\arrow[hook]{r}&\Spec k[\varepsilon].
    \end{tikzcd}
\]
Consider the exact sequence
\begin{equation}\label{eq:KS extension class}
    0\to \ms O_X\xrightarrow{d\varepsilon}\Omega^1_{X'/k[\varepsilon]}\to\Omega^1_{X/k}\to 0.
\end{equation}
The Kodaira--Spencer class $\tau_{X'}$ of the deformation $X'$ is the extension class of this sequence in $\Ext^1_X(\Omega^1_{X/k},\ms O_X)=\H^1(X,T_X)$.
There is a canonical cup product pairing
\begin{equation}\label{eq:cup product}
\_\cup\_:\H^1(X,\Omega^1_X)\otimes \H^1(X,T_X)\to\H^2(X,\ms O_X)
\end{equation}
and the map
\begin{equation}\label{eq:cup product part 2}
    \_\cup\tau_{X'}:\H^1(X,\Omega^1_X)\to\H^2(X,\ms O_X)
\end{equation}
is the boundary map coming from the long exact sequence on cohomology of~\eqref{eq:KS extension class}.
Consider the map
\begin{equation}\label{eq:map11}
	\dlog:\mathbf{G}_m\to\Omega^1_X
\end{equation}
of \'{e}tale sheaves on $X$ given on sections by $f\mapsto df/f$. Let $n$ be a positive integer which is divisible by $p$. Any $p$th power is killed by $\dlog$, so~\eqref{eq:map11} descends to a map
\begin{equation}\label{eq:map12}
	\dlog:\mathbf{G}_m/\mathbf{G}_m^{\times n}\to\Omega^1_X
\end{equation}
where $\mathbf{G}_m/\mathbf{G}_m^{\times n}$ is the quotient sheaf for the \'{e}tale topology. Composing~\eqref{eq:map of complexes to the quotient} with~\eqref{eq:map12} and taking cohomology we get a map
\begin{equation}\label{eq:map15}
	\dlog:\H^2(X,\mu_n)\to\H^1(X,\Omega^1_X).
\end{equation}

Fix a class $\alpha\in\H^2(X,\mu_n)$. The following result computes the class $\widetilde{o}(\alpha/X')\in\H^2(X,\varepsilon\ms O_X)$~\eqref{eq:refined obstruction class} in terms of the Kodaira--Spencer class $\tau_{X'}$ of the deformation $X'$. This computation also appears in a paper of Nygaard \cite[pg. 223]{MR723215}. The corresponding result for invertible sheaves is standard (see eg. \cite[Proposition 1.14]{Ogus78}). 
\begin{proposition}\label{prop:cup product}
    Suppose that $n$ is divisible by $p$. For any class $\alpha\in\H^2(X,\mu_n)$, the class $\widetilde{o}(\alpha/X')\in\H^2(X,\varepsilon\ms O_X)\cong\varepsilon\H^2(X,\ms O_X)$ is equal to $\varepsilon(\dlog(\alpha)\cup\tau_{X'})$.
\end{proposition}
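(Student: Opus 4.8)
The plan is to reduce the statement, using Proposition~\ref{prop:computation of relative obstruction class 2} and the construction of the map $\dlog$ in~\eqref{eq:map15}, to the computation of a single connecting homomorphism, and to carry out that computation via a morphism of short exact sequences through $\dlog$. Concretely, both sides of the asserted identity are controlled by $\Upsilon(\alpha)\in\H^1(X,\mathbf{G}_m/\mathbf{G}_m^{\times n})$: on the one hand, Proposition~\ref{prop:computation of relative obstruction class 2}, applied with $A=k[\varepsilon]$ (so that $\mathfrak{m}\ms O_X=\varepsilon\ms O_X$, and the hypothesis $n\mathfrak{m}=0$ holds because $p\mid n$), gives $\widetilde{o}(\alpha/X')=\delta''(\Upsilon(\alpha))$, where $\delta''$ is the connecting homomorphism of~\eqref{eq:deformation SES for nu}; on the other hand, by construction the map~\eqref{eq:map15} factors as $\Upsilon$ followed by the homomorphism $\dlog_*\colon\H^1(X,\mathbf{G}_m/\mathbf{G}_m^{\times n})\to\H^1(X,\Omega^1_X)$ induced by the sheaf map~\eqref{eq:map12}. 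So it suffices to prove that, for every $\beta\in\H^1(X,\mathbf{G}_m/\mathbf{G}_m^{\times n})$,
\[
  \delta''(\beta)=\varepsilon(\dlog_*(\beta)\cup\tau_{X'})
\]
in $\H^2(X,\varepsilon\ms O_X)=\varepsilon\H^2(X,\ms O_X)$.

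To prove this I would construct a morphism of short exact sequences of \'{e}tale sheaves on $X$ whose top row is~\eqref{eq:deformation SES for nu} and whose bottom row is~\eqref{eq:KS extension class} (with middle term $\Omega^1_{X'/k}\otimes_{\ms O_{X'}}\ms O_X$). The right-hand vertical map is~\eqref{eq:map12}, and the middle vertical map is the composite $\mathbf{G}_{m,X'}/\mathbf{G}_{m,X'}^{\times n}\xrightarrow{\dlog}\Omega^1_{X'/k}\to\Omega^1_{X'/k}\otimes_{\ms O_{X'}}\ms O_X$; both are defined because $\dlog$ kills $p$th powers, hence all $n$th powers since $p\mid n$, so $\dlog\colon\mathbf{G}_{m,X'}\to\Omega^1_{X'/k}$ and $\dlog\colon\mathbf{G}_{m,X}\to\Omega^1_{X/k}$ descend to the quotients by $n$th powers. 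The right square commutes because $\dlog$ is compatible with restriction along $X\hookrightarrow X'$.

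For the left square, recall from the derivation of~\eqref{eq:deformation SES for nu} (the snake lemma applied to~\eqref{eq:exp SES, mult by n}, using that $\mu_{n,X'}\to\mu_{n,X}$ is surjective because $X$ is reduced) that the inclusion $\varepsilon\ms O_X\hookrightarrow\mathbf{G}_{m,X'}/\mathbf{G}_{m,X'}^{\times n}$ sends a local section $\varepsilon g$ to the class of $1+\varepsilon g$. One then computes
\[
  \dlog(1+\varepsilon g)=\frac{d(1+\varepsilon g)}{1+\varepsilon g}=(g\,d\varepsilon+\varepsilon\,dg)(1-\varepsilon g)
\]
in $\Omega^1_{X'/k}$, whose image in $\Omega^1_{X'/k}\otimes_{\ms O_{X'}}\ms O_X$ is $g\,d\varepsilon$: indeed $\varepsilon^2=0$, and $\varepsilon$ lies in the ideal of $X$ in $X'$, hence acts as zero on the restricted cotangent sheaf, so every term but $g\,d\varepsilon$ vanishes. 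Under the identification of the left term of~\eqref{eq:KS extension class} with $\ms O_X$ (included via $g\mapsto g\,d\varepsilon$), this shows the left square commutes and that its left vertical arrow is the standard isomorphism $\varepsilon\ms O_X\xrightarrow{\sim}\ms O_X$, $\varepsilon g\mapsto g$.

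Passing to cohomology, functoriality of the associated long exact sequences identifies $\delta''$ --- after composing with the isomorphism $\H^2(X,\varepsilon\ms O_X)\xrightarrow{\sim}\H^2(X,\ms O_X)$, $\varepsilon c\mapsto c$ --- with the connecting homomorphism of~\eqref{eq:KS extension class} precomposed with $\dlog_*$. Since that connecting homomorphism is cup product with the Kodaira--Spencer class $\tau_{X'}$ (as recalled just before~\eqref{eq:map11}), we get $\delta''(\beta)=\varepsilon(\dlog_*(\beta)\cup\tau_{X'})$, and taking $\beta=\Upsilon(\alpha)$ yields the proposition. I expect no serious obstacle here: the only genuinely computational inputs are the one-line identity $\dlog(1+\varepsilon g)=g\,d\varepsilon$ in $\Omega^1_{X'/k}\otimes_{\ms O_{X'}}\ms O_X$ --- the single place where the dual-numbers structure is used --- and the bookkeeping that pins down the inclusion of $\varepsilon\ms O_X$ in~\eqref{eq:deformation SES for nu}; everything else is formal.
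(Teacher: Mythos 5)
Your proof is correct and follows essentially the same route as the paper's: reduce via Proposition~\ref{prop:computation of relative obstruction class 2} to identifying $\delta''$ with cup product against $\tau_{X'}$, then build the morphism of short exact sequences from~\eqref{eq:deformation SES for nu} to the Kodaira--Spencer sequence with $\dlog$ as the middle and right vertical arrows and $\varepsilon^{-1}$ on the left. Your explicit check that $\dlog(1+\varepsilon g)=g\,d\varepsilon$ after restriction to $X$ is exactly the commutativity of the left square that the paper leaves implicit.
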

\begin{proof}
    We have a commutative diagram
    \[
        \begin{tikzcd}
            0\arrow{r}&\varepsilon\ms O_X\isor{d}{\varepsilon^{-1}}\arrow{r}{\exp}&\mathbf{G}_{m,X'}/\mathbf{G}_{m,X'}^{\times n}\arrow{d}{\dlog}\arrow{r}&\mathbf{G}_{m,X}/\mathbf{G}_{m,X}^{\times n}\arrow{d}{\dlog}\arrow{r}&1\\
            0\arrow{r}&\ms O_X\arrow{r}{d\varepsilon}&\Omega^1_{X'/k[\varepsilon]}\arrow{r}&\Omega^1_{X/k}\arrow{r}&0
        \end{tikzcd}
    \]
    with exact rows. Taking cohomology we find a commutative diagram
	\[
	    \begin{tikzcd}
	        \H^1(X,\mathbf{G}_m/\mathbf{G}_{m}^{\times n})\arrow{d}[swap]{\dlog}\arrow{r}{\delta''}&\H^2(X,\varepsilon\ms O_X)\\
	        \H^1(X,\Omega^1_X)\arrow{r}{\_\cup\tau_{X'}}&\H^2(X,\ms O_X)\isor{u}{\varepsilon}.
	    \end{tikzcd}
	\]
	We conclude the result from Proposition \ref{prop:computation of relative obstruction class 2}.
\end{proof}

\section{K3 surfaces in positive characteristic}\label{sec:K3 surfaces in positive characteristic}

A \textit{K3 surface} is a smooth projective surface $X$ over such that $\omega_X\cong\ms O_X$ and $\H^1(X,\ms O_X)=0$. In this section we collect some facts about K3 surfaces in positive characteristic and their cohomology.

We assume that $k$ is algebraically closed of  characteristic $p>0$ and $X$ is a K3 surface over $k$. We recall the definition of the height of $X$ (see eg. \cite[\S18.3]{Huy06}). The \textit{formal Brauer group} of $X$ is the functor $\widehat{\Br}_X:=\Phi^2_{\mathbf{G}_m/X}$ on the category of Artinian local $k$-algebras, given by
\[
    A\mapsto\ker\left(\H^2(X_A,\mathbf{G}_m)\to\H^2(X,\mathbf{G}_m)\right)
\]
where $X_A=X\otimes_kA$ is the trivial deformation of $X$ over $A$. Due to the equalities $h^3(X,\ms O_X)=0$ and $h^2(X,\ms O_X)=1$, a result of Artin--Mazur \cite[Corollary 2.12]{MR0457458} implies that $\widehat{\Br}_X$ is prorepresentable by a smooth one dimensional commutative formal group over $k$. Such objects are classified up to isomorphism by their \textit{height}, which is a discrete invariant $h$, equal either to a positive integer or to $\infty$. The height is determined as follows. Fix an isomorphism $\widehat{\Br}_X\cong\Spf k[[s]]$. The multiplication by $p$ map $[p]:\widehat{\Br}_X\to\widehat{\Br}_X$ corresponds to a map $k[[s]]\to k[[s]]$, which we also denote by $[p]$. If $[p](s)\neq 0$, we define the height of $\widehat{\Br}_X$ to be the smallest integer $h$ such that
\[
    [p](s)=\lambda s^{p^h}+(\text{higher order terms})
\]
for some nonzero $\lambda\in k$. If $[p](s)=0$, we set $h=\infty$.
The height of the K3 surface $X$ is defined to be the height of the formal Brauer group $\widehat{\Br}_X$.

Let $\widehat{\Br}_X[n]$ denote the kernel of multiplication by $n$ on the formal Brauer group. The height may be equivalently described in terms of the formal scheme prorepresenting $\widehat{\Br}_X[p]$: $X$ has height $h<\infty$ if and only if
\[
    \widehat{\Br}_X[p]\cong\Spf k[[s]]/(s^{p^h})
\]
while $X$ has height $h=\infty$ if and only if
\[
    \widehat{\Br}_X[p]\cong\Spf k[[s]].
\]

We say that $X$ has \textit{finite height} if $h\neq\infty$. In this case, $h$ must lie in the range $1\leq h\leq 10$. If $h=1$, we say that $X$ is \textit{ordinary}. A K3 surface is ordinary if and only if the map $F:\H^2(X,\ms O_X)\to\H^2(X,\ms O_X)$ induced by the absolute Frobenius of $X$ is an isomorphism (see \cite[\S5]{MR1776939}). If $h=\infty$, then we say that $X$ is \textit{supersingular}. In the ordinary and supersingular cases, we have the following explicit descriptions of the formal Brauer group:
\[
    h(X)=1\Leftrightarrow\widehat{\Br}_X\cong\widehat{\mathbf{G}}_m\hspace{1cm}\text{and}\hspace{1cm}h(X)=\infty\Leftrightarrow\widehat{\Br}_X\cong\widehat{\mathbf{G}}_a.
\]
Accordingly, in these two cases the group structures on $\widehat{\Br}_X[p]$ are given by
\[
    h(X)=1\Leftrightarrow\widehat{\Br}_X[p]\cong\mu_p\hspace{1cm}\text{and}\hspace{1cm}h(X)=\infty\Leftrightarrow\widehat{\Br}_X[p]\cong\widehat{\mathbf{G}}_a.
\]


Suppose that $X$ is supersingular. In this case, there is a further discrete invariant of $X$, which may be characterized as follows. The flat cohomology group $\H^2(X,\mathbf{Z}_p(1)):=\varprojlim\H^2(X,\mu_{p^n})$ is a free $\mathbf{Z}_p$-module of rank 22, and is equipped with a natural $\mathbf{Z}_p$-valued pairing. The \textit{Artin invariant} of $X$ is the integer $\sigma_0$ such that
\[
    \mathrm{disc}\,\H^2(X,\mathbf{Z}_p(1))=-p^{2\sigma_0}.
\]
We have $1\leq\sigma_0\leq 10$. We say that $X$ is \textit{superspecial} if $\sigma_0=1$. The height may only rise upon specialization, and the Artin invariant can only fall upon specialization. Thus, the ordinary locus is open in moduli, and the superspecial locus is closed in moduli. In fact, there is up to isomorphism only one superspecial K3 surface \cite{Ogus78}. If $X$ has finite height, then we formally declare $\sigma_0(X)=\infty$.

\subsection{De Rham cohomology}
The second de Rham cohomology group $\H^2_{\dR}(X)$ has dimension 22, and is equipped with the \textit{Hodge} and \textit{conjugate} filtrations
\[
  0\subset F^2_H\subset F^1_H\subset F^0_H=\H^2_{\dR}(X)\hspace{1cm} 0\subset F^2_C\subset F^1_C\subset F^0_C=\H^2_{\dR}(X).
\]
Both $F^2_H$ and $F^2_C$ have dimension 1, and $F^1_H$ and $F^1_C$ have dimension 21. Under the cup product pairing on $\H^2_{\dR}(X)$, we have $(F^2_H)^{\perp}=F^1_H$ and $(F^2_C)^{\perp}=F^1_C$. The relative positions of $F^i_H$ and $F^i_C$ give some information on the invariants of $X$. We have that $X$ is ordinary if and only if $F^2_H\cap F^1_C=0$ if and only if $F^1_H\cap F^2_C=0$, and $X$ is superspecial if and only if $F^2_H=F^2_C$ if and only if $F^1_H=F^1_C$ (see \cite[\S8]{MR1776939}).

For future use, we list some explicit cohomology groups corresponding to the intersections of the various pieces of the Hodge and conjugate filtrations. Define
\[
    Z\Omega^i_X:=\ker(d:\Omega^i_X\to\Omega^{i+1}_X)\hspace{1cm}\text{\and}\hspace{1cm}B\Omega^i_X:=\mathrm{im}(d:\Omega^{i-1}_X\to\Omega^i_X).
\]
\begin{lemma}\label{lem:some natural identifications}
    We have natural identifications
    \begin{enumerate}
        \item $F^1_H\cap F^1_C=\H^1(X,Z\Omega^1_X)$,
        \item $F^1_H\cap F^2_C=\H^1(X,B\Omega^1_X)$,
        \item $F^2_H\cap F^1_C=\H^0(X,B\Omega^2_X)$, and
        \item $F^2_H\cap F^2_C=\H^0(X,\Omega^1_X/B\Omega^1_X)$.
    \end{enumerate}
\end{lemma}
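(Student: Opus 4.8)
The plan is to realize each of the four intersections as the degree-$2$ hypercohomology of an explicit two-term subcomplex of the de Rham complex $\Omega^\bullet_X = [\ms O_X\to\Omega^1_X\to\Omega^2_X]$ (or of a quasi-isomorphic variant of such a subcomplex), and to match it with the intersection of the relevant steps of the Hodge and conjugate filtrations. Recall that $F^a_H$ is the image of $\H^2(X,\sigma^{\geq a}\Omega^\bullet_X)\to\H^2_{\dR}(X)$, where $\sigma^{\geq a}$ denotes the stupid truncation, and that $F^b_C$ is the image of $\H^2(X,\tau_{\leq 2-b}\Omega^\bullet_X)\to\H^2_{\dR}(X)$, where $\tau_{\leq j}$ denotes the canonical truncation. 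Because the Hodge--de Rham spectral sequence of a K3 surface degenerates at $E_1$ and the conjugate spectral sequence degenerates at $E_2$ (which is already built into the dimensions of the $F^a_H$ and $F^b_C$ recorded above), both of these maps are injective, with cokernels canonically identified with $\H^2(X,\ms O_X)$ and $\H^0(X,\Omega^2_X/B\Omega^2_X)$ respectively, and $F^1_H/F^2_H\cong\H^1(X,\Omega^1_X)$. We also use freely the vanishings $\H^0(X,\Omega^1_X)=\H^1(X,\ms O_X)=\H^1(X,\Omega^2_X)=\H^2(X,\Omega^1_X)=0$ (the third since $\Omega^2_X\cong\ms O_X$, and the last by Serre duality together with $T_X\otimes\omega_X\cong\Omega^1_X$).

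I would first prove (1). The complex $Z\Omega^1_X[-1]$ is a subcomplex both of $\sigma^{\geq 1}\Omega^\bullet_X=[\Omega^1_X\to\Omega^2_X]$ and of $\tau_{\leq 1}\Omega^\bullet_X=[\ms O_X\to Z\Omega^1_X]$, compatibly with their inclusions into $\Omega^\bullet_X$, so the resulting map $\H^1(X,Z\Omega^1_X)\to\H^2_{\dR}(X)$ has image contained in $F^1_H\cap F^1_C$. The quotient $\sigma^{\geq 1}\Omega^\bullet_X/Z\Omega^1_X[-1]$ is $[B\Omega^2_X\hookrightarrow\Omega^2_X]$, which is quasi-isomorphic to $(\Omega^2_X/B\Omega^2_X)[-2]$ and hence has vanishing hypercohomology in degrees $\leq 1$; the associated long exact sequence then shows that $\H^1(X,Z\Omega^1_X)\to F^1_H$ is injective with image the kernel of the induced map $F^1_H\to\H^0(X,\Omega^2_X/B\Omega^2_X)$. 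Since the latter map is the restriction to $F^1_H$ of the projection $\H^2_{\dR}(X)\to\H^2_{\dR}(X)/F^1_C$, that kernel is precisely $F^1_H\cap F^1_C$, which gives (1).

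I would then prove (2), where the subtlety is that $\tau_{\leq 0}\Omega^\bullet_X=Z\Omega^0_X[0]$ is concentrated in degree $0$: replace it by the quasi-isomorphic complex $C^\bullet=[\ms O_X\xrightarrow{d}B\Omega^1_X]$, in degrees $0$ and $1$, which still maps to $\Omega^\bullet_X$ and whose degree-$2$ hypercohomology maps isomorphically onto $F^2_C$. Now $B\Omega^1_X[-1]$ is a subcomplex of both $C^\bullet$ and $\sigma^{\geq 1}\Omega^\bullet_X$, so the induced map $\H^1(X,B\Omega^1_X)\to\H^2_{\dR}(X)$ has image in $F^1_H\cap F^2_C$; the exact sequence $0\to B\Omega^1_X[-1]\to C^\bullet\to\ms O_X[0]\to 0$, together with $\H^1(X,\ms O_X)=0$, shows this map is injective with image the kernel of $F^2_C\to\H^2(X,\ms O_X)$, which is the restriction to $F^2_C$ of the projection $\H^2_{\dR}(X)\to\H^2_{\dR}(X)/F^1_H=\H^2(X,\ms O_X)$; hence the image is $F^1_H\cap F^2_C$, proving (2). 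Finally, (3) and (4) are deduced from (1) and (2): since $F^2_H\subset F^1_H$, one has $F^2_H\cap F^1_C=F^2_H\cap\H^1(X,Z\Omega^1_X)$ and $F^2_H\cap F^2_C=F^2_H\cap\H^1(X,B\Omega^1_X)$ inside $\H^2_{\dR}(X)$, and each of these is the kernel of the composite of the inclusion into $F^1_H$ with the projection $F^1_H\to F^1_H/F^2_H\cong\H^1(X,\Omega^1_X)$. Unwinding the constructions, this composite is the map on $\H^1$ induced by $Z\Omega^1_X\hookrightarrow\Omega^1_X$, respectively $B\Omega^1_X\hookrightarrow\Omega^1_X$; since $\H^0(X,\Omega^1_X)=0$, the exact sequences $0\to Z\Omega^1_X\to\Omega^1_X\xrightarrow{d}B\Omega^2_X\to 0$ and $0\to B\Omega^1_X\to\Omega^1_X\to\Omega^1_X/B\Omega^1_X\to 0$ identify the two kernels with $\H^0(X,B\Omega^2_X)$ and $\H^0(X,\Omega^1_X/B\Omega^1_X)$.

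The part I expect to be most delicate is not any individual computation but the bookkeeping: fixing the indexing convention for the conjugate filtration and checking that the edge maps of the two spectral sequences, the connecting homomorphisms in the auxiliary long exact sequences, and the comparison maps between $\Omega^\bullet_X$ and the complexes $Z\Omega^1_X[-1]$, $B\Omega^1_X[-1]$, and $C^\bullet$ are all compatible as asserted --- in particular the cokernel identifications $\H^2_{\dR}(X)/F^1_H\cong\H^2(X,\ms O_X)$ and $\H^2_{\dR}(X)/F^1_C\cong\H^0(X,\Omega^2_X/B\Omega^2_X)$ and the claim that the relevant restrictions of these projections coincide with the maps coming out of the auxiliary sequences. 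Once the degeneration of both spectral sequences is granted, the remaining input is just the short exact sequences displayed above.
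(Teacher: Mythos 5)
Your proof is correct and follows essentially the same route as the paper: parts (2)--(4) rest on exactly the same short exact sequences (your complex $C^\bullet=[\ms O_X\to B\Omega^1_X]$ is quasi-isomorphic to $\ms O_X^p=\tau_{\leq 0}\Omega^\bullet_X$, so your long exact sequence for (2) is literally the paper's sequence $0\to\ms O_X^p\to\ms O_X\to B\Omega^1_X\to 0$, and the sequences $0\to Z\Omega^1_X\to\Omega^1_X\to B\Omega^2_X\to 0$ and $0\to B\Omega^1_X\to\Omega^1_X\to\Omega^1_X/B\Omega^1_X\to 0$ used for (3) and (4) are identical). The only difference is that for (1) you supply a direct argument with the truncations where the paper simply cites Ogus.
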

\begin{proof}
    (1) is proven by Ogus in \cite[Proposition 1.2]{Ogus78}. For (2), consider the short exact sequence
    \[
        0\to\ms O_X^p\to\ms O_X\xrightarrow{d}B\Omega^1_X\to 0.
    \]
    Taking cohomology and using $\H^1(X,\ms O_X)=0$, we obtain a diagram
    \[
        \begin{tikzcd}
            0\arrow{r}&\H^1(X,B\Omega^1_X)\arrow{r}\arrow[hook]{d}&\H^2(X,\ms O_X^p)\arrow{r}\arrow[hook]{d}&\H^2(X,\ms O_X)\arrow{r}\arrow[equals]{d}&0\\
            0\arrow{r}&\H^2(X,\tau_{\geq 1}\Omega^{\bullet}_X)\arrow{r}&\H^2_{\dR}(X)\arrow{r}&\H^2(X,\ms O_X)\arrow{r}&0
        \end{tikzcd}
    \]
    where the vertical arrows are induced by the natural maps of complexes. Here, the middle verticle arrow is injective because of the degeneration of the conjugate spectral sequence, which implies the injectivity of the left arrow. The image of the middle arrow is $F^2_C$, and $\H^2(X,\tau_{\geq 1}\Omega^{\bullet}_X)=F^1_H$, so we conclude the result. For (3), we take cohomology of the exact sequence
    \begin{equation}\label{eq:SES with a B}
        0\to Z\Omega^1_X\to\Omega^1_X\xrightarrow{d}B\Omega^2_X\to 0
    \end{equation}
    which gives
    \begin{equation}\label{eq:SES with a B on coho}
        0\to\H^0(X,B\Omega^2_X)\to\H^1(X,Z\Omega^1_X)\to\H^1(X,\Omega^1_X).
    \end{equation}
    This identifies $\H^0(X,B\Omega^2_X)$ with the kernel of the map $F^1_H\cap F^1_C\to F^1_H/F^2_H$, which is exactly $F^2_H\cap F^1_C$. Finally, we show (4). Taking cohomology of the exact sequences
    \[
        0\to B\Omega^1_X\to\Omega^1_X\to\Omega^1_X/B\Omega^1_X\to 0
    \]
    and~\eqref{eq:SES with a B}, we find a diagram
    \[
        \begin{tikzcd}
            0\arrow{r}&\H^0(X,\Omega^1_X/B\Omega^1_X)\arrow[hook]{d}\arrow{r}&\H^1(X,B\Omega^1_X)\arrow[hook]{d}\arrow{r}&\H^1(X,\Omega^1_X)\arrow[equals]{d}\\
            0\arrow{r}&\H^0(X,B\Omega^2_X)\arrow{r}&\H^1(X,Z\Omega^1_X)\arrow{r}&\H^1(X,\Omega^1_X).
        \end{tikzcd}
    \]
    We conclude that $\H^0(X,\Omega^1_X/B\Omega^1_X)=(F^1_H\cap F^2_C)\cap (F^2_C\cap F^1_H)=F^2_H\cap F^2_C$. 
\end{proof}



\section{Formal deformation spaces for cohomology classes on K3 surfaces}\label{sec:formal deformations}

Let $k$ be an algebraically closed field of positive characteristic $p$ and let $W=W(k)$ be the ring of Witt vectors of $k$. Let $\cC_W$ be the category of artinian local $W$-algebras with residue field identified with $k$. Let $X$ be a K3 surface over $k$. A \textit{deformation} of $X$ over $A\in\cC_W$ is a pair $(X_A,\rho)$, where $X_A$ is a family of K3 surfaces over $A$ and $\rho:X_A\otimes_Ak\iso X$ is an isomorphism. We let 
\[
    \Def_X:=\Def_{X/W}
\]
be the functor on $\cC_W$ whose value on $A\in\cC_W$ is the set of isomorphism classes of deformations of $X$ over $A$.

Let $n$ be a positive integer, let $\alpha\in\H^2(X,\mu_n)$ be a cohomology class, and let $\alpha_{\Br}\in\H^2(X,\mathbf{G}_m)$ be the image of $\alpha$ in the Brauer group. Let $\ms X$ be a $\mu_n$-gerbe over $X$ with cohomology class $\alpha$, and let $\ms X_{\Br}:=\ms X\wedge_{\mu_n}\mathbf{G}_m$ be the associated $\mathbf{G}_m$-gerbe, which has class $\alpha_{\Br}$. We consider the deformation functors $\Def_{\ms X/W}$ and $\Def_{\ms X_{\Br}/W}$ on $\cC_{W}$ associated to the gerbes $\ms X$ and $\ms X_{\Br}$ (see Definition \ref{def:definition of def functor, most general version}). Up to isomorphism, these depend only on the cohomology classes $\alpha$ and $\alpha_{\Br}$ respectively. In an abuse of notation, we write
\[
    \Def_{(X,\alpha)}:=\Def_{\ms X/W}\hspace{1cm}\mbox{and}\hspace{1cm}\Def_{(X,\alpha_{\Br})}:=\Def_{\ms X_{\Br}/W}
\]
We have a commutative diagram of functors
	\begin{equation}\label{eq:diagram111}
		\begin{tikzcd}[column sep=tiny]
			\Def_{(X,\alpha)}\arrow{rr}{\iota}\arrow{dr}[swap]{\pi}&&\Def_{(X,\alpha_{\Br})}\arrow{dl}{\pi_{\Br}}\\
			&\Def_{X}&
		\end{tikzcd}
	\end{equation}
(see~\eqref{eq:commutative diagram of functors}). The map $\iota$ is induced by $\ms X_A\mapsto\ms X_A\wedge_{\mu_n}\mathbf{G}_m$, the map $\pi$ is induced by $\ms X_A\mapsto |\ms X_A|$, where $|\ms X_A|$ is the sheafification (or ``underlying K3 surface'') of $\ms X_A$, and $\pi_{\Br}$ is induced by $\ms X_{\Br,A}\mapsto |\ms X_{\Br,A}|$.

\begin{remark}\label{rem:naive functor is fine!}
    Let $\oDef_{(X,\alpha)}$ denote the functor on $\cC_W$ whose value on $A$ is the set of isomorphism classes of tuples $(X_A,\rho,\alpha_A)$, where $(X_A,\rho)$ is a deformation of $X$ over $A$ and $\alpha_A\in\H^2(X_A,\mu_n)$ is a class such that $\alpha_A|_X=\alpha$. There is a natural map
    \begin{equation}\label{eq:varpi for mu}
        \Def_{(X,\alpha)}\to\oDef_{(X,\alpha)}
    \end{equation}
    induced by the association $(\ms X_A,\varphi)\mapsto (X_A,\rho,[\ms X_A])$, where $X_A=|\ms X_A|$ is the sheafification of $\ms X_A$, $\rho=|\varphi|$, and $[\ms X_A]\in\H^2(X_A,\mu_n)$ is the cohomology class of $\ms X_A$. Because $\H^1(X,\mu_n)=0$, the map~\eqref{eq:varpi for mu} is an isomorphism (Lemma \ref{lem:hull lemma}), and we may without risk of confusion identify the two functors. In particular, the deformation functors $\Def_{\ms X/W}$ resulting from different choices of $\mu_n$-gerbe $\ms X$ with class $\alpha$ are \textit{canonically} isomorphic. With this identification, the map $\pi$~\eqref{eq:diagram111} is given by $(X_A,\rho,\alpha_A)\mapsto (X_A,\rho)$.

    On the other hand, the analogous map
    \begin{equation}\label{eq:varpi for br}
        \Def_{(X,\alpha_{\Br})}\to\oDef_{(X,\alpha_{\Br})}
    \end{equation}
    is \textit{not} an isomorphism. This is because for a general flat deformation $X_A$ of $X$, the restriction map $\Pic(X_A)\to\Pic(X)$ will not be surjective.
\end{remark}

  

\begin{proposition}\label{prop:pro-representable1}
  The functor $\Def_{(X,\alpha_{\Br})}$ is prorepresentable and formally smooth over $W$, and the map $\pi_{\Br}$~\eqref{eq:diagram111}
is formally smooth of relative dimension 1.
\end{proposition}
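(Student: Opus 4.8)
The plan is to reduce everything to the deformation theory of the underlying K3 surface, which is governed by the Rudakov--Shafarevich vanishing, together with the gerbe obstruction theory of Appendix~\ref{appendix}. Recall that $\Def_X$ is prorepresentable and formally smooth over $W$, with $\Def_X\cong\Spf W[[t_1,\dots,t_{20}]]$; this is equivalent to $\H^2(X,T_X)=0$~\cite{RS83}, as used by Deligne~\cite{MR638598}. It therefore suffices to show that $\pi_{\Br}$ is formally smooth with one-dimensional relative tangent space and that $\Def_{(X,\alpha_{\Br})}$ is prorepresentable; composing $\pi_{\Br}$ with $\Def_X\to\Spf W$ then yields formal smoothness over $W$, and counting parameters gives the isomorphism with $\Spf W[[t_1,\dots,t_{20},s]]$.

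\emph{Formal smoothness of $\pi_{\Br}$.} Let $A'\to A$ be a small extension in $\cC_W$ with kernel $I$ (so $I\mathfrak{m}_{A'}=0$), let $(X_A,\rho,\ms X_{\Br,A})\in\Def_{(X,\alpha_{\Br})}(A)$, and suppose given a deformation $X_{A'}$ of $X_A$ over $A'$. Then $X_A\hookrightarrow X_{A'}$ is a square-zero thickening with ideal $I\otimes_k\ms O_X$, and the obstruction to lifting the $\mathbf{G}_m$-gerbe $\ms X_{\Br,A}$ to a $\mathbf{G}_m$-gerbe over $X_{A'}$ lies in $\H^3(X,I\otimes_k\ms O_X)\cong I\otimes_k\H^3(X,\ms O_X)$. (The obstruction class for the cohomology class $\alpha_{\Br,A}=[\ms X_{\Br,A}]$ is the class $o(\alpha_{\Br,A}/X_{A'})=\delta(\alpha_{\Br,A})$ of \S\ref{sec:deformations of flat coho classes}; its refinement to the gerbe itself is supplied by Appendix~\ref{appendix}.) Since $X$ is a surface, $\H^3(X,\ms O_X)=0$, so the obstruction vanishes and the lift exists; hence $\pi_{\Br}$ is formally smooth. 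Taking $A=k$, $A'=k[\varepsilon]$, and $X_{A'}=X\otimes_kk[\varepsilon]$ the trivial deformation, the split exact sequence $1\to\varepsilon\ms O_X\to\mathbf{G}_{m,X[\varepsilon]}\to\mathbf{G}_{m,X}\to 1$ shows that the fiber of $\pi_{\Br}$ over the trivial tangent vector is a torsor under $\H^2(X,\varepsilon\ms O_X)\cong\H^2(X,\ms O_X)=k$. Thus the relative tangent space is one-dimensional, and with formal smoothness this gives relative dimension $1$.

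\emph{Prorepresentability.} We apply the prorepresentability criterion of Appendix~\ref{appendix}. The key input is that the gerbe $\ms X_{\Br}$ carries no nontrivial infinitesimal automorphisms: automorphisms of a deformation of $\ms X_{\Br}$ along a square-zero thickening with ideal $J$ which restrict to the identity on $\ms X_{\Br}$ are classified by $\H^1(X,J\otimes_k\ms O_X)\cong J\otimes_k\H^1(X,\ms O_X)$, which vanishes since $X$ is K3. Together with the finite-dimensionality of the tangent space of $\Def_{(X,\alpha_{\Br})}$ (an extension of that of $\Def_X$ by the one-dimensional relative piece above) and the Schlessinger-type conditions, which follow from the base-change compatibility of the obstruction theory, this gives prorepresentability of $\Def_{(X,\alpha_{\Br})}$. (Alternatively, prorepresentability can be deduced formally from that of $\Def_X$ and the properties of $\pi_{\Br}$ just established.)

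The main obstacle is the first step: one must develop the deformation theory of the $\mathbf{G}_m$-gerbe so that deformations of the gerbe and of the base $X$ are treated simultaneously, so as to identify the obstruction group with $\H^3$ of a coherent sheaf on $X$ — precisely the purpose of the obstruction classes constructed in Appendix~\ref{appendix} — after which the vanishing $\H^3(X,\ms O_X)=0$, valid because $X$ is a surface, does all the work. A secondary point is that prorepresentability, as opposed to the mere existence of a hull, genuinely uses $\H^1(X,\ms O_X)=0$, reflecting the fact that $\mathbf{G}_m$-gerbes over a general surface have infinitesimal automorphisms.
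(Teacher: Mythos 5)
Your argument is correct and follows the paper's proof essentially step for step: formal smoothness of $\pi_{\Br}$ from the gerbe obstruction theory of the appendix landing in $\H^3(X,\ms O_X)=0$, relative dimension $1$ from the relative tangent space $\H^2(X,\ms O_X)$, and prorepresentability from the appendix criterion (Corollary \ref{cor:prorep criterion the third}) via the vanishing of $\H^1(X,\ms O_X)$. Two small caveats: the prorepresentability criterion also needs $\H^0(X,T_X)=0$ to kill automorphisms of the underlying deformation of $X$ (your $\H^1(X,J\otimes\ms O_X)$ only controls the automorphisms lying over the identity of $X_A$), though this is supplied by Rudakov--Shafarevich; and your parenthetical claim that prorepresentability ``can be deduced formally from that of $\Def_X$ and the properties of $\pi_{\Br}$'' is false --- the naive functor $\oDef_{(X,\alpha_{\Br})}$ also maps formally smoothly to $\Def_X$ with one-dimensional relative tangent space, yet is not prorepresentable (Remark \ref{rem:not prorepresentable example}), so the control of infinitesimal automorphisms is genuinely where the work happens.
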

\begin{proof}
    We have $\H^1(X,\ms O_X)=0$ and $\H^0(X,T_X)=0$, so Corollary \ref{cor:prorep criterion the third} implies that $\Def_{(X,\alpha_{\Br})}$ is prorepresentable. As $\H^3(X,\mathscr{O}_X)=0$, the obstruction theory of Proposition \ref{prop:obstructions for gerbe deformation functor} implies that $\pi_{\Br}$ is formally smooth. Finally, by Lemma \ref{lem:LES on infinitesimal auts and such}, the map $T(\pi)$ on tangent spaces induced by $\pi$ fits into a short exact sequence
    \[
        0\to\H^2(X,\ms O_X)\to T(\Def_{(X,\alpha_{\Br})})\xrightarrow{T(\pi)} T(\Def_X)\to 0.
    \]
    As $\H^2(X,\ms O_X)$ is one dimensional, we conclude that $\pi_{\Br}$ has relative dimension 1.
\end{proof}

\begin{remark}
  By Lemma \ref{lem:hull lemma}, the map~\eqref{eq:varpi for br} exhibits the functor $\Def_{(X,\alpha_{\Br})}$ as a hull for the naive deformation functor $\oDef_{(X,\alpha_{\Br})}$, which is not prorepresentable (see Remark \ref{rem:not prorepresentable example}).
\end{remark}

\begin{proposition}\label{prop:prorep for mu n}
  The deformation functor $\Def_{(X,\alpha)}$ is prorepresentable.
\end{proposition}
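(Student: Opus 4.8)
The plan is to follow the proof of Proposition \ref{prop:pro-representable1} verbatim, replacing the $\mathbf{G}_m$-gerbe $\ms X_{\Br}$ by the $\mu_n$-gerbe $\ms X$ and invoking the prorepresentability criterion for gerbe deformation functors established in Appendix \ref{appendix} (Corollary \ref{cor:prorep criterion the third}). That criterion should give prorepresentability of $\Def_{\ms X/W}$ once $\H^1(X,\ms O_X)=0$ and $\H^0(X,T_X)=0$, both of which hold for a K3 surface; the comparison between $\Def_{(X,\alpha)}$ and the naive functor $\oDef_{(X,\alpha)}$ via Remark \ref{rem:naive functor is fine!} uses in addition $\H^1(X,\mu_n)=0$, which holds because $\Pic X$ is torsion free and $k$ is algebraically closed. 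The reason one cannot simply cite a reference is that when $p\mid n$ the band $\mu_n$ is not smooth, so the Artin--Mazur style argument that works for $\mathbf{G}_m$-gerbes has to be extended; this extension is precisely what Appendix \ref{appendix} provides, and the hypotheses needed are unchanged.

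Unwinding the criterion, the argument is the usual Schlessinger verification applied to $\oDef_{(X,\alpha)}$, whose objects over $A\in\cC_W$ are tuples $(X_A,\rho,\alpha_A)$ with $\alpha_A\in\H^2(X_A,\mu_n)$ restricting to $\alpha$. Conditions (H1) and (H2) follow from the compatibility of fppf cohomology of the flat affine group scheme $\mu_n$ with the relevant fibre products of Artinian $W$-algebras. Condition (H3), finiteness of the tangent space, follows from the diagram \eqref{eq:diagram111}: the map on tangent spaces $T(\Def_{(X,\alpha)})\to T(\Def_{(X,\alpha_{\Br})})$ has target of dimension $21$ by Proposition \ref{prop:pro-representable1}, and its kernel is controlled by the obstruction and tangent exact sequences of \S\ref{sec:deformations of flat coho classes}, hence finite dimensional. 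Condition (H4) is in fact immediate: a deformation $(X_A,\rho)$ of a K3 surface over $A$ has no nontrivial automorphism restricting to the identity on $X$, since such automorphisms are governed by $\H^0(X,T_X)=0$ (argue by induction along a filtration of $A$ by small extensions), and therefore the tuple $(X_A,\rho,\alpha_A)$ has trivial automorphism group; with (H1)--(H3) this yields prorepresentability.

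The step I expect to require the most care is not within this proposition at all but in the Appendix: formulating and proving a prorepresentability criterion valid for a non-smooth band and checking that its hypotheses reduce to the cohomological vanishings above. Granting that, the only genuinely $\mu_n$-specific point here is verifying (H1) and (H2) for $\oDef_{(X,\alpha)}$ with fppf cohomology of $\mu_n$ in the case $p\mid n$; once $\H^1(X,\ms O_X)=0$, $\H^0(X,T_X)=0$ and $\H^1(X,\mu_n)=0$ are in hand everything else is formal.
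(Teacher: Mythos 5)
There is a genuine gap, and it sits exactly where you wave it away. Corollary \ref{cor:prorep criterion the third} requires $\H^1(X,\colie^{\vee}_{\mathbf{G}})=0$. For $\mathbf{G}=\mathbf{G}_m$ this is $\H^1(X,\ms O_X)=0$ and the proof of Proposition \ref{prop:pro-representable1} goes through; but for $\mathbf{G}=\mu_n$ the Lie complex is $\colie^{\vee}_{\mu_n}=\ms O_X(n)=[\ms O_X\xrightarrow{\cdot n}\ms O_X]$, and when $p\mid n$ this splits as $\ms O_X\oplus\ms O_X[-1]$, so $\H^1(X,\colie^{\vee}_{\mu_n})\cong\H^1(X,\ms O_X)\oplus\H^0(X,\ms O_X)=k\neq 0$. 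So the hypotheses are \emph{not} unchanged, and the corollary simply does not apply in the case $p\mid n$ — which is the only case of interest, since for $(n,p)=1$ everything is classical. Concretely, deformations of the $\mu_n$-gerbe over a thickening do have nontrivial infinitesimal automorphisms when $p\mid n$ (the group $\Phi^1(\mu_n/X_B)$ is nonzero), so the "verbatim" transport of the $\mathbf{G}_m$ argument fails.

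The paper instead verifies the weaker condition of Theorem \ref{thm:prorep criterion the second}: one must show that the functor $\Phi^1_{\mu_n/X_A}$ is formally \emph{smooth}, not zero. This requires an actual computation: using $\H^0(X,\mu_n)=\H^1(X,\mu_n)=0$ and the Leray spectral sequence together with $R^1f_{B*}\mu_n=\uPic_{X_B/B}[n]=0$, one identifies $\Phi^1(B)=\H^1(X_B,\mu_n)=\H^1(\Spec B,\mu_n)\cong B^{\times}/B^{\times n}$, and then checks that for a surjection $B'\twoheadrightarrow B$ the induced map $B'^{\times}/B'^{\times n}\to B^{\times}/B^{\times n}$ is surjective (because $B'^{\times}\to B^{\times}$ is). Your fallback — running Schlessinger directly on $\oDef_{(X,\alpha)}$ and asserting that (H1)--(H2) "follow from the compatibility of fppf cohomology with fibre products" — buries the same issue: injectivity of the Mayer--Vietoris-type map for $\H^2(\cdot,\mu_n)$ over $B\times_AC$ is controlled precisely by the surjectivity of these $\H^1$-level automorphism groups (this is the content of Proposition \ref{prop:its a deformation category}), and is not automatic. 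The unit-group computation above is the actual content of the proof, and your proposal omits it.
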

\begin{proof}
    We will verify the conditions of Theorem \ref{thm:prorep criterion the second}. Let $X_A$ be a flat deformation of $X$ over $A\in\cC_W$. We will show that the functor $\Phi^1=\Phi^1_{\mu_n/X_A}$~\eqref{eq:def of Phi} on $\cC_A$ is formally smooth. As $X$ is K3, we have $\H^0(X,\mu_n)=\H^1(X,\mu_n)=0$. It follows that for any $B\in\cC_A$ we have $\Phi^1(B)=\H^1(X_B,\mu_n)$. The Leray spectral sequence gives an exact sequence
    \[
        0\to\H^1(\Spec B,\mu_n)\to\H^1(X_B,\mu_n)\to\H^0(\Spec B,R^1f_{B*}\mu_n)
    \]
    where $f_B:X_B\to\Spec B$ is the structural morphism. We have $R^1f_{B*}\mu_n=\uPic_{X_B/B}[n]=0$. It follows that the left inclusion is an isomorphism, and so $\Phi^1(B)=\H^1(\Spec B,\mu_n)$. Using the Kummer sequence and the vanishing of $\H^1(\Spec B,\mathbf{G}_m)$, we have $\H^1(\Spec B,\mu_n)\cong B^{\times}/B^{\times n}$. (We remark that if $n$ is coprime to $p$, then this quotient is zero, and hence $\Phi^1(B)=0$ for all $B$. This is not the case however if $p$ divides $n$.) Consider a surjection $B'\twoheadrightarrow B$ in $\cC_W$. By the snake lemma, the map $\Phi^1(B')\to\Phi^1(B)$ is isomorphic to the map
    \[
        B^{'\times}/B^{'\times p}\to B^{\times}/B^{\times p}.
    \]
    The map $B^{'\times}\to B^{\times}$ is surjective, so this map is surjective as well. It follows that $\Phi^1$ is formally smooth, as desired.
\end{proof}




\begin{proposition}\label{prop:one equation}
  The map $\iota$~\eqref{eq:diagram111} is a closed formal subscheme defined by one equation.
\end{proposition}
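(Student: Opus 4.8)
The plan is to exhibit $\Def_{(X,\alpha)}$ as the vanishing locus of a single function on the smooth $(21)$-dimensional formal $W$-scheme $\Def_{(X,\alpha_{\Br})}$. The key point is the short exact sequence of complexes
\[
  0\to I(n)\to \mathbf{G}_{m}(n)\to \mathbf{G}_{m}(n)\to 1
\]
from \S\ref{sec:deformations of flat coho classes}, or rather its incarnation as the obstruction to promoting a deformation of the $\mathbf{G}_m$-gerbe $\ms X_{\Br}$ to a deformation of the $\mu_n$-gerbe $\ms X$. Concretely: given $A\to A'$ a small extension in $\cC_W$ with kernel $I$ (so $I$ is a $k$-vector space), and given an $A$-point of $\Def_{(X,\alpha_{\Br})}$, i.e.\ a deformation $(X_A,\alpha_{\Br,A})$ together with a lift $\alpha_A\in\H^2(X_A,\mu_n)$, the obstruction to lifting $(X_{A'},\alpha_{\Br,A'})$ compatibly with $\alpha_A$ to a class $\alpha_{A'}\in\H^2(X_{A'},\mu_n)$ lies in $\H^2(X,I(n))_{0}:=\ker(\H^2(X,I(n))\to\H^2(X,I))$ — the component along $\H^2(X,I)$ of the full obstruction, the other component being the obstruction to lifting $\alpha_{\Br}$, which is already killed by the assumption that we are on $\Def_{(X,\alpha_{\Br})}$. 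This is exactly the relative obstruction class $o(\ms X/\ms X'_{\Br})$ of Appendix \ref{appendix} (equivalently $\widetilde{o}(\alpha/X')$ of~\eqref{eq:refined obstruction class}), which takes values in $\H^2(X,I)=I\otimes_k\H^2(X,\ms O_X)$. Since $\H^2(X,\ms O_X)$ is $1$-dimensional, this obstruction group is canonically $I$ itself: the relative deformation problem $\iota:\Def_{(X,\alpha)}\to\Def_{(X,\alpha_{\Br})}$ has a perfect obstruction theory with $1$-dimensional obstruction space and $0$-dimensional tangent space relative to $\Def_{(X,\alpha_{\Br})}$ (indeed $T(\iota)$ is injective: this follows from Lemma \ref{lem:LES on infinitesimal auts and such} / the diagram~\eqref{eq:diagram111}, since $T(\pi)$ and $T(\pi_{\Br})$ are identified on the $\H^2(X,\ms O_X)$-kernel).

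The steps, in order, are as follows. First, I would record that $\iota$ is a morphism of prorepresentable functors (Propositions \ref{prop:pro-representable1}, \ref{prop:prorep for mu n}) and that it is relatively unramified, i.e.\ the relative tangent space $T(\iota)$ vanishes; this is the assertion that a deformation of the $\mu_n$-gerbe is rigid over a fixed deformation of the associated $\mathbf{G}_m$-gerbe, which comes from the vanishing of the relevant $\H^1$ — precisely the statement $\Phi^1(B)=B^\times/B^{\times n}$ computed in the proof of Proposition \ref{prop:prorep for mu n} together with the comparison between the two deformation problems. Second, I would produce the relative obstruction theory: for a small extension $A'\to A$ with kernel $I$ and an $A$-point $\xi$ of $\Def_{(X,\alpha)}$ with image $\eta$ in $\Def_{(X,\alpha_{\Br})}$ and a chosen lift $\eta'$ of $\eta$ to $A'$, there is an obstruction $o(\xi,\eta')\in\H^2(X,\ms O_X)\otimes_k I$ whose vanishing is necessary and sufficient for lifting $\xi$ to $A'$ over $\eta'$; this is Proposition \ref{prop:obstructions for relative gerbe change of group situation} of the appendix, or can be assembled directly from the exact sequences of \S\ref{sec:deformations of flat coho classes}. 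Third, since $\dim_k\H^2(X,\ms O_X)=1$, standard deformation theory (a minimal-resolution / Schlessinger-style argument) shows that a prorepresentable functor which is relatively unramified over a formally smooth base and carries a relative obstruction theory valued in a $1$-dimensional space is cut out inside that base by the vanishing of a single element of the maximal ideal — i.e.\ $\Def_{(X,\alpha)}\hookrightarrow\Def_{(X,\alpha_{\Br})}$ is a closed immersion and the ideal is principal. Concretely, if $\Def_{(X,\alpha_{\Br})}\cong\Spf W[[t_1,\dots,t_{20},s]]$ and $R$ prorepresents $\Def_{(X,\alpha)}$, one builds the surjection $W[[t_1,\dots,t_{20},s]]\twoheadrightarrow R$ order by order; relative unramifiedness forces no new generators, and the $1$-dimensionality of the obstruction space forces at most one relation, which gives $R\cong W[[t_1,\dots,t_{20},s]]/(f)$ for a single $f$.

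The main obstacle, I expect, is the bookkeeping needed to make the third step airtight: one must check that the single obstruction parameter really does assemble into a \emph{single} power series $f$ rather than a sequence of relations that could fail to be generated by one element, and that $f$ is genuinely in the maximal ideal (so that the subscheme is nonempty, i.e.\ $\alpha$ does deform in \emph{some} direction — which is clear since $\alpha$ itself gives the closed point, but one wants $f$ not to be a unit so that the subscheme has the expected dimension $20$ over $W$). The cleanest route is probably to invoke the general structure theory for morphisms of complete local rings with $1$-dimensional relative tangent-obstruction complex — the same formalism that lets Deligne conclude $\Def_{(X,L)}\subset\Def_X$ is one equation — and cite it, rather than redo the Schlessinger machinery by hand. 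An alternative, more hands-on approach is to use the map $\Upsilon$ and $\dlog$ of \S\ref{sec:Kodaira--Spencer}: along a deformation of $X$ with Kodaira--Spencer class $\tau$, the obstruction to deforming $\alpha$ (relative to $\alpha_{\Br}$, in the equal-characteristic directions) is $\dlog(\alpha)\cup\tau$ by Proposition \ref{prop:cup product}, which is a single linear functional on $\H^1(X,T_X)$, and in the mixed-characteristic direction $s$ one adds the contribution from lifting along $W$; either way the obstruction is governed by one linear form to leading order, confirming that a single equation suffices.
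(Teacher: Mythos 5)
Your proposal is correct and follows essentially the same route as the paper: injectivity on tangent spaces (from $\H^1(X,\ms O_X)=0$) gives the closed immersion, and the relative obstruction theory of Proposition \ref{prop:obstructions for relative gerbe change of group situation}, valued in the one-dimensional space $\H^2(X,\ms O_X)$, forces the ideal to be principal. The "bookkeeping" you flag in your third step is exactly what the paper writes out by hand (for lack of a reference) via the quotients $I_i=(J+\mathfrak{m}^i)/(\mathfrak{m}J+\mathfrak{m}^i)$, Artin--Rees, and Nakayama.
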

\begin{proof}
    
    To show that $\iota$ is a closed immersion, it suffices to verify that the induced map
  \[
    T(\iota):T(\Def_{(X,\alpha)})\to T(\Def_{(X,\alpha_{\Br})})
  \]
  on tangent spaces is injective. This follows from Proposition \ref{prop:obstructions for relative gerbe change of group situation} and the vanishing $\H^1(X,\mathfrak{n}_{\mu_n/\mathbf{G}_m})=\H^1(X,\ms O_X)=0$. We now show that $\iota$ is defined by one equation.
  This is a formal consequence of Proposition \ref{prop:obstructions for relative gerbe change of group situation}, which gives an obstruction theory for the morphism $\iota$ with values in the one-dimensional $k$-vector space $\H^2(X,\ms O_X)$. For lack of an exact reference, we give the proof.
  Let $(R,\mathfrak{m})$ be a complete local ring prorepresenting $\Def_{(X,\alpha_{\Br})}$. The map $\iota$ corresponds to a surjection $R\to R/J$ for some ideal $J\subset R$. Let $i\geq 1$ be an integer. Consider the square zero extension
  \[
    R/(\mathfrak{m}J+\mathfrak{m}^i)\twoheadrightarrow R/(J+\mathfrak{m}^i)
  \]
  of Artinian $R$-algebras, which has kernel $I_i=(J+\mathfrak{m}^i)/(\mathfrak{m}J+\mathfrak{m}^i)$. As $I_i$ is killed by $\mathfrak{m}$, it has a natural $k$-vector space structure. We will show that $I_i$ has dimension 1 over $k$. Consider the diagram
  \[
    \begin{tikzcd}
        \Spec R/(J+\mathfrak{m}^i)\arrow[hook]{d}\arrow{r}&\Def_{(X,\alpha)}\arrow[hook]{d}\\
        \Spec R/(\mathfrak{m}J+\mathfrak{m}^i)\arrow[hook]{r}\arrow[dashed]{ur}&\Def_{(X,\alpha_{\Br})}.
    \end{tikzcd}  
  \]
  By Proposition \ref{prop:obstructions for relative gerbe change of group situation}, there is a functorial obstruction class $o\in\H^2(X,f^*I_i)=\H^2(X,\ms O_X)\otimes_k I_i$ whose vanishing is equivalent to the existence of the dashed arrow. Let $\tau\in\H^2(X,\ms O_X)$ be a generator. The obstruction class $o$ is then equal to $v\otimes \overline{f}_i$, where $\overline{f}_i\in I_i$ is the image of some element $f_i\in J+\mathfrak{m}^i$. Consider the square zero extension
  \[
    R/(\mathfrak{m}J+(f_i)+\mathfrak{m}^i)\twoheadrightarrow R/(J+\mathfrak{m}^i)
  \]
  which has kernel $(J+\mathfrak{m}^i)/(\mathfrak{m}J+(\overline{f}_i)+\mathfrak{m}^i)=I_i/(\overline{f}_i)$. We consider the diagram
  \[
    \begin{tikzcd}
        \Spec R/(J+\mathfrak{m}^i)\arrow[hook]{d}\arrow[hook]{r}&\Def_{(X,\alpha)}\arrow[hook]{d}\\
        \Spec R/(\mathfrak{m}J+(f_i)+\mathfrak{m}^i)\arrow[hook]{r}\arrow[dashed]{ur}&\Def_{(X,\alpha_{\Br})}.
    \end{tikzcd}  
  \]
  By functoriality, the class $o'\in\H^2(X,\ms O_X)\otimes I_i/(\overline{f}_i)$ which obstructs the existence of the dashed arrow vanishes. It follows that $J\subset\mathfrak{m}J+(f_i)+\mathfrak{m}^i$, and therefore $I_i/(\overline{f}_i)=0$ and the left verical arrow is an isomorphism. In particular, $I_i$ is generated by $\overline{f}_i$, and hence has dimension 1 over $k$.
  
  By the Artin--Rees lemma \cite[00IN]{stacks-project}, we have $\mathfrak{m}^i\cap J\subset\mathfrak{m}J$ for $i$ sufficiently large. We therefore have that
  \[
    I_i=(J+\mathfrak{m}^i)/(\mathfrak{m}J+\mathfrak{m}^i)=J/(\mathfrak{m}J+\mathfrak{m}^i\cap J)=J/\mathfrak{m}J
  \]
  for $i$ sufficiently large. We conclude that $J/\mathfrak{m}J$ has dimension one over $k$. If $f\in J$ is any element whose image in $J/\mathfrak{m}J$ is nonzero, then by Nakayama's lemma we have $J=(f)$.
\end{proof}

We will describe the deformation spaces~\eqref{eq:diagram111} in explicit coordinates. By Proposition \ref{prop:pro-representable1}, the map $\pi_{\Br}$ may be represented in suitable coordinates by the projection
\[
    \Spf W[[t_1,\dots,t_{20},s]]\to\Spf W[[t_1,\dots,t_{20}]].
\]
By Proposition \ref{prop:one equation}, the diagram~\eqref{eq:diagram111} may then be represented by
	\begin{equation}\label{eq:diagram1111}
	\begin{tikzcd}[column sep=-1.2cm]
			\Spf W[[t_1,\dots,t_{20},s]]/(g)\arrow[hook]{rr}{\iota}\arrow{dr}[swap]{\pi}&&\Spf W[[t_1,\dots,t_{20},s]]\arrow{dl}{\pi_{\Br}}\\
			&\Spf W[[t_1,\dots,t_{20}]]&
		\end{tikzcd}
	\end{equation}
for some function $g\in W[[t_1,\dots,t_{20},s]]$.

\begin{proposition}\label{prop:nice coordinates}
	We may choose $g$ so that $g$ is congruent modulo $(p,t_1,\dots,t_{20})$ to either $s^{p^{rh}}$ (if $X$ has finite height $h$) or to $0$ (if $X$ is supersingular), where $p^r$ is the largest power of $p$ dividing $n$. If $p$ does not divide $n$, then we may even take $g=s$. 
\end{proposition}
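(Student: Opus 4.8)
The plan is to identify the reduction $\bar g$ of $g$ modulo $(p,t_1,\dots,t_{20})$ with a concrete invariant of the formal Brauer group and then read off its valuation from the theory of heights. Working in the presentation \eqref{eq:diagram1111}, the closed formal subscheme $V(p,t_1,\dots,t_{20})\subseteq\Def_{(X,\alpha_{\Br})}$ is $\Spf k[[s]]$, and it coincides with the preimage under $\pi_{\Br}$ of the closed point of $\Def_X$; likewise $\Def_{(X,\alpha)}\cap V(p,t_1,\dots,t_{20})=\Spf k[[s]]/(\bar g)$ is, by commutativity of \eqref{eq:diagram111}, the preimage under $\pi$ of the closed point, carried into $\Spf k[[s]]$ by the restriction of the closed immersion $\iota$. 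The first and main task will be to identify, as functors on $\cC_k$, the fiber $\pi_{\Br}^{-1}(\text{closed point})$ with the formal Brauer group $\Phi^2_{\mathbf{G}_m/X}=\widehat{\Br}_X$ and $\pi^{-1}(\text{closed point})$ with $\Phi^2_{\mu_n/X}$, the restriction of $\iota$ then becoming the map $\Phi^2_{\mu_n/X}\to\widehat{\Br}_X$ induced by $\mu_n\hookrightarrow\mathbf{G}_m$. Concretely, restricting the gerbe $\ms X_{\Br}$ over the trivial deformation $X\otimes_k A$ (and using that $\Pic(X\otimes_k A)\to\Pic(X)$ is surjective there) one identifies $\pi_{\Br}^{-1}(\text{closed point})(A)$ with $\{\beta_A\in\H^2(X\otimes_k A,\mathbf{G}_m):\beta_A|_X=\alpha_{\Br}\}$, which after subtracting the pullback of $\alpha_{\Br}$ is $\Phi^2_{\mathbf{G}_m/X}(A)=\widehat{\Br}_X(A)$; here one uses $\H^1(X,\ms O_X)=0$ (so $\widehat{\Pic}_X=\Phi^1_{\mathbf{G}_m/X}=0$ and deformations of the class form a torsor), $\H^0(X,T_X)=0$ (to rigidify), and Artin--Mazur prorepresentability. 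The $\mu_n$ statement is the same computation, invoking also $\H^1(X,\mu_n)=0$ and Remark \ref{rem:naive functor is fine!} to pass between gerbes and classes. Finally, because $\Phi^1_{\mathbf{G}_m/X}=0$, the Kummer sequence \eqref{eq:Kummer sequence} shows $\Phi^2_{\mu_n/X}\to\widehat{\Br}_X$ is a closed immersion with image $\widehat{\Br}_X[n]$. The upshot is that $(\bar g)$ is the ideal of $\widehat{\Br}_X[n]$ inside $\widehat{\Br}_X=\Spf k[[s]]$.

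It then remains to compute this ideal. Write $n=p^rm$ with $p\nmid m$; since multiplication by $m$ is an automorphism of the formal group $\widehat{\Br}_X$, we have $\widehat{\Br}_X[n]=\widehat{\Br}_X[p^r]$, and $(\bar g)=([p^r](s))$, the ideal generated by the power series for multiplication by $p^r$ on $\widehat{\Br}_X$. If $X$ has finite height $h$, then $\widehat{\Br}_X$ has height $h$, so $\widehat{\Br}_X[p]\cong\Spf k[[s]]/(s^{p^h})$; iterating, $\widehat{\Br}_X[p^r]\cong\Spf k[[s]]/([p^r](s))$ is finite of length $p^{rh}$ over $k$, so $\bar g$ has $s$-adic valuation exactly $p^{rh}$, i.e.\ $\bar g=u\cdot s^{p^{rh}}$ for some unit $u\in k[[s]]^\times$. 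If $X$ is supersingular, then $\widehat{\Br}_X\cong\widehat{\mathbf{G}}_a$, so $[p]$, and hence $[p^r]$, is zero on $\widehat{\Br}_X$; thus $\widehat{\Br}_X[p^r]=\widehat{\Br}_X$ and $\bar g=0$. To finish the first two assertions, one uses that $g$ is only well defined up to a unit of $R:=W[[t_1,\dots,t_{20},s]]$: in the supersingular case $\bar g=0$ already, while in the finite-height case the kernel $(p,t_1,\dots,t_{20})$ of $R\twoheadrightarrow k[[s]]$ lies in the maximal ideal of the complete local ring $R$, so any lift $w\in R$ of $u^{-1}\in k[[s]]^\times$ is a unit, and replacing $g$ by $wg$ (which does not change $(g)$) gives $\bar g=s^{p^{rh}}$.

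For the last assertion, suppose $p\nmid n$ (so $r=0$, and the above already yields $g\equiv s$ modulo $(p,t_1,\dots,t_{20})$). Then multiplication by $n$ is invertible on every square-zero ideal $I$ of a ring in $\cC_W$, so $I(n)$ \eqref{eq:definition of complex} is acyclic and the obstruction and ambiguity groups $\H^\bullet(-,I(n))$ of \S\ref{sec:deformations of flat coho classes} for deforming a class in $\H^2(X,\mu_n)$ all vanish. Hence $\alpha$ deforms uniquely along every thickening, and (using $\H^1(X,\mu_n)=0$ and Remark \ref{rem:naive functor is fine!}) the map $\pi$ of \eqref{eq:diagram111} is an isomorphism of functors. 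Consequently $\iota$ is, via $\pi^{-1}$, a section of $\pi_{\Br}$, hence in the coordinates of \eqref{eq:diagram1111} it is cut out by $s-\sigma$ for a unique $\sigma\in(p,t_1,\dots,t_{20})\subseteq W[[t_1,\dots,t_{20}]]$; since $t_1,\dots,t_{20},s-\sigma$ remain a regular system of parameters for $W[[t_1,\dots,t_{20},s]]$ over which $\pi_{\Br}$ is still the projection, we may rename $s-\sigma$ to $s$ and take $g=s$. I expect the crux of the argument to be the first paragraph — the precise identification of the two closed-point fibers with $\Phi^2_{\mathbf{G}_m/X}$ and $\Phi^2_{\mu_n/X}$, compatibly with the Kummer boundary map; once that is in hand, the remainder is formal group theory together with an adjustment of $g$ by a unit.
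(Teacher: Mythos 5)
Your proposal is correct and follows essentially the same route as the paper: restrict to the fiber over the closed point of $\Def_X$, identify the two fibers with $\widehat{\H}^2(X,\mu_n)$ and $\widehat{\Br}_X$ after translating by the pullback of $\alpha$, reduce from $\mu_n$ to $\mu_{p^r}$, and read off the ideal of $\widehat{\Br}_X[p^r]\subset\widehat{\Br}_X$ from the height. Your extra care in adjusting $g$ by a unit lift, and in realizing $\iota$ as a section cut out by $s-\sigma$ when $p\nmid n$, fills in details the paper leaves implicit.
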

\begin{proof}
    Restricting~\eqref{eq:diagram111} to the closed point $0\in\Def_X$ we get a map
	\begin{equation}\label{eq:diagram1121}
		\iota_0:\Def_{(X,\alpha)}|_0\hookrightarrow\Def_{(X,\alpha_{\Br})}|_0
	\end{equation}
	which is represented by
	\begin{equation}\label{eq:diagram1131}
	    \Spf k[[s]]/(\overline{g})\hookrightarrow\Spf k[[s]]
	\end{equation}
	where $\overline{g}$ is the image of $g$ modulo $(p,t_1,\dots,t_{20})$. Write $\widehat{\H}^2(X,\mu_n)$ for the functor
	\[
		A\mapsto\ker(\H^2(X_A,\mu_n)\to\H^2(X,\mu_n))  
	\]
	on $\cC_k$. For any $A\in\cC_k$, the inclusion $i_A:X\hookrightarrow X_A$ is split. It follows that we have $\widehat{\Br}_X=\Def_{(X,0_{\Br})}|_0$ and $\widehat{\H}^2(X,\mu_n)=\Def_{(X,0)}|_0$. There is an isomorphism
	\[
	    \Def_{(X,\alpha)}|_0\iso\widehat{\H}^2(X,\mu_n)
	\]
	defined by $\alpha_A\mapsto \alpha_A-\rho_A^*(\alpha)$, where $\rho_A:X_A\to X$ is the projection. We similarly define an isomorphism $\Def_{(X,\alpha_{\Br})}|_0\iso\widehat{\Br}_X$. Thus, we may assume without loss of generality that both $\alpha$ and $\alpha_{\Br}$ are zero, in which case
    the map~\eqref{eq:diagram1121} is identified with the natural map
	\begin{equation}\label{eq:formal Kummer map}
	  \widehat{\H}^2(X,\mu_n)\to\widehat{\Br}_{X}
	\end{equation}
	induced by the inclusion $\mu_n\subset\mathbf{G}_m$. Consider the commutative diagram
	\[
	    \begin{tikzcd}
	        \widehat{\H}^2(X,\mu_{p^r})\isor{d}{}\arrow{r}{\sim}&\widehat{\Br}_X[p^r]\arrow[equals]{d}\\
	        \widehat{\H}^2(X,\mu_{n})\arrow{r}{\sim}&\widehat{\Br}_X[n].
	    \end{tikzcd}
	\]
	Here, the horizontal arrows are induced by the natural maps~\eqref{eq:formal Kummer map}, which are injective because the Picard scheme of $X$ is discrete. The left vertical arrow is induced by the inclusion $\mu_{p^r}\subset\mu_n$, and is an isomorphism because $\widehat{\H}^m(X,\mu_{n/p^r})=0$ for all $m$ as $n/p^r$ is coprime to $p$.
	
	We conclude that the inclusion~\eqref{eq:diagram1121} is isomorphic to the inclusion $\widehat{\Br}_X[p^r]\subset\widehat{\Br}_X$.
	If $X$ has finite height $h$, then this map is represented by the closed immersion
	\[
        \Spf k[[s]]/(s^{p^{hr}})\hookrightarrow\Spf k[[s]].
	\]
	If $X$ is supersingular, then $\widehat{\Br}_X=\widehat{\mathbf{G}}_a$, so $\widehat{\Br}_X[p^r]=\widehat{\Br}_X$ as long as $r\geq 1$. Finally, suppose moreover that $n$ is coprime to $p$. In this case, $\pi$ is an isomorphism, and so $g$ is equal to a unit times $s$. Hence, we may take $g=s$. 
\end{proof}

\begin{remark}
  Proposition \ref{prop:nice coordinates} implies that the forgetful map $\Def_{(X,\alpha)}\to\Def_X$ will frequently be non-flat. Nevertheless, we will see that $\Def_{(X,\alpha)}$ itself is always flat over $W$.
\end{remark}

	

We consider the maps on tangent spaces induced by~\eqref{eq:diagram111}. We have a canonical identification $T(\Def_X)=\H^1(X,T_X)$. Comparing the exact sequences of Lemma \ref{lem:LES on infinitesimal auts and such} and using the vanishing $\H^0(X,T_X)=\H^1(X,\ms O_X)=\H^3(X,\ms O_X)=0$, we obtain a diagram
\begin{equation}\label{eq:tangent space diagram, first version}
    \begin{tikzcd}
        0\arrow{r}&\H^2(X,\ms O_X(n))\arrow[hook]{d}\arrow{r}&T(\Def_{(X,\alpha)})\arrow[hook]{d}[swap]{T(\iota)}\arrow{r}{T(\pi)}&\H^1(X,T_X)\arrow[equals]{d}\arrow{r}{\varepsilon^{-1}\mathrm{ob}}&\H^3(X,\ms O_X(n))\arrow{d}\\
        0\arrow{r}&\H^2(X,\ms O_X)\arrow{r}&T(\Def_{(X,\alpha_{\Br})})\arrow{r}{T(\pi_{\Br})}&\H^1(X,T_X)\arrow{r}&0
    \end{tikzcd}
\end{equation}
with exact rows, where $\mathrm{ob}$ is the map which sends $\tau_{X'}$ to the obstruction class $o(\alpha/X')$.

We can be more explicit. If $n$ is coprime to $p$, then the complex $\ms O_X(n)$ is quasi-isomorphic to 0. The diagram~\eqref{eq:tangent space diagram, first version} becomes
\begin{equation}\label{eq:summary1}
	\begin{tikzcd}
		&&T(\Def_{(X,\alpha)})\arrow{r}{T(\pi)}[swap]{\sim}\arrow[hook]{d}&\H^1(X,T_X)\arrow[equal]{d}&\\
		0\arrow{r}&\H^2(X,\mathscr{O}_X)\arrow{r}&T(\Def_{(X,\alpha_{\Br})})\arrow{r}&\H^1(X,T_X)\arrow{r}&0.
	\end{tikzcd}
\end{equation}
In particular, $T(\Def_{(X,\alpha)})$ has dimension $20$.

On the other hand, suppose that $p$ divides $n$. We then have $\ms O_X(n)=\ms O_X\oplus\ms O_X[-1]$. As $\H^1(X,\ms O_X)=0$, the left vertical arrow of~\eqref{eq:tangent space diagram, first version} is an isomorphism. Given $v\in\H^1(X,\Omega^1_X)$ write $\Ann(v)\subset\H^1(X,T_X)$ for the subspace of elements $\tau$ such that $v\cup\tau=0$. By Proposition \ref{prop:cup product}, we have a commutative diagram
\begin{equation}\label{eq:triangle with ob}
    \begin{tikzcd}
        &\H^3(X,\ms O_X(n))\arrow{dr}{\pi_1}&\\
        \H^1(X,T_X)\arrow{ur}{\varepsilon^{-1}\mathrm{ob}}\arrow{rr}{\dlog(\alpha)\cup\_}&&\H^2(X,\ms O_X).
    \end{tikzcd}
\end{equation}
The map $\pi_1$ is an isomorphism, and it follows that the kernel of $\varepsilon^{-1}\mathrm{ob}$ is equal to $\Ann(\dlog(\alpha))$. The diagram~\eqref{eq:tangent space diagram, first version} becomes
\begin{equation}\label{eq:summary2}
  \begin{tikzcd}
    0\arrow{r}&\H^2(X,\mathscr{O}_X)\arrow{r}\arrow[equal]{d}&T(\Def_{(X,\alpha)})\arrow{r}\arrow[hook]{d}&\Ann(\dlog \alpha)\arrow{r}\arrow[hook]{d}&0\\
    0\arrow{r}&\H^2(X,\mathscr{O}_X)\arrow{r}&T(\Def_{(X,\alpha_{\Br})})\arrow{r}&\H^1(X,T_X)\arrow{r}&0.
  \end{tikzcd}
\end{equation}
Because $X$ is K3, the cup product pairing~\eqref{eq:cup product} is perfect, and therefore the map
\begin{equation}\label{eq:canonical KS maps}
    \H^1(X,\Omega^1_X)\iso\Hom(\H^1(X,T_X),\H^2(X,\mathscr O_X))
\end{equation}
is an isomorphism. Thus, the above diagram shows in particular that $T(\Def_{(X,\alpha)})$ has dimension $20$ if $\dlog(\alpha)\neq 0$ and has dimension $21$ otherwise.

\begin{remark}
  Note that when $p$ divides $n$, the group $\H^2(X,\ms O_X)$ plays two distinct roles: it appears as both the relative tangent space to $\pi$ (in the top row of~\eqref{eq:summary2}), and as the obstruction group for the morphism $\iota$ (in~\eqref{eq:triangle with ob}).
\end{remark}

We deduce some consequences for universal deformation spaces.
\begin{proposition}\label{prop:smooth1}
	Consider a class $\alpha\in\H^2(X,\mu_n)$. If $n$ is coprime to $p$, then $\Def_{(X,\alpha)}$ is formally smooth over $W$. If $p$ divides $n$ and $\dlog(\alpha)\neq 0$, then $\Def_{(X,\alpha)}$ is formally smooth over $W$.
\end{proposition}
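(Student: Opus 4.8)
The plan is to bootstrap off the two structural results already in hand: that $\Def_{(X,\alpha)}$ is cut out inside $\Def_{(X,\alpha_{\Br})}\cong\Spf W[[t_1,\dots,t_{20},s]]$ by a single power series $g$ (Proposition \ref{prop:one equation}, in the coordinates of \eqref{eq:diagram1111}), together with the tangent space computations recorded in \eqref{eq:summary1} and \eqref{eq:summary2}. In both cases the goal is to show that, in suitable formal coordinates over $W$, the ring $W[[t_1,\dots,t_{20},s]]/(g)$ prorepresenting $\Def_{(X,\alpha)}$ is again a power series ring over $W$.

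First I would dispose of the case $n$ coprime to $p$. Here the complex $\ms O_X(n)$ is quasi-isomorphic to $0$, so every group $\H^i(X,\ms O_X(n))$ vanishes; by the obstruction theory of Proposition \ref{prop:obstructions for gerbe deformation functor} the morphism $\pi$ of \eqref{eq:diagram111} is formally smooth, and by \eqref{eq:summary1} its relative tangent space is zero, so $\pi$ is an isomorphism of functors. Since $\Def_X$ is formally smooth over $W$, so is $\Def_{(X,\alpha)}$. (Equivalently, Proposition \ref{prop:nice coordinates} lets one take $g=s$, so that $\Def_{(X,\alpha)}\cong\Spf W[[t_1,\dots,t_{20}]]$.)

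The case $p\mid n$ and $\dlog(\alpha)\neq 0$ is where the content lies. The crucial input is that $T(\Def_{(X,\alpha)})$ then has dimension exactly $20$: by perfectness of the cup product pairing \eqref{eq:canonical KS maps}, the functional $\dlog(\alpha)\cup\_\colon\H^1(X,T_X)\to\H^2(X,\ms O_X)$ is nonzero, hence surjective, so $\Ann(\dlog\alpha)$ has dimension $19$, and then the exact sequence \eqref{eq:summary2} gives $\dim_k T(\Def_{(X,\alpha)})=1+19=20$. On the other hand, writing $P=W[[t_1,\dots,t_{20},s]]$, $\mathfrak{m}=(p,t_1,\dots,t_{20},s)$, and $\Def_{(X,\alpha)}\cong\Spf P/(g)$, the tangent space $T(\Def_{(X,\alpha)})=\Hom_W(P/(g),k[\varepsilon])$ is the kernel of the linear functional on $\mathfrak{m}/(\mathfrak{m}^2+pP)\cong k^{21}$ determined by the class of $g$. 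That this kernel is $20$-dimensional forces the class of $g$ in $\mathfrak{m}/(\mathfrak{m}^2+pP)$ to be nonzero, i.e. $g$ has a nonzero linear part in the variables $t_1,\dots,t_{20},s$ (equivalently $dg\not\equiv 0$ in $\Omega_{P/W}\otimes_P k$). After a linear change of the coordinates $t_1,\dots,t_{20},s$ we may assume this linear part is $s$; since then the coefficient of $s^1$ in $g$, viewed in $W[[t_1,\dots,t_{20}]]$, is a unit, formal Weierstrass preparation writes $g$ as a unit times $s+a$ with $a\in\mathfrak{m}_{W[[t_1,\dots,t_{20}]]}$, whence $P/(g)\cong W[[t_1,\dots,t_{20}]]$. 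Thus $\Def_{(X,\alpha)}$ is formally smooth over $W$ of relative dimension $20$.

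The only real work is the tangent space dimension count in the mixed case, and this has essentially been done already: it rests on Proposition \ref{prop:cup product} (identifying $\widetilde{o}(\alpha/X')$ with cup product against the Kodaira--Spencer class) and on the perfectness of the cup product pairing on a K3 surface. Given that, the passage from ``defined by one equation with nonzero linear part'' to ``formally smooth over $W$'' is just the Jacobian criterion for complete intersections and is routine. The one subtlety I would flag is that the nonzero linear term of $g$ must genuinely involve the $t_i$ and $s$, not merely $p$ — but this is precisely what $\dim_k T(\Def_{(X,\alpha)})=20$ (rather than $21$) guarantees, and it is also consistent with the fact, to be proved separately, that $\Def_{(X,\alpha)}$ is always flat over $W$.
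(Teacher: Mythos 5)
Your proposal is correct and follows essentially the same route as the paper: both arguments reduce to the fact (already recorded in \eqref{eq:summary1} and \eqref{eq:summary2}) that under the stated hypotheses $T(\Def_{(X,\alpha)})$ has dimension $20$, and then conclude that the single equation $g$ cutting $\Def_{(X,\alpha)}$ out of $\Spf W[[t_1,\dots,t_{20},s]]$ must have a unit partial derivative, whence formal smoothness over $W$. The paper phrases this last step via the Jacobian ideal and Nakayama's lemma rather than a coordinate change plus Weierstrass preparation, but the content is identical, and your flagged subtlety (that the linear part must involve the $t_i,s$ and not just $p$) is exactly the point the paper's ``$J/pJ$ is the unit ideal'' step encodes.
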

\begin{proof}
	Consider the Jacobian ideal
	\[
	    J:=\left(\dfrac{\partial g}{\partial t_1},\dots,\dfrac{\partial g}{\partial t_{20}},\dfrac{\partial g}{\partial s}\right)\subset W[[t_1,\dots,t_{20},s]]
	\]
	of the formal subscheme $\Def_{(X,\alpha)}\subset\Def_{(X,\alpha_{\Br})}$. Under the given conditions, the tangent space to $\Def_{(X,\alpha)}\otimes k$ at the closed point has dimension 20. It follows that $J/pJ$ is the unit ideal. By Nakayama's lemma, $J$ is the unit ideal, and hence $\Def_{(X,\alpha)}$ is formally smooth over $W$.
\end{proof}

We now incorporate a line bundle on $X$.

\begin{notation}
Let $L$ be a line bundle on $X$. We let $\Def_{(X,L)}$ denote the functor on $\cC_W$ sending $A$ to the set of isomorphism classes of tuples $(X_A,\rho,L_A)$, where $(X_A,\rho)$ is a deformation of $X$ over $A$ and $L_A$ is a line bundle on $X_A$ whose restriction to $X$ is isomorphic to $L$. We put 
\[
    \Def_{(X,\alpha,L)}:=\Def_{(X,\alpha)}\times_{\Def_X}\Def_{(X,L)}.
\]
By Remark \ref{rem:naive functor is fine!}, we may equivalently define $\Def_{(X,\alpha,L)}$ to be the functor sending $A\in\cC_W$ to the set of isomorphism classes of tuples $(X_A,\rho,\alpha_A,L_A)$, where $(X_A,\rho)$ is a deformation of $X$ over $A$, $\alpha_A\in\H^2(X_A,\mu_n)$ is a class such that $\alpha_A|_X=\alpha$, and $L_A$ is a line bundle such that $L_A|_X$ is isomorphic to $L$.


Given a collection of line bundles $L_1,\dots,L_m$, we similarly define $\Def_{(X,L_1,\dots,L_m)}$ and $\Def_{(X,\alpha,L_1,\dots,L_m)}$.
\end{notation}


We write
\begin{align}\label{eq:maps32}
	\Pic(X)=\H^1(X,\mathbf{G}_m)\xrightarrow{c_1}\H^1(X,\Omega^1_X)
\end{align}
for the map induced by $\dlog$~\eqref{eq:map11}.
\begin{proposition}\label{prop:smooth2}
	Consider a class $\alpha\in\H^2(X,\mu_n)$ and a line bundle $L$ on $X$. If $n$ is coprime to $p$ and $c_1(L)$ is nonzero, then $\Def_{(X,\alpha,L)}$ is formally smooth over $W$. If $p$ divides $n$ and $c_1(L)$ and $\dlog(\alpha)$ are linearly independent in $\H^1(X,\Omega^1_X)$, then $\Def_{(X,\alpha,L)}$ is formally smooth over $W$.
\end{proposition}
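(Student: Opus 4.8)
The plan is to realize $\Def_{(X,\alpha,L)}$ as a closed formal subscheme of $\Def_{(X,\alpha_{\Br})}\cong\Spf W[[t_1,\dots,t_{20},s]]$ cut out by exactly two equations, to show that its tangent space has dimension $19$, and then to conclude formal smoothness over $W$ (of relative dimension $19$) exactly as Proposition \ref{prop:smooth1} does for $\Def_{(X,\alpha)}$: a tangent space of codimension $2$ in the ambient $21$-dimensional formal scheme forces the two defining equations to have $k$-linearly independent linear parts modulo $p$, hence their Jacobian ideal is the unit ideal, hence the quotient is formally smooth over $W$.

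For the presentation step I would invoke Deligne \cite{MR638598} (with Ogus' refinements \cite{Ogus78}): $\Def_{(X,L)}\hookrightarrow\Def_X$ is a closed subfunctor cut out by one equation, and the standard obstruction theory for line bundles \cite[Proposition 1.14]{Ogus78} identifies its tangent space, compatibly with the inclusion into $T(\Def_X)=\H^1(X,T_X)$, with the annihilator $\Ann(c_1(L))=\{\tau:c_1(L)\cup\tau=0\}$ under the perfect pairing \eqref{eq:cup product} (the inclusion being injective since $\H^1(X,\ms O_X)=0$). As the map $\Def_{(X,\alpha)}\to\Def_X$ of \eqref{eq:diagram111} factors through $\iota$, we may rewrite $\Def_{(X,\alpha,L)}=\Def_{(X,\alpha)}\times_{\Def_{(X,\alpha_{\Br})}}\bigl(\Def_{(X,\alpha_{\Br})}\times_{\Def_X}\Def_{(X,L)}\bigr)$. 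By Proposition \ref{prop:pro-representable1} the map $\pi_{\Br}$ is, in the coordinates of \eqref{eq:diagram1111}, the projection forgetting $s$; pulling back the equation cutting out $\Def_{(X,L)}\subset\Def_X$, the second factor is $V(h)$ for some $h\in W[[t_1,\dots,t_{20}]]$. Combined with Proposition \ref{prop:one equation}, which gives $\Def_{(X,\alpha)}=V(g)$, this identifies $\Def_{(X,\alpha,L)}$ with $V(g,h)\subset\Spf W[[t_1,\dots,t_{20},s]]$.

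For the tangent space I would use that fiber products of functors are computed objectwise, so $T(\Def_{(X,\alpha,L)})=T(\Def_{(X,\alpha)})\times_{\H^1(X,T_X)}\Ann(c_1(L))$. If $n$ is coprime to $p$, diagram \eqref{eq:summary1} gives $T(\Def_{(X,\alpha)})\xrightarrow{\ \sim\ }\H^1(X,T_X)$, so $T(\Def_{(X,\alpha,L)})\cong\Ann(c_1(L))$, which is $19$-dimensional since $c_1(L)\neq0$ and \eqref{eq:cup product} is a perfect pairing of $20$-dimensional spaces. If $p\mid n$, diagram \eqref{eq:summary2} shows $T(\Def_{(X,\alpha)})\to\H^1(X,T_X)$ has kernel $\H^2(X,\ms O_X)$ and image $\Ann(\dlog\alpha)$, whence a short exact sequence
\[
    0\to\H^2(X,\ms O_X)\to T(\Def_{(X,\alpha,L)})\to\Ann(\dlog\alpha)\cap\Ann(c_1(L))\to0.
\]
By perfectness of \eqref{eq:cup product} and the hypothesis that $\dlog\alpha$ and $c_1(L)$ are linearly independent, the right-hand term equals $\Ann(\langle\dlog\alpha,c_1(L)\rangle)$, which is $18$-dimensional; since $\H^2(X,\ms O_X)$ is $1$-dimensional we again obtain $\dim_kT(\Def_{(X,\alpha,L)})=19$.

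Finally, arguing as in the proof of Proposition \ref{prop:smooth1}, let $J$ be the ideal of $W[[t_1,\dots,t_{20},s]]$ generated by the $2\times2$ minors of the Jacobian matrix of $(g,h)$. As $\Def_{(X,\alpha,L)}=V(g,h)$ sits inside a formal scheme of relative dimension $21$ over $W$ and has $19$-dimensional tangent space at the closed point, the image of $J$ in $W[[t_1,\dots,t_{20},s]]/(p)$ is the unit ideal; by Nakayama $J$ itself is the unit ideal, whence $g$ and $h$ form part of a regular system of coordinates relative to $W$ and $\Def_{(X,\alpha,L)}$ is formally smooth over $W$ (of relative dimension $19$). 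I expect the only real difficulty to be organizational rather than substantive: correctly threading the chain of fiber products so that $\Def_{(X,\alpha,L)}$ genuinely appears as $V(g,h)$ inside a single fixed $21$-dimensional ambient formal scheme, and keeping straight, in the two cases $p\nmid n$ and $p\mid n$, which of \eqref{eq:summary1} and \eqref{eq:summary2} controls the tangent map $T(\Def_{(X,\alpha)})\to\H^1(X,T_X)$ (in the coprime case $\dlog\alpha$ plays no role and the relative tangent space $\H^2(X,\ms O_X)$ of $\pi$ disappears).
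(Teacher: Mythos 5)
Your proposal is correct and follows essentially the same route as the paper: present $\Def_{(X,\alpha,L)}$ inside $\Def_{(X,\alpha_{\Br})}\cong\Spf W[[t_1,\dots,t_{20},s]]$ as cut out by two equations (Proposition \ref{prop:one equation} for $g$, Deligne's result for the line-bundle equation), check that the tangent space has dimension $19$ under the stated hypotheses via \eqref{eq:summary1} and \eqref{eq:summary2}, and conclude by the Jacobian/Nakayama argument of Proposition \ref{prop:smooth1}. You have merely made explicit the tangent-space computation and the fiber-product bookkeeping that the paper's two-sentence proof leaves implicit.
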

\begin{proof}
	By Proposition \ref{prop:one equation}, the inclusion $\Def_{(X,\alpha,L)}\subset\Def_{(X,\alpha_{\Br})}$ is a closed formal subscheme defined by two equations. Under the assumed conditions, the tangent space to $\Def_{(X,\alpha,L)}\otimes k$ at the closed point has dimension 19. As in Proposition \ref{prop:smooth1}, we conclude that $\Def_{(X,\alpha,L)}$ is formally smooth over $W$.
\end{proof}

\begin{remark}
  If $n$ is coprime to $p$, Proposition \ref{prop:smooth2} follows from a result of Ogus \cite[Proposition 2.2]{Ogus78} and Proposition \ref{prop:smooth1}. 
\end{remark}

\section{The dlog map and de Rham cohomology}\label{sec:dlog map}

Motivated by Proposition \ref{prop:smooth2}, we seek conditions under which the classes $\dlog(\alpha)$ and $c_1(L)$ are linearly independent. We will study the interaction between the images of the various $\dlog$ maps in de Rham cohomology. We consider the $\dlog$ map~\eqref{eq:map11}
\[
    \dlog:\mathbf{G}_m\to\Omega^1_X.
\]
As the target is $p$-torsion, $\dlog$ kills the subsheaf $\mathbf{G}_m^{\times p}$ of $p$th powers. Furthermore, the image of $\dlog$ is contained in the subsheaf $Z\Omega^1_X\subset\Omega^1_X$. To distinguish between the resulting maps on cohomology, we will use the notation in the following commutative diagram.
\begin{equation}\label{eq:all the dlog maps}
    \begin{tikzcd}
        \Pic(X)\arrow{dd}\arrow{dr}[swap]{c_1^{\dR}}\arrow[bend left=15]{drr}{c_1}\\
        &\H^1(X,Z\Omega^1_X)\arrow{r}&\H^1(X,\Omega^1_X).\\
        \H^1(X,\mathbf{G}_m/\mathbf{G}_m^{\times p})\arrow{ur}[swap]{\dlog^{\dR}}\arrow[bend right=15]{urr}[swap]{\dlog}
    \end{tikzcd}
\end{equation}
Here, the vertical map is induced by the quotient and the horizontal map is induced by the inclusion.
We will also use $c_1^{\dR}$ and $\dlog^{\dR}$ to denote the respective compositions of these maps with the inclusion $\H^1(X,Z\Omega^1_X)\subset\H^2_{\dR}(X)$. Let
\[
    C:Z\Omega^1_X\to\Omega^1_X
\]
denote the Cartier operator, which satisfies $C(f^p\omega)=fC(\omega)$ and $C(f^{p-1}df)=df$ for any local sections $f\in\ms O_X$ and $\omega\in Z\Omega^1_X$. As a consequence, if $f$ is invertible, then $C(df/f)=df/f$. By \cite[Corollaire 0.2.1.18]{Ill79}, we have a short exact sequence
\begin{equation}\label{eq:diagram31}
	1\to\mathbf{G}_m/\mathbf{G}_m^{\times p}\xrightarrow{\dlog}Z\Omega^1_X\xrightarrow{1-C}\Omega^1_X\to 0
\end{equation}
where $1$ denotes the inclusion. Taking cohomology, we find an exact sequence
\begin{equation}\label{eq:diagram33}
	0\to\H^1(X,\mathbf{G}_m/\mathbf{G}_m^{\times p})\xrightarrow{\dlog^{\dR}} \H^1(X,Z\Omega^1_X)\xrightarrow{1-C}\H^1(X,\Omega^1_X)
\end{equation}
where the injectivity on the left follows from the vanishing of $\H^0(X,\Omega^1_X)$. Under the identifications of Lemma \ref{lem:some natural identifications}, the sequence~\eqref{eq:diagram33} becomes
\[
  0\to\H^1(X,\mathbf{G}_m/\mathbf{G}_m^{\times p})\xrightarrow{\dlog^{\dR}} F^1_H\cap F^1_C\xrightarrow{1-C}F^1_H/F^2_H
\]
where the right hand map is given by the difference of the map
\[
  1:F^1_H\cap F^1_C\subset F^1_H\twoheadrightarrow F^1_H/F^2_H,
\]
and the Cartier operator $C$, which factors as the composition
\[
  F^1_H\cap F^1_C\subset F^1_C\twoheadrightarrow F^1_C/F^2_C\xrightarrow{\sim} F^1_H/F^2_H.
\]
Thus, the kernel of the map $1$ is $F^2_H\cap F^1_C$, and the kernel of $C$ is $F^1_H\cap F^2_C$. 

We are interested in the injectivity of the various maps induced by $\dlog$~\eqref{eq:all the dlog maps}. Note that the maps $c_1^{\dR}$ and $c_1$~\eqref{eq:all the dlog maps} have $p$-torsion codomain, and hence kill $p\Pic(X)$. They therefore descend to maps on $\Pic(X)/p=\Pic(X)\otimes_{\mathbf{Z}}\mathbf{F}_p$. The following result is due to Ogus \cite[Corollary 1.3, Proposition 1.4]{Ogus78}. We include the proof.

\begin{proposition}\label{prop:dlog is nonzero sometimes}
    If $X$ is any K3 surface, then the maps
    \[
        \Pic(X)\otimes_{\mathbf{Z}}\mathbf{F}_p\xrightarrow{c_1^{\dR}\otimes\mathbf{F}_p}\H^1(X,Z\Omega^1_X)\hspace{1cm}\text{and}\hspace{1cm}\H^1(X,\mathbf{G}_m/\mathbf{G}_m^{\times p})\xrightarrow{\dlog^{\dR}}\H^1(X,Z\Omega^1_X),
    \]
    are injective. If $X$ is not superspecial, then also the maps
    \[
        \Pic(X)\otimes_{\mathbf{Z}}\mathbf{F}_p\xrightarrow{c_1\otimes\mathbf{F}_p}\H^1(X,\Omega^1_X)\hspace{1cm}\text{and}\hspace{1cm}\H^1(X,\mathbf{G}_m/\mathbf{G}_m^{\times p})\xrightarrow{\dlog}\H^1(X,\Omega^1_X),
    \]
    are injective.
\end{proposition}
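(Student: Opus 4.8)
The plan is to deduce all four assertions from exact sequences already assembled (principally~\eqref{eq:diagram33} and the long exact cohomology sequence of the \'{e}tale Kummer-type sequence $1\to\mathbf{G}_m\xrightarrow{\cdot p}\mathbf{G}_m\to\mathbf{G}_m/\mathbf{G}_m^{\times p}\to 1$), together with the dictionary of Lemma \ref{lem:some natural identifications} and the cohomological characterizations of ordinary and superspecial K3 surfaces recalled above. Let $j:\H^1(X,Z\Omega^1_X)\to\H^1(X,\Omega^1_X)$ be the map induced by the inclusion $Z\Omega^1_X\subset\Omega^1_X$ and let $q:\Pic(X)\to\H^1(X,\mathbf{G}_m/\mathbf{G}_m^{\times p})$ be the natural quotient map, so that by~\eqref{eq:all the dlog maps} we have $\dlog=j\circ\dlog^{\dR}$, $c_1^{\dR}=\dlog^{\dR}\circ q$, and $c_1=\dlog\circ q$.

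First I would dispose of the two statements valid for an arbitrary K3 surface. Injectivity of $\dlog^{\dR}$ is nothing but exactness on the left of~\eqref{eq:diagram33}, which uses only $\H^0(X,\Omega^1_X)=0$. For $c_1^{\dR}\otimes\mathbf{F}_p$, I would note that its target is $p$-torsion, so it factors through $\Pic(X)/p$, and since $\dlog^{\dR}$ is injective it then suffices to check that $q$ induces an injection $\Pic(X)/p\hookrightarrow\H^1(X,\mathbf{G}_m/\mathbf{G}_m^{\times p})$. This comes straight out of the long exact cohomology sequence of the Kummer-type sequence above, which is left exact because $X$ (hence every \'{e}tale $X$-scheme) is reduced, so that $\mathbf{G}_m$ has no $p$-torsion as an \'{e}tale sheaf; combined with $\H^1(X,\mathbf{G}_m)=\Pic(X)$ this gives the required injection.

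For the non-superspecial case, the crux is to show that the image of $\dlog^{\dR}$ meets $\ker(j)$ only in $0$; granting this, $\dlog=j\circ\dlog^{\dR}$ is injective and then $c_1\otimes\mathbf{F}_p=\dlog\circ q$ is injective by the previous step. I would identify $\ker(j)$ by taking cohomology of $0\to Z\Omega^1_X\to\Omega^1_X\xrightarrow{d}B\Omega^2_X\to 0$ and invoking $\H^0(X,\Omega^1_X)=0$: this gives $\ker(j)\cong\H^0(X,B\Omega^2_X)$, which is $F^2_H\cap F^1_C$ by Lemma \ref{lem:some natural identifications}(3). On the other hand,~\eqref{eq:diagram33} identifies the image of $\dlog^{\dR}$ with $\ker(1-C)$. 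So let $v\in\ker(1-C)\cap(F^2_H\cap F^1_C)$. Since $v\in\ker(1-C)$, the image of $v$ in $F^1_H/F^2_H$ equals $C(v)$; since $v\in F^2_H$, that image vanishes, so $C(v)=0$, i.e.\ $v\in\ker(C)=F^1_H\cap F^2_C$. Hence $v\in F^2_H\cap F^2_C=\H^0(X,\Omega^1_X/B\Omega^1_X)$ by Lemma \ref{lem:some natural identifications}(4), and as $F^2_H$ and $F^2_C$ are one-dimensional a nonzero such $v$ would force $F^2_H=F^2_C$, i.e.\ $X$ superspecial, contrary to hypothesis.

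This argument is almost entirely formal manipulation of exact sequences; the one genuine point is the inclusion $\ker(1-C)\cap(F^2_H\cap F^1_C)\subset F^2_H\cap F^2_C$ and the translation of ``not superspecial'' into $F^2_H\cap F^2_C=0$, using Lemma \ref{lem:some natural identifications}(4) and the characterization that $X$ is superspecial precisely when $F^2_H=F^2_C$. I do not expect a serious obstacle; the only thing needing care is to keep straight that the maps ``$1$'' and ``$C$'' both land in $F^1_H/F^2_H$, the former via $F^1_H\twoheadrightarrow F^1_H/F^2_H$ and the latter via $F^1_C\twoheadrightarrow F^1_C/F^2_C\xrightarrow{\sim}F^1_H/F^2_H$.
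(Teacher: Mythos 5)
Your proposal is correct and follows essentially the same route as the paper's proof: the first two injectivities come from the long exact sequence of $1\to\mathbf{G}_m\xrightarrow{\cdot p}\mathbf{G}_m\to\mathbf{G}_m/\mathbf{G}_m^{\times p}\to 1$ together with left-exactness of~\eqref{eq:diagram33}, and the non-superspecial case is handled by producing a nonzero class in $F^2_H\cap F^1_C$ killed by $1-C$, hence lying in $F^2_H\cap F^2_C$, forcing $F^2_H=F^2_C$. You simply spell out a few steps (the identification of $\ker(j)$ with $\H^0(X,B\Omega^2_X)$, the reducedness argument for exactness of the Kummer-type sequence) that the paper leaves implicit.
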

\begin{proof}
    Consider the short exact sequence
    \[
        1\to\mathbf{G}_m\xrightarrow{\cdot p}\mathbf{G}_m\to\mathbf{G}_m/\mathbf{G}_m^{\times p}\to 1.
    \]
    Taking cohomology, we deduce that the map
    \[
        \Pic(X)\otimes_{\mathbf{Z}}\mathbf{F}_p\to\H^1(X,\mathbf{G}_m/\mathbf{G}_m^{\times p})
    \]
    is injective. By the exactness of~\eqref{eq:diagram33}, $\dlog^{\dR}$ is injective. This proves the first two claims. For the second two, suppose that $\dlog$ is not injective. We then have a nonzero element $\sigma\in F^2_H$ which is killed by $1-C$. It follows that $\sigma\in F^2_C$, and therefore $F^2_H=F^2_C$. We conclude that $X$ is superspecial.
\end{proof}

We now consider the maps $c_1^{\dR}\otimes k$ and $c_1\otimes k$ obtained by tensoring with $k$. We record the following result.

\begin{proposition}\label{prop:inj for c 1 tensor k}
  If $X$ has finite height, then the maps
  \[
        \Pic(X)\otimes_{\mathbf{Z}}k\xrightarrow{c_1^{\dR}\otimes k}\H^1(X,Z\Omega^1_X)\hspace{1cm}\text{and}\hspace{1cm}\Pic(X)\otimes_{\mathbf{Z}}k\xrightarrow{c_1\otimes k}\H^1(X,\Omega^1_X)
    \]
    are injective.
\end{proposition}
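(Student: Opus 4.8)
The plan is to treat the two maps separately. Since $c_1$ factors as $\Pic(X)\xrightarrow{c_1^{\dR}}\H^1(X,Z\Omega^1_X)\hookrightarrow\H^1(X,\Omega^1_X)$ (see \eqref{eq:all the dlog maps}), injectivity of $c_1\otimes k$ formally implies that of $c_1^{\dR}\otimes k$, so it suffices to prove the former. Combining \eqref{eq:SES with a B on coho} with Lemma \ref{lem:some natural identifications}(3), the kernel of $\H^1(X,Z\Omega^1_X)\to\H^1(X,\Omega^1_X)$ is $F^2_H\cap F^1_C$. Thus, once one knows that $c_1^{\dR}\otimes k$ is injective --- which I would deduce from the fact that the crystalline cycle class map $\Pic(X)\otimes_{\mathbf Z}W\to\H^2_{\mathrm{cris}}(X/W)$ is injective with torsion-free cokernel (standard for K3 surfaces: $\H^2_{\mathrm{cris}}$ is torsion free and the image is the saturated $F=p$-eigenlattice, cf. \cite{Ogus78}), together with the fact that its reduction mod $p$ lands in $\H^1(X,Z\Omega^1_X)=F^1_H\cap F^1_C$ by \eqref{eq:diagram33} --- the whole statement reduces to the claim that $\overline N\cap F^2_H=0$ inside $\H^2_{\dR}(X)$, where $\overline N$ is the image of $c_1^{\dR}\otimes k$. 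Note $\overline N\subseteq F^1_H\cap F^1_C$, and in particular $\overline N\subseteq F^1_C$.

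If $X$ is ordinary this is immediate: then $F^2_H\cap F^1_C=0$ (equivalently $\H^0(X,B\Omega^2_X)=0$, by Lemma \ref{lem:some natural identifications}(3) and the characterization of ordinariness recalled in \S\ref{sec:K3 surfaces in positive characteristic}), and since $\overline N\subseteq F^1_C$ we get $\overline N\cap F^2_H\subseteq F^2_H\cap F^1_C=0$. So the essential case --- and the step I expect to be the main obstacle --- is $2\le h<\infty$, where $F^2_H\subseteq F^1_C$ and this argument fails. Here I would work with the crystalline Frobenius $F$ on $H:=\H^2_{\mathrm{cris}}(X/W)$ and its reduction $\overline F=F\bmod p$ on $\H^2_{\dR}(X)$, using: (a) the sub-$F$-crystal $N:=\Pic(X)\otimes W$ is saturated, with $F|_N=p\,\sigma_N$ for a $\sigma$-semilinear bijection $\sigma_N$ of $N$ (from $F(c_1^{\mathrm{cris}}(L))=p\,c_1^{\mathrm{cris}}(L)$); (b) $H$ has Newton slopes $1-\tfrac1h,\,1,\,1+\tfrac1h$ with multiplicities $h,\,22-2h,\,h$, so there is a saturated slope-$(1-\tfrac1h)$ sub-$F$-crystal $U\subset H$; moreover $\mathrm{Im}(\overline F)$ is one-dimensional (Mazur's theorem: the elementary divisors of $F$ are the Hodge numbers $1,20,1$, so $\operatorname{coker}\overline F\cong k^{21}$) and, since $F(U)\subseteq U$ but $F(U)\not\subseteq pU$ (the slope is $<1$), we have $\overline F(\overline U)\neq 0$, hence $\mathrm{Im}(\overline F)=\overline F(\overline U)\subseteq\overline U$; (c) Mazur's theorem also gives $F^2_H\subseteq\overline{\{x\in H:F(x)\in p^2H\}}$.

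The key lemma is $\overline N\cap\overline U=0$: a nonzero common class would lift to $n\in N$, $u\in U$ with $n\equiv u\pmod{pH}$ and $n\notin pN$, $u\notin pU$; applying the divided Frobenius $\Phi:=F^{h}/p^{h-1}$ (which preserves $H$, as the minimal slope is $\tfrac{h-1}{h}$, restricts to a semilinear bijection of $U$, and equals $p\,\sigma_N^{h}$ on $N$) one finds $\Phi(u)\in pH\cap U=pU$, whence $u\in pU$ --- a contradiction. Granting this, suppose $0\ne\omega\in\overline N\cap F^2_H$; by (c) lift $\omega$ to $\tilde\omega\in H$ with $F(\tilde\omega)\in p^2H$ and write $\tilde\omega=c+pw$ with $c\in N$ reducing to $\omega$. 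Then $F(\tilde\omega)=p\,\sigma_N(c)+p\,F(w)\in p^2H$, so $\overline{\sigma_N(c)}=-\overline F(\bar w)\in\mathrm{Im}(\overline F)\subseteq\overline U$; but $\overline{\sigma_N(c)}\in\overline N$, so it lies in $\overline N\cap\overline U=0$, forcing $c\in pN$ (as $\sigma_N$ is bijective) and $\omega=0$ --- a contradiction. Hence $\overline N\cap F^2_H=0$, which completes the proof. The delicate part throughout is assembling the crystalline structure of $\H^2_{\mathrm{cris}}(X/W)$ in the finite-height case (the Newton decomposition, Mazur's theorem on elementary divisors and the Hodge filtration, and the behaviour of the divided Frobenius on $U$ and on $N$); alternatively, the identity $\overline N\cap F^2_H=0$ can be extracted directly from Ogus's analysis of K3 crystals of finite height in \cite{Ogus78}.
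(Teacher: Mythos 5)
The paper offers no argument of its own here: it cites the Newton--Hodge decomposition of $\H^2_{\mathrm{cris}}(X/W)$ via \cite[Remark 1.9]{Ogus78} (with \cite[Proposition 10.3]{MR1776939} as an alternative), and your proof is a correct reconstruction of precisely that route --- reduce to showing $\overline N\cap F^2_H=0$ for the image $\overline N$ of $c_1^{\dR}\otimes k$, then rule out a nonzero intersection using the slope-$(1-\tfrac1h)$ summand $U$, Mazur's theorem, and a divided Frobenius. Two small points of hygiene: the input ``$\Pic(X)\otimes W\to \H^2_{\mathrm{cris}}(X/W)$ is injective with torsion-free cokernel'' is literally equivalent to the injectivity of $c_1^{\dR}\otimes k$, so that half of the proposition is being cited rather than proved (which is consistent with what the paper itself does); and the assertion $F^hU\subseteq p^{h-1}U$ ``because the minimal slope is $\tfrac{h-1}{h}$'' needs more than a slope estimate, since integral divisibility bounds carry correction terms in general --- here one should use that $U$ is (a twist of) the Dieudonn\'e module of the one-dimensional formal group $\widehat{\Br}_X$ of height $h$, unique up to isomorphism over $k$, for which $F^hU=p^{h-1}U$ is a direct computation.
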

\begin{proof}
    This follows from the Newton--Hodge decomposition on the second crystalline cohomology of $X$. See \cite[Remark 1.9]{Ogus78}. A different proof is given in \cite[Proposition 10.3]{MR1776939}. 
\end{proof}

We strengthen this slightly in the following.

\begin{proposition}\label{prop:trivial intersection finite height case}
  If $X$ has finite height, then the map
  \begin{equation}\label{eq:c_1 times k}
    c_1^{\dR}\otimes k:\Pic(X)\otimes_{\mathbf{Z}} k\to\H^2_{\dR}(X)
  \end{equation}
  is injective, and its image has trivial intersection with the subspace $F^2_H+F^2_C\subset \H^2_{\dR}(X)$.
\end{proposition}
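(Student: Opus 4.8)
The plan is as follows. The injectivity of $c_1^{\dR}\otimes k$ is already contained in Proposition~\ref{prop:inj for c 1 tensor k} (the target $\H^1(X,Z\Omega^1_X)$ is a subspace of $\H^2_{\dR}(X)$), so the only new point is the statement about the intersection with $F^2_H+F^2_C$. I would prove this by passing to crystalline cohomology. Write $H:=\H^2_{\mathrm{cris}}(X/W)$, a free $W$-module of rank $22$ carrying its Frobenius $\varphi$, so that $H/pH=\H^2_{\dR}(X)$, the Hodge filtration $F^\bullet_H$ is the reduction of the crystalline Hodge filtration, and the conjugate filtration $F^\bullet_C$ is controlled by $\varphi$ through Mazur's theorem. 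Note first that since $X$ has finite height it is not superspecial, so $F^2_H\cap F^2_C=\H^0(X,\Omega^1_X/B\Omega^1_X)=0$ by Lemma~\ref{lem:some natural identifications}, and $F^2_H+F^2_C$ is genuinely two-dimensional.

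Because $X$ has finite height $h$, the Newton polygon of $(H,\varphi)$ has slopes $1-\tfrac{1}{h}$, $1$, $1+\tfrac{1}{h}$ with multiplicities $h$, $22-2h$, $h$, and it meets the Hodge polygon $(0,1^{20},2)$ at its break points, at abscissa $h$ and $22-h$. Katz's Newton--Hodge decomposition therefore provides a splitting $H=H_{<1}\oplus H_1\oplus H_{>1}$ into saturated $\varphi$-stable submodules of ranks $h$, $22-2h$, $h$, compatible with the Hodge filtration (this is the crystalline input already underlying Proposition~\ref{prop:inj for c 1 tensor k}; compare \cite[Remark~1.9]{Ogus78}). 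Reducing modulo $p$ gives $\H^2_{\dR}(X)=\bar H_{<1}\oplus\bar H_1\oplus\bar H_{>1}$. The heart of the argument is then three containments: (i) the image of $c_1^{\dR}\otimes k$ lies in $\bar H_1$; (ii) $F^2_H\subseteq\bar H_{>1}$; and (iii) $F^2_C\subseteq\bar H_{<1}$.

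For (i), the crystalline first Chern class satisfies $\varphi(c_1^{\mathrm{cris}}(L))=p\, c_1^{\mathrm{cris}}(L)$, so $c_1^{\mathrm{cris}}(L)$ is an integral class of slope exactly $1$ and hence lies in $H_1$; reducing mod $p$ and taking the $k$-span gives the claim. For (ii), the Hodge filtration respects the decomposition, and the inherited Hodge slopes of $H_{<1}$ and $H_1$ are $\le 1$ while only $H_{>1}$ carries a Hodge slope equal to $2$; since $F^2_H$ is (the reduction of) the rank-one slope-$2$ Hodge piece, it lies in $\bar H_{>1}$. For (iii), Mazur's theorem identifies the deepest conjugate step $F^2_C$ with the image of $\varphi\bmod p$ acting on $\mathrm{gr}^0$ of the Hodge filtration, i.e. with $\mathrm{im}(\varphi\bmod p)$; since $\varphi(H_1)\subseteq pH_1$ and $\varphi(H_{>1})\subseteq pH_{>1}$, this image is contained in $\bar H_{<1}$. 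Granting (i)--(iii), we conclude: $F^2_H+F^2_C\subseteq\bar H_{>1}\oplus\bar H_{<1}$, which meets $\bar H_1$ only in $0$, while the image of $c_1^{\dR}\otimes k$ lies in $\bar H_1$; hence the intersection is trivial, and the injectivity makes $c_1^{\dR}\otimes k$ an embedding with this property.

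I expect the main obstacle to be assembling the precise compatibility statements in (ii) and (iii) — that the Newton--Hodge splitting is simultaneously compatible with the Hodge filtration and that the bottom of the conjugate filtration is read off from $\varphi\bmod p$ as in Mazur's theorem — together with keeping track of the small-characteristic hypotheses under which these hold (the relevant results of Ogus and Illusie are cleanest for $p$ odd, so either the prime $2$ is handled separately or one works under the running assumptions of \cite{Ogus78}). Once these structural facts are in place, the proposition is just a dimension count inside the fixed direct-sum decomposition $\H^2_{\dR}(X)=\bar H_{<1}\oplus\bar H_1\oplus\bar H_{>1}$.
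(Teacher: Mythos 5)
Your argument is correct, but it takes a different route from the paper. The paper never leaves de Rham cohomology: it uses the exact sequence $0\to\H^0(X,B\Omega^2_X)\to\H^1(X,Z\Omega^1_X)\to\H^1(X,\Omega^1_X)$ together with the identifications of Lemma \ref{lem:some natural identifications} (so that $F^2_H\cap F^1_C$ and $F^1_H\cap F^2_C$ become $\H^0(X,B\Omega^2_X)$ and $\H^1(X,B\Omega^1_X)$), and then quotes Katsura--van der Geer \cite[Proposition 10.2]{MR1776939}, which says that for finite height the image of $c_1\otimes k$ in $\H^1(X,\Omega^1_X)$ meets the image of $\H^1(X,B\Omega^1_X)$ trivially; a short diagram chase (writing $v=v_H+v_C$ with $v_H\in F^2_H$, $v_C\in F^1_H\cap F^2_C$ and projecting to $F^1_H/F^2_H$) then gives the result from the injectivity of $c_1\otimes k$. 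You instead work crystallinely, using the Newton--Hodge splitting $H=H_{<1}\oplus H_1\oplus H_{>1}$ and Mazur's theorem to place $\mathrm{im}(c_1^{\dR}\otimes k)$ in $\bar H_1$ and $F^2_H$, $F^2_C$ in $\bar H_{>1}$, $\bar H_{<1}$ respectively; all three containments check out ($\varphi c_1^{\mathrm{cris}}=pc_1^{\mathrm{cris}}$ forces the Chern class into the slope-$1$ summand, $\varphi^{-1}(p^2H_{<1}\oplus p^2H_1)\subseteq pH$ gives (ii), and $F^2_C=\mathrm{im}(\varphi\bmod p)$ gives (iii)). This is very much in the spirit of the reference the paper itself gives for Proposition \ref{prop:inj for c 1 tensor k} (\cite[Remark 1.9]{Ogus78}), and in effect you make that remark explicit and push it one step further. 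What your route buys is a single, conceptually transparent source for the finite-height input (the slope decomposition) at the cost of importing Katz's integral decomposition theorem and Mazur's theorem; the paper's route stays elementary and sheaf-theoretic but outsources the key finite-height fact to \cite{MR1776939}. The only points to nail down are the ones you already flag: that Katz's Newton--Hodge decomposition over a perfect field is a genuine direct sum at a breakpoint lying on the Hodge polygon, and that Mazur's theorem applies to K3 surfaces in every characteristic (it does, since $\H^*_{\mathrm{cris}}$ is torsion-free and Hodge--de Rham degenerates, with no restriction on $p$ in the Berthelot--Ogus formulation).
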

\begin{proof}
    Consider the commutative diagram
  \[
    \begin{tikzcd}
    &&\Pic(X)\otimes_{\mathbf{Z}} k\arrow{d}[swap]{c_1^{\dR}\otimes k}\arrow{dr}{c_1\otimes k}&\\
    0\arrow{r}&\H^0(X,B\Omega^2_X)\arrow{r}{a}&\H^1(X,Z\Omega^1_X)\arrow{r}&\H^1(X,\Omega^1_X)\\
    &&\H^1(X,B\Omega^1_X)\arrow[hook]{u}{b}\arrow{ur}[swap]{c}
    \end{tikzcd}
  \]
  where the row is exact and $b$ and $c$ are induced by the natural inclusions of sheaves. Under the identifications of Lemma \ref{lem:some natural identifications}, we have $\H^1(X,Z\Omega^1_X)=F^1_H\cap F^1_C$, the image of $a$ is $F^2_H\cap F^1_C$, and the image of $b$ is $F^1_H\cap F^2_C$.
  
  We now use the assumption that $h<\infty$. By \ref{prop:inj for c 1 tensor k}, $c_1^{\dR}\otimes k$ and $c_1\otimes k$ are injective. Furthermore, by \cite[Proposition 10.2]{MR1776939}, the image of $c_1\otimes k$ has trivial intersection with the image of $c$. We conclude that the image of $c_1^{\dR}\otimes k$ has trivial intersection with $F^2_H+F^2_C$.
\end{proof}

\begin{remark}
  If $X$ has finite height, then one can strengthen \ref{prop:inj for c 1 tensor k} to show that the maps $\dlog^{\dR}\otimes k$ and $\dlog\otimes k$ are injective. It is also true that the image of $\dlog^{\dR}\otimes k$ has trivial intersection with $F^2_H+F^2_C$.
\end{remark}

\subsection{The supersingular case}
The preceding results \ref{prop:inj for c 1 tensor k}, \ref{prop:trivial intersection finite height case} are false if $X$ is supersingular. In fact, in this case the map $c_1^{\dR}\otimes k$ is never injective, and furthermore the subspace $F^2_H+F^2_C$ is even contained in the image of $c_1^{\dR}\otimes k$. To explain this situation, we recall some results of Ogus \cite{Ogus78}.

Suppose that $X$ is supersingular with Artin invariant $\sigma_0$. By the Tate conjecture for supersingular K3 surfaces, $\Pic(X)$ is a $\mathbf{Z}$-lattice of rank 22.\footnote{If we wish to avoid the use of the Tate conjecture, we may replace $\Pic(X)$ with $\H^2(X,\mathbf{Z}_p(1))$ and replace $c_1^{\dR}\otimes k$ with the natural map $\H^2(X,\mathbf{Z}_p(1))\otimes k\to\H^2_{\dR}(X)$.} Write $\varphi:\Pic(X)\otimes k\to\Pic(X)\otimes k$ for the bijective map given by $v\otimes\lambda\mapsto v\otimes\lambda^p$. The map $c_1^{\dR}\otimes k$ factors through $F^1_H\cap F^1_C$, and thus for dimension reasons cannot be injective. Its kernel is equal to $\varphi(K)$ for some subspace $K\subset\Pic(X)\otimes k$. We have an exact sequence
\[
    0\to\varphi(K)\to\Pic(X)\otimes k\xrightarrow{c_1^{\dR}\otimes k}\H^2_{\dR}(X/k).
\]
The subspace $K$ is the \textit{characteristic subspace} associated to $X$, and plays a central role in the theory of supersingular K3 surfaces. The following result is due to Ogus.
\begin{lemma}\label{lem:char sub properties}
    The subspace $K\subset\Pic(X)\otimes k$ has the following properties.
  \begin{enumerate}
      \item[{\rm (1)}] $\dim_k K=\sigma_0$
      \item[{\rm (2)}] $\dim_k \,K+\varphi(K)=\sigma_0+1$
      \item[{\rm (3)}] $\dim_k \sum_{i\geq 0} \varphi^i(K)=2\sigma_0$
  \end{enumerate}
\end{lemma}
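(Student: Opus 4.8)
The plan is to recall the linear-algebraic structure underlying supersingular K3 crystals. First I would translate the statement into a statement about the $K3$ crystal $H^2_{\mathrm{cris}}(X/W)$ together with its Frobenius $F$. The point is that $c_1^{\dR}\otimes k$ is the reduction mod $p$ of the crystalline cycle class map, and its kernel is governed by how the Tate module $T_p\,\Pic(X)\cong H^2(X,\mathbf{Z}_p(1))$ sits inside the crystal. Concretely, following Ogus (\cite[\S3]{Ogus78}), one has the Tate module $T:=H^2(X,\mathbf{Z}_p(1))$, a $\mathbf{Z}_p$-lattice of rank $22$ with a perfect symmetric pairing, sitting inside $H:=H^2_{\mathrm{cris}}(X/W)$ in such a way that $T\otimes_{\mathbf{Z}_p}W\xrightarrow{\sim} H$ as $W$-modules (this is precisely the supersingular condition: $F$ acts semilinearly and $p^{-1}F$ preserves $T\otimes W$). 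The characteristic subspace $K\subset T\otimes k=\Pic(X)\otimes k$ is then defined as the kernel of the induced map to $H^2_{\dR}(X)=H/pH$; equivalently, writing $\overline{T}=T\otimes k$ with the semilinear operator $\overline{\varphi}$ coming from $p^{-1}F$, the subspace $K$ is characterized by $\overline{T}\supset K=\overline{\varphi}(K)^{\perp}$ together with the nondegeneracy constraints recorded in Ogus's axioms for a characteristic subspace (\cite[Definition 3.19, Theorem 3.20]{Ogus78}).

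Granting that dictionary, all three claims are formal consequences of Ogus's structure theorem. For (1): by definition a characteristic subspace of the $\sigma_0$-dimensional... rather, $K$ has dimension $\sigma_0$ because $\dim_k K$ equals the Artin invariant — this is exactly how Ogus defines the characteristic subspace, or one extracts it from the relation $K^{\perp}=\overline{\varphi}(K)$ in the rank-$22$ space $\overline{T}$ together with $\dim(K\cap\overline{\varphi}(K))$ being as small as possible; the totally isotropic condition forces $\dim K\le 11$ and the discriminant computation $\mathrm{disc}\,T=-p^{2\sigma_0}$ pins it to $\sigma_0$. For (2): $K$ and $\overline{\varphi}(K)$ are both $\sigma_0$-dimensional, and the defining property of a characteristic subspace (strictness, in Ogus's terminology) is exactly that $\dim_k(K\cap\overline{\varphi}(K))=\sigma_0-1$, whence $\dim_k(K+\overline{\varphi}(K))=\sigma_0+1$. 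For (3): one argues inductively that $\dim_k\sum_{i=0}^{m}\overline{\varphi}^i(K)$ increases by exactly $1$ at each step until it stabilizes; the stabilization value is $2\sigma_0$ because $\sum_i\overline{\varphi}^i(K)$ is the smallest $\overline{\varphi}$-stable subspace containing $K$, and its orthogonal complement is $\bigcap_i\overline{\varphi}^i(K^{\perp})=\bigcap_i\overline{\varphi}^{i+1}(K)$, which by the same stabilization has dimension $22-2\sigma_0$; here one uses that $\overline{\varphi}$ is bijective (it is induced from the $W$-linear iso $p^{-1}F$) so that $\bigcap_i\overline{\varphi}^i(\cdot)$ and $\sum_i\overline{\varphi}^i(\cdot)$ are swapped under $\perp$. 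These are precisely \cite[Corollary 3.21]{Ogus78} (or the discussion surrounding it), so the cleanest route is to cite Ogus and merely indicate the translation.

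The main obstacle, as I see it, is not any single computation but making the identification between the "geometric" kernel $K=\ker(c_1^{\dR}\otimes k)$ used here and Ogus's "crystalline" characteristic subspace clean and correct — in particular checking that the $\overline{\varphi}$ appearing in Ogus matches the operator $v\otimes\lambda\mapsto v\otimes\lambda^p$ twisted appropriately by Frobenius, and that the perfect pairing on $\Pic(X)\otimes k$ under which one takes orthogonal complements is the one compatible with the cup product on $H^2_{\dR}$. Once that bookkeeping is in place, (1)--(3) are immediate from the axioms of a characteristic subspace together with the fact that $F^2_H\cap F^1_C$, $F^1_H\cap F^2_C$, etc., have the dimensions recorded in Lemma~\ref{lem:some natural identifications}; indeed the reader may already expect (2) since $\dim_k(F^2_H\cap F^1_C)$ and $\dim_k(F^1_H\cap F^2_C)$ control exactly the "extra" dimension in $K+\overline{\varphi}(K)$. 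So the proof I would write is: recall Ogus's setup in one paragraph, note the translation, and then deduce (1)--(3) either by direct citation of \cite[\S3]{Ogus78} or by the short orthogonality/stabilization argument sketched above.
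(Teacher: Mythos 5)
Your main route matches the paper exactly: the paper's entire proof is the citation to Ogus (\cite[Proposition 3.12.2, 3.12.3]{Ogus78}), together with the remark that the standing hypothesis $p\neq 2$ in loc.\ cit.\ is not used there. One caution about your self-contained sketch, though: the relation $K=\varphi(K)^{\perp}$ cannot hold in the rank-$22$ space $\Pic(X)\otimes k$ for dimension reasons ($\dim K=\sigma_0\leq 10$ while $\dim\varphi(K)^{\perp}=22-\sigma_0$); in Ogus's setup the characteristic subspace is a totally isotropic subspace of dimension $\sigma_0$ (so $K=K^{\perp}$) inside the $2\sigma_0$-dimensional discriminant space $(T^{*}/T)\otimes k$, and the orthogonality/stabilization argument must be run there rather than in $\Pic(X)\otimes k$ — so the citation really is the safe route.
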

\begin{proof}
    This follows from \cite[Proposition 3.12.2, 3.12.3]{Ogus78}. We note that, while loc. cit. has a standing assumption that $p\neq 2$, this is not used in the proof of the cited result.
\end{proof}

\begin{remark}
  We mention two other approaches to the characteristic subspace $K$, complementing Ogus's crystalline methods. Nygaard \cite{MR596878} has given an interpretation for $K$ using de Rham-Witt cohomology. Katsura-Van der Geer give an elementary proof of the above properties for $K$ \cite[\S 11]{MR1776939} (in the notation of loc. cit., the subspace $U_i\subset\Pic(X)\otimes k$ is equal to $\varphi(K)\cap\dots\cap\varphi^i(K)$ if $i\geq 1$, and $U_0=K+\varphi(K)$).
\end{remark}

\begin{lemma}\label{lem:char subspace lemma}
    For each $0\leq i\leq\sigma_0$, we have
    \[
        \dim_k(K+\varphi(K)+\dots+\varphi^i(K))=\sigma_0+i
    \]
    and
    \[
        \dim_k(K\cap\varphi(K)\cap\dots\cap\varphi^{i}(K))=\sigma_0-i.
    \]
\end{lemma}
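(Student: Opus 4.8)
Set $W_i:=K+\varphi(K)+\dots+\varphi^i(K)$ and $D_i:=K\cap\varphi(K)\cap\dots\cap\varphi^i(K)$, so that the assertions to be proved are $\dim_k W_i=\sigma_0+i$ and $\dim_k D_i=\sigma_0-i$ for $0\le i\le\sigma_0$. Since $\varphi$ is bijective it commutes with sums and, being injective, with intersections, so that $W_{i+1}=W_i+\varphi(W_i)$ and $D_{i+1}=D_i\cap\varphi(D_i)$. The plan is to deduce the statement about $W_i$ purely formally from Lemma~\ref{lem:char sub properties}, and then to obtain the statement about $D_i$ from it.

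For the first statement, the essential observation is that $\varphi(W_{i-1})=\varphi(K)+\dots+\varphi^i(K)$ is contained in both $W_i$ and $\varphi(W_i)$, hence in $W_i\cap\varphi(W_i)$. Therefore
\[
\dim_k W_{i+1}-\dim_k W_i=\dim_k W_i-\dim_k\big(W_i\cap\varphi(W_i)\big)\le\dim_k W_i-\dim_k W_{i-1},
\]
so the successive differences $d_i:=\dim_k W_{i+1}-\dim_k W_i$ form a non-increasing sequence of non-negative integers. By Lemma~\ref{lem:char sub properties}(2) we have $d_0=1$, whence $d_i\in\{0,1\}$ for all $i$; and since the $W_i$ form an increasing chain inside $\Pic(X)\otimes k$ we get $\sum_{i\ge0}d_i=\dim_k\big(\sum_{j\ge0}\varphi^j(K)\big)-\dim_k K=2\sigma_0-\sigma_0=\sigma_0$ by parts (3) and (1) of that lemma. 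A non-increasing $\{0,1\}$-valued sequence with sum $\sigma_0$ consists of $\sigma_0$ ones followed by zeros, so $\dim_k W_i=\sigma_0+i$ for $0\le i\le\sigma_0$ (and $=2\sigma_0$ afterward).

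For the second statement I would first bound $\dim_k D_i$ from below: since $D_i\subseteq\varphi^i(K)$ we have $D_i+\varphi^{i+1}(K)\subseteq\varphi^i(K)+\varphi^{i+1}(K)=\varphi^i\big(K+\varphi(K)\big)$, a space of dimension $\sigma_0+1$; hence $\dim_k D_{i+1}=\dim_k\big(D_i\cap\varphi^{i+1}(K)\big)=\dim_k D_i+\sigma_0-\dim_k\big(D_i+\varphi^{i+1}(K)\big)\ge\dim_k D_i-1$, and with $\dim_k D_0=\sigma_0$ this gives $\dim_k D_i\ge\sigma_0-i$. For the reverse inequality, note that if $D_{i+1}=D_i$ for some $i$ then $D_i\subseteq\varphi^{i+1}(K)$, and a short induction (using $D_i\cap\varphi(D_i)\subseteq D_{i+2}$) shows $D_j=D_i$ for all $j\ge i$, so that $D_i=\bigcap_{j\ge0}\varphi^j(K)$. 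Granting that $\bigcap_{j\ge0}\varphi^j(K)=0$, the chain $D_0\supseteq D_1\supseteq\cdots$ must therefore strictly decrease until it reaches $0$, dropping by exactly one dimension at each step; combined with $\dim_k D_0=\sigma_0$ this yields $\dim_k D_i=\sigma_0-i$ for $0\le i\le\sigma_0$. Equivalently and more symmetrically, one may use that $K$ is totally isotropic and equal to its own orthogonal complement inside the non-degenerate $2\sigma_0$-dimensional orthogonal space $\sum_{j\ge0}\varphi^j(K)$; then each $\varphi^j(K)$ is likewise maximal isotropic, so $D_i^{\perp}=W_i$ in that space and $\dim_k D_i=2\sigma_0-\dim_k W_i$ directly.

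The hard part is exactly the input invoked at the end of the second step — that $\bigcap_{j\ge0}\varphi^j(K)=0$, equivalently the isotropy/self-duality of $K$. This is genuinely more than Lemma~\ref{lem:char sub properties} provides: the three numerical identities (1)--(3) are satisfied by semilinear configurations (already in dimension $4$, with $\sigma_0=2$) that contain a nonzero $\varphi$-stable subspace, and for those the conclusion of the present lemma fails. So the argument must appeal to the finer structure of characteristic subspaces established in Ogus's work \cite{Ogus78}, not merely to the three facts recorded above; everything else is elementary linear algebra over $k$ with the Frobenius-semilinear operator $\varphi$.
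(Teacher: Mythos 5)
Your proof of the first claim is correct and complete, and it uses exactly the same three inputs from Lemma \ref{lem:char sub properties} as the paper, organized differently: the paper inducts on $i$ and rules out $\dim_k(W_{i+1}/W_i)=0$ for $i<\sigma_0$ by noting that equality would force $W_i=\sum_{j\ge 0}\varphi^j(K)$ and hence $\sigma_0+i=2\sigma_0$, whereas you show the increments $d_i$ are non-increasing, start at $1$, and sum to $\sigma_0$. Both arguments are valid; yours avoids the contradiction step and makes the monotonicity explicit.

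On the second claim your diagnosis is accurate, and it applies equally to the paper's own proof, which handles the intersections with the single sentence ``The second claim is similar.'' The dual induction gives only $\dim_k D_{i+1}\ge\dim_k D_i-1$, and to rule out $D_{i+1}=D_i$ for $i<\sigma_0$ one must know that $K$ contains no nonzero $\varphi$-stable subspace, i.e.\ $\bigcap_{j\ge0}\varphi^j(K)=0$; this is not a formal consequence of Lemma \ref{lem:char sub properties}. Your claimed counterexample is real: in $V=k^4$ with $\varphi$ acting by $p$th powers on coordinates in a basis $f_1,\dots,f_4$, take $K=\langle f_1,v\rangle$ with $v=(0,a,b,c)$ and $a,b,c$ linearly independent over $\mathbf{F}_p$. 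Then properties (1)--(3) hold with $\sigma_0=2$ and the first claim holds, but $K\cap\varphi(K)\cap\varphi^2(K)=\langle f_1\rangle\neq 0$. The missing input is Ogus's structural result that $K$ is a characteristic subspace: it is totally isotropic of dimension $\sigma_0$ for a non-degenerate form on $M\otimes k=\sum_{j\ge0}\varphi^j(K)$ compatible with $\varphi$, so that $K^{\perp}=K$ and $\varphi^j(K)^{\perp}=\varphi^j(K)$. Granting this, your orthogonal-complement argument $D_i^{\perp}=W_i$ is the cleanest way to finish and reduces the second claim to the first. So your proposal does not give a self-contained proof of the second claim from the inputs recorded in the paper, but no such proof exists; the right repair is to record the isotropy of $K$ from \cite{Ogus78} alongside Lemma \ref{lem:char sub properties} and conclude by duality, exactly as you suggest.
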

\begin{proof}
    We prove the first claim. Write $Z_i=K+\varphi(K)+\dots+\varphi^i(K)$. We induct on $i$. The case $i=0$ is true by assumption. For the induction step, consider the quotient $Z_{i+1}/Z_{i}$. We will show that if $i<\sigma_0$ then $\dim_k(Z_{i+1}/Z_i)=1$. We have $\dim(\varphi^{i}(K)+\varphi^{i+1}(K))=\sigma_0+1$, so $\dim_k(Z_{i+1}/Z_{i})$ is either 0 or 1. In the former case, we have $Z_{i}=Z_{i+1}$, so $\varphi^{i+1}(K)\subset Z_{i}$, and therefore $V\otimes k=\sum_{j\geq 0}\varphi^j(K)=Z_{i}$. By induction, $Z_i$ has dimension $\sigma_0+i$, so $i=\sigma_0$. Thus, if $i<\sigma_0$, we have $\dim_k(Z_{i+1}/Z_i)=1$. The second claim is similar.
\end{proof}

The subspace $\sum_i\varphi^i(K)$ is fixed by $\varphi$, and hence is equal to $M\otimes k$ for some $\mathbf{F}_p$-subspace $M\subset\Pic(X)\otimes\mathbf{F}_p$. In making computations, it is helpful to choose a basis of $M\otimes k$ which is adapted to $K$. By Lemma \ref{lem:char subspace lemma}, the subspace $\varphi^{-\sigma_0+1}(K)\cap\dots\cap K$ has dimension 1. Let $e$ be a generator, and set $e_i=\varphi^i(e)$. It follows that, for each $0\leq b\leq \sigma_0$, the vectors 
\[
    \left\{e_0,\dots,e_{\sigma_0+b-1}\right\}
\]
are linearly independent, and form a basis for $K+\varphi(K)+\dots+\varphi^b(K)$. In particular, $\left\{e_0,e_1,\dots,e_{\sigma_0-1}\right\}$ is a basis for $K$, and $\left\{e_0,e_1,\dots,e_{2\sigma_0-1}\right\}$ is a basis for $M\otimes k$.
We refer to such a vector $e$ as a \textit{characteristic vector} for $K$. This construction is due to Ogus; see \cite[pg. 33]{Ogus78}.

We define a sequence of subspaces
\[
    0= V_0\subset V_1\subset V_2\subset\dots\subset\H^2_{\dR}(X)
\]
by setting $V_0=0$ and
\[
    V_i:=\text{Im}(K+\varphi(K)+\dots+\varphi^i(K)\xrightarrow{c_1^{\dR}\otimes k}\H^2_{\dR}(X/k))
\]
for $i\geq 1$. Thus, $c_1^{\dR}\otimes k$ induces an isomorphism
\[
    (K+\varphi(K)+\dots+\varphi^i(K))/\varphi(K)\iso V_i.
\]
By Lemma \ref{lem:char subspace lemma}, we have $\dim_kV_i=i$ for $1\leq i\leq \sigma_0$, and $V_{\sigma_0}=V_{\sigma_0+j}$ for all $j\geq 0$. 

The following result gives a cohomological interpretation for $V_1$ and $V_2$. We will use the commuting squares
\[  	
    \begin{tikzcd}
  		\Pic(X)\otimes k\arrow{r}{\id}\arrow{d}[swap]{c_1^{\dR}\otimes k}&\Pic(X)\otimes k\arrow{d}{c_1\otimes k}\\
 		 \H^1(X,Z\Omega^1_X)\arrow{r}{1}&\H^1(X,\Omega^1_X)
  	\end{tikzcd}
  	\hspace{1cm}
  	\begin{tikzcd}
  	  \Pic(X)\otimes k\arrow{r}{\varphi^{-1}}\arrow{d}[swap]{c_1^{\dR}\otimes k}&\Pic(X)\otimes k\arrow{d}{c_1\otimes k}\\
  	  \H^1(X,Z\Omega^1_X)\arrow{r}{C}&\H^1(X,\Omega^1_X).
  	\end{tikzcd}
\]

\begin{lemma}\label{lem:cohomological interpretation}
    We have $V_1=F^2_H$ and $V_2=F^2_H+F^2_C$.
\end{lemma}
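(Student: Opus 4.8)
The plan is to argue entirely inside $F^1_H\cap F^1_C=\H^1(X,Z\Omega^1_X)$, through which $c_1^{\dR}\otimes k$ factors, using the two commuting squares displayed just above: the tautological one, which says that $c_1\otimes k$ is $c_1^{\dR}\otimes k$ postcomposed with the inclusion $1\colon F^1_H\cap F^1_C\to F^1_H/F^2_H$, and the Cartier-twisted one, $C\circ(c_1^{\dR}\otimes k)=(c_1\otimes k)\circ\varphi^{-1}$. The other inputs I would use are: the identification $\ker(c_1^{\dR}\otimes k)=\varphi(K)$; the dimension count $\dim_kV_i=i$ for $1\leq i\leq\sigma_0$ recorded above via Lemma~\ref{lem:char subspace lemma}; the facts $\ker(1)=F^2_H\cap F^1_C$ and $\ker(C)=F^1_H\cap F^2_C$; and the observation that a supersingular $X$ is in particular non-ordinary, so $F^2_H\cap F^1_C\neq 0$ and $F^1_H\cap F^2_C\neq 0$ --- hence, since $F^2_H$ and $F^2_C$ are one-dimensional, $F^2_H\cap F^1_C=F^2_H$ and $F^1_H\cap F^2_C=F^2_C$.

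First I would prove $V_1=F^2_H$. Since $c_1\otimes k$ factors through $c_1^{\dR}\otimes k$, which kills $\varphi(K)$, the map $c_1\otimes k$ vanishes on $\varphi(K)$. Now apply the Cartier-twisted square to $x\in\varphi(K)$: it gives $C\bigl((c_1^{\dR}\otimes k)(x)\bigr)=(c_1\otimes k)(\varphi^{-1}(x))$, and since $(c_1^{\dR}\otimes k)(x)=0$ this forces $(c_1\otimes k)(\varphi^{-1}(x))=0$; letting $x$ range over $\varphi(K)$, so that $\varphi^{-1}(x)$ ranges over $K$, we get that $c_1\otimes k$ vanishes on $K$. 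Hence $1$ kills $V_1=(c_1^{\dR}\otimes k)(K)$, i.e. $V_1\subseteq\ker(1)=F^2_H\cap F^1_C=F^2_H$; and as $\dim_kV_1=1=\dim_kF^2_H$, equality holds.

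Next I would prove $V_2=F^2_H+F^2_C$. Because $\varphi(K)\subseteq\ker(c_1^{\dR}\otimes k)$, I can discard the middle summand and write $V_2=(c_1^{\dR}\otimes k)\bigl(K+\varphi(K)+\varphi^2(K)\bigr)=V_1+(c_1^{\dR}\otimes k)(\varphi^2(K))$. Applying the Cartier-twisted square to $x\in\varphi^2(K)$ gives $C\bigl((c_1^{\dR}\otimes k)(x)\bigr)=(c_1\otimes k)(\varphi^{-1}(x))$ with $\varphi^{-1}(x)\in\varphi(K)$, and $c_1\otimes k$ vanishes on $\varphi(K)$ by the previous step, so $(c_1^{\dR}\otimes k)(\varphi^2(K))\subseteq\ker(C)=F^1_H\cap F^2_C=F^2_C$. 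It remains to see that this subspace is nonzero, hence all of the line $F^2_C$: were it zero, we would have $\varphi^2(K)\subseteq\ker(c_1^{\dR}\otimes k)=\varphi(K)$, hence $\varphi(K)=K$ by comparing dimensions, hence $K$ would be $\varphi$-stable and equal to $\sum_{i\geq 0}\varphi^i(K)$, contradicting $\dim_kK=\sigma_0<2\sigma_0=\dim_k\sum_{i\geq 0}\varphi^i(K)$ from Lemma~\ref{lem:char sub properties}. So $(c_1^{\dR}\otimes k)(\varphi^2(K))=F^2_C$, and with $V_1=F^2_H$ this gives $V_2=F^2_H+F^2_C$.

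I expect the only real obstacle to be conceptual rather than computational: recognizing that it is precisely the kernel $\varphi(K)$ of $c_1^{\dR}\otimes k$, fed through the Cartier-twisted square, that produces the vanishing trapping $V_1$ inside the line $F^2_H$ --- and then, one step further out, trapping $(c_1^{\dR}\otimes k)(\varphi^2(K))$ inside $F^2_C$; after that, the argument is just dimension bookkeeping for the characteristic subspace. I would also record that this covers the superspecial case $\sigma_0=1$ without change: there $F^2_H=F^2_C$ and $V_1=V_2$, the asserted equalities read $V_1=F^2_H=F^2_H+F^2_C$, and the step for $V_2$ still applies verbatim (the inequality $\sigma_0<2\sigma_0$ still forces $(c_1^{\dR}\otimes k)(\varphi^2(K))=F^2_C=F^2_H$).
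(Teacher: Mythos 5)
Your proof is correct and follows essentially the same route as the paper's: both feed $\varphi(K)$ (resp.\ $\varphi^2(K)$) through the Cartier-twisted commuting square to trap $V_1$ in $\ker(1)=F^2_H$ and the image of $\varphi^2(K)$ in $\ker(C)=F^2_C$, then finish by dimension counting. You merely phrase the argument with whole subspaces where the paper picks single elements $e\in\varphi(K)\setminus\varphi^2(K)$ and $f\in\varphi^2(K)\setminus\varphi(K)$, and you make explicit a couple of points (the nonvanishing of $(c_1^{\dR}\otimes k)(\varphi^2(K))$ and the superspecial case) that the paper leaves implicit.
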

\begin{proof}
    Let $e$ be an element of $\varphi(K)$ which is not in $\varphi^2(K)$. We then have $c_1^{\dR}\otimes k(e)=0$. Using the above commuting square involving $C$, we have $c_1\otimes k(\varphi^{-1}(e))=0$. Because $\varphi^{-1}(e)\notin\varphi(K)$, $c_1^{\dR}\otimes k(\varphi^{-1}(e))$ is a nonzero element of the kernel of the projection $\H^1(X,Z\Omega^1_X)\to\H^1(X,\Omega^1_X)$, which is $F^2_H$. It follows that $V_1=F^2_H$. Similarly, let $f$ be an element of $\varphi^2(K)$ which is not in $\varphi(K)$. We have that $c_1^{\dR}\otimes k(f)$ is a nonzero element of the kernel of $C:\H^1(X,Z\Omega^1_X)\to\H^1(X,\Omega^1_X)$, which is $F^2_C$. We conclude that $c_1\otimes k$ induces an isomorphism
    \[
        (\varphi(K)+\varphi^2(K))/\varphi(K)\iso F^2_C.
    \]
    Thus, we have $V_2=F^2_H+F^2_C$.
\end{proof}



We have the following result.

\begin{proposition}\label{prop:trivial intersection supersingular case}
  Let $X$ be a supersingular K3 surface with $\sigma_0(X)\geq 3$.
    Let $L$ be a line bundle on $X$. If $c_1^{\dR}(L)$ is contained in $F^2_H+F^2_C$, then $L$ is a $p$th power.
\end{proposition}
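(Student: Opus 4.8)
The plan is to rewrite the hypothesis in terms of the characteristic subspace $K$ and then run a short linear algebra argument using a characteristic vector. First I would invoke Lemma~\ref{lem:cohomological interpretation}, which identifies $F^2_H+F^2_C$ with $V_2=(c_1^{\dR}\otimes k)(K+\varphi(K)+\varphi^2(K))$. Since $\ker(c_1^{\dR}\otimes k)=\varphi(K)$ is contained in $K+\varphi(K)+\varphi^2(K)$, the full preimage $(c_1^{\dR}\otimes k)^{-1}(F^2_H+F^2_C)$ equals $K+\varphi(K)+\varphi^2(K)$. Hence the hypothesis $c_1^{\dR}(L)\in F^2_H+F^2_C$ is equivalent to the statement that $w:=[L]\otimes 1$, the image of $L$ in $\Pic(X)\otimes_{\mathbf{Z}}k$, lies in $K+\varphi(K)+\varphi^2(K)$. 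As $\Pic(X)$ is a $\mathbf{Z}$-lattice of rank $22$ (here I use the Tate conjecture, as in the discussion preceding Lemma~\ref{lem:char sub properties}), $L$ is a $p$th power precisely when $w=0$, so the goal becomes: show $w=0$.

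Next I would exploit that $w$ comes from $\Pic(X)$, hence is fixed by the Frobenius-semilinear operator $\varphi$ on $\Pic(X)\otimes k$. Fix a characteristic vector $e$ for $K$ and put $e_i=\varphi^i(e)$. By the adapted-basis construction recorded after Lemma~\ref{lem:char subspace lemma}, the vectors $e_0,\dots,e_{2\sigma_0-1}$ are linearly independent and $\{e_0,\dots,e_{\sigma_0+1}\}$ is a basis of $K+\varphi(K)+\varphi^2(K)$ (this uses $\sigma_0\ge 2$). The hypothesis $\sigma_0\ge 3$ enters here, to guarantee that $\{e_0,\dots,e_{\sigma_0+2}\}$ is still linearly independent, i.e.\ that $\sigma_0+2\le 2\sigma_0-1$. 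Writing $w=\sum_{i=0}^{\sigma_0+1}c_ie_i$ with $c_i\in k$, I would compute $\varphi(w)=\sum_{i=0}^{\sigma_0+1}c_i^p e_{i+1}$ and compare coefficients in the equation $w=\varphi(w)$ against the linearly independent family $\{e_0,\dots,e_{\sigma_0+2}\}$: this forces $c_0=0$ and $c_j=c_{j-1}^p$ for $1\le j\le\sigma_0+1$, whence $c_i=0$ for all $i$ by induction, so $w=0$.

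I do not expect any real obstacle here; the content is entirely in the two cited inputs. The conceptual step is the identification $F^2_H+F^2_C=V_2$ of Lemma~\ref{lem:cohomological interpretation}, which is what makes the translation to characteristic subspaces possible; the numerical dimension count $\dim_k(K+\varphi(K)+\dots+\varphi^i(K))=\sigma_0+i$ from Lemma~\ref{lem:char subspace lemma} is what the argument rests on, and the inequality $\sigma_0\ge 3$ is needed exactly to keep $e_{\sigma_0+2}$ outside $K+\varphi(K)+\varphi^2(K)$ --- indeed for $\sigma_0\le 2$ one has $K+\varphi(K)+\varphi^2(K)=M\otimes k$, which contains nonzero $\varphi$-fixed vectors, so the statement genuinely needs $\sigma_0\ge 3$. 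The one point to be careful about is that the hypothesis concerns $c_1^{\dR}$, which lands in $\H^1(X,Z\Omega^1_X)\subset\H^2_{\dR}(X)$ (where $F^2_H+F^2_C$ indeed lives when $X$ is supersingular), rather than the map $c_1$ to $\H^1(X,\Omega^1_X)$.
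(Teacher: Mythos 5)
Your proposal is correct and follows the paper's own proof essentially verbatim: translate the hypothesis via Lemma \ref{lem:cohomological interpretation} into $[L]\in K+\varphi(K)+\varphi^2(K)$, expand $[L]$ in the adapted basis $e_0,\dots,e_{\sigma_0+1}$, and apply $\varphi$ to get a relation among $e_0,\dots,e_{\sigma_0+2}$ that must be trivial when $\sigma_0\geq 3$. The only difference is that you spell out two small points the paper leaves implicit (that the preimage of $V_2$ is exactly $K+\varphi(K)+\varphi^2(K)$ because $\ker(c_1^{\dR}\otimes k)=\varphi(K)$ lies inside it, and the explicit coefficient comparison $c_0=0$, $c_j=c_{j-1}^p$), which is fine.
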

\begin{proof}
    If $c_1^{\dR}(L)$ is in $F^2_H+F^2_C$, then using Lemma \ref{lem:cohomological interpretation}, we deduce that $[L]\in K+\varphi(K)+\varphi^2(K)$. Let $e$ be a characteristic vector for $K$.
Write $[L]$ as a linear combination of $e_0,\dots,e_{\sigma_0+1}$. Applying $\varphi$ to both sides, we find a linear relation between the vectors $e_0,\dots,e_{\sigma_0+2}$. As $\sigma_0\geq 3$, this relation must be trivial, which implies $[L]=0$ as an element of $\Pic(X)\otimes k$.
\end{proof}


We now incorporate a flat cohomology class. Let $n$ be a positive integer. 
In~\eqref{eq:map15} we defined a map
\begin{equation}\label{eq:map15, yet again}
    \dlog:\H^2(X,\mu_n)\to\H^1(X,\Omega^1_X).
\end{equation}
If $n$ is coprime to $p$, this map is zero. If $n$ is divisible by $p$, then it fits into the commuting diagram
\begin{equation}\label{eq:a square for some flat coho dlogs}
    \begin{tikzcd}
        \H^2(X,\mu_n)\arrow{d}{\cdot n/p}\arrow{r}{\Upsilon}\arrow[bend left=25]{rr}{\dlog}&\H^1(X,\mathbf{G}_m/\mathbf{G}_m^{\times n})\arrow{d}\arrow{r}{\dlog}&\H^1(X,\Omega^1_X)\\
        \H^2(X,\mu_p)\arrow{r}{\sim}&\H^1(X,\mathbf{G}_m/\mathbf{G}_m^{\times p})\arrow{ur}[swap]{\dlog}
    \end{tikzcd}
\end{equation}
where the right vertical arrow is induced by the natural quotient map.



\begin{proposition}\label{prop:linearly indep}
  Suppose that $X$ has finite height, or that $X$ is supersingular with Artin invariant $\sigma_0\geq 3$. Let $n$ be a positive integer which is divisible by $p$. If $\alpha\in\H^2(X,\mu_n)$ is a class such that $p$ does not divide $\frac{n}{\ord(\alpha_{\Br})}$
  and $L$ is a line bundle on $X$ that is not a $p$th power, then $\dlog(\alpha)$ and $c_1(L)$ are nonzero and linearly independent in $\H^1(X,\Omega^1_X)$.
\end{proposition}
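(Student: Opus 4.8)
The plan is to show that $\dlog(\alpha)$ and $c_1(L)$ are each nonzero, and then that no nontrivial linear combination of them vanishes. For nonvanishing, I would argue as follows. Since $L$ is not a $p$th power, Proposition \ref{prop:dlog is nonzero sometimes} (in the finite height case, via Proposition \ref{prop:inj for c 1 tensor k}) gives $c_1(L)\neq 0$; in the supersingular case with $\sigma_0\geq 3$ the class $c_1(L)$ could in principle vanish, so it is cleaner to work one level up, with $c_1^{\dR}(L)\in\H^1(X,Z\Omega^1_X)$, which is nonzero because $\dlog^{\dR}$ is injective (Proposition \ref{prop:dlog is nonzero sometimes}) and $[L]\neq 0$ in $\Pic(X)/p$. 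For $\dlog(\alpha)$: the hypothesis that $p\nmid \frac{n}{\ord(\alpha_{\Br})}$ means that, after multiplying by $n/p$, the image of $\alpha$ in $\H^2(X,\mu_p)$ corresponds under the isomorphism of~\eqref{eq:a square for some flat coho dlogs} to a class in $\H^1(X,\mathbf{G}_m/\mathbf{G}_m^{\times p})$ which is nonzero — precisely because $\ord(\alpha_{\Br})$ carries a factor of $p$, so $\frac{n}{p}\alpha$ still has nontrivial image in the Brauer group, hence is not killed by the Kummer map, hence is not a $p$th power class. One then pulls $\dlog^{\dR}(\alpha)\in\H^1(X,Z\Omega^1_X)$ into de Rham cohomology via Proposition \ref{prop:dlog is nonzero sometimes}, which asserts injectivity of $\dlog^{\dR}$; so $\dlog^{\dR}(\alpha)\neq 0$, and in the finite height case the remark following Proposition \ref{prop:trivial intersection finite height case} gives that $\dlog(\alpha)\neq 0$ as well.

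For linear independence I would lift everything to $\H^2_{\dR}(X)$ and play off the subspaces $F^2_H$ and $F^2_C$. In the finite height case, Proposition \ref{prop:trivial intersection finite height case} says the image of $c_1^{\dR}\otimes k$ meets $F^2_H+F^2_C$ trivially, and similarly (by the remark following it) the image of $\dlog^{\dR}\otimes k$ meets $F^2_H+F^2_C$ trivially. The key structural input is that $\dlog^{\dR}(\alpha)$ and $c_1^{\dR}(L)$ both lie in $\H^1(X,Z\Omega^1_X)=F^1_H\cap F^1_C$, and that the difference $1-C$ (the map in~\eqref{eq:diagram33}) kills exactly the image of $\dlog^{\dR}$, while its behaviour on $c_1^{\dR}(L)$ is governed by the Frobenius. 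Concretely: suppose $a\,\dlog^{\dR}(\alpha)+b\,c_1^{\dR}(L)\in F^2_H$ (the analogue in $\H^1(X,\Omega^1_X)$ of a linear relation between $\dlog(\alpha)$ and $c_1(L)$, since $F^2_H$ is the kernel of $\H^1(X,Z\Omega^1_X)\to\H^1(X,\Omega^1_X)$). Applying $1-C$ and using that $\dlog^{\dR}(\alpha)$ is in the kernel of $1-C$ while $C\circ c_1^{\dR} = c_1^{\dR}\circ\varphi^{-1}$, one deduces a relation putting $b(c_1^{\dR}(L)-c_1^{\dR}(\varphi^{-1}[L]))$ into $F^2_C$ — combined with the triviality of the intersection of $\operatorname{im}(c_1^{\dR}\otimes k)$ with $F^2_H+F^2_C$, this forces $b[L]=b\varphi^{-1}[L]$ in $\Pic(X)\otimes k$, hence $b=0$ (as $[L]\neq0$ and $\varphi$ is not the identity on the line spanned by a non-$p$th-power class), and then $a=0$ by nonvanishing of $\dlog^{\dR}(\alpha)$.

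In the supersingular case the subspaces $F^2_H$ and $F^2_C$ are instead \emph{contained} in $\operatorname{im}(c_1^{\dR}\otimes k)$, so the argument is genuinely different and this is where I expect the real work to be. Here I would use the characteristic subspace $K$ and the adapted basis $e_0,e_1,\dots,e_{2\sigma_0-1}$ from Ogus's construction, together with Lemma \ref{lem:cohomological interpretation} identifying $V_1=F^2_H$ and $V_2=F^2_H+F^2_C$. Both $\dlog^{\dR}(\alpha)$ and $c_1^{\dR}(L)$ lie in $\operatorname{im}(c_1^{\dR}\otimes k)$, so they are represented by vectors in $\sum_i\varphi^i(K)=M\otimes k$; a linear relation pushing their combination into $F^2_H=V_1$ means, after pulling back along $c_1^{\dR}\otimes k$ (whose kernel is $\varphi(K)$), a relation $v\in K+\varphi(K)$ modulo $\varphi(K)$. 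The extra arithmetic constraint on $\alpha$ should force the $K$-component of the class of $\alpha$ to be "more generic" than that of $[L]$: since $\dlog$ kills $p$th powers but not the honest Brauer part, the class of $\dlog^{\dR}(\alpha)$, written in the $e_i$, involves coordinates that $\varphi$ shifts, and applying $\varphi$ once or twice to a putative relation among $e_0,\dots,e_{\sigma_0+1}$ yields a relation among $e_0,\dots,e_{\sigma_0+2}$, which must be trivial since $\sigma_0\geq 3$ (exactly as in the proof of Proposition \ref{prop:trivial intersection supersingular case}). Carrying this bookkeeping through carefully — keeping track of how $\varphi$ acts on the coordinates of $[L]$ versus those of the class underlying $\alpha$, and using the non-$p$th-power hypothesis to say $[L]\notin\varphi(K)$ — is the technical crux; the hypothesis $\sigma_0\geq 3$ is precisely what gives enough room (a span of dimension $\leq\sigma_0+3\leq 2\sigma_0$ stays inside $M\otimes k$, where relations are detected) for the Frobenius-shift trick to close the argument.
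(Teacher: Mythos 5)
There is a genuine gap at the heart of your linear-independence argument. The semilinear Cartier computation does not force the coefficient to vanish: from a relation $\dlog(\alpha)=\lambda c_1(L)$ one writes $\dlog^{\dR}(\alpha)=\lambda c_1^{\dR}(L)+\sigma$ with $\sigma\in F^2_H\cap F^1_C$, applies $1-C$, and (using that $C(\mu\eta)=\mu^{1/p}C(\eta)$ and $C(c_1^{\dR}(L))=c_1(L)$) concludes only that $(\lambda^p-\lambda)c_1^{\dR}(L)\in F^2_H+F^2_C$, hence, by the trivial-intersection input, that $\lambda^p=\lambda$, i.e.\ $\lambda\in\mathbf{F}_p$. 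Your step ``$b[L]=b\varphi^{-1}[L]$, hence $b=0$'' is incorrect: $\varphi$ fixes $[L]\otimes 1$, and the equation $b=b^{1/p}$ has all of $\mathbf{F}_p$ as solutions. The residual case of a nonzero $\mathbf{F}_p$-rational coefficient is exactly where the hypothesis that $p$ does not divide $\frac{n}{\ord(\alpha_{\Br})}$ must be used a second time (you use it only for nonvanishing): lifting $\lambda$ to $s\in\mathbf{Z}$, an $\mathbf{F}_p$-rational relation says that $\dlog$ kills the class of $\frac{n}{p}\alpha-\delta(L^{\otimes s})$ in $\H^1(X,\mathbf{G}_m/\mathbf{G}_m^{\times p})$; since $X$ is not superspecial this $\dlog$ is injective (Proposition \ref{prop:dlog is nonzero sometimes}), so $\frac{n}{p}\alpha=\delta(L^{\otimes s})$, which kills the Brauer class of $\frac{n}{p}\alpha$ and contradicts the order hypothesis. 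Without this arithmetic step the claim would simply be false: nothing in de Rham cohomology distinguishes $\dlog(\alpha)$ from $c_1(L)$ in the degenerate situation where $\frac{n}{p}\alpha=\delta(L)$.

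The supersingular case also needs repair. The paper's proof there is verbatim the finite-height argument with Proposition \ref{prop:trivial intersection finite height case} replaced by Proposition \ref{prop:trivial intersection supersingular case} (this is precisely what $\sigma_0\geq 3$ buys); no separate bookkeeping with the adapted basis $e_0,\dots,e_{2\sigma_0-1}$ is needed beyond what is already packaged into that proposition. Your alternative route rests on the unjustified premise that $\dlog^{\dR}(\alpha)$ lies in the image of $c_1^{\dR}\otimes k$; that image is the $k$-span of the algebraic classes, and there is no reason a class with nontrivial Brauer class should land in it --- indeed this is exactly the situation you are in. Finally, the worry that $c_1(L)$ might vanish for supersingular $X$ is unfounded: $\sigma_0\geq 3$ implies $X$ is not superspecial, so $c_1\otimes\mathbf{F}_p$ is injective by Proposition \ref{prop:dlog is nonzero sometimes}.
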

\begin{proof}
    Here, as usual, $\alpha_{\Br}$ denotes the image of $\alpha$ in $\Br(X)=\H^2(X,\mathbf{G}_m)$, and $\ord(\alpha_{\Br})$ is the order of $\alpha_{\Br}$. Suppose that $\dlog(\alpha)=\lambda c_1(L)\in\H^1(X,\Omega^1_X)$ for some scalar $\lambda$. We have
  \[
    \dlog^{\dR}(\alpha)=\lambda c_1^{\dR}(L)+\sigma
  \]
  as elements of $\H^1(X,Z\Omega^1_X)=F^1_H\cap F^1_C\subset\H^2_{\dR}(X)$, for some $\sigma\in F^2_H\cap F^1_C$ (here, $\dlog^{\dR}$ is the evident lift of~\eqref{eq:map15, yet again} to a map with target $\H^1(X,Z\Omega^1_X)$). We have a commuting diagram
  \[
    \begin{tikzcd}
      0\arrow{r}&\Pic(X)\otimes\mathbf{F}_p\arrow{r}\arrow[hook]{d}&\Pic(X)\otimes k\arrow{r}{\id\otimes(1-\varphi^{-1})}\arrow{d}{c_1^{\dR}\otimes k}&\Pic(X)\otimes k\arrow{d}{c_1\otimes k}\arrow{r}&0\\
      0\arrow{r}&\H^1(X,\mathbf{G}_m/\mathbf{G}_m^{\times p})\arrow{r}{\dlog^{\dR}}&\H^1(X,Z\Omega^1_X)\arrow{r}{1-C}&\H^1(X,\Omega^1_X)&
    \end{tikzcd}
  \]
  with exact rows. By the commutativity of the right hand square, we deduce that $(\lambda-\lambda^{1/p})c_1(L)=C(\sigma)$, and thus $C((\lambda^p-\lambda)c^{\dR}_1(L)-\sigma)=0$. The kernel of $C$ is $F^1_H\cap F^2_C$, so this implies that 
  \[
  (\lambda^p-\lambda)c^{\dR}_1(L)\in F^2_H+F^2_C.
  \]
  Applying \ref{prop:trivial intersection finite height case} or \ref{prop:trivial intersection supersingular case}, we have $(\lambda^p-\lambda)c^{\dR}_1(L)=0$. As $L$ is not a $p$th power, \ref{prop:dlog is nonzero sometimes} implies that $c_1^{\dR}(L)$ is nonzero. Hence, $\lambda^p-\lambda=0$, and therefore $\lambda\in\mathbf{F}_p$. Consider the commutative diagram
  \begin{equation}\label{eq:the diagram that makes the proof work}
  	\begin{tikzcd}
		\Pic(X)\arrow[equal]{r}\arrow{d}&\Pic(X)\arrow[two heads]{r}\arrow{d}[swap]{\delta}&\Pic(X)\otimes_{\mathbf{Z}}\mathbf{F}_p\arrow[hook]{d}\arrow{dr}{c_1\otimes\mathbf{F}_p}\\
		\H^2(X,\mu_n)\arrow{r}{\cdot\frac{n}{p}}\arrow{d}&
		\H^2(X,\mu_p)\arrow{r}{\sim}\arrow{d}&\H^1(X,\mathbf{G}_m/\mathbf{G}_m^{\times p})\arrow{r}[swap]{\dlog}&\H^1(X,\Omega^1_X)\\
		\Br(X)\arrow{r}{\cdot\frac{n}{p}}&\Br(X)&&
	\end{tikzcd}
  \end{equation}
  where the left columns are fragments of the long exact sequences induced by the Kummer sequences for $n$ and $p$. Using the commutative diagram~\eqref{eq:a square for some flat coho dlogs}, we see that the horizontal composition $\H^2(X,\mu_n)\to\H^1(X,\Omega^1_X)$ is the $\dlog$ map~\eqref{eq:map15}. Let $s\in\mathbf{Z}$ be a lift of $\lambda$. We have $\dlog(\alpha)=c_1([L^{\otimes s}])$. Because $X$ is not superspecial, the map $\dlog$ in~\eqref{eq:the diagram that makes the proof work} is injective (Proposition \ref{prop:dlog is nonzero sometimes}). It follows that $\frac{n}{p}\alpha=\delta([L^{\otimes s}])$, and therefore the image of $\frac{n}{p}\alpha$ in the Brauer group vanishes. This implies that $p$ divides $\frac{n}{\ord(\alpha_{\Br})}$, contrary to our assumption.
\end{proof}

 Combined with Proposition \ref{prop:smooth2}, we obtain the following result on the smoothness of universal deformation spaces.

\begin{theorem}\label{thm:formally smooth def space, one line bundle}
  Let $X$ be a K3 surface over $k$. Let $\alpha\in\H^2(X,\mu_n)$ be a class and let $L$ be a line bundle on $X$ which is not a $p$th power. Assume that one of the following holds.
  \begin{enumerate}
      \item $n$ is coprime to $p$ and either $h<\infty$ or $h=\infty$ and $\sigma_0\geq 2$.
      \item $p$ divides $n$, $p$ does not divide $\frac{n}{\ord(\alpha_{\Br})}$, and either $h<\infty$ or $h=\infty$ and $\sigma_0\geq 3$.
  \end{enumerate}
  The formal deformation space $\Def_{(X,\alpha,L)}$ is smooth over $W$.
\end{theorem}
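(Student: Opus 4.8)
The plan is to reduce the statement directly to Proposition~\ref{prop:smooth2}, whose two cases already match the two cases of the theorem; the only remaining work is to verify the linear-algebra hypothesis appearing there, and all of the substantive input for that has already been supplied by the de Rham cohomology analysis of \S\ref{sec:dlog map}.

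First I would dispose of case~(1). When $n$ is coprime to $p$, the map $\dlog\colon\H^2(X,\mu_n)\to\H^1(X,\Omega^1_X)$ of~\eqref{eq:map15} is zero, so the hypothesis required by Proposition~\ref{prop:smooth2} is simply that $c_1(L)\neq 0$ in $\H^1(X,\Omega^1_X)$. Since $L$ is not a $p$th power, its class in $\Pic(X)\otimes_{\mathbf{Z}}\mathbf{F}_p$ is nonzero, so it suffices to know that $c_1\otimes\mathbf{F}_p$ is injective. The condition ``$h<\infty$, or $h=\infty$ and $\sigma_0\geq 2$'' is precisely the condition that $X$ is not superspecial (a surface of finite height is not supersingular, hence not superspecial, and a supersingular surface is superspecial exactly when $\sigma_0=1$), so the injectivity of $c_1\otimes\mathbf{F}_p$ follows from Proposition~\ref{prop:dlog is nonzero sometimes}. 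Then Proposition~\ref{prop:smooth2} gives that $\Def_{(X,\alpha,L)}$ is formally smooth over $W$.

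Next I would treat case~(2). When $p$ divides $n$, the hypothesis demanded by Proposition~\ref{prop:smooth2} is that $c_1(L)$ and $\dlog(\alpha)$ be linearly independent in $\H^1(X,\Omega^1_X)$. But the assumptions in case~(2) --- that $X$ has finite height or is supersingular with $\sigma_0\geq 3$, that $p$ does not divide $\tfrac{n}{\ord(\alpha_{\Br})}$, and that $L$ is not a $p$th power --- are exactly the hypotheses of Proposition~\ref{prop:linearly indep}, which yields that linear independence (and in particular that both classes are nonzero). So again Proposition~\ref{prop:smooth2} applies. Finally, in either case, since $\Def_{(X,\alpha,L)}$ is pro-represented by a quotient of $W[[t_1,\dots,t_{20},s]]$ by two equations (as in~\eqref{eq:diagram1111} and the proof of Proposition~\ref{prop:smooth2}), formal smoothness over $W$ forces this quotient to be a power series ring over $W$, hence smooth over $W$.

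I do not expect a genuine obstacle in assembling this argument: the theorem is essentially a repackaging, and the real difficulty --- controlling, in the supersingular case, when $c_1^{\dR}(L)$ or $\dlog^{\dR}(\alpha)$ can lie in $F^2_H+F^2_C$, and hence when a nontrivial linear relation between $\dlog(\alpha)$ and $c_1(L)$ can occur --- has already been absorbed into Propositions~\ref{prop:trivial intersection finite height case}, \ref{prop:trivial intersection supersingular case}, and \ref{prop:linearly indep}. The only points needing a little care are the bookkeeping identifications noted above (that the stated numerical conditions in case~(1) amount to ``$X$ is not superspecial'') and the observation that the bound $\sigma_0\geq 3$ in case~(2), rather than $\sigma_0\geq 2$, is genuinely necessary: it is exactly what makes the three-term relation among $K,\varphi(K),\varphi^2(K)$ used in the proof of Proposition~\ref{prop:trivial intersection supersingular case} force vanishing.
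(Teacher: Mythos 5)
Your proposal is correct and follows exactly the route the paper intends: the theorem is stated as an immediate consequence of Proposition~\ref{prop:smooth2} combined with Proposition~\ref{prop:linearly indep} (for case~(2)) and Proposition~\ref{prop:dlog is nonzero sometimes} (for case~(1)), and your verification of the hypotheses — including the identification of the numerical conditions in case~(1) with ``$X$ is not superspecial'' — is the same bookkeeping the paper leaves implicit.
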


Combined with the algebraization result of Proposition \ref{prop:02}, Theorem \ref{thm:formally smooth def space, one line bundle} implies the existence of lifts (even over $W$) outside of a small locus of exceptional cases. We will treat the general case when $\Def_{(X,\alpha,L)}$ is not smooth using global methods in \S\ref{sec:moduli of twisted K3 surfaces}.

\subsection{Generalization to the case of multiple line bundles}\label{ssec:multiple line bundles}


We generalize the preceding results to the case of multiple line bundles. Let $X$ be a supersingular K3 surface.

\begin{proposition}\label{prop:multiple line bundles, 1}
  Let $L_1,\dots,L_m$ be line bundles on $X$ whose classes in $\Pic(X)\otimes\mathbf{F}_p$ generate a subspace of dimension $m$. Let $Q\subset \Pic(X)\otimes k$ be the $k$-vector space generated by the classes $[L_1],\dots,[L_m]$. If $i$ is a non-negative integer such that $\sigma_0\geq m+i$, then the subspace $Q$ has trivial intersection with $K+\varphi(K)+\dots+\varphi^i(K)$.
\end{proposition}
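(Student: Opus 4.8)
The plan is to run the argument from the proof of Proposition~\ref{prop:trivial intersection supersingular case} in a quantitatively sharper form. Fix a characteristic vector $e$ for $K$ and set $e_a=\varphi^a(e)$, so that $\varphi(e_a)=e_{a+1}$, the set $\{e_0,\dots,e_{2\sigma_0-1}\}$ is a $k$-basis of $M\otimes k$, and for $0\le b\le\sigma_0$ the set $\{e_0,\dots,e_{\sigma_0+b-1}\}$ is a basis of $Z_b:=K+\varphi(K)+\dots+\varphi^b(K)$.

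First I would dispose of the trivial case $m=0$ and record that the hypothesis $\sigma_0\ge m+i$ forces $0\le i\le\sigma_0-1$. Next I would establish two elementary facts about $Q$: since the images of $[L_1],\dots,[L_m]$ in $\Pic(X)\otimes\mathbf{F}_p$ are $\mathbf{F}_p$-linearly independent, the classes $[L_j]$ are $k$-linearly independent in $\Pic(X)\otimes k$, so $\dim_kQ=m$; and since each $[L_j]$ is fixed by $\varphi$, the subspace $Q$ is $\varphi$-stable, hence $\varphi^t(v)\in Q$ for every $v\in Q$ and every $t\ge0$.

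The heart of the argument is a proof by contradiction. Suppose $v\in Q\cap Z_i$ is nonzero; write $v=\sum_{a=0}^{\sigma_0+i-1}c_ae_a$ and let $b=\max\{a:c_a\ne0\}$. By semilinearity of $\varphi$ we get $\varphi^t(v)=\sum_{a\le b}c_a^{p^t}e_{a+t}$, whose highest-index term is $c_b^{p^t}e_{b+t}$ with nonzero coefficient. The crucial point is that for $0\le t\le m$ the index $b+t$ is at most $\sigma_0+i-1+m\le 2\sigma_0-1$, so all these expansions take place inside the honest basis $\{e_0,\dots,e_{2\sigma_0-1}\}$ with no wraparound. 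Now $v,\varphi(v),\dots,\varphi^m(v)$ are $m+1$ vectors in the $m$-dimensional space $Q$, so they satisfy a nontrivial $k$-linear relation $\sum_{t=0}^m\mu_t\varphi^t(v)=0$; letting $s=\max\{t:\mu_t\ne0\}$ and comparing the coefficients of $e_{b+s}$ (to which only the $t=s$ summand can contribute, since $a\le b$ and $t\le s$ force $a=b$, $t=s$) yields $\mu_sc_b^{p^s}=0$, which is absurd.

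The only delicate point — and the single place where the hypothesis $\sigma_0\ge m+i$ is consumed — is the index bookkeeping ensuring that $e_{b+s}$ is genuinely among the basis vectors $e_0,\dots,e_{2\sigma_0-1}$, so that comparison of coefficients in the relation is legitimate; everything else is a short manipulation of $\varphi$. I do not expect any serious obstacle beyond this.
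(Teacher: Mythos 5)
Your proof is correct. It rests on the same two pillars as the paper's argument --- Ogus's characteristic vector $e$, with $e_0,\dots,e_{2\sigma_0-1}$ linearly independent and $e_0,\dots,e_{\sigma_0+i-1}$ a basis of $Z_i$, and the index bound $b+t\le\sigma_0+i-1+m\le 2\sigma_0-1$, which is exactly where the paper also spends the hypothesis $\sigma_0\ge m+i$ --- but the combinatorial mechanism is different. The paper starts from a relation $\sum_j\lambda_j[L_j]=\sum_a\mu_ae_a$ and repeatedly applies $\varphi-1$: after normalizing some $\lambda_j$ to $1$, each application kills at least one line-bundle coefficient while the top $e$-coefficient survives (raised to the $p$-th power and shifted up one index), so after at most $m$ steps one obtains a nontrivial relation among $e_0,\dots,e_{\sigma_0+m+i-1}$, contradicting their independence. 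You instead note that $Q$ is an $m$-dimensional $\varphi$-stable subspace, so $v,\varphi(v),\dots,\varphi^m(v)$ must be $k$-linearly dependent, and then rule this out by comparing the coefficient of $e_{b+s}$ (only the summand with $a=b$, $t=s$ can contribute, giving $\mu_sc_b^{p^s}\ne 0$). Your route buys a slightly cleaner bookkeeping: it avoids the normalization step and the need to verify at each stage that the right-hand side has not collapsed to zero, replacing the descent on the number of nonzero $\lambda_j$ by a single dimension count plus a leading-term argument. The paper's iteration of $\varphi-1$ has the minor advantage of making visible exactly which combination of the $[L_j]$ is being eliminated at each step, which is the form reused in the proof of Proposition \ref{prop:linearly indep}.
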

\begin{proof}
Assume that $\sigma_0\geq m+i$ and that the intersection of $Q$ and $K+\varphi(K)+\dots+\varphi^i(K)$ is nonzero. Let $e$ be a characteristic vector for $K$. We then find a relation of the form
    \begin{equation}\label{eq:relation 1}
        \sum_{j=1}^m\lambda_j[L_j]=\sum_{j=1}^{\sigma_0+i-1}\mu_je_j.
    \end{equation}
    Let $N$ be the number of nonzero $\lambda_j$. We have $1\leq N\leq m$. By dividing, we may assume that $\lambda_j=1$ for some $j$. Applying $\varphi-1$ to both sides of~\eqref{eq:relation 1}, we find a relation of the form
    \begin{equation}\label{eq:relation 2}
        \sum_{j=1}^m\lambda'_j[L_j]=\sum_{j=1}^{\sigma_0+i}\mu'_je_j.
    \end{equation}
    We have $N'<N$, where $N'$ is the number of nonzero $\lambda'_j$. Moreover, as we assume $\sigma_0\geq m+i$, the vectors $e_0,\dots,e_{\sigma_0+i}$ are linearly independent. Thus, the sum on the right hand side of~\eqref{eq:relation 2} is necessarily nonzero, because if $\mu_je_j$ is a nonzero term in the right hand side of~\eqref{eq:relation 1} with the largest index, then $\mu'_{j+1}=\mu_j^p$. We continue in this manner until all of the $\lambda_j$ are zero. We are then left with a nontrivial linear relation between the vectors $e_0,\dots,e_{\sigma_0+m+i-1}$. But $\sigma_0\geq m+i$ implies that these vectors are linearly independent, so this is a contradiction. 
    
    
\end{proof}

\begin{proposition}\label{prop:omnibus linearly indep}
  Let $L_1,\dots,L_m$ be line bundles on $X$ whose classes in $\Pic(X)\otimes\mathbf{F}_p$ generate a subspace of dimension $m$. Let $P\subset\H^2_{\dR}(X)$ be the $k$-vector space generated by the classes $c_1^{\dR}(L_1),\dots,c_1^{\dR}(L_m)$. If $\sigma_0\geq m+i$ for some $i\geq 0$, then $P$ has dimension $m$ and has trivial intersection with the subspace $V_i$. In particular,
  \begin{enumerate}
      \item if $\sigma_0\geq m$, then $P$ has dimension $m$,
      \item if $\sigma_0\geq m+1$, then $P$ has dimension $m$, and has trivial intersection with $F^2_H$, and
      \item if $\sigma_0\geq m+2$, then $P$ has dimension $m$, and has trivial intersection with $F^2_H+F^2_C$.
  \end{enumerate}
\end{proposition}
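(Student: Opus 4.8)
The plan is to reduce Proposition \ref{prop:omnibus linearly indep} to the two preceding propositions about the characteristic subspace. First I would recall that, by Lemma \ref{lem:cohomological interpretation}, the subspace $V_i \subset \H^2_{\dR}(X)$ is the image under $c_1^{\dR}\otimes k$ of $K+\varphi(K)+\dots+\varphi^i(K)$ modulo $\varphi(K)$, and in particular $c_1^{\dR}\otimes k$ induces an isomorphism $(K+\varphi(K)+\dots+\varphi^i(K))/\varphi(K)\iso V_i$. Since $\Pic(X)\otimes k$ surjects onto $\Pic(X)\otimes k$ and $c_1^{\dR}\otimes k$ has kernel exactly $\varphi(K)$ (by the definition of the characteristic subspace recalled before Lemma \ref{lem:char sub properties}), the subspace $P$ is the image of $Q:=\langle [L_1],\dots,[L_m]\rangle_k\subset\Pic(X)\otimes k$ under $c_1^{\dR}\otimes k$, and $P$ has dimension $m$ iff $Q\cap\varphi(K)=0$, while $P\cap V_i=0$ iff $Q\cap(K+\varphi(K)+\dots+\varphi^i(K))=\varphi(K)\cap Q$, i.e.\ (given $Q\cap\varphi(K)=0$) iff $Q$ meets $K+\varphi(K)+\dots+\varphi^i(K)$ trivially.

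Next I would invoke Proposition \ref{prop:multiple line bundles, 1}: under the hypothesis $\sigma_0\geq m+i$, the subspace $Q$ has trivial intersection with $K+\varphi(K)+\dots+\varphi^i(K)$. Note that $\varphi(K)\subseteq K+\varphi(K)+\dots+\varphi^i(K)$ for every $i\geq 0$, so this simultaneously gives $Q\cap\varphi(K)=0$ (hence $\dim_k P=m$, using that $c_1^{\dR}\otimes k$ is injective on $Q$) and $Q\cap(K+\varphi(K)+\dots+\varphi^i(K))=0$ (hence $P\cap V_i=0$ by the isomorphism above, since a class in $P\cap V_i$ lifts to a class in $Q$ whose image lies in $V_i$, forcing that lift into $K+\varphi(K)+\dots+\varphi^i(K)+\varphi(K)=K+\varphi(K)+\dots+\varphi^i(K)$). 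This establishes the general statement. For the three numbered consequences, I would simply specialize: taking $i=0$ gives $V_0=0$, so the statement reduces to $\dim_k P = m$ under $\sigma_0\geq m$; taking $i=1$ and using $V_1=F^2_H$ from Lemma \ref{lem:cohomological interpretation} gives (2) under $\sigma_0\geq m+1$; taking $i=2$ and using $V_2=F^2_H+F^2_C$ gives (3) under $\sigma_0\geq m+2$.

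The only mild subtlety — and the step I would be most careful about — is the bookkeeping identifying "$P$ has trivial intersection with $V_i$" with "$Q$ has trivial intersection with $K+\varphi(K)+\dots+\varphi^i(K)$", since $c_1^{\dR}\otimes k$ is not injective: its kernel is $\varphi(K)$, which is itself contained in $K+\varphi(K)+\dots+\varphi^i(K)$. The clean way to phrase it is to work with the induced isomorphism $(K+\dots+\varphi^i(K))/\varphi(K)\iso V_i$ and note that the preimage in $\Pic(X)\otimes k$ of any subspace of $V_i$ under $c_1^{\dR}\otimes k$ is its preimage in $K+\dots+\varphi^i(K)$; intersecting with $Q$ and applying Proposition \ref{prop:multiple line bundles, 1} then finishes it. Everything else is a direct citation of the already-established lemmas, so there is no genuine analytic obstacle here.
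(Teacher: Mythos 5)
Your overall route is the same as the paper's: reduce everything to Proposition \ref{prop:multiple line bundles, 1} via the fact that $c_1^{\dR}\otimes k$ has kernel $\varphi(K)$ and carries $K+\varphi(K)+\dots+\varphi^i(K)$ onto $V_i$. The bookkeeping you worry about at the end is handled correctly and matches what the paper does.

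There is, however, one concrete false step: the claim that $\varphi(K)\subseteq K+\varphi(K)+\dots+\varphi^i(K)$ for \emph{every} $i\geq 0$. For $i\geq 1$ this is true because $\varphi(K)$ is literally one of the summands, but for $i=0$ the sum is just $K$, and $\varphi(K)\not\subseteq K$: by Lemma \ref{lem:char sub properties}(2) we have $\dim_k(K+\varphi(K))=\sigma_0+1>\sigma_0=\dim_k K$. So in exactly the case $i=0$ (which is what you need for conclusion (1), and for the claim $\dim_k P=m$ under the weakest hypothesis $\sigma_0\geq m$), Proposition \ref{prop:multiple line bundles, 1} only hands you $Q\cap K=0$, and your argument does not yet yield $Q\cap\varphi(K)=0$, which is what controls the kernel of $c_1^{\dR}\otimes k$ on $Q$. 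The repair is the one-line observation the paper makes: since $Q$ is spanned by classes lying in $\Pic(X)\otimes\mathbf{F}_p$, it satisfies $\varphi(Q)=Q$, and $\varphi$ is bijective, so $Q\cap\varphi(K)=\varphi(Q)\cap\varphi(K)=\varphi(Q\cap K)=0$. With that inserted, your proof is complete and coincides with the paper's.
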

\begin{proof}
    By Proposition \ref{prop:multiple line bundles, 1}, the $k$-subspace $Q\subset\Pic(X)\otimes k$ generated by the classes $[L_1],\dots,[L_m]$ has trivial intersection with the subspace $K+\varphi(K)+\dots+\varphi^i(K)$. As $Q=\varphi(Q)$, this implies that $Q$ has trivial intersection with $\varphi(K)$ (even when $i=0$). The kernel of the map
    \[
        c_1^{\dR}\otimes k:\Pic(X)\otimes k\to\H^2_{\dR}(X)
    \]
    is $\varphi(K)$, and so $c_1^{\dR}\otimes k$ maps $Q$ isomorphically to $P$. We conclude that $P$ has dimension $m$, and has trivial intersection with $V_i$.
\end{proof}

In particular, we obtain the following result.

 \begin{proposition}\label{prop:omnibus linearly indep 2}
 With the assumptions of Proposition \ref{prop:omnibus linearly indep}, let $\overline{P}\subset\H^1(X,\Omega^1_X)$ be the $k$-vector space generated by the classes $c_1(L_1),\dots,c_1(L_m)$. If $\sigma_0\geq m+1$, then $\overline{P}$ has dimension $m$.
\end{proposition}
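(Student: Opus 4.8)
The plan is to deduce the statement directly from Proposition~\ref{prop:omnibus linearly indep} by controlling the kernel of the natural map $q\colon\H^1(X,Z\Omega^1_X)\to\H^1(X,\Omega^1_X)$ induced by the inclusion $Z\Omega^1_X\subseteq\Omega^1_X$. Recall from the diagram~\eqref{eq:all the dlog maps} that $c_1$ factors as $q\circ c_1^{\dR}$, where $c_1^{\dR}\colon\Pic(X)\to\H^1(X,Z\Omega^1_X)$. Consequently $\overline P=q(P)$, where $P\subseteq\H^1(X,Z\Omega^1_X)\subseteq\H^2_{\dR}(X)$ is the $k$-span of the classes $c_1^{\dR}(L_1),\dots,c_1^{\dR}(L_m)$, and it therefore suffices to prove that $q$ is injective on $P$.

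First I would apply Proposition~\ref{prop:omnibus linearly indep} with $i=1$: since $\sigma_0\geq m+1$, the subspace $P$ has dimension $m$ and satisfies $P\cap F^2_H=0$. Next I would identify $\ker q$: taking cohomology of the short exact sequence~\eqref{eq:SES with a B} produces the exact sequence~\eqref{eq:SES with a B on coho}, which shows that $\ker q$ is identified with $\H^0(X,B\Omega^2_X)$, and by Lemma~\ref{lem:some natural identifications}(3) this equals $F^2_H\cap F^1_C$, which is in particular contained in $F^2_H$. Hence $P\cap\ker q\subseteq P\cap F^2_H=0$, so $q|_P$ is injective and $\dim_k\overline P=\dim_k P=m$, as claimed.

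There is no serious obstacle here: the entire geometric input is already packaged in Proposition~\ref{prop:omnibus linearly indep} (and, beneath it, in the combinatorics of the characteristic subspace in Proposition~\ref{prop:multiple line bundles, 1}); the only thing to get right is that $\ker q$ lands inside $F^2_H$ rather than merely inside $F^1_H$. One could alternatively argue entirely on $\Pic(X)\otimes k$, identifying $\ker(c_1\otimes k)$ with $K+\varphi(K)$ and invoking Proposition~\ref{prop:multiple line bundles, 1} with $i=1$, but routing through $q$ as above is cleaner and reuses Proposition~\ref{prop:omnibus linearly indep} without modification.
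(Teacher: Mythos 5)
Your proof is correct and matches the paper's intended argument: the paper states this proposition as an immediate consequence of Proposition \ref{prop:omnibus linearly indep} without writing out the deduction, and your write-up supplies exactly the missing step, namely that the kernel of $\H^1(X,Z\Omega^1_X)\to\H^1(X,\Omega^1_X)$ is $F^2_H\cap F^1_C\subseteq F^2_H$, so the condition $P\cap F^2_H=0$ from the case $\sigma_0\geq m+1$ forces $q|_P$ to be injective. No gaps.
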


The following result generalizes Proposition \ref{prop:linearly indep}.

\begin{proposition}
    Let $X$ be a K3 surface. Let $L_1,\dots,L_m$ be line bundles on $X$ whose classes in $\Pic(X)\otimes\mathbf{F}_p$ generate a subspace of dimension $m$. Let $n$ be an integer which is divisible by $p$ and let $\alpha\in\H^2(X,\mu_n)$ be a class such that $p$ does not divide $\frac{n}{\ord(\alpha_{\Br})}$. If $X$ has finite height or is supersingular with Artin invariant $\sigma_0\geq m+2$, then the classes $\dlog(\alpha),c_1(L_1),\dots,c_1(L_m)$ in $\H^1(X,\Omega^1_X)$ are linearly independent, and generate a subspace of dimension $m+1$.
\end{proposition}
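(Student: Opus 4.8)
The plan is to run the argument of Proposition~\ref{prop:linearly indep} with the single line bundle $L$ replaced by the span of $L_1,\dots,L_m$, substituting the sharper trivial-intersection statements of \S\ref{ssec:multiple line bundles} for the ones used there. First I would dispose of the easy reduction: since the $[L_j]$ are linearly independent in $\Pic(X)\otimes\mathbf{F}_p$ they remain so in $\Pic(X)\otimes k$, and then $c_1(L_1),\dots,c_1(L_m)$ are linearly independent in $\H^1(X,\Omega^1_X)$ --- by Proposition~\ref{prop:inj for c 1 tensor k} when $X$ has finite height, and by Proposition~\ref{prop:omnibus linearly indep 2} (which needs only $\sigma_0\geq m+1$) when $X$ is supersingular. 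Hence any nontrivial linear relation among $\dlog(\alpha),c_1(L_1),\dots,c_1(L_m)$ must involve $\dlog(\alpha)$ nontrivially, so it suffices to derive a contradiction from an expression $\dlog(\alpha)=\sum_{j=1}^m\lambda_j c_1(L_j)$ with $\lambda_j\in k$.

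The heart of the argument lifts this identity to $\H^1(X,Z\Omega^1_X)$. Choosing the de Rham lift $\dlog^{\dR}(\alpha)$ of $\dlog(\alpha)$ together with the lifts $c_1^{\dR}(L_j)$ of $c_1(L_j)$, one gets $\dlog^{\dR}(\alpha)=\sum_j\lambda_j c_1^{\dR}(L_j)+\sigma$ for some $\sigma$ in the kernel $F^2_H\cap F^1_C$ of $\H^1(X,Z\Omega^1_X)\to\H^1(X,\Omega^1_X)$ (Lemma~\ref{lem:some natural identifications}). Applying $1-C$, using that $\dlog^{\dR}(\alpha)$ lies in $\ker(1-C)$ by the exactness of~\eqref{eq:diagram33} and the semilinearity square relating $\varphi^{-1}$ and $C$ to compute $C\bigl(c_1^{\dR}(L_j)\bigr)$, yields exactly as in Proposition~\ref{prop:linearly indep} the relation
\[
\textstyle\sum_{j=1}^m(\lambda_j^p-\lambda_j)\,c_1^{\dR}(L_j)\ \in\ F^2_H+F^2_C .
\]
Now I invoke the trivial-intersection input: in the finite height case, $c_1^{\dR}\otimes k$ is injective and its image meets $F^2_H+F^2_C$ trivially (Propositions~\ref{prop:inj for c 1 tensor k} and~\ref{prop:trivial intersection finite height case}); in the supersingular case with $\sigma_0\geq m+2$, part~(3) of Proposition~\ref{prop:omnibus linearly indep} shows the $k$-span of the $c_1^{\dR}(L_j)$ has dimension $m$ and meets $F^2_H+F^2_C$ trivially. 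In either case the displayed class vanishes, and $c_1^{\dR}\otimes k$ is injective on the span of the $[L_j]$, so $\lambda_j^p=\lambda_j$, i.e.\ $\lambda_j\in\mathbf{F}_p$, for all $j$.

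With the $\lambda_j$ in $\mathbf{F}_p$, choose integer lifts $s_j$ and put $L:=\bigotimes_j L_j^{\otimes s_j}$, so that $\dlog(\alpha)=c_1(L)$ in $\H^1(X,\Omega^1_X)$. Since $X$ is not superspecial (finite height, or $\sigma_0\geq 3$), the map $\dlog\colon\H^1(X,\mathbf{G}_m/\mathbf{G}_m^{\times p})\to\H^1(X,\Omega^1_X)$ is injective (Proposition~\ref{prop:dlog is nonzero sometimes}); chasing the Kummer diagram~\eqref{eq:the diagram that makes the proof work} precisely as in the proof of Proposition~\ref{prop:linearly indep} gives $\tfrac{n}{p}\alpha=\delta([L])$, hence $\tfrac{n}{p}\alpha_{\Br}=0$ in $\Br(X)$, forcing $\ord(\alpha_{\Br})\mid\tfrac{n}{p}$ and so $p\mid\tfrac{n}{\ord(\alpha_{\Br})}$, contrary to hypothesis. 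This contradiction proves that $\dlog(\alpha),c_1(L_1),\dots,c_1(L_m)$ are linearly independent, and being $m+1$ vectors they span a subspace of dimension $m+1$. The only genuinely new point beyond Proposition~\ref{prop:linearly indep} --- and the step I expect to require the most care --- is that one now needs the trivial intersection of an $m$-dimensional system of first Chern classes with $F^2_H+F^2_C$, which is exactly why the hypothesis $\sigma_0\geq m+2$ appears (via Proposition~\ref{prop:omnibus linearly indep}); the rest is the familiar bookkeeping of the semilinear Cartier operator applied to combinations with scalars in $k$.
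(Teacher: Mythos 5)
Your proof is correct and follows essentially the same argument as the paper: reduce to a relation $\dlog(\alpha)=\sum_j\lambda_j c_1(L_j)$, lift to $\H^1(X,Z\Omega^1_X)$, apply $1-C$ to land $\sum_j(\lambda_j^p-\lambda_j)c_1^{\dR}(L_j)$ in $F^2_H+F^2_C$, invoke Proposition \ref{prop:trivial intersection finite height case} or Proposition \ref{prop:omnibus linearly indep}(3) to force $\lambda_j\in\mathbf{F}_p$, and derive $p\mid\frac{n}{\ord(\alpha_{\Br})}$ for a contradiction. The only difference is cosmetic: you handle the initial linear independence of the $c_1(L_j)$ in the finite height case via Proposition \ref{prop:inj for c 1 tensor k}, where the paper cites only Proposition \ref{prop:omnibus linearly indep 2}; this is if anything slightly more careful.
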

\begin{proof}
    By Proposition \ref{prop:omnibus linearly indep 2}, the classes $c_1(L_1),\dots,c_1(L_m)$ are linearly independent. We now reason as in Proposition \ref{prop:linearly indep}: suppose that there is a relation
    \[
        \dlog(\alpha)=\sum_i\lambda_ic_1(L_i)
    \]
    for some $\lambda_i\in k$. We then have
    \[
        \dlog(\alpha)=\sum_i\lambda_ic_1^{\dR}(L_i)+\sigma
    \]
    for some $\sigma\in F^2_H\cap F^1_C$. As before, we deduce that
    \[
        \sum_i(\lambda_i^p-\lambda_i)c_1^{\dR}(L_i)\in F^2_H+F^2_C.
    \]
    Using \ref{prop:trivial intersection finite height case} (if $h<\infty$) or \ref{prop:omnibus linearly indep} (if $h=\infty$), we conclude that $\lambda_i^p-\lambda_i=0$ for all $i$. Hence, $\lambda_i\in\mathbf{F}_p$ for all $i$. Choose lifts $s_i\in\mathbf{Z}$ of the $\lambda_i$. We obtain
    \[
        \frac{n}{p}\alpha=\sum_i\delta([L_i^{\otimes s_i}])
    \]
    and therefore the image of $\frac{n}{p}\alpha$ in the Brauer group is trivial, a contradiction.
\end{proof}

We record the following consequences for formal deformation spaces.

\begin{corollary}\label{cor:formally smooth def space}
    Let $X$ be a K3 surface over $k$. Let $L_1,\dots,L_m$ be a collection of line bundles on $X$ whose classes in $\Pic(X)\otimes\mathbf{F}_p$ generate a subspace of dimension $m$. If $X$ has finite height or is supersingular with Artin invariant $\sigma_0\geq m+1$, then $\Def_{(X,L_1,\dots,L_m)}$ is formally smooth over $W$.
\end{corollary}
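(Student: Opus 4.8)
The plan is to mimic the proofs of Propositions \ref{prop:smooth1} and \ref{prop:smooth2}, now carrying $m$ line bundles and no flat cohomology class. First I would pin down the shape of the deformation space. By definition
\[
    \Def_{(X,L_1,\dots,L_m)}=\Def_{(X,L_1)}\times_{\Def_X}\dots\times_{\Def_X}\Def_{(X,L_m)},
\]
and each factor $\Def_{(X,L_i)}\subset\Def_X$ is a closed formal subscheme defined by one equation. This is the classical counterpart of Proposition \ref{prop:one equation} (Deligne \cite{MR638598}): the obstruction to deforming $L_i$ along a square zero thickening with ideal $I$ lies in $\H^2(X,\ms O_X)\otimes_kI$, a group of length equal to that of $I$, and the formal argument in the proof of Proposition \ref{prop:one equation} then shows that the ideal defining $\Def_{(X,L_i)}$ in a ring prorepresenting $\Def_X$ is principal. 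Since $\Def_X\cong\Spf W[[t_1,\dots,t_{20}]]$, I may therefore write
\[
    \Def_{(X,L_1,\dots,L_m)}\cong\Spf W[[t_1,\dots,t_{20}]]/(g_1,\dots,g_m)
\]
for suitable $g_1,\dots,g_m$ lying in the maximal ideal.

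Next I would compute the relative tangent space over $W$ at the closed point. The obstruction to deforming $L_i$ along the first order deformation of $X$ with Kodaira--Spencer class $\tau\in\H^1(X,T_X)=T(\Def_X)$ is $c_1(L_i)\cup\tau\in\H^2(X,\ms O_X)$ (see \cite[Proposition 1.14]{Ogus78}), so
\[
    T(\Def_{(X,L_1,\dots,L_m)})=\bigcap_{i=1}^m\Ann(c_1(L_i))\subset\H^1(X,T_X).
\]
Because $X$ is a K3 surface, the cup product pairing~\eqref{eq:cup product} is perfect, so this intersection has dimension $20-\dim_k\overline{P}$, where $\overline{P}\subset\H^1(X,\Omega^1_X)$ is spanned by $c_1(L_1),\dots,c_1(L_m)$. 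It then remains only to check that $\dim_k\overline{P}=m$. If $X$ has finite height, $c_1\otimes k$ is injective by Proposition \ref{prop:inj for c 1 tensor k}, and since the classes $[L_i]$ are linearly independent in $\Pic(X)\otimes\mathbf{F}_p$, hence in $\Pic(X)\otimes k$, this gives $\dim_k\overline{P}=m$. If $X$ is supersingular with $\sigma_0\geq m+1$, this is exactly Proposition \ref{prop:omnibus linearly indep 2}.

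Finally I would conclude as in Proposition \ref{prop:smooth1}. Having shown that the relative tangent space of $\Spf W[[t_1,\dots,t_{20}]]/(g_1,\dots,g_m)$ over $W$ at the closed point has dimension $20-m$, the $m\times 20$ matrix with entries the images in $k$ of $(\partial g_i/\partial t_j)(0)$ must have rank $m$. After reordering the $t_j$ and replacing the $g_i$ by $W$-linear combinations thereof, I may then assume $(\partial g_i/\partial t_j)(0)=\delta_{ij}$ for all $1\le i,j\le m$; the formal implicit function theorem then yields an isomorphism $W[[t_1,\dots,t_{20}]]/(g_1,\dots,g_m)\cong W[[t_{m+1},\dots,t_{20}]]$, obtained by solving $g_i=0$ for $t_1,\dots,t_m$. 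Hence $\Def_{(X,L_1,\dots,L_m)}$ is formally smooth over $W$ of relative dimension $20-m$. The one genuinely nontrivial input --- and the main obstacle --- is the linear independence of the classes $c_1(L_i)$ in $\H^1(X,\Omega^1_X)$: in the supersingular case this uses the hypothesis $\sigma_0\geq m+1$ essentially, through the characteristic subspace estimates of Lemma \ref{lem:char subspace lemma} and Proposition \ref{prop:multiple line bundles, 1}, whereas for surfaces of finite height the injectivity of $c_1\otimes k$ makes it automatic.
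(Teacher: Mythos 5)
Your proposal is correct and is exactly the argument the paper intends (the corollary is stated without proof as a consequence of the preceding results): each line bundle cuts one equation out of $\Def_X\cong\Spf W[[t_1,\dots,t_{20}]]$, the tangent space is $\bigcap_i\Ann(c_1(L_i))$, and its dimension is $20-m$ by Proposition \ref{prop:inj for c 1 tensor k} in the finite height case and Proposition \ref{prop:omnibus linearly indep 2} in the supersingular case with $\sigma_0\geq m+1$, whence formal smoothness by the Jacobian argument of Proposition \ref{prop:smooth1}. No gaps.
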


\begin{corollary}\label{cor:formally smooth def space, twisted version}
  Let $X$ be a K3 surface over $k$. Let $L_1,\dots,L_m$ be a collection of line bundles on $X$ whose classes in $\Pic(X)\otimes\mathbf{F}_p$ generate a subspace of dimension $m$. Let $n$ be a positive integer and let $\alpha\in\H^2(X,\mu_n)$ be a flat cohomology class. Assume one of the following holds.
  \begin{enumerate}
      \item $n$ is coprime to $p$ and either $h<\infty$ or $h=\infty$ and $\sigma_0\geq m+1$.
      \item $p$ divides $n$, $p$ does not divide $\frac{n}{\ord(\alpha_{\Br})}$, and either $h<\infty$ or $h=\infty$ and $\sigma_0\geq m+2$.
  \end{enumerate}
  The universal deformation space $\Def_{(X,\alpha,L_1,\dots,L_m)}$ is formally smooth over $W$.
\end{corollary}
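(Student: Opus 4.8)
The plan is to reduce the two cases of the hypothesis to linear-algebra facts already in hand and to the smoothness criterion used in the proofs of Propositions \ref{prop:smooth1} and \ref{prop:smooth2}. Suppose first that $n$ is coprime to $p$. Then by Proposition \ref{prop:nice coordinates} the forgetful map $\pi\colon\Def_{(X,\alpha)}\to\Def_X$ is an isomorphism, so the fibered product $\Def_{(X,\alpha,L_1,\dots,L_m)}=\Def_{(X,\alpha)}\times_{\Def_X}\Def_{(X,L_1,\dots,L_m)}$ is canonically isomorphic to $\Def_{(X,L_1,\dots,L_m)}$. Under hypothesis (1) the latter is formally smooth over $W$ by Corollary \ref{cor:formally smooth def space}, which settles this case.

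Now suppose $p\mid n$. Arguing as in the proof of Proposition \ref{prop:smooth2}, I would first record that the inclusion $\Def_{(X,\alpha,L_1,\dots,L_m)}\subseteq\Def_{(X,\alpha_{\Br})}$ is a closed formal subscheme defined by $m+1$ equations: one from Proposition \ref{prop:one equation} (cutting out $\Def_{(X,\alpha)}$ inside $\Def_{(X,\alpha_{\Br})}$), and, since $\pi_{\Br}\colon\Def_{(X,\alpha_{\Br})}\to\Def_X\cong\Spf W[[t_1,\dots,t_{20}]]$ is smooth of relative dimension $1$ and each $\Def_{(X,L_i)}\subseteq\Def_X$ is a hypersurface (Deligne \cite{MR638598}), one more equation for each $L_i$. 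Writing $\Def_{(X,\alpha_{\Br})}\cong\Spf W[[t_1,\dots,t_{20},s]]$ as in \eqref{eq:diagram1111}, so that $\Def_{(X,\alpha,L_1,\dots,L_m)}$ is cut out by $m+1$ elements $g_0,\dots,g_m$, it then suffices — exactly as in Proposition \ref{prop:smooth1} — to show that the tangent space to $\Def_{(X,\alpha,L_1,\dots,L_m)}\otimes k$ at the closed point has dimension $21-(m+1)=20-m$; the Jacobian ideal of $(g_0,\dots,g_m)$ is then the unit ideal, and formal smoothness over $W$ follows by Nakayama's lemma.

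For the tangent-space computation I would combine the short exact sequence $0\to\H^2(X,\ms O_X)\to T(\Def_{(X,\alpha)})\to\Ann(\dlog\alpha)\to 0$ of \eqref{eq:summary2} with the identifications $T(\Def_{(X,L_i)})\iso\Ann(c_1(L_i))\subseteq\H^1(X,T_X)$ — valid because $\H^1(X,\ms O_X)=0$ and the obstruction to deforming $L_i$ along a square-zero deformation $X'$ of $X$ is $c_1(L_i)\cup\tau_{X'}$ — and take fibered products over $T(\Def_X)=\H^1(X,T_X)$. This produces a short exact sequence
\[
  0\to\H^2(X,\ms O_X)\to T(\Def_{(X,\alpha,L_1,\dots,L_m)})\to\Ann\bigl(\langle\dlog\alpha,c_1(L_1),\dots,c_1(L_m)\rangle\bigr)\to 0,
\]
and since the cup product pairing \eqref{eq:cup product} is perfect, the right-hand term has dimension $20-\dim_k\langle\dlog\alpha,c_1(L_1),\dots,c_1(L_m)\rangle$. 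Under hypothesis (2), together with the hypothesis that $L_1,\dots,L_m$ generate an $m$-dimensional subspace of $\Pic(X)\otimes\mathbf{F}_p$, the Proposition immediately preceding Corollary \ref{cor:formally smooth def space} shows that $\dlog\alpha,c_1(L_1),\dots,c_1(L_m)$ span a subspace of dimension $m+1$. Hence the right-hand term has dimension $20-(m+1)$, and so $\dim_kT(\Def_{(X,\alpha,L_1,\dots,L_m)})=1+(20-(m+1))=20-m$, as required.

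I expect the main obstacle to be organizational rather than conceptual: one must keep careful track of how many equations each intermediate deformation problem contributes inside $\Def_{(X,\alpha_{\Br})}$, and check that forming the iterated fibered product splits off exactly one copy of $\H^2(X,\ms O_X)$ while intersecting the relevant annihilators. The genuinely nontrivial ingredient — the linear independence of the $\dlog$ and $c_1$ classes in $\H^1(X,\Omega^1_X)$, which rests on Propositions \ref{prop:trivial intersection finite height case} and \ref{prop:omnibus linearly indep} — has already been established in the Proposition preceding Corollary \ref{cor:formally smooth def space}, so no new geometric input is needed.
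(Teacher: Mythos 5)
Your proof is correct and follows exactly the route the paper intends: the paper states this corollary without proof as a recorded consequence of the preceding linear-independence proposition, and your argument is the natural filling-in, mirroring the Jacobian-criterion proofs of Propositions \ref{prop:smooth1} and \ref{prop:smooth2} (count $m+1$ defining equations inside $\Def_{(X,\alpha_{\Br})}\cong\Spf W[[t_1,\dots,t_{20},s]]$, then show the tangent space has dimension $20-m$ via the perfectness of the cup product pairing and the $(m+1)$-dimensionality of $\langle\dlog\alpha,c_1(L_1),\dots,c_1(L_m)\rangle$). The reduction of case (1) to Corollary \ref{cor:formally smooth def space} via the isomorphism $\pi:\Def_{(X,\alpha)}\to\Def_X$ for $n$ coprime to $p$ is also exactly as the paper's framework suggests.
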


    

\section{Arithmetic moduli of twisted K3 surfaces}\label{sec:moduli of twisted K3 surfaces}

In this section we introduce some global moduli spaces of twisted polarized K3 surfaces over $\Spec \mathbf{Z}$ and describe some of their basic geometric properties.

\begin{definition}
	Fix positive integers $n$ and $d$. Define $\ms M^n_d$ to be the stack over $\Spec\mathbf{Z}$ whose objects over a scheme $S$ are tuples $(X,\alpha,L)$, where $f:X\to S$ is a family of K3 surfaces, $\alpha\in\H^0(S,R^2f_*\mu_n)$, and $L\in\H^0(S,\uPic_{X/S})$ is a section whose restriction to every geometric fiber of $X\to S$ is a primitive ample class of degree $2d$.
\end{definition}

If $k$ is an algebraically closed field and $f:X\to\Spec k$ is a K3 surface, we have
\[
    \H^2(X,\mu_n)=\H^0(\Spec k,R^2f_*\mu_n).
\]
Thus, the $k$-points of $\ms M^n_d$ are tuples $(X,\alpha,L)$ where $X$ is a K3 surface over $k$, $\alpha\in\H^2(X,\mu_n)$, and $L\in\Pic(X)$ is an ample class of degree $2d$.

Let $\ms M_d$ denote the usual moduli stack of polarized K3 surfaces of degree $2d$. There is a morphism
\begin{equation}\label{eq:the forgetful map pi}
  \pi:\ms M^{n}_d\to\ms M_d
\end{equation}
given by forgetting the class $\alpha$. If $n=1$ this map is an isomorphism.
\begin{proposition}\label{prop:its DM}
  The moduli stack $\ms M^{n}_d$ is Deligne--Mumford.
\end{proposition}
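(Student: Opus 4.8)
The plan is to exhibit $\ms M^n_d$ as an algebraic stack with unramified diagonal by bootstrapping from the representability theorem. Recall that the stack $\ms M_d$ of polarized K3 surfaces of degree $2d$ is a separated Deligne--Mumford stack of finite presentation over $\Spec\mathbf{Z}$; in particular it is algebraic and its diagonal is unramified. I will show that the forgetful morphism $\pi\colon\ms M^n_d\to\ms M_d$ of~\eqref{eq:the forgetful map pi} is representable by algebraic spaces and of finite presentation; granting this, $\ms M^n_d$ is algebraic and of finite presentation over $\Spec\mathbf{Z}$, and the remainder of the argument is formal.

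For the representability of $\pi$, I would fix a scheme $S$ and a morphism $S\to\ms M_d$ classifying a polarized family of K3 surfaces $(f\colon X\to S,L)$, and identify the fiber product $S\times_{\ms M_d}\ms M^n_d$. By definition its $T$-points, for an $S$-scheme $T$, are the sections of $R^2f_{T*}\mu_n$; since the functors $R^\bullet f_*\mu_n$ are formed on the big flat site, their formation commutes with the base change $T\to S$, so this fiber product is precisely the sheaf $R^2f_*\mu_n$ regarded as an $S$-space. The hypotheses of Theorem~\ref{thm:representability for R2} hold here: $f$ is smooth and proper of relative dimension $2$ with geometrically connected fibers, $\H^1(X_s,\ms O_{X_s})=0$ for every geometric point $s$ because the fibers are K3 surfaces, and $R^1f_*\mu_n=\uPic_{X/S}[n]=0$ since the relative Picard functor of a family of K3 surfaces is unramified over the base with torsion-free, hence $n$-torsion-free, geometric fibers. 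Therefore $S\times_{\ms M_d}\ms M^n_d$ is represented by a group algebraic space of finite presentation over $S$, which gives the claim.

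It remains to check that $\Delta_{\ms M^n_d}$ is unramified, and I would do this by factoring it as
\[
    \ms M^n_d\xrightarrow{\ \Delta_\pi\ }\ms M^n_d\times_{\ms M_d}\ms M^n_d\longrightarrow\ms M^n_d\times_{\Spec\mathbf{Z}}\ms M^n_d .
\]
Since $\pi$ is representable by algebraic spaces, $\Delta_\pi$ is a monomorphism, and being locally of finite type it is unramified; the second morphism is the base change of the diagonal $\Delta_{\ms M_d}\colon\ms M_d\to\ms M_d\times\ms M_d$, which is unramified because $\ms M_d$ is Deligne--Mumford. As unramified morphisms are stable under composition, $\Delta_{\ms M^n_d}$ is unramified, and hence $\ms M^n_d$ is Deligne--Mumford. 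The only genuinely substantive input is Theorem~\ref{thm:representability for R2}; within the present argument the sole points requiring care are the base-change compatibility of $R^\bullet f_*\mu_n$ and the verification that $R^1f_*\mu_n$ vanishes for a family of K3 surfaces.
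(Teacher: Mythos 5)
Your proof is correct and follows the same route as the paper: identify the forgetful map $\pi:\ms M^n_d\to\ms M_d$ with the sheaf $R^2f_*\mu_n$ over $\ms M_d$, invoke Theorem~\ref{thm:representability for R2} to get representability by algebraic spaces, and conclude that $\ms M^n_d$ inherits the Deligne--Mumford property from $\ms M_d$. You simply spell out two steps the paper leaves implicit --- the verification of the hypotheses of Theorem~\ref{thm:representability for R2} (in particular $R^1f_*\mu_n=\uPic_{X/S}[n]=0$, which the paper uses elsewhere, e.g.\ in Lemma~\ref{lem:flat and etale covers killing classes}) and the formal unramified-diagonal argument --- both of which are fine.
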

\begin{proof}
  It is well known that $\ms M_d$ is Deligne--Mumford (see eg. \cite[Ch. 5, Proposition 4.10]{Huy06} or \cite[4.3.3]{MR2263236}). Let $f:\ms X\to\ms M_d$ be the universal polarized K3 surface. We have $\ms M^n_d=R^2f_*\mu_n$ as functors on the category of schemes over $\ms M_d$. By Theorem \ref{thm:representability for R2}, the map $\ms M^n_d\to\ms M_d$ is representable by algebraic spaces. We conclude that $\ms M^n_d$ is a Deligne--Mumford stack.
\end{proof}



If $f:X\to S$ is a morphism, then $R^2f_*\mu_n$ may be computed as the flat sheafification of the functor $T\mapsto \H^2(X\times_ST,\mu_n)$ on the category of $S$-schemes. The following result shows that if $f$ is a family of K3 surfaces then this sheafification may be taken instead in the \'{e}tale topology. This simplification will be important in our discussion of the geometry of $\ms M^n_d$.
\begin{lemma}\label{lem:flat and etale covers killing classes}
    Let $f:X\to S$ be a family of K3 surfaces.
    \begin{enumerate}
        \item If $\alpha\in\H^0(S,R^2f_*\mu_n)$ is any class, then there exists an \'{e}tale cover $S'\to S$ such that $\alpha$ is in the image of the map $\H^2(X\times_SS',\mu_n)\to\H^0(S',R^2f_{S'*}\mu_n)$.
        \item If $\alpha\in\H^2(X,\mu_n)$ is a class and there there exists an fppf cover $S'\to S$ such that $\alpha|_{X\times_SS'}=0$, then there exists an \'{e}tale cover $S'\to S$ such that $\alpha_{X\times_SS'}=0$.
    \end{enumerate}
\end{lemma}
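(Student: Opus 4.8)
The plan is to prove the two parts by different routes. Both rely on the fact that for a family of K3 surfaces one has $R^0f_*\mu_n=\mu_{n,S}$ and $R^1f_*\mu_n=\uPic_{X/S}[n]=0$, the Picard group of a K3 surface being torsion free (this vanishing is used already in the proof of Proposition~\ref{prop:prorep for mu n}).

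For (1) I would repackage the presentation of $R^2f_*\mu_n$ built in the proof of Theorem~\ref{thm:representability for R2}. Recall from there the algebraic stack $\Az$ over $S$ whose objects over $T\to S$ are Azumaya algebras on $X_T$ which fiberwise have degree $n$ and injective trace on $\H^2$, together with the morphism $\chi\colon\Az\to R^2f_*\mu_n$ induced by the nonabelian boundary map~\eqref{eq:boundary map for Azumaya algebras}; there it is shown that $\chi$ is smooth (via Proposition~\ref{prop:obstructions for azumaya algebras, A_0 version} and the vanishing of $\H^1(X_t,\ms O_{X_t})$) and surjective on geometric points (via Theorem~\ref{thm:period index 00}), hence a smooth surjective morphism. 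Given $\alpha\in\H^0(S,R^2f_*\mu_n)$, I would form the base change $\Az_\alpha:=\Az\times_{R^2f_*\mu_n,\alpha}S$, which is smooth and surjective over $S$, pick a smooth surjective cover $U\to\Az_\alpha$ by a scheme, and use the standard fact that the smooth surjective morphism $U\to S$ admits a section over some \'{e}tale cover $S'\to S$. The induced object of $\Az(S')$ is an Azumaya algebra $\ms A'$ on $X_{S'}$ with $\chi(\ms A')=\alpha|_{S'}$, so that the flat cohomology class $\partial([\ms A'])\in\H^2(X_{S'},\mu_n)$ maps to $\alpha|_{S'}$ in $\H^0(S',R^2f_{S'*}\mu_n)$, which is what is required.

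For (2), suppose $\alpha|_{X_{S'}}=0$ for an fppf cover $S'\to S$. Since $R^2f_*\mu_n$ is an fppf sheaf and $S'\to S$ a covering, the image $\bar\alpha\in\H^0(S,R^2f_*\mu_n)$ of $\alpha$ vanishes. The Leray spectral sequence $\H^p(S,R^qf_*\mu_n)\Rightarrow\H^{p+q}(X,\mu_n)$ together with $R^1f_*\mu_n=0$ yields an exact sequence
\[
0\longrightarrow\H^2(S,\mu_n)\xrightarrow{\ f^*\ }\H^2(X,\mu_n)\longrightarrow\H^0(S,R^2f_*\mu_n),
\]
so $\alpha=f^*\beta$ for a unique $\beta\in\H^2(S,\mu_n)$. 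It remains to kill $\beta$ over an \'{e}tale cover, and for this I would invoke that \emph{every} class in $\H^2_{\fl}(S,\mu_n)$ is \'{e}tale-locally trivial: by the Kummer sequence~\eqref{eq:Kummer sequence} in the flat topology, together with $\H^1_{\fl}(S,\mathbf{G}_m)=\Pic(S)$ and $\H^2_{\fl}(S,\mathbf{G}_m)=\H^2_{\et}(S,\mathbf{G}_m)$, after an \'{e}tale cover on which its image in $\H^2_{\et}(S,\mathbf{G}_m)$ dies (cohomological Brauer classes are \'{e}tale-locally trivial, since strictly henselian local rings have trivial Brauer group) the class $\beta$ becomes the image of a line bundle modulo $n$, and such a class dies Zariski-locally. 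Choosing an \'{e}tale cover $S''\to S$ with $\beta|_{S''}=0$ gives $\alpha|_{X_{S''}}=f_{S''}^*(\beta|_{S''})=0$.

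I expect the main difficulty to be organizational rather than mathematical: $R^2f_*\mu_n$ is a priori only an fppf sheaf, and the content of the lemma is that for K3 families the passage from the fppf to the \'{e}tale topology is controlled---in (1) because the moduli of good Azumaya algebras maps smoothly onto $R^2f_*\mu_n$, and in (2) because the discrepancy $\H^2(S,\mu_n)$ between $\H^2(X,\mu_n)$ and the relative group is itself \'{e}tale-local. I would take care to state precisely the injectivity and exactness coming from $R^1f_*\mu_n=0$, which is exactly what makes the descent to the base in (2) work, and to cite cleanly the two standard inputs (smooth surjective morphisms have \'{e}tale-local sections; strictly henselian local rings have trivial Brauer and Picard groups).
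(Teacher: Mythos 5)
Your proof is correct, but part (1) takes a genuinely different route from the paper. The paper handles both parts with the same low-tech tool: the Leray spectral sequence together with $R^0f_*\mu_n=\mu_n$ and $R^1f_*\mu_n=\uPic_{X/S}[n]=0$ gives the four-term exact sequence
\[
0\to\H^2(S,\mu_n)\to\H^2(X,\mu_n)\to\H^0(S,R^2f_*\mu_n)\to\H^3(S,\mu_n),
\]
so the obstruction to lifting a section of $R^2f_*\mu_n$ to an honest class on $X$ is a class in $\H^3(S,\mu_n)$, and the Kummer-sequence observation you invoke for (2) (any class in $\H^m(S,\mu_n)$ with $m\geq 2$ dies on an \'{e}tale cover, since its image in $\H^m(S,\mathbf{G}_m)$ dies \'{e}tale-locally and the resulting class from $\H^{m-1}(S,\mathbf{G}_m)$ does too) kills that obstruction after an \'{e}tale cover; this \emph{is} the paper's proof of (1). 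Your part (2) is essentially identical to the paper's. For part (1) you instead re-run the Azumaya-algebra machinery from the proof of Theorem \ref{thm:representability for R2}: the smooth surjection $\chi:\Az\to R^2f_*\mu_n$, base-changed along $\alpha$, plus \'{e}tale-local sections of smooth surjective morphisms. This is valid and not circular (that theorem is proved in \S\ref{sec:algebraization}, independently of this lemma), and it delivers slightly more than asked --- an actual Azumaya algebra on $X_{S'}$ representing $\alpha|_{S'}$ --- but it is considerably heavier than necessary: the exact sequence you already extract from Leray for (2) finishes (1) as soon as you note that the next term is $\H^3(S,\mu_n)$ and apply your own Kummer argument one degree higher.
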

\begin{proof}
    We have $R^0f_*\mu_n=\mu_n$ and $R^1f_*\mu_n=\uPic_{X/S}[n]=0$. The Leray spectral sequence therefore gives an exact sequence
    \[
        0\to\H^2(S,\mu_n)\to\H^2(X,\mu_n)\to\H^0(X,R^2f_*\mu_n)\to\H^3(S,\mu_n).
    \]
    It follows from the Kummer sequence that if $m\geq 2$ then any class in $\H^m(S,\mu_n)$ may by killed by an \'{e}tale cover of $S$. This implies (1). For (2), we note that a class $\alpha\in\H^2(X,\mu_n)$ is killed by an fppf cover of $S$ if and only if $\alpha$ maps to 0 in $\H^0(X,R^2f_*\mu_n)$.
\end{proof}


The following gives some basic geometric properties of $\ms M^{n}_d\to\Spec\mathbf{Z}$, and is the main result of this section. As an immediate consequence of this result, we obtain Theorem \ref{thm:basic lifting}.
\begin{theorem}\label{thm:irreducible components of fibers}
	The morphism $\ms M^{n}_d\to\Spec\mathbf{Z}$ is flat and is a local complete intersection of relative dimension 19.
\end{theorem}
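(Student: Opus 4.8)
The idea is to reduce the statement to a property of the complete local rings of $\ms M^n_d$, where the deformation-theoretic results of \S\ref{sec:formal deformations} apply, and to supplement this with a global dimension count in the one case where those results do not already give smoothness. Since $\ms M^n_d$ is a Deligne--Mumford stack of finite presentation over $\Spec\mathbf{Z}$ (Proposition \ref{prop:its DM}), flatness and the property of being a local complete intersection of relative dimension $19$ may be checked on the complete local rings at geometric points; passing to an \'etale chart and to algebraically closed residue fields, it suffices to treat a geometric point $x=(X,\alpha,L)$ with residue field $k$ algebraically closed. If $\operatorname{char}k=0$ then $x$ lies on $\ms M^n_d\otimes\mathbf{Q}$, which is finite \'etale over $\ms M_d\otimes\mathbf{Q}$ via $R^2f_*\mu_n$ (here $n$ is invertible, so this is \cite[Th. finitude]{SGA4.5}), hence smooth of dimension $19$ over $\mathbf{Q}$; such points present no difficulty. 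So assume $\operatorname{char}k=p>0$ and set $W=W(k)$. By Lemma \ref{lem:flat and etale covers killing classes} the class $\alpha$ is, \'etale-locally, an honest class in $\H^2(X,\mu_n)$, and by Remark \ref{rem:naive functor is fine!} the complete local ring of $\ms M^n_d$ at $x$ prorepresents $\Def_{(X,\alpha,L)}$ over $W$. By Proposition \ref{prop:one equation} (applied to $\Def_{(X,\alpha)}\subset\Def_{(X,\alpha_{\Br})}$, together with the analogous statement for $\Def_{(X,L)}\subset\Def_X$, due to Deligne \cite{MR638598}) and the coordinates of~\eqref{eq:diagram1111}, this ring has the form $R/(g_1,g_2)$ with $R=W[[t_1,\dots,t_{20},s]]$.

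Next I would carry out the standard commutative-algebra reduction. Since $R$ is regular, hence Cohen--Macaulay, of dimension $22$, and $\dim R/(g_1,g_2,p)\geq 22-3=19$ automatically, it suffices to prove $\dim\bigl(\Def_{(X,\alpha,L)}\otimes_Wk\bigr)\leq 19$. Indeed, granting this, $p,g_1,g_2$ is a system of parameters and therefore a regular sequence in $R$; permuting it (legitimate in a Cohen--Macaulay local ring), $g_1,g_2$ is a regular sequence in $R$ and $p$ is a nonzerodivisor on $R/(g_1,g_2)$. Hence $\Def_{(X,\alpha,L)}$ is flat over $W$, it is a complete intersection in the formally smooth $W$-algebra $R$, and its special fibre $k[[t_1,\dots,t_{20},s]]/(\overline g_1,\overline g_2)$ is cut out by the regular sequence $\overline g_1,\overline g_2$; likewise the generic fibre is a complete intersection of dimension $19$ in the regular ring $R[1/p]$. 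Thus $\ms M^n_d\to\Spec\mathbf{Z}$ is flat and a local complete intersection of relative dimension $19$ at $x$.

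It remains to bound $\dim\bigl(\Def_{(X,\alpha,L)}\otimes_Wk\bigr)\leq 19$ at every geometric point over characteristic $p$; note the reverse inequality is automatic, as $\Def_{(X,\alpha,L)}\otimes_Wk\cong k[[t_1,\dots,t_{20},s]]/(\overline g_1,\overline g_2)$. Write this functor as $\Def_{(X,\alpha)/k}\times_{\Def_{X/k}}\Def_{(X,L)/k}$, a closed formal subscheme of $\Def_{(X,\alpha_{\Br})/k}\cong\Spf k[[t_1,\dots,t_{20},s]]$ defined by $\overline f$ (the reduction mod $p$ of Deligne's equation for $\Def_{(X,L)}\subset\Def_X$, which is nonzero because $\Def_{(X,L)}$ is flat over $W$ of relative dimension $19$, so $\dim k[[t_1,\dots,t_{20}]]/(\overline f)=19$) together with $\overline g$ (the reduction of the equation of Proposition \ref{prop:one equation}). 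Since $k[[t_1,\dots,t_{20},s]]$ is a UFD and $\overline f$ involves only the $t_i$, the bound $\dim\leq 19$ follows as soon as $\overline f$ and $\overline g$ have no common irreducible factor. If $p\nmid n$ this is trivial, as $\overline g=s$ by Proposition \ref{prop:nice coordinates} (indeed $\ms M^n_d\to\ms M_d$ is then finite \'etale). If $p\mid n$ and $X$ has finite height $h$, then $\overline g\equiv s^{p^{rh}}\pmod{(t_1,\dots,t_{20})}$ by Proposition \ref{prop:nice coordinates}; any common irreducible factor, being a factor of $\overline f$, would lie in the ideal $(t_1,\dots,t_{20})$, forcing $\overline g\in(t_1,\dots,t_{20})k[[t_1,\dots,t_{20},s]]$ and contradicting $\overline g\equiv s^{p^{rh}}\neq 0$.

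The remaining case, and the genuine obstacle, is $p\mid n$ with $X$ supersingular: here Proposition \ref{prop:nice coordinates} yields only $\overline g\equiv 0\pmod{(t_1,\dots,t_{20})}$, and the argument above collapses. My plan here is to abandon the purely local computation and bound $\dim\bigl(\Def_{(X,\alpha,L)}\otimes_Wk\bigr)$ by exhibiting $\ms M^n_d\otimes\mathbf{F}_p$ as small along the supersingular locus, using the forgetful map $\ms M^n_d\otimes\mathbf{F}_p\to\ms M_d\otimes\mathbf{F}_p$. Over the locus where $X$ has finite height $h$ --- of dimension $20-h\leq 19$ in $\ms M_d\otimes\mathbf{F}_p$ --- the fibres of this map are finite, since $\widehat{\Br}_X[n]$ and $\Pic(X)/n$ are finite; so this part of $\ms M^n_d\otimes\mathbf{F}_p$ has dimension $\leq 19$. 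Over the locus where $X$ is supersingular with Artin invariant $\sigma_0$, which has dimension $\sigma_0-1\leq 9$ by the theory of the Artin invariant (Artin, Ogus, Rudakov--Shafarevich), the fibre is the group algebraic space $R^2f_*\mu_n$ of a single supersingular K3 surface; its dimension is controlled by the characteristic subspace $K$ of \S\ref{sec:dlog map} (cf.~\cite{bragg2021representability}) and is bounded by $10$, whence the supersingular part of $\ms M^n_d\otimes\mathbf{F}_p$ has dimension $\leq 9+10=19$. Combining this with the inequality $\dim\bigl(\Def_{(X,\alpha,L)}\otimes_Wk\bigr)\geq 19$ above gives equality at every geometric point over characteristic $p$, completing the proof; for the exceptional supersingular points one could alternatively invoke Theorem \ref{thm:formally smooth def space, one line bundle} when $\sigma_0\geq 3$ and $p\nmid\frac{n}{\ord(\alpha_{\Br})}$. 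I expect the main difficulty to be exactly the supersingular case: pinning down the fibre-dimension bound for $R^2f_*\mu_n$ over a supersingular K3 surface (and matching it against $\sigma_0-1$, so that the sum stays $\leq 19$) is where the crystalline input of \S\ref{sec:dlog map} and the Artin-invariant stratification are really used.
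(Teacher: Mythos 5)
Your proposal is correct and follows the paper's proof almost step for step: the identification of the complete local ring at $x$ with $\Def_{(X,\alpha,L)}=W[[t_1,\dots,t_{20},s]]/(f,g)$ via Lemma \ref{lem:flat and etale covers killing classes}, the commutative-algebra reduction to showing that $f_0,g_0$ is a regular sequence in $k[[t_1,\dots,t_{20},s]]$, the trivial case $p\nmid n$ via $g=s$, and the UFD argument at finite-height points using $g_0\equiv s^{p^{rh}}\bmod(t_1,\dots,t_{20})$ are all exactly the paper's steps. The one place you genuinely diverge is the supersingular case with $p\mid n$. The paper stays local: the sublocus of $\Def_{(X,\alpha,L)}\otimes k$ parametrizing supersingular deformations has dimension at most $10$ (the supersingular locus downstairs has dimension $\leq 9$ by \cite[Proposition 14]{MR1827026}, and the fibres over $\Def_X$ have dimension $\leq 1$), so every irreducible component --- having dimension $\geq 19$ --- contains a finite-height point, where the UFD computation pins the dimension to $19$. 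Your global stratification of $\ms M^n_d\otimes\mathbf{F}_p$ by height is the same idea turned inside out and rests on the same input, so it does work; the only repair needed is your fibre-dimension bound over the supersingular locus, which you rightly flag as the open point. The fibre of $\pi_p$ over a supersingular $(X,L)$ is $\underline{\H}^2(X,\mu_n)$, whose identity component has completion at the origin isomorphic to $\widehat{\Br}_X[p^r]\cong\widehat{\mathbf{G}}_a$ exactly as in the proof of Proposition \ref{prop:nice coordinates}; it therefore has dimension $1$, not ``bounded by $10$,'' and no characteristic-subspace or crystalline input is required. With that, the supersingular part of $\ms M^n_d\otimes\mathbf{F}_p$ has dimension $\leq 9+1=10<19$, and your argument closes.
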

\begin{proof}
	Let $k$ be an algebraically closed field of characteristic $p$, and consider a $k$-point $x\in\ms M^{n}_d(k)$ corresponding to a K3 surface $X$ with an ample class $L$ of degree $2d$ and a class $\alpha\in\H^2(X,\mu_n)$. Let $\widehat{\ms M}_{x}$ be the category cofibered in groupoids over $\cC_W$ whose fiber over $A\in\cC_W$ is the groupoid of 2-commutative diagrams
	\[
	    \begin{tikzcd}
	           \Spec k\arrow{r}\arrow[bend left=25]{rr}{x}&\Spec A\arrow{r}&\ms M^n_d.
	    \end{tikzcd}
	\]
	By Lemma \ref{lem:flat and etale covers killing classes}, if $A$ is an Artinian local ring with residue field $k$ and $f_A:X_A\to\Spec A$ is a relative K3 surface, then the map $\H^2(X_A,\mu_n)\to\H^0(\Spec A,R^2f_{A*}\mu_n)$ is an isomorphism. It follows that the natural map
	\begin{equation}\label{eq:natural map of def spaces}
	    \Def_{(X,\alpha,L)}\to\widehat{\ms M}_{x}
	\end{equation}
	of categories cofibered in groupoids over $\cC_W$ is an isomorphism.

	To show the result, it will therefore suffice to check that $\Def_{(X,\alpha,L)}$ is flat and lci of relative dimension 19 over $\Spf W$. By definition, we have a Cartesian square
	\begin{equation}\label{eq:cart diagram}
		\begin{tikzcd}
			\Def_{(X,\alpha,L)}\arrow[hook]{r}\arrow{d}&\Def_{(X,\alpha)}\arrow{d}\\
			\Def_{(X,L)}\arrow[hook]{r}&\Def_X.
		\end{tikzcd}
	\end{equation}
	By Proposition \ref{prop:one equation} and \cite[1.6]{MR638598}, the inclusion $\Def_{(X,\alpha,L)}\subset\Def_{(X,\alpha_{\Br})}$ is a closed immersion cut out by two equations. As in~\eqref{eq:diagram1111}, we choose coordinates so that the diagram~\eqref{eq:cart diagram} is represented by
	\begin{equation}\label{eq:cart diagram coordinates}
		\begin{tikzcd}
			\Spf W[[t_1,\dots,t_{20},s]]/(f,g)\arrow[hook]{r}\arrow{d}&\Spf W[[t_1,\dots,t_{20},s]]/(g)\arrow{d}\\
			\Spf W[[t_1,\dots,t_{20}]]/(f)\arrow[hook]{r}&\Spf W[[t_1,\dots,t_{20}]]
		\end{tikzcd}
	\end{equation}
	for some functions $f\in W[[t_1,\dots,t_{20}]]$ and $g\in W[[t_1,\dots,t_{20},s]]$. Let $f_0,g_0$ denote the images of $f$ and $g$ modulo $p$. It will suffice to show that $f_0,g_0\in R_0=k[[t_1,\dots,t_{20},s]]$ is a regular sequence.
	
	If $n$ is coprime to $p$, then by Proposition \ref{prop:nice coordinates} we may assume $g=s$. On the other hand, if $p$ divides $n$, then it follows from \cite[Proposition 14]{MR1827026} that the closed formal subscheme of $\Def_{(X,\alpha,L)}\otimes k$ parametrizing deformations whose underlying K3 has infinite height has dimension at most $10$. Therefore the generic point of any irreducible component of $\ms M^n_d\otimes\overline{\mathbf{F}}_p$ has finite height. So, in this case it will suffice to show the result when $X$ has finite height $h$. By Proposition \ref{prop:nice coordinates}, we may assume that $g$ is congruent to $s^k$ modulo $(p,t_1,\dots,t_{20})$ for some positive integer $k$. We conclude that, in either case, it suffices to prove that $f_0,g_0$ is a regular sequence under the additional assumption that $g_0=g'_0+s^k$ for some positive integer $k$ and some $g_0'\in (t_1,\dots,t_{20})\subset R_0$.
	
	To prove this, we first recall that by \cite[1.6]{MR638598} $f_0$ is not a zero divisor in $R_0$. It remains to show that the image of $g_0$ in $R_0/(f_0)$ is not a zero divisor. Suppose that $g_0h_0\in (f_0)$ for some $h_0\in R_0$. Then $f_0$ divides $g_0h_0=(g'_0+s^k)h_0$. Note that $f_0$ is contained in the subring $k[[t_1,\dots,t_{20}]]$, and also in the ideal $(t_1,\dots,t_{20})$ of $R_0$. The same is true for any irreducible factor of $f_0$. But no such element can divide $g'_0+s^k$. Hence, every irreducible factor of $f_0$ divides $h_0$, so $f_0$ divides $h_0$. This completes the proof.
\end{proof}

We record a few remarks regarding the forgetful morphism $\pi:\ms M^{n}_d\to\ms M_d$~\eqref{eq:the forgetful map pi}. The restriction of $\pi$ to $\Spec\mathbf{Z}\left[\frac{1}{n}\right]$ is \'{e}tale. Over geometric points whose residue characteristics divide $n$, we can describe the fibers of $\pi$ as follows. Given a K3 surface $X$ over an algebraically closed field $k$, write $\underline{\H}^2(X,\mu_n)$ for the functor $R^2f_*\mu_n$, where $f:X\to\Spec k$ is the structural morphism. Thus, $\underline{\H}^2(X,\mu_n)$ is a group scheme over $k$ whose group of $k$-points is $\H^2(X,\mu_n)$, and the fiber of $\pi$ over a geometric point $[(X,L)]\in\ms M_d(k)$ is exactly $\underline{\H}^2(X,\mu_n)$. Let $\underline{\U}^2(X,\mu_n)\subset\underline{\H}^2(X,\mu_n)$ denote the connected component of the identity, and let $\underline{\D}^2(X,\mu_n)$ be the quotient, so that we have a short exact sequence
\[
    0\to\underline{\U}^2(X,\mu_n)\to\underline{\H}^2(X,\mu_n)\to\underline{\D}^2(X,\mu_n)\to 0.
\]
If $n$ is invertible in $k$, then $\underline{\U}^2(X,\mu_n)$ is trivial, and $\underline{\D}^2(X,\mu_n)\cong(\underline{\mathbf{Z}}/n\underline{\mathbf{Z}})^{\oplus 22}$. 
Suppose that $k$ has characteristic $p>0$. Let $p^r$ be the largest power of $p$ dividing $n$ and set $m=n/p^r$. The completion of $\underline{\U}^2(X,\mu_n)$ at the identity is isomorphic to the $p^r$ torsion in the formal Brauer group $\widehat{\Br}_X$. This determines $\underline{\U}^2(X,\mu_n)$ up to isomorphism. In particular, if $X$ has finite height, then $\underline{\U}^2(X,\mu_n)$ is a purely infinitesimal group scheme of length $p^{rh}$, and if $h=1$, then
\[
    \underline{\U}^2(X,\mu_n)\cong\mu_{p^r}.
\]
If $h=\infty$, then as long as $r\geq 1$ we have
\[
    \underline{\U}^2(X,\mu_n)\cong\mathbf{G}_a.
\]
The \'{e}tale quotient $\underline{\D}^2(X,\mu_n)$ can also be computed explicitly. If $h<\infty$, then
\[
    \underline{\D}^2(X,\mu_n)\cong (\underline{\mathbf{Z}}/m\underline{\mathbf{Z}})^{\oplus 22}\oplus(\underline{\mathbf{Z}}/p^r\underline{\mathbf{Z}})^{\oplus 22-2h}
\]
and if $h=\infty$ then
\[
    \underline{\D}^2(X,\mu_n)\cong (\underline{\mathbf{Z}}/m\underline{\mathbf{Z}})^{\oplus 22}\oplus(\underline{\mathbf{Z}}/p^r\underline{\mathbf{Z}})^{\oplus 22-2\sigma_0}.
\]
In particular, if $p$ divides $n$ then the forgetful morphism
    \[
        \pi_p:\ms M^n_d\otimes \mathbf{F}_p\to\ms M_d\otimes \mathbf{F}_p
    \]
is not flat, and the height--Artin invariant stratification provides a flattening stratification. Furthermore, the generic geometric fiber of $\pi_p$ is nonreduced, and so the forgetful map is inseparable.

\begin{corollary}\label{cor:non reduced components}
  If $p$ divides $n$, then the stack $\ms M^n_d\otimes\mathbf{F}_p$ has an irreducible component which is everywhere nonreduced.
\end{corollary}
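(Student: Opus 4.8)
The plan is to exhibit an irreducible component $Z$ of $\ms M^n_d\otimes\mathbf{F}_p$ whose \emph{generic point} has non-reduced local ring. This suffices: if $\eta_0$ denotes the generic point of $Z$ and $z\in Z$, then $\eta_0$ corresponds to a prime of $\mathcal{O}_{\ms M^n_d\otimes\mathbf{F}_p,z}$, so $\mathcal{O}_{\ms M^n_d\otimes\mathbf{F}_p,\eta_0}$ is a localization of $\mathcal{O}_{\ms M^n_d\otimes\mathbf{F}_p,z}$; since a localization of a reduced ring is reduced, $\mathcal{O}_{\ms M^n_d\otimes\mathbf{F}_p,z}$ is non-reduced for \emph{every} $z\in Z$, which is the assertion. (One works on an \'etale atlas in order to speak of schemes; all the notions involved are \'etale-local.)

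To produce such a component, I would first choose the generic point carefully. Let $\eta$ be the generic point of an irreducible component of $\ms M_d\otimes\mathbf{F}_p$ and set $\kappa=\kappa(\eta)$. As $\ms M_d\otimes\mathbf{F}_p$ is lci over $\mathbf{F}_p$ and generically smooth it is reduced at $\eta$, so $\mathcal{O}_{\ms M_d\otimes\mathbf{F}_p,\eta}=\kappa$; in particular $\Spec\kappa\to\ms M_d\otimes\mathbf{F}_p$ is flat, the higher pushforward $R^2f_*\mu_n$ commutes with this base change, and the fibre of $\pi_p$ over $\eta$ is the finite $\kappa$-group scheme $\underline{\H}^2(X_\eta,\mu_n)$, where $X_\eta/\kappa$ is the corresponding K3 surface. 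By Theorem \ref{thm:irreducible components of fibers} (with $n=1$) every component of $\ms M_d\otimes\mathbf{F}_p$ has dimension $19$, whereas the supersingular locus has dimension $\leq 10$ (as in the proof of that theorem, via \cite[Proposition 14]{MR1827026}), so $X_\eta$ has finite height $h$. As recalled above, the identity component $\underline{\U}^2(X_\eta,\mu_n)$ is then purely infinitesimal of length $p^{rh}$, where $p^r$ is the largest power of $p$ dividing $n$. Since $p\mid n$ we have $p^{rh}\geq p>1$, so, writing $\underline{\U}^2(X_\eta,\mu_n)=\Spec B$, the ring $B$ is a local artinian $\kappa$-algebra with residue field $\kappa$ and $\dim_\kappa B=p^{rh}>1$; hence its maximal ideal is nonzero and nilpotent, and $B$ is \emph{not} reduced. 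This is the step where the hypothesis $p\mid n$ enters, and it is the delicate point: neither the non-reducedness of a single fibre, nor the shape of the complete local ring $k[[t_1,\dots,t_{20},s]]/(f_0,g_0)$ occurring in the proof of Theorem \ref{thm:irreducible components of fibers}, detects non-reducedness of the total space on its own.

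The reason the non-reducedness of the fibre nonetheless propagates is that the fibre is zero-dimensional. Let $\eta_0$ be the unique point of the connected artinian scheme $\underline{\U}^2(X_\eta,\mu_n)$, viewed as a point of $\ms M^n_d\otimes\mathbf{F}_p$. The whole fibre $\underline{\H}^2(X_\eta,\mu_n)=\Spec A$ is artinian, hence discrete, so it has no nontrivial specializations; and since $\pi_p(\eta_0)=\eta$ is a generic point of $\ms M_d\otimes\mathbf{F}_p$, any generization of $\eta_0$ in $\ms M^n_d\otimes\mathbf{F}_p$ maps to $\eta$, hence lies in this fibre, hence equals $\eta_0$. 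Thus $\eta_0$ is a generic point of $\ms M^n_d\otimes\mathbf{F}_p$, and $Z:=\overline{\{\eta_0\}}$ is an irreducible component. Because $\Spec\kappa\to\ms M_d\otimes\mathbf{F}_p$ is flat, $\mathcal{O}_{\ms M^n_d\otimes\mathbf{F}_p,\eta_0}$ is the local ring of $\Spec A$ at $\eta_0$, i.e.\ the local factor of $A$ cut out by the identity component, which is exactly the non-reduced ring $B$. By the first paragraph, $Z$ is then everywhere non-reduced, which proves the corollary. The main obstacle to set up correctly is precisely the implication ``non-reduced fibre $\Rightarrow$ non-reduced total space'': it holds here only because the relevant fibre of $\pi_p$ is a \emph{finite} $\kappa$-scheme, which is what lets us identify the generic point of its identity component with a genuine generic point of $\ms M^n_d\otimes\mathbf{F}_p$ whose local ring is literally the non-reduced coordinate ring $B$. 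In writing it up I would be careful to (i) record that $\ms M_d\otimes\mathbf{F}_p$ is reduced at its generic points, so that the fibre of $\pi_p$ there really is $\underline{\H}^2(X_\eta,\mu_n)$, and (ii) invoke correctly the structure theory of finite group schemes over a field used to identify $\underline{\U}^2(X_\eta,\mu_n)$ with the local factor $B$ of $A$ at the identity.
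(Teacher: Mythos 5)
Your proof is correct, and it takes a genuinely different route from the paper's. The paper's argument uses the section $\sigma:\ms M_d\otimes\mathbf{F}_p\to\ms M^n_d\otimes\mathbf{F}_p$, $(X,L)\mapsto(X,\delta(L),L)$, of $\pi_p$: taking $Z$ to be the component containing $\sigma(W)$ for a component $W$ of $\ms M_d\otimes\mathbf{F}_p$, it restricts to the ordinary locus $W_1\subset W$, where the fibers of $Z_1=\pi_p^{-1}(W_1)\cap Z\to W_1$ are disjoint unions of infinitesimal group schemes, and observes that $\sigma(W_1)$ is a reduced closed substack of $Z_1$ with the same support but strictly contained in it. You instead compute the local ring at a generic point of $\ms M^n_d\otimes\mathbf{F}_p$ directly: using generic reducedness of $\ms M_d\otimes\mathbf{F}_p$ to identify the fiber of $\pi_p$ over a generic point $\eta$ with the finite $\kappa(\eta)$-group scheme $\underline{\H}^2(X_\eta,\mu_n)$ (finite because $X_\eta$ has finite height), you note that its points are maximal points of $\ms M^n_d\otimes\mathbf{F}_p$ whose local rings are the artinian local factors of the fiber, and the identity component has length $p^{rh}>1$ since $p\mid n$. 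The two arguments in fact exhibit different components --- yours is the closure of the locus $\alpha=0$, the paper's passes through $\alpha=\delta(L)\neq 0$ --- but either suffices, since the statement only asserts existence. Your version buys two things: it avoids the restriction to the ordinary locus (any finite-height generic point works), and it isolates exactly why non-reducedness of a fiber propagates to the total space here, namely that a zero-dimensional fiber over a generic point of a generically reduced base is literally a localization of the total space. The underlying phenomenon --- the non-reduced infinitesimal identity component of $R^2f_*\mu_n$ when $p\mid n$ --- is of course the same in both proofs.
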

\begin{proof}
    The morphism $\pi_p$ has a section
    \[
        \sigma:\ms M_d\otimes\mathbf{F}_p\to \ms M_d^n\otimes\mathbf{F}_p
    \]
    defined on $S$-points by $(X,L)\mapsto (X,\delta(L),L)$, where
    \[
        \delta:\H^0(S,R^1f_*\mathbf{G}_m)\to\H^0(S,R^2f_*\mu_n)
    \]
    is the boundary map coming from the Kummer sequence. Let $W$ be an irreducible component of $\ms M_d\otimes\mathbf{F}_p$, and let $Z$ be the irreducible component of $\ms M^n_d\otimes\mathbf{F}_p$ which contains $\sigma(W)$. We claim that $Z$ is everywhere nonreduced. To see this, let $W_1\subset W$ be the open dense subset parametrizing ordinary K3 surfaces. Let $Z_1=\pi_p^{-1}(W_1)\cap Z\subset Z$ be its preimage in $Z$. Every geometric fiber of $Z_1\to W_1$ is a disjoint union of copies of $\mu_p$. The subscheme $\sigma(W_1)\subset Z_1$ is reduced, and is not equal to $Z_1$. It follows that $Z_1$, and hence $Z$, is everywhere nonreduced.
\end{proof}

\subsection{Multiple line bundles}\label{sec:multiple line bundles, global results}

We indicate the extension of the preceding results to the case of multiple line bundles. The proofs are essentially the same, so we shall be brief.

\begin{definition}
    Let $\Lambda$ be a lattice. Let $\ms M_{\Lambda}$ be the moduli stack over $\Spec\mathbf{Z}$ whose objects over a scheme $S$ are pairs $(X,\iota)$, where $f:X\to S$ is a family of K3 surfaces and $\iota:\underline{\Lambda}_S\hookrightarrow\uPic_{X/S}$ is an isometric embedding whose image contains a primitive ample class.
    
    Let $n$ be a positive integer. We let $\ms M^n_{\Lambda}$ be the moduli stack parametrizing tuples $(X,\iota,\alpha)$, where $X$ and $\iota$ are as before, and $\alpha\in\H^0(S,R^2f_*\mu_n)$.
\end{definition}

Both $\ms M_{\Lambda}$ and $\ms M^n_{\Lambda}$ are Deligne-Mumford stacks over $\Spec\mathbf{Z}$. Write $m=\rk(\Lambda)$.

\begin{proposition}\label{prop:flatness for moduli space, many line bundles}
  Suppose that $m\leq 10$. The map $\ms M_{\Lambda}\to\Spec\mathbf{Z}$ is a flat local complete intersection of relative dimension $20-m$. Every irreducible component of every geometric fiber of $\ms M_{\Lambda}\to\Spec\mathbf{Z}$ is generically smooth of dimension $20-m$.  
\end{proposition}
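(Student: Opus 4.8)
The plan is to follow the proof of Theorem~\ref{thm:irreducible components of fibers}, using Corollary~\ref{cor:formally smooth def space} in place of the one-line-bundle smoothness input and carrying out the bookkeeping for $m$ line bundles. First I would fix a geometric point $x=(X,\iota)$ of $\ms M_\Lambda$ over an algebraically closed field $k$ of characteristic $p>0$, pick a $\mathbf{Z}$-basis $e_1,\dots,e_m$ of $\Lambda$, and put $L_i=\iota(e_i)\in\Pic X$. Exactly as in the proof of Theorem~\ref{thm:irreducible components of fibers}, the completed local ring of $\ms M_\Lambda$ at $x$ is identified with $\Def_{(X,L_1,\dots,L_m)}=\Def_{(X,L_1)}\times_{\Def_X}\cdots\times_{\Def_X}\Def_{(X,L_m)}$, which by Deligne~\cite[1.6]{MR638598} is of the form $\Spf W[[t_1,\dots,t_{20}]]/(f_1,\dots,f_m)$, where $f_i$ cuts out the divisor $\Def_{(X,L_i)}\subseteq\Def_X\cong\Spf W[[t_1,\dots,t_{20}]]$.

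Because $W[[t_1,\dots,t_{20}]]$ is a regular, hence Cohen--Macaulay, local ring of dimension $21$, the whole proposition reduces to showing that every irreducible component of the geometric fiber $\ms M_\Lambda\otimes\overline{\mathbf{F}}_p$ has dimension $20-m$. Indeed, this forces $\dim\bigl(\Def_{(X,L_1,\dots,L_m)}\otimes k\bigr)=20-m$ for every $x$, so $\bar f_1,\dots,\bar f_m$ cut out a subscheme of $k[[t_1,\dots,t_{20}]]$ of the expected codimension $m$ and therefore form a regular sequence; then $f_1,\dots,f_m,p$ is a regular sequence in $W[[t_1,\dots,t_{20}]]$ (permuting $p$ past the $f_i$ is legitimate in a Cohen--Macaulay local ring), which says simultaneously that $R:=W[[t_1,\dots,t_{20}]]/(f_1,\dots,f_m)$ is a complete intersection of dimension $21-m$, hence lci, and that $p$ is a nonzerodivisor on $R$, so $R$ is flat over $W$ of relative dimension $20-m$. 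Over $\overline{\mathbf{Q}}$ the stack $\ms M_\Lambda$ is smooth of pure dimension $20-m$ by the classical theory (line bundles on a characteristic-$0$ K3 surface are unobstructed and $c_1$ is injective on $\Pic\otimes\mathbf{Q}$), and flatness over $\Spec\mathbf{Z}$ is automatic along the characteristic-$0$ points; so it remains only to prove the dimension claim for $\ms M_\Lambda\otimes\overline{\mathbf{F}}_p$, which will simultaneously give generic smoothness in characteristic $p$.

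To prove the dimension claim, let $\mathcal Z$ be an irreducible component of $\ms M_\Lambda\otimes\overline{\mathbf{F}}_p$ with generic point $\eta$. Near each of its points $\ms M_\Lambda\otimes\overline{\mathbf{F}}_p$ is cut out by $m$ equations in a $20$-dimensional regular ring, so $\dim\mathcal Z\ge 20-m$, and I must prove the reverse inequality and smoothness at $\eta$. The infinite-height locus of $\ms M_\Lambda\otimes\overline{\mathbf{F}}_p$ has dimension strictly less than $20-m$ — this follows, as in the proof of Theorem~\ref{thm:irreducible components of fibers}, from the bound on the dimension of the moduli of supersingular K3 surfaces together with $m\le 10$ — so $X_\eta$ has finite height. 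Let $\Lambda'\subseteq\Pic X_\eta$ be the saturation of $\langle L_1,\dots,L_m\rangle$, a rank-$m$ lattice that is primitively embedded. Since the locus in $\Def_{X_\eta}$ along which a line bundle deforms is unchanged upon multiplying the bundle by $p$ (by Deligne's analysis it is cut out by the condition that the crystalline first Chern class lie in a Hodge filtration, a saturated subobject), $\Def_{(X_\eta,L_1,\dots,L_m)}$ agrees with the deformation functor of $X_\eta$ together with a basis of $\Lambda'$, and the classes of that basis span an $m$-dimensional subspace of $\Pic X_\eta\otimes\mathbf{F}_p$. By Corollary~\ref{cor:formally smooth def space}, $\Def_{(X_\eta,L_1,\dots,L_m)}$ is then formally smooth over $W$; its relative tangent space is $\bigcap_i\Ann\bigl(c_1(L_i)\bigr)\subseteq\H^1(X_\eta,T_{X_\eta})$, which has dimension $20-m$ by perfectness of the Kodaira--Spencer pairing together with the injectivity of $c_1\otimes k$ on a finite-height surface (Proposition~\ref{prop:inj for c 1 tensor k}). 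Thus $\ms M_\Lambda\otimes\overline{\mathbf{F}}_p$ is smooth of dimension $20-m$ at $\eta$, so $\dim\mathcal Z=20-m$ and $\mathcal Z$ is generically smooth of dimension $20-m$, as required.

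The commutative algebra converting the dimension count into the flat lci statement is routine, and the finite-height case of the dimension count follows immediately from Corollary~\ref{cor:formally smooth def space}. The main obstacle is controlling components of $\ms M_\Lambda\otimes\overline{\mathbf{F}}_p$ that threaten to lie entirely in the supersingular locus: this is exactly where the hypothesis $m\le 10$ enters, through the bound on the dimension of the moduli of supersingular K3 surfaces, and pinning down that bound — together with the passage to the primitively embedded saturation $\Lambda'$ at the generic point, which is what keeps the $L_i$ independent modulo $p$ there — is the delicate part of the argument.
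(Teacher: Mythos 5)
Your proof is correct and follows essentially the same route as the paper: bound the supersingular locus to locate a finite-height point on each irreducible component of $\ms M_{\Lambda}\otimes\overline{\mathbf{F}}_p$, use the injectivity of $c_1\otimes k$ there (Proposition \ref{prop:inj for c 1 tensor k}) to force the tangent space to have dimension $20-m$, and convert the resulting dimension count into flatness and the lci property via the presentation of the local deformation space by $m$ equations in $\Spf W[[t_1,\dots,t_{20}]]$. The only substantive addition is your passage to the saturation $\Lambda'$ to guarantee mod-$p$ independence of the $L_i$ before invoking Corollary \ref{cor:formally smooth def space}; the paper instead applies Proposition \ref{prop:inj for c 1 tensor k} directly to the image of $\Lambda$ in $\Pic(X)\otimes k$, so your extra step is a harmless (arguably clarifying) refinement rather than a different method.
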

\begin{proof}
    Let $p$ be a prime and consider an irreducible component $Z\subset\ms M_{\Lambda}\otimes\overline{\mathbf{F}}_p$. We have $\dim(Z)\geq 20-m$. By \cite[Proposition 14]{MR1827026}, the supersingular locus in $Z$ has dimension at most $9$, so $Z$ contains a geometric point $x$ parametrizing a K3 surface say $X$ of finite height. By Proposition \ref{prop:inj for c 1 tensor k}, the image of $\Lambda$ in $\H^1(X,\Omega^1_X)$ has dimension $m$, and hence the tangent space to $\ms M_{\Lambda}$ at $x$ has dimension $20-m$. It follows that $Z$ is smooth at $x$ of dimension $20-m$. Hence, $Z$ is generically smooth of dimension $20-m$.
    
    We know that the local deformation space to $\ms M_{\Lambda}$ at any geometric point is a subscheme of $\Spf W[[t_1,\dots,t_{20}]]$ cut out by $m$ equations. Our computation of the dimension of $Z$ therefore implies that $\ms M_{\Lambda}$ is flat and lci over $\Spec\mathbf{Z}$ of relative dimension $20-m$.
\end{proof}

\begin{proposition}\label{prop:flatness for twisted moduli space, many line bundles}
  Let $n$ be a positive integer. If $m\leq 9$, then the map $\ms M^n_{\Lambda}\to\Spec\mathbf{Z}$ is a flat local complete intersection of relative dimension $20-m$. If $m\leq 10$, then the same conclusion holds for the restriction $\ms M^n_{\Lambda}\otimes\mathbf{Z}\left[\frac{1}{n}\right]\to\Spec\mathbf{Z}\left[\frac{1}{n}\right]$.
\end{proposition}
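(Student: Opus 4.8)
The plan is to mirror the proof of Theorem~\ref{thm:irreducible components of fibers}, the only genuinely new inputs being an adaptation of its regular--sequence step to several line bundles and a dimension count for the supersingular locus. First I would reduce to deformation theory. For a geometric point $x=(X,\iota,\alpha)$ of $\ms M^n_\Lambda$ over an algebraically closed field $k$ of characteristic $p$, fix a $\mathbf{Z}$-basis $L_1,\dots,L_m$ of $\iota(\Lambda)$; by Lemma~\ref{lem:flat and etale covers killing classes} the complete local ring of $\ms M^n_\Lambda$ at $x$ is the ring prorepresenting $\Def_{(X,\alpha,L_1,\dots,L_m)}$, exactly as in Theorem~\ref{thm:irreducible components of fibers}. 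Since flatness and the relative--complete--intersection property of a fixed relative dimension can be checked on complete local rings (at geometric points of an \'etale atlas), it suffices to show $\Def_{(X,\alpha,L_1,\dots,L_m)}$ is flat and a local complete intersection of relative dimension $20-m$ over $W$. By Proposition~\ref{prop:one equation} (for the class $\alpha$) together with \cite[1.6]{MR638598} applied to each $L_i$, we may choose coordinates so that
\[
  \Def_{(X,\alpha,L_1,\dots,L_m)}\cong\Spf W[[t_1,\dots,t_{20},s]]/(f_1,\dots,f_m,g),
\]
where $f_1,\dots,f_m\in W[[t_1,\dots,t_{20}]]$ cut out $\Def_{(X,L_1,\dots,L_m)}\subset\Def_X$ and $g$ cuts out $\Def_{(X,\alpha)}\subset\Def_{(X,\alpha_{\Br})}$. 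As in Theorem~\ref{thm:irreducible components of fibers} it then suffices to prove that the reductions $f_{1,0},\dots,f_{m,0}\in k[[t_1,\dots,t_{20}]]$ and $g_0\in R_0:=k[[t_1,\dots,t_{20},s]]$ form a regular sequence in $R_0$: this forces $f_1,\dots,f_m,g$ to be a regular sequence in the regular local ring $W[[t_1,\dots,t_{20},s]]$ with $p$ a nonzerodivisor on the quotient, whence $W$-flatness and the complete intersection property of relative dimension $22-(m+1)-1=20-m$.

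For the statement over $\mathbf{Z}[1/n]$, and for primes $p\nmid n$ in the first statement, I would argue directly with the forgetful morphism $\pi\colon\ms M^n_\Lambda\to\ms M_\Lambda$. By Theorem~\ref{thm:representability for R2} the functor $R^2f_*\mu_n$ is represented by a group algebraic space of finite presentation over $\ms M_\Lambda$, and over $\mathbf{Z}[1/n]$ it is a locally constant sheaf of finite groups, so $\pi$ is finite \'etale there. Since $\ms M_\Lambda\otimes\mathbf{Z}[1/n]\to\Spec\mathbf{Z}[1/n]$ is flat and a local complete intersection of relative dimension $20-m$ by Proposition~\ref{prop:flatness for moduli space, many line bundles} (this is where $m\leq 10$ enters), composing with $\pi$ shows $\ms M^n_\Lambda\otimes\mathbf{Z}[1/n]\to\Spec\mathbf{Z}[1/n]$ is flat and a local complete intersection of relative dimension $20-m$. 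This proves the second assertion and disposes of the primes not dividing $n$.

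Now fix a prime $p\mid n$. First perform the finite--height reduction as in Theorem~\ref{thm:irreducible components of fibers}: the supersingular locus of $\ms M_\Lambda\otimes\overline{\mathbf{F}}_p$ has dimension at most $9$ by \cite[Proposition~14]{MR1827026}, and the fibre of $\pi$ over a supersingular point has identity component $\mathbf{G}_a$, so its preimage under $\pi$ --- the supersingular locus of $\ms M^n_\Lambda\otimes\overline{\mathbf{F}}_p$ --- has dimension at most $10$. As $m\leq 9$ gives $20-m\geq 11>10$, every irreducible component of $\ms M^n_\Lambda\otimes\overline{\mathbf{F}}_p$ has a finite--height generic point, so it suffices to treat $x$ with $X$ of finite height $h$. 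In that case Proposition~\ref{prop:flatness for moduli space, many line bundles} shows $f_{1,0},\dots,f_{m,0}$ is a regular sequence in $k[[t_1,\dots,t_{20}]]$, hence in $R_0$, so $B_0:=k[[t_1,\dots,t_{20}]]/(f_{1,0},\dots,f_{m,0})$ is a complete intersection --- in particular Cohen--Macaulay --- of dimension $20-m$, and $A_0:=R_0/(f_{1,0},\dots,f_{m,0})\cong B_0[[s]]$ is Cohen--Macaulay of dimension $21-m$. By Proposition~\ref{prop:nice coordinates} we may take $g_0\equiv s^{p^{rh}}$ modulo $(t_1,\dots,t_{20})R_0$, where $p^r$ is the largest power of $p$ dividing $n$; thus the image of $g_0$ in $B_0[[s]]/\mathfrak{m}_{B_0}B_0[[s]]=k[[s]]$ is $s^{p^{rh}}\neq 0$. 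By the Weierstrass preparation theorem over the complete local ring $B_0$, the quotient $A_0/(g_0)$ is a free $B_0$-module of rank $p^{rh}$, so $\dim A_0/(g_0)=\dim B_0=20-m=\dim A_0-1$; since $A_0$ is Cohen--Macaulay, $g_0$ is a nonzerodivisor on $A_0$, i.e.\ $f_{1,0},\dots,f_{m,0},g_0$ is a regular sequence in $R_0$. By the first paragraph this completes the local verification, and (as in Theorem~\ref{thm:irreducible components of fibers}) it follows that $\ms M^n_\Lambda\otimes\overline{\mathbf{F}}_p$ is a local complete intersection of pure dimension $20-m$ and that $\ms M^n_\Lambda\to\Spec\mathbf{Z}$ is flat and a local complete intersection of relative dimension $20-m$.

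The main obstacle is the last step of the third paragraph. For $m=1$ (Theorem~\ref{thm:irreducible components of fibers}) one shows $g_0$ is a nonzerodivisor modulo $(f_{1,0})$ by inspecting the irreducible factors of the single line--bundle equation, which all lie in $k[[t_1,\dots,t_{20}]]\cap(t_1,\dots,t_{20})R_0$ and hence cannot divide $g_0'+s^{p^{rh}}$. For $m\geq 2$ the ring $B_0=k[[t_1,\dots,t_{20}]]/(f_{1,0},\dots,f_{m,0})$ is typically not a unique factorization domain, so this argument is unavailable; the substitute is to observe that $g_0$ is an $s$-distinguished element of $B_0[[s]]$ (its image modulo $\mathfrak{m}_{B_0}$ being $s^{p^{rh}}$, which uses $h<\infty$ together with Proposition~\ref{prop:nice coordinates}), so that Weierstrass preparation makes $A_0/(g_0)$ a finite free $B_0$-module and the Cohen--Macaulay dimension count applies. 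The need for $X$ of finite height here is exactly what forces the supersingular dimension estimate to be run carefully, and the extra $\dim\mathbf{G}_a=1$ contributed by the twisting is what pins the hypothesis to $m\leq 9$ rather than $m\leq 10$.
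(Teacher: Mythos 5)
Your proof is correct and follows the same overall strategy as the paper's: reduce to the dimension of the local deformation rings at geometric points, dispose of the primes not dividing $n$ (and the statement over $\mathbf{Z}[\tfrac{1}{n}]$) by \'{e}taleness of the forgetful map together with Proposition \ref{prop:flatness for moduli space, many line bundles}, and use the supersingular dimension bound plus $m\leq 9$ to reduce to finite-height points. The one place where you genuinely diverge is the final step at a finite-height point. The paper simply observes that the fiber of $\ms Z\to Z$ through such a point is zero-dimensional (Proposition \ref{prop:nice coordinates}), so $\dim\ms Z\leq\dim Z=20-m$, and then deduces flatness and the lci property from the resulting dimension count against the $m+1$ defining equations. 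You instead verify the regular-sequence condition directly, replacing the irreducible-factor argument of Theorem \ref{thm:irreducible components of fibers} (which, as you correctly note, does not survive the passage from one line bundle to several, since $B_0$ need not be factorial) by Weierstrass division over the complete local ring $B_0$ and the Cohen--Macaulayness of the complete intersection $A_0=B_0[[s]]$. Both routes rest on the same inputs and are valid; yours makes the commutative algebra explicit and shows directly that $A_0/(g_0)$ is finite free of rank $p^{rh}$ over $B_0$, while the paper's fibration argument is shorter and sidesteps Weierstrass preparation entirely. The only caveat, which applies equally to the paper's own write-up, is that the propagation from finite-height points to all points (including supersingular ones) is left implicit: one must note that every component acquires dimension exactly $20-m$ from a finite-height closed point, whence the local ring at every point is cut out by a regular sequence.
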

\begin{proof}
    The local deformation space of any geometric point of $\ms M_{\Lambda}^n$ is a closed subscheme of $\Spf W[[t_1,\dots,t_{20},s]]$ cut out by $m+1$ equations. As in the proof of \ref{prop:flatness for moduli space, many line bundles}, to show the first claim it will suffice to show that if $p$ is a prime and $\ms Z$ is an irreducible component of $\ms M^n_{\Lambda}\otimes\overline{\mathbf{F}}_p$ then $\dim(\ms Z)=20-m$. To show this, consider the forgetful map $\pi:\ms M^n_{\Lambda}\to\ms M_{\Lambda}$. The fibers of this map have dimension at most one. Let $Z\subset \ms M_{\Lambda}\otimes\overline{\mathbf{F}}_p$ be an irreducible component containing the image of $\ms Z$. By \cite[Proposition 14]{MR1827026} the supersingular locus in $Z$ has dimension at most $9$, and therefore the supersingular locus in $\ms Z$ has dimension at most $10$. Because $m\leq 9$, we deduce that $\ms Z$ contains a geometric point $x$ parametrizing a K3 surface of finite height. By \ref{prop:flatness for moduli space, many line bundles} $Z$ has dimension $20-m$, and by \ref{prop:nice coordinates}, the fiber of $\ms Z\to Z$ containing $x$ is zero-dimensional. We conclude that $\ms Z$ has dimension $20-m$. This gives the first claim.
    
    For the second, we note that the map $\ms M^n_{\Lambda}\otimes\mathbf{Z}\left[\frac{1}{n}\right]\to\Spec\mathbf{Z}\left[\frac{1}{n}\right]$ is \'{e}tale. The result therefore follows from \ref{prop:flatness for moduli space, many line bundles}.
\end{proof}

We highlight the following consequence for the existence of liftings of twisted K3 surfaces together with a collection of line bundles. In the non-twisted case, this problem has been considered by Lieblich--Olsson \cite{LO15} and Lieblich--Maulik \cite{MR3934849}.

\begin{theorem}\label{thm:mega lifting}
	Let $X$ be a K3 surface over an algebraically closed field $k$ of characteristic $p>0$ and let $\alpha_{\Br}\in\Br(X)$ be a Brauer class. Let $\alpha\in\H^2(X,\mu_n)$ be a class whose image in the Brauer group is $\alpha_{\Br}$. Let $V\subset\Pic(X)$ be a saturated sublattice of rank $m$ containing an ample class. Suppose that at least one of the following holds.
	\begin{enumerate}
		\item[{\rm (A)}] $X$ has finite height.
		\item[{\rm (B)}] $m\leq 9$.
		\item[{\rm (C)}] $n$ is coprime to $p$ and $m\leq 10$.
	\end{enumerate}
	There exists
	\begin{enumerate}
		\item[{\rm (1)}] a DVR $R$ with fraction field $K$ of characteristic 0 and residue field $k$,
		\item[{\rm (2)}] a K3 surface $\widetilde{X}$ over $R$ and an isomorphism $\ms X\otimes_R k\cong X$,
		\item[{\rm (3)}] a class $\widetilde{\alpha}\in\H^2(\widetilde{X},\mu_n)$ such that $\widetilde{\alpha}|_X=\alpha$, and
		\item[{\rm (4)}] a sublattice $\ms V\subset\Pic(\widetilde{X})$ which over $k$ specializes to the inclusion $V\subset\Pic(X)$ 
		and which for every algebraically closed field $L$ containing $K$ induces an isomorphism $\ms V|_{\widetilde{X}_L}=\Pic(\widetilde{X}_L)$.
	\end{enumerate}
 \end{theorem}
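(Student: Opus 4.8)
The plan is to reduce the theorem to the $W$-flatness of the local deformation space $\Def_{(X,\alpha,L_1,\dots,L_m)}$ and to Deligne's argument for choosing a lift along which the Picard lattice does not grow. Fix a $\mathbf{Z}$-basis $L_1,\dots,L_m$ of $V$ and an ample class $H\in V$. As in the proof of Theorem \ref{thm:irreducible components of fibers}, and using Lemma \ref{lem:flat and etale covers killing classes} to identify sections of $R^2f_{A*}\mu_n$ with classes in $\H^2(X_A,\mu_n)$ over Artinian bases, the deformation functor $\widehat{\ms M}_x$ of the point $x=(X,\,V\hookrightarrow\Pic(X),\,\alpha)$ of $\ms M^n_{\Lambda}$ (with $\Lambda=V$) is isomorphic to $\Def_{(X,\alpha,L_1,\dots,L_m)}$; let $A$ be the complete local ring prorepresenting it. It then suffices to produce a local $W$-algebra homomorphism from $A$ to a DVR $R$ with residue field $k$ and mixed characteristic along which the resulting family has property~(4): then $\widetilde X$, $\widetilde\alpha$ and $\ms V=\langle\ms L_1,\dots,\ms L_m\rangle$ are obtained by restricting to $R$ the algebraization of the universal family over $\Spf A$.

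The crucial point is that $A$ is flat over $W$ of relative dimension $20-m$, so that $p$ is a nonzerodivisor on $A$ and $\Spec A[1/p]$ is nonempty of dimension $20-m$. Under hypotheses (B) or (C) this is Proposition \ref{prop:flatness for twisted moduli space, many line bundles}. Under hypothesis (A), when $X$ has finite height $h$, I would instead rerun the regular-sequence argument from the proof of Theorem \ref{thm:irreducible components of fibers}: by Propositions \ref{prop:one equation} and \ref{prop:nice coordinates} and \cite[1.6]{MR638598}, $\Def_{(X,\alpha,L_1,\dots,L_m)}$ is cut out inside $\Def_{(X,\alpha_{\Br})}\cong\Spf W[[t_1,\dots,t_{20},s]]$ by equations $f_1,\dots,f_m,g$, where $f_i\in W[[t_1,\dots,t_{20}]]$ is the Deligne equation of $L_i$ and $g\equiv s^{p^{rh}}\pmod{(p,t_1,\dots,t_{20})}$. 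Proposition \ref{prop:inj for c 1 tensor k} shows that for $X$ of finite height the classes $c_1(L_i)$ are linearly independent in $\H^1(X,\Omega^1_X)$, so the reductions $\bar f_1,\dots,\bar f_m$ have linearly independent linear terms and hence form a regular sequence in $k[[t_1,\dots,t_{20}]]$; the argument of Theorem \ref{thm:irreducible components of fibers} then shows that $\bar g$ is a nonzerodivisor modulo $(\bar f_1,\dots,\bar f_m)$, since no irreducible factor of any $\bar f_i$ (which lies in $k[[t_1,\dots,t_{20}]]\cap(t_1,\dots,t_{20})$) can divide an element congruent to $s^{p^{rh}}$. Thus $A\otimes_W k$ has dimension $20-m$ and $A$ is $W$-flat. (If $m=20$, which can occur only under (A), then $\Spec A[1/p]$ is zero-dimensional and $\Pic$ of every geometric fibre has rank at most $m$, so~(4) holds automatically once a characteristic-$0$ point is chosen.)

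Next I would algebraize: the universal formal family over $\Spf A$ is polarized by the deformed $H$, so by Grothendieck's existence theorem it algebraizes to a projective K3 surface $\mathfrak X\to\Spec A$ carrying line bundles $\ms L_1,\dots,\ms L_m$ restricting to $L_1,\dots,L_m$, and by Proposition \ref{prop:02} (applicable since $\H^1(X,\ms O_X)=0$) the class $\alpha$ lifts uniquely to $\widetilde\alpha_A\in\H^2(\mathfrak X,\mu_n)$. It remains to choose the DVR as in \cite[\S1.7--1.8]{MR638598}: each irreducible component of $\Spec A[1/p]$ has dimension $20-m$ (because $A$ is a complete intersection, hence equidimensional, and $W$-flat), and on each such component the geometric generic fibre has Picard lattice exactly $V$ --- this reduces to the classical fact over $\mathbf{C}$, the deformation space of a complex $(X_0,L_1,\dots,L_m)$ being smooth of dimension $20-m$ and each ``extra line bundle'' locus being proper closed of dimension $\le19-m$. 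Since the fraction field of $W$ is uncountable (it contains $\mathbf{Q}_p$), these countably many Noether--Lefschetz loci fail to cover the closed points of $\Spec A[1/p]$, so one may choose a one-dimensional $W$-flat complete local domain quotient $A\to\ms O$ whose generic point avoids all of them. Taking $R$ to be the normalization of $\ms O$ --- a mixed-characteristic complete DVR whose residue field is a finite, hence trivial, extension of $k$ --- furnishes the desired lift; the sublattice $\ms V\subset\Pic(\widetilde X)$ generated by the $\ms L_i|_{\widetilde X}$ specializes to $V\subset\Pic(X)$ on the special fibre and equals $\Pic$ on every geometric generic fibre.

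\textbf{The main obstacle} is this last genericity step, namely producing a $W$-flat lift along which the Picard group grows no further than $V$. This is exactly Deligne's argument, whose substance is the smoothness of the characteristic-$0$ deformation space together with the countability of Noether--Lefschetz loci; the extra datum $\alpha$ contributes no new difficulty, having already been spread out over all of $\Spec A$ by Proposition \ref{prop:02} before one passes to $R$. (When the hypotheses of Theorem \ref{thm:formally smooth def space, one line bundle} or Corollary \ref{cor:formally smooth def space, twisted version} are in force, $A$ is even formally smooth over $W$ and one may take $R=W$, but this refinement is not needed for the statement.)
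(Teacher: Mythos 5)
Your proposal is correct and follows exactly the route the paper intends: Theorem \ref{thm:mega lifting} is stated without proof as a consequence of the flatness of the local deformation rings (Propositions \ref{prop:flatness for moduli space, many line bundles} and \ref{prop:flatness for twisted moduli space, many line bundles} for cases (B) and (C)), algebraization via Grothendieck and Proposition \ref{prop:02}, and the standard Deligne/Lieblich--Olsson/Lieblich--Maulik Noether--Lefschetz avoidance argument, and your handling of case (A) by rerunning the regular-sequence argument of Theorem \ref{thm:irreducible components of fibers} with Proposition \ref{prop:inj for c 1 tensor k} is exactly what is needed there (note that Corollary \ref{cor:formally smooth def space, twisted version} alone would not cover finite height with $p\mid\frac{n}{\ord(\alpha_{\Br})}$). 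The one imprecision is that for $m>1$ the ``irreducible factor'' reasoning does not apply to the non-principal ideal $(\bar f_1,\dots,\bar f_m)$; but your own observation that these elements have linearly independent linear parts shows $k[[t_1,\dots,t_{20},s]]/(\bar f_1,\dots,\bar f_m)$ is a regular local ring, hence a domain, in which $\bar g$ is nonzero (its image modulo $(t_1,\dots,t_{20})$ is $s^{p^{rh}}$ up to higher-order terms) and therefore a nonzerodivisor, which is all that is required.
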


\subsection{Moduli of primitive twisted K3 surfaces}\label{sec:moduli of primitive twisted K3 surfaces}

Note that in the definition of $\ms M^n_d$ we allow the class $\alpha$ to have order smaller than $n$, and in particular to vanish. Furthermore, we have imposed no restriction on the relationship between $L$ and $\alpha$. As a consequence, the stack $\ms M^n_d$ has some undesirable behavior (eg. Corollary \ref{cor:non reduced components}). Inspired by Brakkee \cite{MR4126898}, we will consider a variant of the stack $\ms M^n_d$ in which we require the class $\alpha$ to be primitive (in a certain sense) with respect to $L$ (for the precise relation with Brakkee's definitions, we refer to Remark \ref{rem:Brakkee}). This stack will turn out to have some better properties.

Let $(X,L)$ be a polarized K3 surface over an algebraically closed field. We set
\[
    \H^2(X,\mu_n)_{\prim}=\H^2(X,\mu_n)/\langle\delta(L)\rangle
\]
where $\delta:\Pic(X)\to\H^2(X,\mu_n)$ is the boundary map from the Kummer sequence, and $\langle\delta(L)\rangle=\mathbf{Z}/n\mathbf{Z}\cdot\delta(L)$ is the cyclic subgroup generated by $\delta(L)$. By the exactness of the Kummer sequence, the map $\H^2(X,\mu_n)\to\Br(X)$ descends to a map
\[
    \H^2(X,\mu_n)_{\prim}\to\Br(X)
\]
which we will denote by $\alpha\mapsto\alpha_{\Br}$, as before.

We make a similar definition in families. Consider a pair $(X,L)$ where $f:X\to S$ is a family of K3 surfaces and $L\in\H^0(S,\uPic_{X/S})$ is a class whose restriction to every geometric fiber is primitive. Consider the boundary map $\delta:R^1f_*\mathbf{G}_m\to R^2f_*\mu_n$ coming from the Kummer sequence. The global section $L$ induces a map of group schemes $\underline{\mathbf{Z}}_S\to R^1f_*\mathbf{G}_m$. We define $(R^2f_*\mu_n)_{\prim}$ to be the quotient of the composition of this map with $\delta$. Thus, we have a short exact sequence
\begin{equation}\label{eq:primitive SES}
    0\to\underline{\mathbf{Z}}_S/n\underline{\mathbf{Z}}_S\xrightarrow{1\mapsto\delta(L)}R^2f_*\mu_n\to (R^2f_*\mu_n)_{\prim}\to 0.
\end{equation}
For the motivation behind this notation we refer to Remark \ref{rem:Brakkee}.
\begin{definition}\label{def:some more open substacks}
	Let 
	$\mc M_d[n]$ be the stack over $\Spec\mathbf{Z}$ whose objects over a scheme $S$ are triples $(X,\alpha,L)$ where $f:X\to S$ is a family of K3 surfaces, $L\in\H^0(S,\uPic_{X/S})$ is a section whose restriction to each geometric fiber of $X\to S$ is a primitive ample class of degree $2d$, and $\alpha\in\H^0(S,(R^2f_*\mu_n)_{\prim})$.
	
	Let 
	$\mc M_d^n\subset\mc M_d[n]$ be the substack such that for all geometric points $s\in S$ the class $\alpha_s$ has order $n$.
\end{definition}

\begin{proposition}\label{prop:the stacks are DM}
    The stacks $\mc M_d[n]$ and $\mc M_d^n$ are Deligne--Mumford.
\end{proposition}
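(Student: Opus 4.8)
The plan is to exhibit $\mc M_d[n]$ as an algebraic space over the Deligne--Mumford moduli stack $\ms M_d$ of polarized K3 surfaces of degree $2d$, and then to realize $\mc M_d^n$ as an open substack of it.

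First I would handle $\mc M_d[n]$. Let $f\colon\ms X\to\ms M_d$ be the universal polarized K3 surface with universal polarization $L_u$. By Proposition \ref{prop:its DM} (via Theorem \ref{thm:representability for R2}) the sheaf $R^2f_*\mu_n$ is representable by an algebraic space over $\ms M_d$; it carries a group structure induced from that of $\mu_n$, is killed by $n$, and is separated over $\ms M_d$ (its diagonal is representable by closed immersions, as recorded in the proof of Theorem \ref{thm:representability for R2}). The universal polarization defines a homomorphism $\underline{\mathbf Z}/n\to R^2f_*\mu_n$ sending $1$ to $\delta(L_u)$. Since $L_u$ restricts to a primitive class on every geometric fibre, a short computation in the (torsion-free) Picard lattice shows that $k\delta(L_u)=0$ on a fibre forces $n\mid k$; hence $\delta(L_u)$ has order exactly $n$ on every fibre, and the translation action of $\underline{\mathbf Z}/n$ on $R^2f_*\mu_n$ through this homomorphism is free. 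The quotient of an algebraic space by a free action of a finite locally free group scheme is again an algebraic space (the associated fppf equivalence relation has finite locally free projections), and this quotient is precisely the sheaf $(R^2f_*\mu_n)_{\prim}$ of \eqref{eq:primitive SES}; it is again a separated group algebraic space over $\ms M_d$, and its formation commutes with base change. By construction a lift of a scheme $S\to\ms M_d$ to $\mc M_d[n]$ is exactly a section of $(R^2f_*\mu_n)_{\prim}$ over $S$, so $\mc M_d[n]\to\ms M_d$ is representable by algebraic spaces. Pulling back an étale presentation of $\ms M_d$ by a scheme then yields an étale presentation of $\mc M_d[n]$ by an algebraic space, which in turn admits an étale cover by a scheme; hence $\mc M_d[n]$ is Deligne--Mumford.

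Next I would show that $\mc M_d^n$ is an open substack of $\mc M_d[n]$. A section $\alpha$ of $(R^2f_*\mu_n)_{\prim}$ has order dividing $n$, and has order exactly $n$ at a geometric point $s$ precisely when $\tfrac{n}{\ell}\alpha$ is nonzero at $s$ for every prime $\ell\mid n$. For a fixed such $\ell$, the locus where $\tfrac n\ell\alpha=0$ is the preimage of the zero section under the composition of $\alpha$ with multiplication by $\tfrac n\ell$ on $(R^2f_*\mu_n)_{\prim}$; as that group algebraic space is separated over $\ms M_d$, its zero section is a closed immersion, so this locus is closed. Intersecting the complements over the finitely many primes $\ell\mid n$ shows that the condition cutting out $\mc M_d^n$ inside $\mc M_d[n]$ is open, so $\mc M_d^n\hookrightarrow\mc M_d[n]$ is an open immersion. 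An open substack of a Deligne--Mumford stack is Deligne--Mumford, which finishes the proof.

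I expect the only genuinely delicate point to be the quotient step for $\mc M_d[n]$: one must be certain that $(R^2f_*\mu_n)_{\prim}$, a priori only an fppf quotient sheaf, is actually an algebraic space. This is exactly where primitivity of the polarization is used — it guarantees that $\delta(L_u)$ has full order $n$ on every fibre, so the $\underline{\mathbf Z}/n$-action is free — after which one invokes the general representability of quotients by free actions of finite locally free group schemes. The remaining ingredients (separatedness of $R^2f_*\mu_n$, compatibility with base change, and descent of the Deligne--Mumford property along representable morphisms) are formal.
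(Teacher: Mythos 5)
Your proposal is correct and follows essentially the same route as the paper: the paper likewise realizes $\mc M_d[n]$ as the quotient of the representable sheaf $R^2f_*\mu_n$ by the injective, flat subgroup $\underline{\mathbf{Z}}/n\underline{\mathbf{Z}}$ generated by $\delta(L)$ (representability of the quotient being exactly your free-action argument), and then observes that $\mc M_d^n\subset\mc M_d[n]$ is open. The only difference is that you spell out the injectivity of $k\mapsto k\delta(L_u)$ via primitivity and the openness of the order-$n$ locus, both of which the paper asserts without detail.
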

\begin{proof}
    Let $f:X\to\ms M_d$ be the universal polarized K3 surface. The map $\underline{\mathbf{Z}}/n\underline{\mathbf{Z}}\to R^2f_*\mu_n$ is injective. As $\underline{\mathbf{Z}}/n\underline{\mathbf{Z}}$ is flat over $\ms M_d$, the quotient sheaf $(R^2f_*\mu_n)_{\prim}=\mc M_d[n]$ is representable over $\ms M_d$. As $\ms M_d$ is Deligne--Mumford, we conclude the same for $\mc M_d[n]$. The inclusion $\mc M_d^n\subset\mc M_d[n]$ is open, so $\mc M_d^n$ is Deligne--Mumford as well.
\end{proof}

The fiber of $\mc M_d^n$ over the complex numbers has been studied by Brakkee \cite{MR4126898} (see Remark \ref{rem:Brakkee}). Note that the stack $\mc M_d^n$ is large enough to still allow for interesting variation in the Brauer class $\alpha_{\Br}$. In particular, it admits a reasonable notion of Noether--Lefschetz loci.

The stacks we have defined are related by maps
\begin{equation}\label{eq:map of stacks}
    \ms M^n_d\to\mc M_d[n]\supset\mc M_d^n
\end{equation}
where the left arrow is an $n$-fold cyclic \'{e}tale cover (corresponding to the short exact sequence~\eqref{eq:primitive SES} of sheaves) and the right map is an open inclusion.

\begin{remark}\label{rem:Brakkee}
Brakkee \cite[Definition 2.1]{MR4126898} studies a functor on schemes over the complex numbers whose $\mathbf{C}$-points are isomorphism classes of tuples $(X,\alpha,L)$, where $(X,L)$ is a primitively polarized K3 surface of degree $2d$ and $\alpha\in\Hom(\H^2(X,\mathbf{Z})_{\prim},\mathbf{Z}/n\mathbf{Z})$, as well as the subfunctor of tuples such that $\alpha$ has order $n$. Brakkee shows that these functors admit coarse moduli spaces \cite[Theorem 1]{MR4126898}, which are moreover constructed explicitly in terms of the period domain for complex K3 surfaces.

As explained in \cite[\S 2.1]{MR4126898}, there is a canonical isomorphism 
\[
    \Hom(\H^2(X,\mathbf{Z})_{\prim},\mathbf{Z}/n\mathbf{Z})\cong\H^2(X,\mu_n)/\langle\delta(L)\rangle.
\]
Thus, Brakkee's functors are exactly the functors of isomorphism classes associated to the fibers $\mc M_d[n]\otimes\mathbf{C}$ and $\mc M^n_d\otimes\mathbf{C}$ of our moduli stacks over the complex numbers. Our results in this section therefore give a natural extension of Brakkee's moduli spaces to spaces defined over the integers. In particular, Proposition \ref{prop:the stacks are DM} gives a purely algebraic proof of Theorem 1 of \cite[Theorem 1]{MR4126898}.
\end{remark}

We consider the singular locus of the fiber $\mc M_d^n\otimes\mathbf{F}_p$. It is convenient to make the following definition. Suppose that $(X,\alpha_{\Br})$ is a twisted K3 surface over an algebraically closed field $k$ of characteristic $p>0$. If $X$ is supersingular of Artin invariant $\sigma_0$, we define the \textit{Artin invariant} of $(X,\alpha_{\Br})$ by
\[
  \sigma_0(X,\alpha_{\Br})=\begin{cases}
  \sigma_0(X)+1,&\mbox{ if }\alpha_{\Br}\neq 0\\
  \sigma_0(X),&\mbox{ if }\alpha_{\Br}=0.
  \end{cases}
\]
For a more motivated approach to this definition we refer to \cite[Section 3.4]{BL17}. One consequence of this convention is that if $p$ divides $n$ then for any $1\leq\sigma\leq 11$ the locus in $\ms M^{n}_d\otimes\mathbf{F}_p$ or in $\mc M_d^n\otimes\mathbf{F}_p$ parametrizing tuples $(X,\alpha,L)$ such that $(X,\alpha_{\Br})$ is supersingular of Artin invariant $\leq \sigma$ has dimension $\sigma-1$, as in the untwisted case.

\begin{proposition}\label{prop:singularities}
    The fiber $\mc M_d^n\otimes\mathbf{Q}$ is regular. Furthermore, if $p$ is a prime, then we have the following descriptions of the singular loci of the fiber $\mc M_d^n\otimes\mathbf{F}_p$.
  \begin{enumerate}
      \item If $p$ does not divide $2dn$, then $\mc M_d^n\otimes\mathbf{F}_p$ is regular.
      \item If $p$ divides $2d$ but not $n$, then $\mc M_d^n\otimes\mathbf{F}_p$ is non-singular away from the locus of supersingular points with Artin invariant $\sigma_0=1$.
      \item If $p$ divides $n$, then $\mc M_d^n\otimes\mathbf{F}_p$ is non-singular away from the locus of supersingular points with Artin invariant $\sigma_0\leq 3$.
  \end{enumerate}  
\end{proposition}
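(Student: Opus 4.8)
The plan is to reduce the regularity statement to the smoothness results for the local deformation spaces $\Def_{(X,\alpha,L)}$ established in \S\ref{sec:dlog map}, using that (by Lemma \ref{lem:flat and etale covers killing classes} and the argument in the proof of Theorem \ref{thm:irreducible components of fibers}) the completed local ring of $\mc M_d^n$ at a geometric point $x=[(X,\alpha,L)]$ is prorepresented by $\Def_{(X,\alpha,L)}$, or rather its quotient by the subgroup $\langle\delta(L)\rangle$ — but since $\delta(L)$ deforms together with $L$ (the class $\delta(L_A)$ is canonically defined whenever $L_A$ deforms $L$), passing to the primitive quotient does not change the deformation functor: the natural map $\Def_{(X,\alpha,L)}\to\widehat{\mc M_d^n}_{,x}$ is an isomorphism. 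So $\mc M_d^n\otimes\mathbf{F}_p$ is regular at $x$ if and only if $\Def_{(X,\alpha,L)}\otimes k$ is regular, and since $\mc M_d^n\to\Spec\mathbf{Z}$ is flat lci of relative dimension $19$ (this follows from the argument of Theorem \ref{thm:irreducible components of fibers}, noting that the hypothesis forcing the order of $\alpha$ to be exactly $n$ does not affect the local structure), regularity of the fiber at $x$ is equivalent to $\Def_{(X,\alpha,L)}$ being smooth over $W$, equivalently to $\Def_{(X,\alpha,L)}\otimes k$ having dimension $19$, equivalently to the tangent space $T(\Def_{(X,\alpha,L)})$ having dimension $19$. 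For the generic fiber $\mc M_d^n\otimes\mathbf{Q}$, every point is characteristic $0$, so $n$ is invertible and Proposition \ref{prop:smooth2} applies directly (with $c_1(L)\neq 0$ automatic as $L$ is ample), giving regularity everywhere.

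For the positive characteristic statements, fix a prime $p$ and a geometric point $x=[(X,\alpha,L)]$ of $\mc M_d^n\otimes\mathbf{F}_p$; write $h$ for the height and $\sigma_0$ for the Artin invariant of $X$. I split into the three cases. In case (1), $p\nmid 2dn$: since $p\nmid n$, Proposition \ref{prop:smooth2} shows $\Def_{(X,\alpha,L)}$ is smooth as soon as $c_1(L)\neq 0$ in $\H^1(X,\Omega^1_X)$; because $p\nmid 2d = L^2$ and $L$ is primitive, $L$ is not a $p$th power, so by Proposition \ref{prop:dlog is nonzero sometimes} $c_1(L)\neq 0$ when $X$ is not superspecial, and when $X$ is superspecial one uses instead that the de Rham Chern class $c_1^{\dR}(L)$ is always injective on $\Pic(X)\otimes\mathbf{F}_p$ together with the refined version of Proposition \ref{prop:smooth2} (the smoothness criterion really only needs $c_1^{\dR}(L)\notin F^2_H$, which holds since $L^2\neq 0$ in $k$); hence regularity everywhere. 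In case (2), $p\mid 2d$ but $p\nmid n$: again $p\nmid n$ so smoothness holds whenever $c_1(L)\neq 0$. If $X$ has finite height, Proposition \ref{prop:inj for c 1 tensor k} gives $c_1(L)\neq 0$. If $X$ is supersingular with $\sigma_0\geq 2$, then by Proposition \ref{prop:dlog is nonzero sometimes} (applicable since $X$ is not superspecial) $c_1(L)\neq 0$ provided $L$ is not a $p$th power; but $L$ primitive forces $L$ not a $p$th power regardless of $p\mid 2d$. The only remaining possibility is $X$ superspecial, $\sigma_0=1$, where $c_1(L)$ can vanish (indeed the period computation forces $c_1(L)\in F^2_H$ when $p\mid L^2$) and smoothness can genuinely fail; so the singular locus is contained in the superspecial locus $\sigma_0=1$. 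In case (3), $p\mid n$: here the relevant criterion is Theorem \ref{thm:formally smooth def space, one line bundle}(2), which requires $p\nmid \tfrac{n}{\ord(\alpha_{\Br})}$ — but this is exactly the hypothesis defining $\mc M_d^n$ via the primitivity of $\alpha$, since $\alpha$ having order $n$ in $\H^2(X,\mu_n)_{\prim}$ and $p\mid n$ forces $\ord(\alpha_{\Br})$ to be divisible by the $p$-part of $n$ (one checks this from the exact sequence $0\to\underline{\mathbf{Z}}/n\to R^2f_*\mu_n\to(R^2f_*\mu_n)_{\prim}\to 0$ together with the structure of $\underline{\mathbf U}^2(X,\mu_n)$ described after Theorem \ref{thm:irreducible components of fibers}). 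Granting this, Theorem \ref{thm:formally smooth def space, one line bundle}(2) gives smoothness of $\Def_{(X,\alpha,L)}$ whenever $h<\infty$, or $h=\infty$ and $\sigma_0\geq 3$, i.e.\ away from the supersingular locus with $\sigma_0\leq 3$ — which, in the twisted normalization $\sigma_0(X,\alpha_{\Br})=\sigma_0(X)+1$ for $\alpha_{\Br}\neq 0$, is the locus described. (One should double-check the two normalizations of "$\sigma_0\leq 3$" match: since $\alpha$ primitive of order $n$ with $p\mid n$ forces $\alpha_{\Br}\neq 0$, the surface-level bound $\sigma_0(X)\geq 3$ corresponds to the twisted bound $\sigma_0(X,\alpha_{\Br})\geq 4$, and the complement is $\sigma_0(X,\alpha_{\Br})\leq 3$, as stated.)

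The main obstacle, and the step I would spend the most care on, is the reduction in the first paragraph: verifying that passing to the primitive quotient $(R^2f_*\mu_n)_{\prim}$ and imposing the open condition "$\alpha$ has order exactly $n$" does not disturb the identification of the completed local ring with $\Def_{(X,\alpha,L)}$, and in particular that the lci/relative-dimension-$19$ conclusions of Theorem \ref{thm:irreducible components of fibers} transport to $\mc M_d^n$. The order condition is open so it only removes components and does not affect local rings at points where it holds; the primitive quotient is an fppf (indeed étale after pullback, via the $n$-fold cyclic cover \eqref{eq:map of stacks}) torsor-type modification whose effect on deformations is to quotient the deformation functor by the canonically-split subfunctor generated by $\delta(L_A)$, which is trivial on the level of isomorphism classes once $L$ is fixed — so no change. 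After that, the only genuinely delicate arithmetic point is the claim "$\alpha\in\H^2(X,\mu_n)_{\prim}$ of order $n$ with $p\mid n$ $\Rightarrow$ $p\nmid\tfrac n{\ord(\alpha_{\Br})}$", whose proof I sketched above via the explicit description of $\underline{\mathbf U}^2(X,\mu_n)$; everything else is a direct invocation of the smoothness theorems of \S\ref{sec:dlog map} combined with the observation that an ample (hence primitive, after saturation) line bundle is never a $p$th power.
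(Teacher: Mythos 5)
Your overall strategy --- reduce regularity at a geometric point to formal smoothness of $\Def_{(X,\alpha',L)}$ for a lift $\alpha'\in\H^2(X,\mu_n)$ of $\alpha$, then invoke the smoothness criteria of \S\ref{sec:dlog map} --- is the paper's strategy, and your cases (1) and (2) are essentially correct. (The paper handles the reduction more cleanly by lifting $\alpha$ along the \'{e}tale cover $\ms M_d^n\to\mc M_d[n]$ rather than reasoning about the primitive quotient of the deformation functor, but the outcome is the same. In case (2) you should also record that the Brauer group of a supersingular K3 surface is $p$-torsion, so $p\nmid n$ forces $\alpha_{\Br}=0$; this is what pins down the \emph{twisted} Artin invariant $\sigma_0(X,\alpha_{\Br})$ as $1$ rather than $2$ at a superspecial point.)

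The genuine gap is in case (3). You assert that $\alpha$ having order $n$ in $\H^2(X,\mu_n)_{\prim}$ with $p\mid n$ forces $p\nmid\frac{n}{\ord(\alpha_{\Br})}$, so that Theorem \ref{thm:formally smooth def space, one line bundle}(2) applies at every point. This is false. Primitivity is taken modulo the single class $\delta(L)$, not modulo all of $\delta(\Pic(X))$, so it places no constraint on the order of the Brauer class: take $\alpha'=\delta(M)$ for $M\in\Pic(X)$ whose image in $\Pic(X)/(n\Pic(X)+\mathbf{Z}L)$ has order $n$ (possible whenever $\Pic(X)$ has rank at least $2$, e.g.\ on every supersingular surface). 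Then the image $\alpha$ of $\alpha'$ in $\H^2(X,\mu_n)_{\prim}$ has order $n$, yet $\alpha_{\Br}=0$ and $\frac{n}{\ord(\alpha_{\Br})}=n$ is divisible by $p$. The paper therefore splits case (3) into two subcases. When $p\nmid\frac{n}{\ord(\alpha'_{\Br})}$, your argument via Theorem \ref{thm:formally smooth def space, one line bundle}(2) goes through. When $p\mid\frac{n}{\ord(\alpha'_{\Br})}$, one writes $\frac{n}{p}\alpha'=\delta(M)$ for some $M\in\Pic(X)$ and observes that primitivity of $\alpha$ yields exactly that $\overline{L}$ and $\overline{M}$ are linearly independent in $\Pic(X)\otimes\mathbf{F}_p$; one then applies Proposition \ref{prop:smooth2} directly, using Proposition \ref{prop:inj for c 1 tensor k} (finite height) or Proposition \ref{prop:omnibus linearly indep 2} with $m=2$ (supersingular with $\sigma_0\geq 3$) to see that $c_1(L)$ and $\dlog(\alpha')=c_1(M)$ are linearly independent in $\H^1(X,\Omega^1_X)$. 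Without this second subcase your proof omits the algebraic twists, which form a substantial part of $\mc M_d^n\otimes\mathbf{F}_p$, and your parenthetical claim that primitivity forces $\alpha_{\Br}\neq 0$ fails for the same reason (though the final bound $\sigma_0(X,\alpha_{\Br})\leq 3$ is unaffected, since $\alpha_{\Br}=0$ only lowers the twisted Artin invariant).
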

\begin{proof}
  Let $k$ be an algebraically closed field, and consider a $k$-point $x\in\mc M_d^n(k)$ corresponding to a K3 surface $X$ with an ample class $L$ of degree $2d$ and a class $\alpha\in\H^2(X,\mu_n)_{\prim}$ of order $n$. Let $\alpha'\in\H^2(X,\mu_n)$ be a lift of $\alpha$, and let $x'=(X,\alpha',L)$ be the resulting point of $\ms M_d^n$. As the quotient map $\ms M_d^n\to\mc M_d[n]$ is \'{e}tale and $\mc M_d^n$ is open in $\mc M_d[n]$, we have that $\mc M_d^n$ is regular at $x$ if and only if $\ms M_d^n$ is regular at $x'$. In particular, this shows that $\mc M_d^n\otimes\mathbf{Q}$ is regular.
 
  Suppose that $k$ has characteristic $p$. Arguing as in Theorem \ref{thm:irreducible components of fibers}, we have that $\ms M_d^n$ is regular at $x'$ if and only if the universal deformation space $\Def_{(X,\alpha',L)}$ is formally smooth over $W$.
  
  Suppose that $p$ does not divide $2d$. Then $L^2$ is nonzero modulo $p$, so $c_1^{\dR}(L)^2\neq 0$. As $F^2_H$ is isotropic, we have $c_1^{\dR}(L)\notin F^2_H$, and so $c_1(L)$ is nonzero. If also $p$ does not divide $n$, then Proposition \ref{prop:smooth2} implies that $\Def_{(X,\alpha',L)}$ is formally smooth, which gives (1) (see also \cite[Lemma 4.1.3]{MR2263236}).
  
  Now, suppose that $p$ does not divide $n$, but possibly $p$ divides $2d$. By Theorem \ref{thm:formally smooth def space, one line bundle}, if $\ms M_d^n$ is singular at $x'$ then $X$ is superspecial.
  The Brauer group of a supersingular K3 surface is $p$-torsion \cite[Theorem 4.3]{Artin74}, so $\alpha_{\Br}=0$, and hence $\sigma_0(X,\alpha_{\Br})=1$. This gives (2).
  

  Finally, suppose that $p$ divides $n$. We consider two cases. Suppose first that $p$ does not divide $\frac{n}{\mathrm{ord}(\alpha'_{\Br})}$. By Theorem \ref{thm:formally smooth def space, one line bundle}, if $\ms M^n_d$ is singular at $x'$ then $X$ is supersingular and $\sigma(X)\leq 2$, which implies $\sigma_0(X,\alpha'_{\Br})\leq 3$. Suppose that $p$ divides $\frac{n}{\mathrm{ord}(\alpha'_{\Br})}$, or equivalently that $\frac{n}{p}\alpha'\in\H^2(X,\mu_p)$ has trivial Brauer class. By Proposition \ref{prop:smooth2}, $\ms M_d^n$ is nonsingular at $x'$ if the classes $\dlog(\alpha')$ and $c_1(L)$ are nonzero and linearly independent in $\H^1(X,\Omega^1_X)$ (in fact, as $\ms M^d_n$ has relative dimension 19, this is an if and only if). These classes are the images of $\frac{n}{p}\cdot\alpha'$ and $\delta(L)$ under the map
   \[
    \dlog:\H^2(X,\mu_p)\to\H^1(X,\Omega^1_X).
   \]
  As $\frac{n}{p}\cdot\alpha'$ has trivial Brauer class, we have $\alpha'=\delta(M)$ for some $M\in\Pic(X)$. By assumption, $\alpha'$ has order $n$ modulo $\langle\delta(L)\rangle$. It follows that $\frac{n}{p}\alpha'$ has order $p$ modulo $\langle \delta(L)\rangle$. Hence, the images $\overline{L}$ and $\overline{M}$ of $L$ and $M$ in $\Pic(X)\otimes\mathbf{F}_p$ are linearly independent. By Proposition \ref{prop:omnibus linearly indep 2}, we conclude that if the classes $c_1(L)$ and $c_1(M)=\dlog(\alpha')$ are not linearly independent in $\H^1(X,\Omega^1_X)$, then $X$ is supersingular with $\sigma_0(X)\leq 2$. As before, this implies $\sigma_0(X,\alpha_{\Br})\leq 3$, and we obtain (3).
  
\end{proof}

\begin{theorem}
  The morphism $\mc M_d^n\to\Spec\mathbf{Z}$ is flat and a local complete intersection of relative dimension 19. For each prime $p$, every connected component of $\mc M_d^n\otimes\overline{\mathbf{F}}_p$ is reduced and irreducible and is generically smooth of dimension 19.
\end{theorem}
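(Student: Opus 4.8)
The plan is to obtain the flatness and local‑complete‑intersection statements by descent from $\ms M^n_d$, and then to deduce everything about the geometric fibres from Serre's normality criterion. First I would observe that the fibre product $\widetilde{\mc M}{}^n_d:=\ms M^n_d\times_{\mc M_d[n]}\mc M_d^n$ is an open substack of $\ms M^n_d$ (it is the preimage of the open immersion $\mc M_d^n\hookrightarrow\mc M_d[n]$) and is finite \'{e}tale of degree $n$ over $\mc M_d^n$ by~\eqref{eq:map of stacks}. Being flat, a local complete intersection, and of relative dimension $19$ over $\Spec\mathbf{Z}$ are all properties local on the source, hence they hold for $\widetilde{\mc M}{}^n_d\to\Spec\mathbf{Z}$ by Theorem~\ref{thm:irreducible components of fibers}; since these properties descend along the faithfully flat, finitely presented map $\widetilde{\mc M}{}^n_d\to\mc M_d^n$, they hold for $\mc M_d^n\to\Spec\mathbf{Z}$ as well. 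In particular, for every prime $p$ the geometric fibre $\mc M_d^n\otimes\overline{\mathbf{F}}_p$ is a local complete intersection over $\overline{\mathbf{F}}_p$, hence Cohen--Macaulay (so $S_2$ and catenary) and equidimensional of dimension $19$.

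Next I would control its non‑regular locus. By Proposition~\ref{prop:singularities}, this locus is empty when $p\nmid 2dn$, and otherwise is contained in the locus of points $(X,\alpha,L)$ for which the twisted K3 surface $(X,\alpha_{\Br})$ is supersingular of Artin invariant $\sigma_0(X,\alpha_{\Br})\le 3$; by the stratification dimension count recalled just before Proposition~\ref{prop:singularities} (which rests on \cite[Proposition 14]{MR1827026}), this supersingular locus has dimension at most $2$. Since $\mc M_d^n\otimes\overline{\mathbf{F}}_p$ is equidimensional of dimension $19$ and catenary, every point of codimension $\le 1$ has closure of dimension $\ge 18$, whereas every non‑regular point has closure of dimension $\le 2$; hence the non‑regular locus contains no point of codimension $\le 1$, i.e. $\mc M_d^n\otimes\overline{\mathbf{F}}_p$ satisfies Serre's condition $R_1$.

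Combining $R_1$ with $S_2$, Serre's criterion yields that $\mc M_d^n\otimes\overline{\mathbf{F}}_p$ is normal. A normal locally Noetherian Deligne--Mumford stack is locally irreducible, so its irreducible components are pairwise disjoint and open, and therefore coincide with its connected components; in particular each connected component is integral, hence reduced. Finally the regular locus is open, and since its complement has dimension $\le 2<19$ it is dense in every connected component, so each connected component is generically smooth over $\overline{\mathbf{F}}_p$ and, by the relative‑dimension statement, has dimension $19$; this completes the proof. (All the notions used here --- $S_2$, $R_1$, normality, and ``connected component $=$ irreducible component'' --- are \'{e}tale‑local, so this reasoning is legitimate for the stack $\mc M_d^n$.)

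The scaffolding above is essentially formal once its two geometric inputs are secured, and those inputs are the real content, supplied elsewhere in the paper. The first is that the geometric fibres are genuinely equidimensional of dimension $19$, so that $R_1$ can be read off from a dimension count on the singular locus rather than checked by hand; this rests on the explicit local deformation rings of \S\ref{sec:formal deformations}, on Proposition~\ref{prop:nice coordinates} (where, in the supersingular case, the equation $g$ cutting out $\Def_{(X,\alpha)}$ inside $\Def_{(X,\alpha_{\Br})}$ is congruent to $0$ modulo $(p,t_1,\dots,t_{20})$), and on Theorem~\ref{thm:irreducible components of fibers}. The second is the confinement of the singular locus to the small supersingular stratum, which is Proposition~\ref{prop:singularities}, whose proof depends in turn on Theorem~\ref{thm:formally smooth def space, one line bundle} and the de Rham cohomology computations of \S\ref{sec:dlog map}. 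I expect the most delicate point to be the regular‑sequence bookkeeping behind Theorem~\ref{thm:irreducible components of fibers} in these supersingular cases, which is exactly what underwrites the equidimensionality used for $R_1$; but it is subsumed in the cited results.
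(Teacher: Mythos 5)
Your proposal is correct, and its first half (descending flatness, lci, and relative dimension from $\ms M^n_d$ along the degree-$n$ \'{e}tale cover and then restricting to the open substack) is exactly the paper's argument, up to the harmless reordering of ``pull back the open'' versus ``descend first.'' For the geometric fibers you take a genuinely different, and in fact stronger, route: you combine $S_2$ (from the lci property) with $R_1$ (from the confinement of the singular locus to a stratum of dimension $\le 2$, hence codimension $\ge 2$ in the equidimensional $19$-dimensional fiber) to conclude via Serre's criterion that $\mc M_d^n\otimes\overline{\mathbf{F}}_p$ is \emph{normal}, from which reducedness, the coincidence of connected and irreducible components, and generic smoothness all follow formally. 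The paper instead argues directly: each irreducible component has a dense open locus of finite-height points by \cite[Proposition 14]{MR1827026}, Proposition \ref{prop:singularities} makes these points smooth, so each component is generically smooth and (being Cohen--Macaulay) reduced, and the same dimension count applied to the intersection of two components forces that intersection to be empty. Both arguments rest on the identical inputs (Theorem \ref{thm:irreducible components of fibers} and Proposition \ref{prop:singularities}); yours buys the extra conclusion that the geometric fibers are normal, at the cost of invoking the Artin-invariant stratification dimension count --- though, as you could note, even the cruder bound that the whole supersingular locus has dimension $\le 10$ would already give $R_1$, so your argument does not actually depend on the finer $\sigma_0\le 3$ estimate.
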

\begin{proof}
    By Theorem \ref{thm:irreducible components of fibers}, we have an \'{e}tale cover $\ms M_d^n\to\mc M_d[n]$ of $\mc M_d[n]$ by a flat local complete intersection of relative dimension 19. These properties are \'{e}tale local on the source (see \cite[069P]{stacks-project}) and so descend to $\mc M_d[n]\to\Spec\mathbf{Z}$. The inclusion $\mc M_d^n\subset\mc M_d[n]$ is open, and hence these properties also hold for $\mc M_d^n\to\Spec\mathbf{Z}$.
   
   Fix a prime $p$ and let $Z\subset\mc M_d^n\otimes\overline{\mathbf{F}}_p$ be an irreducible component. By \cite[Proposition 14]{MR1827026} there is a dense open subset of $Z$ parametrizing K3 surfaces of finite height. Proposition \ref{prop:singularities} implies in particular that $Z$ is generically smooth, and hence reduced. The same is true for the intersection of any two irreducible components. We conclude that every irreducible component is reduced and generically smooth of dimension 19, and that no two irreducible components intersect.
\end{proof}

\section{An application to twisted derived equivalences}\label{sec:application to twisted derived equivalences}

In this section we give an application of our results to a problem concerning derived equivalences of twisted K3 surfaces. Suppose that $(X,\alpha_{\Br})$ and $(Y,\beta_{\Br})$ are twisted K3 surfaces over an algebraically closed field $k$. Given a Fourier--Mukai equivalence $\Phi_P:D(X,\alpha_{\Br})\xrightarrow{\sim} D(Y,\beta_{\Br})$ one can ask if the induced cohomological transform is  orientation preserving (sometimes also called ``signed'').\footnote{Over the complex numbers, this notion is usually phrased in terms of the Hodge structure on singular cohomology (see eg. \cite{MR2553878}). However, it extends without difficulty to a field of arbitrary characteristic by instead using the extended N\'{e}ron-Severi groups, as recorded in \cite[Definition 3.4.6]{MR4184293}.} It is conjectured that this should always be the case. If $k=\mathbf{C}$ (or more generally if the characteristic of $k$ is zero) then this was shown in the untwisted case by Huybrechts--Macr\`{i}--Stellari \cite{MR2553878}. An alternative proof, which extends to the twisted case, was given by Reinecke \cite{MR3946279}. If $k$ has positive characteristic, various special cases were treated in \cite[Appendix B]{MR4184293}. Using a combination of standard techniques and Theorem \ref{thm:basic lifting}, we can complete the proof of this conjecture in arbitrary characteristic.

\begin{theorem}
  Let $(X,\alpha_{\Br})$ and $(Y,\beta_{\Br})$ be twisted K3 surfaces over an algebraically closed field $k$.
  If $\Phi_P:D(X,\alpha_{\Br})\xrightarrow{\sim} D(Y,\beta_{\Br})$ is a Fourier--Mukai equivalence, then the induced cohomological transform $\Phi_{v(P)}$ is orientation preserving. 
\end{theorem}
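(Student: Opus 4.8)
The plan is to deduce the theorem from the characteristic-zero case by lifting the entire Fourier--Mukai situation to a discrete valuation ring of mixed characteristic and then transporting the orientation statement from the generic fibre, where it is known, to the special fibre. First I would dispose of the case $\mathrm{char}(k)=0$: there the result is due to Reinecke \cite{MR3946279}, extending the untwisted theorem of Huybrechts--Macr\`{i}--Stellari \cite{MR2553878}. So from now on $\mathrm{char}(k)=p>0$. (One may, if convenient, first reduce to $k=\overline{\mathbf F}_p$ by spreading out $(X,\alpha_{\Br})$, $(Y,\beta_{\Br})$ and the kernel $P$ over an integral scheme of finite type over $\mathbf F_p$ and specializing to a closed point, using that the orientation is defined through the extended N\'{e}ron--Severi lattice of \cite[Definition 3.4.6]{MR4184293} and is compatible with specialization; but Theorem \ref{thm:basic lifting} works over an arbitrary DVR, so this is not needed.)

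The core step is the lift. I would fix an ample line bundle $L$ on $X$, set $n=\ord(\alpha_{\Br})$, choose $\alpha\in\H^2(X,\mu_n)$ lifting $\alpha_{\Br}$, and apply Theorem \ref{thm:basic lifting} to obtain a DVR $R$ with residue field $k$ and fraction field $K$ of characteristic $0$ together with a lift $(\widetilde X,\widetilde\alpha,\widetilde L)$ over $R$. Next I would lift the remaining data. The kernel $P$ of the given equivalence is a perfect complex of twisted sheaves on $X\times_k Y$ whose deformation theory, since it is the kernel of an equivalence of twisted K3 categories, is controlled by the (twisted) Hochschild cohomology of either side; transporting the deformation of $(X,\alpha,L)$ across $\Phi_P$ therefore produces a compatible formal deformation of $(Y,\beta_{\Br})$, of an ample line bundle $M$ on $Y$ determined by $L$ and $\Phi_P$, and of $P$ itself over $R$. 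This is exactly the deformation argument for Fourier--Mukai kernels carried out in \cite[Appendix B]{MR4184293} (building on \cite{LO15}). By Grothendieck's existence theorem, applied to $\widetilde X$, to the formal K3 surface deforming $Y$, and to the twisted sheaf underlying $P$, together with the fact that a K3 surface over $R$ with projective special fibre is projective, these algebraize --- after possibly replacing $R$ by a finite extension --- to a lift $(\widetilde Y,\widetilde\beta,\widetilde M)$ over $R$ and a kernel $\widetilde P$ restricting to $P$, with $\Phi_{\widetilde P}$ an $R$-linear twisted derived equivalence.

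With the global lift in hand, I would base change to an algebraic closure $\overline K$ of $K$, where $\Phi_{\widetilde P_{\overline K}}$ is a twisted Fourier--Mukai equivalence between K3 surfaces in characteristic $0$, so $\Phi_{v(\widetilde P_{\overline K})}$ is orientation preserving by \cite{MR3946279}. It then remains to propagate this along $\Spec R$: the cohomological transform $\Phi_{v(\widetilde P)}$ acts on the relative ``extended'' cohomology of $\widetilde X/R$ --- crystalline on the special fibre, de Rham on the generic one --- compatibly with the Berthelot--Ogus comparison (and, at a complex point, with the period comparison), and these identifications carry the distinguished positive-definite real subspace, spanned by the realizations of the polarization, of the $B$-field $\widetilde\alpha$, and of the holomorphic $2$-form direction, to the corresponding subspace of every fibre. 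Hence the orientation is locally constant on $\Spec R$ and is respected by $\Phi_{v(\widetilde P)}$ throughout; restricting to the closed point gives the assertion for $\Phi_{v(P)}$.

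I expect the genuinely hard content to lie upstream rather than in the assembly above. The deformation of the Fourier--Mukai kernel and the compatibility of the specialization maps with the orientation are already established in \cite[Appendix B]{MR4184293}, which proves the theorem in every case for which $(X,\alpha)$ was previously known to lift to characteristic $0$. The only missing ingredient was such a lift in the remaining cases --- most notably supersingular $X$ of small Artin invariant with $p\mid\ord(\alpha_{\Br})$ --- and this is precisely what Theorem \ref{thm:basic lifting} supplies. So the true obstacle, lifting the twisted K3 surface, will have been removed before this section, and what is left is to feed that lift into the existing machinery.
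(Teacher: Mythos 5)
Your proposal is correct and follows exactly the paper's argument: dispose of characteristic zero via Huybrechts--Macr\`{i}--Stellari and Reinecke, then in positive characteristic invoke the standard lifting-and-specialization machinery of Lieblich--Olsson as outlined in \cite[Appendix B]{MR4184293}, with Theorem \ref{thm:basic lifting} supplying the previously missing lift. The paper's proof simply cites this machinery rather than sketching it, but the route is the same.
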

\begin{proof}
	As discussed above, if the characteristic of $k$ is 0 this is shown in \cite{MR2553878,MR3946279}. Suppose that the characteristic of $k$ is positive. Using now standard techniques introduced in \cite{LO15}, to prove the result it suffices to show that every twisted K3 surface admits a lift to characteristic 0, which reduces the problem to the case considered in \cite{MR2553878,MR3946279} (this strategy is outlined for instance in \cite[Appendix B]{MR4184293}). Thus, the result follows from Theorem \ref{thm:basic lifting}.
\end{proof}

\appendix

\section{Deformations of gerbes and flat cohomology classes}\label{appendix}


In this appendix we record some results on deformations of gerbes, particularly those banded by a possibly non-smooth group scheme.

Let $S$ be a scheme. Let $\mathbf{G}$ be a flat commutative group scheme over $S$ which is locally of finite presentation. The \textit{co-Lie complex} of $\mathbf{G}$ is
    \[
        \colie_{\mathbf{G}/S}:=\mathbf{L}e_{\mathbf{G}}^*L_{\mathbf{G}/S}\in D(\ms O_S)
    \]
    where $e_{\mathbf{G}}:S\to\mathbf{G}$ is the identity section and $L_{\mathbf{G}/S}$ is the cotangent complex of the morphism $\mathbf{G}\to S$. The \textit{Lie complex} of $\mathbf{G}$ is its derived dual
    \[
        \colie_{\mathbf{G}/S}^{\vee}:=R\sHom_{\ms O_S}(\colie_{\mathbf{G}/S},\ms O_S).
    \]
    The complex $\colie_{\mathbf{G}/S}$ is supported in degrees $[-1,0]$ and $\colie_{\mathbf{G}/S}^{\vee}$ is supported in degrees $[0,1]$ \cite[3.1.1.3]{MR0491681}. We set
    \[
        \mathfrak{t}_{\mathbf{G}/S}:=\ms H^0(\colie^{\vee}_{\mathbf{G}/S})\hspace{1cm}\mbox{and}\hspace{1cm}\mathfrak{n}_{\mathbf{G}/S}:=\ms H^1(\colie^{\vee}_{\mathbf{G}/S}).
    \]
    We may omit the base scheme $S$ from the notation if it is clear from context. We also will use a version of these definitions relative to a homomorphism $\mathbf{G}\to\mathbf{H}$ of flat commutative lfp group schemes: we define $\colie_{\mathbf{G}/\mathbf{H}}:=\mathbf{L}e_{\mathbf{G}}^*L_{\mathbf{G}/\mathbf{H}}$, we let $\colie^{\vee}_{\mathbf{G}/\mathbf{H}}$ be its derived dual, and we let $\mathfrak{t}_{\mathbf{G}/\mathbf{H}}$ and $\mathfrak{n}_{\mathbf{G}/\mathbf{H}}$ be the 0th and 1st cohomology sheaves of $\colie^{\vee}_{\mathbf{G}/\mathbf{H}}$.
    
    The co-Lie complex is contravariantly functorial with respect to maps of group schemes over $S$, and the Lie complex is covariantly functorial. We refer to Illusie \cite[3.1.1]{MR0491681} for further discussion of these objects and some of their basic properties (see also \cite[2.5.1]{MR0491682}).
    
    We mention two situations in which we can explicitly compute the Lie complex of $\mathbf{G}$. If $\mathbf{G}$ is smooth, then there is a canonical identification $\colie_{\mathbf{G}}=\omega_{\mathbf{G}}:=e^*_{\mathbf{G}}\Omega^1_{\mathbf{G}}$, and hence $\colie_{\mathbf{G}}^{\vee}=\omega^{\vee}_{\mathbf{G}}=\mathfrak{t}_{\mathbf{G}}$ is the Lie algebra of $\mathbf{G}$. Suppose that $\mathbf{G}$ is not necessarily smooth, and that we are given a closed immersion $\mathbf{G}\hookrightarrow\mathbf{H}$ where $\mathbf{H}$ is a smooth commutative group scheme over $S$. Let $J$ be the ideal sheaf of $\mathbf{G}$ in $\mathbf{H}$. The cotangent complex of $\mathbf{G}$ is then given by
  \[
    L_{\mathbf{G}/S}=[J/J^2\to\Omega^1_{\mathbf{H}/S}]
  \]
    where the terms on the right hand side are in degrees -1 and 0. We have $L_{\mathbf{G}/\mathbf{H}}=(J/J^2)[1]=N^{\vee}_{\mathbf{G}/\mathbf{H}}[1]$, where $N$ is the normal bundle of $\mathbf{G}$ in $\mathbf{H}$. It follows that $\colie^{\vee}_{\mathbf{G}/\mathbf{H}}=\mathfrak{n}_{\mathbf{G}/\mathbf{H}}=e_{\mathbf{G}}^*N_{\mathbf{G}/\mathbf{H}}$ and the Lie complex of $\mathbf{G}$ is given by
    \begin{equation}\label{eq:computation of lie complex}
        \colie_{\mathbf{G}}^{\vee}=[\mathfrak{t}_{\mathbf{H}}\to \mathfrak{n}_{\mathbf{G}/\mathbf{H}}]
    \end{equation}
    (terms in degrees 0 and 1).
    \begin{example}
        Let $n$ be a positive integer. The co-Lie complex of the group scheme $\mathbf{G}=\mu_n$ can be computed using the Kummer sequence
        \[
            1\to\mu_n\to\mathbf{G}_m\xrightarrow{\cdot n}\mathbf{G}_m\to 1.
        \]
        We have $\mathfrak{t}_{\mathbf{G}_m}\cong\ms O_X$, $\mathfrak{n}_{\mu_n/\mathbf{G}_m}\cong\ms O_X$, and $\colie^{\vee}_{\mu_n}\cong [\ms O_X\xrightarrow{\cdot n}\ms O_X]=\ms O_X(n)$.
    \end{example}

We now consider a closed immersion $i:S\hookrightarrow S'$ which is defined by a sheaf of ideals $I\subset\ms O_{S'}$ which satisfies $I^2=0$. We may regard $I$ also as a module over $\ms O_S$. Let $\mathbf{G}'$ be a flat commutative lfp group scheme on $S'$. Let $\mathbf{G}'\hookrightarrow\mathbf{H}'$ be an embedding into a smooth commutative group scheme $\mathbf{H}'$ on $S'$. We let $\mathbf{Q}'$ denote the quotient, so that we have a short exact sequence
\begin{equation}\label{eq:smooth resolution of G}
    1\to\mathbf{G}'\to\mathbf{H}'\to\mathbf{Q}'\to 1
\end{equation}
of flat commutative group schemes on $S'$. We write $\mathbf{G}=\mathbf{G}'|_S$, $\mathbf{H}=\mathbf{H}'|_S$, and $\mathbf{Q}=\mathbf{Q}'|_S$. By flat descent, $\mathbf{Q}'$ is also smooth. Consider the commutative diagram
\begin{equation}\label{eq:big diagram, which will give a qis}
    \begin{tikzcd}
        &&0\arrow{d}&0\arrow{d}&\\
        &&\mathfrak{t}_{\mathbf{H}}\otimes I\arrow{r}\arrow{d}&\mathfrak{t}_{\mathbf{Q}}\otimes I\arrow{d}&\\
        1\arrow{r}&\mathbf{G}'\arrow{r}\arrow{d}&\mathbf{H}'\arrow{r}\arrow{d}&\mathbf{Q}'\arrow{r}\arrow{d}&1\\
        1\arrow{r}&i_*\mathbf{G}\arrow{r}&i_*\mathbf{H}\arrow{r}\arrow{d}&i_*\mathbf{Q}\arrow{r}\arrow{d}&1\\
        &&1&1&
    \end{tikzcd}
\end{equation}
of sheaves on the big fppf site of $S'$, which has exact rows and columns. Incorperating the identification $\mathfrak{t}_{\mathbf{Q}}=\mathfrak{n}_{\mathbf{G}/\mathbf{H}}$, the diagram~\eqref{eq:big diagram, which will give a qis} gives a quasi-isomorphism
\begin{equation}\label{eq:Deligne's quasi-isomorphism}
    [\mathbf{G}'\to i_*\mathbf{G}]\cong\colie^{\vee}_{\mathbf{G}}\otimes^{\mathbf{L}}_{\ms O_X}I
\end{equation}
of complexes of fppf sheaves.

\subsection{Gerbes}\label{ssec:gerbes}

Let $X$ be an algebraic space. We will use the notation of a \textit{gerbe} over $X$ \cite[06QC]{stacks-project}. If $\mathbf{G}$ is a commutative group scheme over $X$, then a $\mathbf{G}$-gerbe is a gerbe $\ms X\to X$ equipped with an isomorphism $\mathbf{G}|_{\ms X}\iso\ms I_{\ms X}$. A morphism of $\mathbf{G}$-gerbes on $X$ is a morphism of algebraic stacks over $X$ which is compatible with these isomorphisms. The set of $\mathbf{G}$-gerbes over $X$ forms a 2-groupoid $\sGerb_X(\mathbf{G})$, and there is a natural bijection between the set of isomorphism classes of $\mathbf{G}$-gerbes over $X$ and the cohomology group $\H^2(X,\mathbf{G})$ \cite[Theorem 12.2.8]{MR3495343}.

An \textit{absolute gerbe} is an algebraic stack $\ms X$ which is locally nonempty and locally connected. By \cite[06QJ]{stacks-project}, $\ms X$ is an absolute gerbe if and only if the inertia stack $\ms I_{\ms X}\to \ms X$ is flat. If $\ms X$ is an absolute gerbe, then by \cite[06QD]{stacks-project} the sheafification $X=|\ms X|$ of $\ms X$ is an algebraic space, and the map $\ms X\to X$ makes $\ms X$ into a gerbe over $X$, in the sense of \cite[06QC]{stacks-project}.

Let $S$ be an algebraic space and let $\mathbf{G}$ be a commutative group scheme on $S$.  An \textit{absolute }$\mathbf{G}$\textit{-gerbe over} $S$ is an absolute gerbe $\ms X$ equipped with a morphism $\ms X\to S$ and an isomorphism $\mathbf{G}|_{\ms X}\iso \ms I_{\ms X}$. There is an induced factorization
\[
    \ms X\to X\to S
\]
where $X=|\ms X|$, and the map $\ms X\to X$ is a $\mathbf{G}|_{X}$-gerbe.
A morphism of absolute $\mathbf{G}$-gerbes over $S$ is a map of algebraic stacks over $S$ compatible with the given isomorphisms.

\subsection{Gerbes and torsors for two-term complexes}\label{ssec:gerbes and torsors for two-terms complexes}

Let $\mathbf{G}\to\mathbf{H}$ be a homomorphism of commutative group schemes on $X$, which we regard as a complex supported in degrees $[0,1]$. A \textit{torsor} for $[\mathbf{G}\to\mathbf{H}]$ is a $\mathbf{G}$-torsor $\ms T$ on $X$ equipped with a $\mathbf{G}\to\mathbf{H}$-equivariant map $\ms T\to\mathbf{H}$. Equivalently, a torsor for $[\mathbf{G}\to\mathbf{H}]$ consists of a $\mathbf{G}$-torsor $\ms T$ equipped with a trivialization of the induced $\mathbf{H}$-torsor $\ms T\wedge_{\mathbf{G}}\mathbf{H}$. The collection of torsors for $[\mathbf{G}\to\mathbf{H}]$ forms a groupoid which we denote by $\sTors_X([\mathbf{G}\to\mathbf{H}])$, and there is a natural bijection between the set of isomorphism classes of torsors for $[\mathbf{G}\to\mathbf{H}]$ and the cohomology group $\H^1(X,[\mathbf{G}\to\mathbf{H}])$.

There is a similar notion for gerbes. A \textit{gerbe} for $[\mathbf{G}\to\mathbf{H}]$ consists of a $\mathbf{G}$-gerbe $\ms X$ over $X$ and a $\mathbf{G}\to\mathbf{H}$-equivariant map $\ms X\to\mathrm{B}\mathbf{H}$ of gerbes, or equivalently a $\mathbf{G}$-gerbe $\ms X$ equipped with a trivialization of the induced $\mathbf{H}$-gerbe $\ms X\wedge_{\mathbf{G}}\mathbf{H}$. The collection of gerbes for $[\mathbf{G}\to\mathbf{H}]$ forms a 2-groupoid which we denote by $\sGerb_X([\mathbf{G}\to\mathbf{H}])$, and there is a natural bijection between the set of isomorphism classes of gerbes for $[\mathbf{G}\to\mathbf{H}]$ and the cohomology group $\H^2(X,[\mathbf{G}\to\mathbf{H}])$.

\subsection{Deformations of gerbes}\label{ssec:deformations of gerbes}

We consider the following deformation situation. Let $S$ be an algebraic space and let $i:S\hookrightarrow S'$ be a closed immersion whose defining sheaf of ideals $I$ is locally nilpotent. Let $\mathbf{G}'$ be a flat commutative lfp group scheme on $S'$ and write $\mathbf{G}=\mathbf{G}'|_S$. We assume that $\mathbf{G}'$ embeds in a smooth commutative group scheme over $S'$, so that we have a short exact sequence \eqref{eq:smooth resolution of G}. Let $\ms X$ be an absolute $\mathbf{G}$-gerbe over $S$. A \textit{deformation} of $\ms X$ over $S'$ is a 2-cartesian diagram
\[
    \begin{tikzcd}
        \ms X\arrow{d}\arrow[hook]{r}{\iota}&\ms X'\arrow{d}\\
        S\arrow[hook]{r}{i}&S'
    \end{tikzcd}
\]
where $\ms X'$ is an absolute $\mathbf{G}'$-gerbe flat over $S'$ and $\iota$ is a map of absolute $\mathbf{G}'$-gerbes. Equivalently, a deformation is a pair $(\ms X',\varphi)$ where $\ms X'$ is an absolute $\mathbf{G}'$-gerbe flat over $S'$ and $\varphi:\ms X'\times_{S'}S\iso \ms X$ is an isomorphism of absolute $\mathbf{G}$-gerbes over $S$. The collection of deformations of $\ms X$ over $S'$ has a natural structure of a 2-groupoid.

\begin{definition}
    We let $\sDef(\ms X/S')$ denote the 2-groupoid of deformations of $\ms X$ over $S'$. We let $\Def(\ms X/S')$ be the set of isomorphism classes of objects of $\sDef(\ms X/S')$.
\end{definition}

Let $X:=|\ms X|$ be the sheafification of $\ms X$, and consider the factorization
\[
    \ms X\to X\to S.
\]
Let $\ms X'$ be a deformation of $\ms X$ over $S'$, with sheafification $X'=|\ms X'|$. 
The map $\ms X'\to X'$ is faithfully flat. As $\ms X'\to S'$ is flat, so is $X'\to S'$. Thus, the association $(\ms X',\varphi)\mapsto(|\ms X'|,|\varphi|)$ defines a functor
\begin{equation}\label{eq:coarse space morphism of groupoids}
    \sDef(\ms X/S')\to\sDef(X/S').
\end{equation}
The homotopy fiber of this map over a deformation $(X',\rho)$ of $X$ is $\sDef(\ms X/X')$. We consider this latter 2-groupoid in more detail. To simplify notation, we omit pullbacks along $X\to S$. We wish to interpret the groupoid $\sDef(\ms X/X')$ cohomologically. To do this, we need to prove that deformations of $\ms X$ exist fppf locally on $X'$. This is implied by the following result. We define
\[
    \Phi^m(\mathbf{G}/X'):=\H^m(X',[\mathbf{G}'\to i_*\mathbf{G}])
\]

\begin{proposition}\label{prop:acyclicity for i, first statement} \
    \begin{enumerate}
        \item If $\alpha\in\H^m(X,\mathbf{G})$ is a flat cohomology class and $m\geq 1$, there exists an fppf cover $V\to X'$ such that the cover $V\times_{X'}X\to X$ kills $\alpha$.
        \item The natural map $\H^m(X',i_*\mathbf{G})\to\H^m(X,\mathbf{G})$ is an isomorphism for all $m\geq 0$.
    \end{enumerate}
\end{proposition}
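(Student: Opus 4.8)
The plan is to deduce both parts from the smooth resolution $1\to\mathbf G\to\mathbf H\to\mathbf Q\to 1$ obtained by restricting~\eqref{eq:smooth resolution of G} to $X$ (I write $\mathbf H,\mathbf Q$ for the pullbacks to $X$ of $\mathbf H',\mathbf Q'$; these are smooth commutative group schemes), together with two standard inputs. Write $\overline T:=T\times_{X'}X$ for an $X'$-scheme $T$; note $\overline T\hookrightarrow T$ is again a closed immersion with locally nilpotent ideal. First, for a smooth commutative group scheme fppf and \'etale cohomology agree, so $\H^m(T,\mathbf H)=\H^m_{\et}(T,\mathbf H)$ and similarly for $\mathbf Q$. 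Second, the small \'etale site is invariant under a thickening, so every \'etale cover of $\overline T$ lifts uniquely to an \'etale cover of $T$ restricting to it; moreover, since $\mathbf Q'\to X'$ is smooth, a morphism $\overline T\to\mathbf Q'$ over $X'$ extends to a morphism $T\to\mathbf Q'$ over $X'$ after a Zariski cover of $T$. (As in all our applications, $I$ is nilpotent, and by a routine d\'evissage one may further assume $I^2=0$; this is harmless but not needed below.)

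For~(1), fix $\alpha\in\H^m(X,\mathbf G)$ with $m\ge1$ and let $\beta\in\H^m(X,\mathbf H)=\H^m_{\et}(X,\mathbf H)$ be its image. As $m\ge1$, $\beta$ is killed by an \'etale cover of $X$; lifting this cover to an \'etale cover $V_1\to X'$ gives $\beta|_{\overline{V_1}}=0$, so the long exact sequence of the resolution lets us write $\alpha|_{\overline{V_1}}=\partial\gamma$ with $\gamma\in\H^{m-1}(\overline{V_1},\mathbf Q)$. If $m\ge2$, then $\gamma$ lies in positive-degree \'etale cohomology, and repeating the argument produces an \'etale cover $V:=V_2\to V_1$ with $\gamma|_{\overline{V_2}}=0$, hence $\alpha|_{\overline{V_2}}=\partial(\gamma|_{\overline{V_2}})=0$. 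The case $m=1$ is the delicate one, because a $\mathbf G$-torsor need not become trivial on an \'etale cover, so $\alpha$ cannot be killed directly: here $\gamma$ is a section of $\mathbf Q$ over $\overline{V_1}$, i.e. an $X'$-morphism $\gamma\colon\overline{V_1}\to\mathbf Q'$, and I would lift it, after a Zariski (hence fppf) cover $V_2\to V_1$, to an $X'$-morphism $\widetilde\gamma\colon V_2\to\mathbf Q'$ with $\widetilde\gamma|_{\overline{V_2}}=\gamma|_{\overline{V_2}}$. Pulling back the $\mathbf G'$-torsor $\mathbf H'\to\mathbf Q'$ along $\widetilde\gamma$ yields a faithfully flat, finitely presented $V_2$-scheme $\widetilde Q$, i.e. an fppf cover of $V_2$; by construction its restriction to $\overline{V_2}$ is the $\mathbf G$-torsor classified by $\partial(\gamma|_{\overline{V_2}})=\alpha|_{\overline{V_2}}$, which is tautologically trivial over its own total space. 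Thus $V:=\widetilde Q\to V_2\to V_1\to X'$ is the desired fppf cover.

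For~(2), the key step is that $i_*\mathbf H\to i_*\mathbf Q$ is an epimorphism of fppf sheaves on $X'$. Indeed, a section of $i_*\mathbf Q$ over an $X'$-scheme $U$ is an element of $\mathbf Q(\overline U)$, whose obstruction to lifting to $\mathbf H(\overline U)$ lies in $\H^1(\overline U,\mathbf G)$; applying~(1) over $U$ (a legitimate instance of the hypotheses) this obstruction dies after an fppf cover of $U$. Since $i_*$ is left exact we obtain a short exact sequence $0\to i_*\mathbf G\to i_*\mathbf H\to i_*\mathbf Q\to0$, hence a distinguished triangle $i_*\mathbf G\to i_*\mathbf H\to i_*\mathbf Q\to i_*\mathbf G[1]$. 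On the other hand $R^qi_*\mathbf H=R^qi_*\mathbf Q=0$ for $q\ge1$: $R^qi_*\mathbf H$ is the fppf sheafification of $U\mapsto\H^q_{\et}(\overline U,\mathbf H)$, which for $q\ge1$ is killed on \'etale covers of $\overline U$, and these lift to \'etale covers of $U$. Applying $Ri_*$ to the resolution therefore gives a distinguished triangle $Ri_*\mathbf G\to i_*\mathbf H\to i_*\mathbf Q\to Ri_*\mathbf G[1]$, and comparing it with the previous one along the canonical map $i_*\mathbf G\to Ri_*\mathbf G$ shows, by the five lemma, that $i_*\mathbf G\xrightarrow{\ \sim\ }Ri_*\mathbf G$; that is, $R^qi_*\mathbf G=0$ for all $q\ge1$. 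The Leray spectral sequence $\H^p(X',R^qi_*\mathbf G)\Rightarrow\H^{p+q}(X,\mathbf G)$ then collapses, and its edge map---which is the natural map of the statement---is an isomorphism in every degree.

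The crux is the case $m=1$ of~(1): it is the only place where the argument cannot be reduced to \'etale cohomology of a smooth group, and it is resolved precisely by lifting the connecting-class witness $\gamma$ along the thickening using smoothness of the resolution $\mathbf H'\to\mathbf Q'$. The rest is formal bookkeeping with long exact sequences, the comparison of fppf and \'etale cohomology for smooth groups, topological invariance of the \'etale site, and the exactness of $Ri_*$.
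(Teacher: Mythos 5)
Your proof is correct, and for the crucial case $m=1$ of (1) it takes a genuinely different route from the paper. The paper observes that a $\mathbf{G}$-torsor $T\to X$ representing $\alpha$ is itself a syntomic cover trivializing $\alpha$, and then invokes the general fact \cite[04E3]{stacks-project} that syntomic covers lift along thickenings to produce $V\to X'$ with $V\times_{X'}X\to X$ factoring through $T$. You instead build the lift by hand out of the standing smooth resolution $1\to\mathbf{G}'\to\mathbf{H}'\to\mathbf{Q}'\to 1$: after an \'etale cover killing the image of $\alpha$ in $\H^1(\mathbf{H})$, you realize $\alpha$ as $\partial\gamma$ for a section $\gamma$ of $\mathbf{Q}$, lift $\gamma$ across the thickening using formal smoothness of $\mathbf{Q}'$, and pull back the fppf torsor $\mathbf{H}'\to\mathbf{Q}'$ to get the desired cover. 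This is in effect an explicit deformation of the torsor $T$ itself rather than an appeal to the abstract deformation theory of syntomic covers; it is more self-contained but leans harder on the hypothesis that $\mathbf{G}$ embeds in a smooth group scheme (which the paper only needs for $m\geq 2$). For $m\geq 2$ your two-step \'etale d\'evissage through $\mathbf{H}$ and $\mathbf{Q}$ is just the expanded version of the paper's one-line argument. For (2) the paper deduces $R^mi_*\mathbf{G}=0$ for $m\geq 1$ directly from (1) applied to arbitrary $X'$-schemes and then quotes Leray; your triangle comparison $i_*\mathbf{G}\to Ri_*\mathbf{G}$ reaches the same conclusion by a slightly longer but valid path, and correctly flags that the surjectivity of $i_*\mathbf{H}\to i_*\mathbf{Q}$ is itself an application of (1).
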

\begin{proof}
    Consider a class $\alpha\in\H^m(X,\mathbf{G})$. Suppose that $m=1$. Let $T\to X$ be a $\mathbf{G}$-torsor with class $\alpha$. As $\mathbf{G}\to X$ is a syntomic cover, so is $T\to X$, and moreover we have $\alpha|_T=0$. By \cite[04E3]{stacks-project} there exists a syntomic cover $V\to X'$ such that $V\times_{X'}X\to X$ factors through $T\to X$. In particular, $V\times_{X'}X\to X$ kills $\alpha$. Suppose $m\geq 2$. We assume $\mathbf{G}$ embeds in a smooth group scheme, so we may find an \'{e}tale cover of $X$ which kills $\alpha$. We then conclude as before. This proves (1). It follows that the higher direct image $R^mi_*\mathbf{G}$ vanishes for all $m\geq 1$, which implies (2).
\end{proof}

Let $\ms T_{\mathbf{G}}$ denote the trivial $\mathbf{G}$-torsor over $X$.
\begin{proposition}\label{prop:acyclicity for i}
    We have canonical isomorphisms
    \[
        \sGerb_{X'}([\mathbf{G}'\to i_*\mathbf{G}])\iso\sDef(\mathrm{B}\mathbf{G}/X')
    \]
    and
    \[
        \sTors_{X'}([\mathbf{G}'\to i_*\mathbf{G}])\iso\sDef(\ms T_{\mathbf{G}}/X').
    \]
\end{proposition}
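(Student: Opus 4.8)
The two assertions are formally parallel --- one concerns torsors, the other gerbes, and the gerbe argument will be the torsor argument raised by one categorical level, with the added bookkeeping of coherence $2$-isomorphisms. I will therefore describe the torsor case and indicate the modifications for gerbes. The plan is to identify on both sides the underlying $\mathbf{G}'$-torsor on $X'$, and then to match the two flavours of ``trivialization'' attached to it: on the left, a trivialization of $\ms T'\wedge_{\mathbf{G}'}i_*\mathbf{G}$; on the right, an isomorphism $\ms T'|_X\iso\ms T_{\mathbf{G}}$, i.e. a trivialization of $\ms T'|_X$. Note that the map $\mathbf{G}'\to i_*\mathbf{G}$ occurring in the complex is nothing but the unit $\mathbf{G}'\to i_*i^*\mathbf{G}'=i_*\mathbf{G}$ of the adjunction $(i^*,i_*)$ between the big fppf topoi of $X$ and $X'$.

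The key step is the natural comparison isomorphism
\[
	\ms T'\wedge_{\mathbf{G}'}i_*\mathbf{G}\;\iso\;i_*\!\left(\ms T'|_X\right)
\]
(and its gerbe analogue $\ms Y'\wedge_{\mathbf{G}'}i_*\mathbf{G}\iso i_*(\ms Y'|_X)$). To construct it I would use the adjunction unit $\ms T'\to i_*i^*\ms T'=i_*(\ms T'|_X)$, which is equivariant for $\mathbf{G}'\to i_*\mathbf{G}$ and hence factors through $\ms T'\wedge_{\mathbf{G}'}i_*\mathbf{G}$. To see the resulting map is an isomorphism, check it fppf-locally on $X'$: by Proposition \ref{prop:acyclicity for i, first statement}(1) there is an fppf cover $V\to X'$ over which the class of $\ms T'|_X$ dies, and flat base change for $i_*$ along $V\to X'$ reduces us to the tautological case of the trivial torsor, where both sides are $i_*\mathbf{G}|_V$. (This argument simultaneously shows that $i_*(\ms T'|_X)$ is genuinely an $i_*\mathbf{G}$-torsor, i.e. is fppf-locally trivial on $X'$ and not merely on $X$.) Finally, for any fppf sheaf $\ms F$ on $X$ one has $(i_*\ms F)(X')=\ms F(X)$ by the very definition of $i_*$, so a trivialization of $\ms T'\wedge_{\mathbf{G}'}i_*\mathbf{G}$ over $X'$ is the same datum as a trivialization of $\ms T'|_X$ over $X$.

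Granting these preliminaries, I would unwind definitions. A torsor for $[\mathbf{G}'\to i_*\mathbf{G}]$ is a $\mathbf{G}'$-torsor $\ms T'$ on $X'$ together with a trivialization of $\ms T'\wedge_{\mathbf{G}'}i_*\mathbf{G}$, which by the above is exactly a $\mathbf{G}'$-torsor on $X'$ together with an isomorphism $\ms T'|_X\iso\ms T_{\mathbf{G}}$; since any $\mathbf{G}'$-torsor on $X'$ is automatically flat over $X'$ (fppf-locally it is $\mathbf{G}'\times_{S'}X'$, which is flat over $X'$ as a base change of the flat group scheme $\mathbf{G}'\to S'$), this is precisely a deformation of $\ms T_{\mathbf{G}}$ over $X'$. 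The construction is manifestly functorial in isomorphisms, giving the second displayed equivalence. The first is obtained by running the identical argument one categorical level higher: a gerbe for $[\mathbf{G}'\to i_*\mathbf{G}]$ is a $\mathbf{G}'$-gerbe $\ms Y'$ on $X'$ with a trivialization of the $i_*\mathbf{G}$-gerbe $\ms Y'\wedge_{\mathbf{G}'}i_*\mathbf{G}\iso i_*(\ms Y'|_X)$, hence a $\mathbf{G}'$-gerbe $\ms Y'$ on $X'$ together with an equivalence $\ms Y'|_X\iso\mathrm{B}\mathbf{G}$, i.e. a deformation of $\mathrm{B}\mathbf{G}$ over $X'$, and one checks that the $1$- and $2$-morphisms match up.

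I expect the main obstacle to be the gerbe version of the comparison isomorphism: one must verify that $i_*(\ms Y'|_X)$ really is an $i_*\mathbf{G}$-gerbe (locally nonempty and locally connected over $X'$, not merely over $X$), so that $\sGerb_{X'}([\mathbf{G}'\to i_*\mathbf{G}])$ is paired with the correct object, and one must carry the coherence $2$-isomorphisms through both the construction and the verification of the equivalence. This is precisely where Proposition \ref{prop:acyclicity for i, first statement} is used: part (1), now applied to classes in $\H^2(X,\mathbf{G})$, produces the fppf cover of $X'$ over which the relevant gerbe becomes trivial, upgrading local triviality ``on $X$'' to local triviality ``on $X'$'', while the vanishing of $R^{\geq 1}i_*\mathbf{G}$ from part (2) is what makes pushforward along $i$ exact enough that these comparisons introduce no higher obstructions.
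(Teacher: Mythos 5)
Your proof is correct and takes essentially the same route as the paper: both reduce the statement to matching the trivialization of $\ms T'\wedge_{\mathbf{G}'}i_*\mathbf{G}$ over $X'$ with a trivialization of $\ms T'|_X$ over $X$ (and likewise one categorical level up for gerbes), using the acyclicity of $i_*$ recorded in Proposition \ref{prop:acyclicity for i, first statement}. The only difference is packaging --- the paper phrases this as the chain of $2$-functors $\sGerb_{X'}(i_*\mathbf{G})\to\sGerb_X(i^{-1}i_*\mathbf{G})\to\sGerb_X(\mathbf{G})$ checked to be an equivalence on $\H^0,\H^1,\H^2$, while your explicit comparison isomorphism $\ms T'\wedge_{\mathbf{G}'}i_*\mathbf{G}\iso i_*(\ms T'|_X)$ combined with $(i_*\ms F)(X')=\ms F(X)$ is the concrete form of that same equivalence (and note that for the local-triviality check one may simply trivialize $\ms T'$ itself fppf-locally on $X'$, which is automatic for a torsor or gerbe on $X'$, rather than appealing to part (1)).
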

\begin{proof}
    We construct the first isomorphism. We have 2-functors
    \[
        \sGerb_{X'}(i_*\mathbf{G})\to\sGerb_X(i^{-1}i_*\mathbf{G})\to\sGerb_X(\mathbf{G})
    \]
    the first being pullback along $i$ and the second induced by the canonical map $i^{-1}i_*\mathbf{G}\to\mathbf{G}$. On isomorphism classes of objects (resp. isomorphism classes of 1-automorphisms, resp. isomorphism classes of 2-automorphisms) this composition corresponds to the natural map $\H^m(X',i_*\mathbf{G})\to\H^m(X,\mathbf{G})$ for $m=2$ (resp. $m=1$, resp. $m=0$). By Proposition \ref{prop:acyclicity for i, first statement}, these maps are isomorphisms for all $m\geq 0$, and so the composition is an equivalence of 2-groupoids. The second equivalence is constructed similarly.
\end{proof}

As a consequence of Proposition \ref{prop:acyclicity for i}, we have natural identifications $\Phi^2(\mathbf{G}/X')=\Def(\mathrm{B}\mathbf{G}/X')$ and $\Phi^1(\mathbf{G}/X')=\Def(\ms T_{\mathbf{G}}/X')$.

\begin{proposition}\label{prop:auts in terms of Phi} \
    \begin{enumerate}
        \item  The set of isomorphism classes of deformations of $\ms X$ over $X'$ is a pseudo-torsor under $\Phi^2(\mathbf{G}/X')$.
        \item The group of isomorphism classes of automorphisms of any deformation of $\ms X$ over $X'$ is $\Phi^1(\mathbf{G}/X')$.
        \item The group of 2-automorphisms of any 1-morphism of deformations of $\ms X$ over $X'$ is $\Phi^0(\mathbf{G}/X')$.
    \end{enumerate}
\end{proposition}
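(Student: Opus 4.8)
The plan is to identify the $2$-groupoid $\sDef(\ms X/X')$ with a pseudo-torsor under the Picard $2$-groupoid $\sGerb_{X'}([\mathbf{G}'\to i_*\mathbf{G}])$, which by Proposition \ref{prop:acyclicity for i} is nothing but $\sDef(\mathrm{B}\mathbf{G}/X')$, and then to read off (1)--(3) from the homotopy invariants of the latter. Recall that for a commutative group scheme $\mathbf{G}'$ the $2$-groupoid $\sGerb_{X'}(\mathbf{G}')$ carries a symmetric monoidal (Baer sum) structure realizing the abelian group $\H^2(X',\mathbf{G}')$; write $+$ for this operation. First I would note that this sum defines an action of $\sGerb_{X'}([\mathbf{G}'\to i_*\mathbf{G}])$ on $\sDef(\ms X/X')$: if $\ms Y'$ is a $\mathbf{G}'$-gerbe on $X'$ equipped --- via the equivalence of Proposition \ref{prop:acyclicity for i} --- with a trivialization of $\ms Y'|_X$, and $\ms X'$ is a deformation of $\ms X$, then the identifications $\ms X'|_X\cong\ms X$ and $\ms Y'|_X\cong\mathrm{B}\mathbf{G}$ endow $\ms X'+\ms Y'$ with the structure of a deformation of $\ms X$ over $X'$.

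The main step is to check that this action is free and transitive in the weak sense that, fixing one object $\ms X'_0\in\sDef(\ms X/X')$ (assuming one exists), the functor $\ms Y'\mapsto\ms X'_0+\ms Y'$ is an equivalence of $2$-groupoids
\[
    \Theta\colon\sGerb_{X'}([\mathbf{G}'\to i_*\mathbf{G}])\xrightarrow{\ \sim\ }\sDef(\ms X/X').
\]
For essential surjectivity, given a deformation $\ms X'_1$ one forms the Baer difference $\ms X'_1-\ms X'_0$ (the sum of $\ms X'_1$ with the inverse gerbe of $\ms X'_0$); its restriction to $X$ is $\ms X-\ms X$, hence is canonically trivialized by the deformation data of $\ms X'_1$ and $\ms X'_0$, so by Proposition \ref{prop:acyclicity for i} it is an object of $\sGerb_{X'}([\mathbf{G}'\to i_*\mathbf{G}])$, and $\Theta$ carries it to $\ms X'_1$. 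Full faithfulness on $1$- and $2$-morphisms holds because adding $\ms X'_0$ is invertible up to coherent equivalence on the Picard $2$-groupoid of $\mathbf{G}'$-gerbes, so that $1$-morphisms (resp. $2$-morphisms) compatible with the chosen trivializations of restrictions to $X$ correspond bijectively to $1$-morphisms (resp. $2$-morphisms) compatible with the deformation structures. Here one invokes Proposition \ref{prop:acyclicity for i, first statement}, which gives $R^mi_*\mathbf{G}=0$ for $m\geq 1$ and thus licenses the passage between $i_*\mathbf{G}$-data on $X'$ and $\mathbf{G}$-data on $X$ --- precisely the content packaged by Proposition \ref{prop:acyclicity for i}.

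Granting the equivalence $\Theta$, the homotopy invariants of $\sDef(\ms X/X')$ agree with those of $\sGerb_{X'}([\mathbf{G}'\to i_*\mathbf{G}])$, and by the standard description of the $2$-groupoid of gerbes (and torsors) for a two-term complex $K=[\mathbf{G}'\to i_*\mathbf{G}]$ in terms of its hypercohomology (cf. \S\ref{ssec:gerbes and torsors for two-terms complexes}): the set of isomorphism classes of objects is $\H^2(X',K)=\Phi^2(\mathbf{G}/X')$; the group of isomorphism classes of automorphisms of any object is $\H^1(X',K)=\Phi^1(\mathbf{G}/X')$; and the group of $2$-automorphisms of any $1$-morphism is $\H^0(X',K)=\Phi^0(\mathbf{G}/X')$. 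Transporting along $\Theta$ gives (2) and (3) immediately, while for (1) the equivalence shows that when $\sDef(\ms X/X')$ is nonempty its set of isomorphism classes is a torsor under $\Phi^2(\mathbf{G}/X')$, and when it is empty the assertion is vacuous; in either case it is a pseudo-torsor. The hard part is the verification of $\Theta$ --- i.e. that deformations of a gerbe form a torsor under deformations of the trivial gerbe --- which is essentially $2$-categorical bookkeeping resting on the Picard $2$-stack structure on $\mathbf{G}'$-gerbes together with Proposition \ref{prop:acyclicity for i}; I would either cite the standard theory of gerbes banded by commutative group schemes or write out the requisite coherence data in full.
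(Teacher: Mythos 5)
Your proposal is correct and follows essentially the same route as the paper: both rest on the tensor product (Baer sum) action of $\sGerb_{X'}([\mathbf{G}'\to i_*\mathbf{G}])\simeq\sDef(\mathrm{B}\mathbf{G}/X')$ on deformations of $\ms X$, together with Proposition \ref{prop:acyclicity for i}. The only difference is packaging --- the paper treats (1) via the induced simply transitive action on isomorphism classes and handles (2)--(3) by identifying the automorphism groupoid of a deformation with $\sTors_{X'}([\mathbf{G}'\to i_*\mathbf{G}])$, whereas you assemble everything into a single equivalence of $2$-groupoids and read off the homotopy invariants.
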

\begin{proof}
    Taking tensor products of gerbes defines an action of $\Def(\mathrm{B}\mathbf{G}/X')=\Phi^2(\mathbf{G}/X')$ on $\Def(\ms X/\ms Y')$. If the latter is nonempty, this action is simply transitive, giving (1). The groupoid of invertible self 1-morphisms of any object of $\sDef(\ms X/X')$ is equivalent $\sTors_{X'}([\mathbf{G}'\to i_*\mathbf{G}])$, which by Proposition \ref{prop:acyclicity for i} is equivalent to the groupoid of torsors for the 2-term complex $[\mathbf{G}'\to i_*\mathbf{G}]$, which implies $(2)$ and $(3)$.
\end{proof}

Suppose now that $I^2=0$. We describe a tangent--obstruction theory for the groupoid $\sDef(\ms X/X')$.

\begin{proposition}\label{prop:obstructions for gerbe deformation functor}
    Suppose that $I^2=0$, and let $\ms X$ be a $\mathbf{G}$-gerbe over $X$.
    \begin{enumerate}
        \item\label{item:property1} There exists a functorial class $o(\ms X/X')\in\H^3(X,\colie_{\mathbf{G}}^{\vee}\otimes_{\ms O_X}^{\mathbf{L}}I)$ whose vanishing is necessary and sufficient for the existence of a deformation of $\ms X$ over $X'$.
        \item\label{item:property2}  If $o(\ms X/X')=0$, then the set of isomorphism classes of deformations of $\ms X$ over $X'$ is a torsor under $\H^2(X,\colie_{\mathbf{G}}^{\vee}\otimes_{\ms O_X}^{\mathbf{L}}I)$.
        \item\label{item:property3}  The group of isomorphism classes of automorphisms of any deformation of $\ms X$ over $X'$ is $\H^1(X,\colie_{\mathbf{G}}^{\vee}\otimes_{\ms O_X}^{\mathbf{L}}I)$.
        \item\label{item:property4} The group of 2-automorphisms of any 1-morphism of deformations of $\ms X$ over $X'$ is $\H^0(X,\colie_{\mathbf{G}}^{\vee}\otimes_{\ms O_X}^{\mathbf{L}}I)$.
    \end{enumerate}
\end{proposition}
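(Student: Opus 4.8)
The plan is to reduce everything to the cohomological description of $\sDef$ via Proposition \ref{prop:acyclicity for i} together with the quasi-isomorphism \eqref{eq:Deligne's quasi-isomorphism}. First I would record the key input: Proposition \ref{prop:acyclicity for i} identifies $\sDef(\mathrm B\mathbf G/X')$ with $\sGerb_{X'}([\mathbf G'\to i_*\mathbf G])$, so that $\Phi^m(\mathbf G/X')=\H^m(X',[\mathbf G'\to i_*\mathbf G])$ computes deformations, automorphisms and $2$-automorphisms of the \emph{trivial} gerbe $\mathrm B\mathbf G$ in degrees $m=2,1,0$ respectively (Proposition \ref{prop:auts in terms of Phi}). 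Now invoke the assumption $I^2=0$: by the quasi-isomorphism \eqref{eq:Deligne's quasi-isomorphism} of complexes of fppf sheaves,
\[
    [\mathbf G'\to i_*\mathbf G]\;\cong\;\colie^{\vee}_{\mathbf G}\otimes^{\mathbf L}_{\ms O_X}I,
\]
where the right-hand side is a complex of quasi-coherent $\ms O_X$-modules whose fppf cohomology agrees with its cohomology on the small site; hence $\Phi^m(\mathbf G/X')=\H^m(X,\colie^{\vee}_{\mathbf G}\otimes^{\mathbf L}_{\ms O_X}I)$. This immediately gives items \eqref{item:property2}, \eqref{item:property3}, \eqref{item:property4} \emph{for the trivial gerbe} by translating Proposition \ref{prop:auts in terms of Phi}; since $\ms X$ and $\mathrm B\mathbf G$ become isomorphic fppf-locally on $X$ (Proposition \ref{prop:acyclicity for i, first statement}(1)) and the relevant sheaves of automorphisms and $2$-automorphisms of any deformation are pulled back from the trivial case, the same descriptions hold for an arbitrary $\mathbf G$-gerbe $\ms X$.

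It remains to produce the obstruction class $o(\ms X/X')\in\H^3(X,\colie^{\vee}_{\mathbf G}\otimes^{\mathbf L}_{\ms O_X}I)$ in item \eqref{item:property1}. The standard gerbe-theoretic argument is to cover $X'$ by an fppf cover $U'\to X'$ over which a deformation $\ms X'_{U'}$ exists — this exists by Proposition \ref{prop:acyclicity for i, first statement}(1), which kills the class of $\ms X$ fppf-locally and hence makes $\ms X$, and any deformation, trivial after base change. On the double overlap $U'\times_{X'}U'$ the two restrictions $\mathrm{pr}_0^*\ms X'_{U'}$ and $\mathrm{pr}_1^*\ms X'_{U'}$ are two deformations of the same object, hence differ by a class in $\Phi^2(\mathbf G/-)=\H^2(-,\colie^{\vee}_{\mathbf G}\otimes^{\mathbf L}I)$ by part \eqref{item:property2}; these local discrepancies form a $2$-cocycle for the hypercover, and its class in $\H^3$ is by construction the obstruction to gluing the local deformations into a global one, i.e. to the existence of a deformation of $\ms X$ over $X'$. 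Functoriality in the pair $(X\hookrightarrow X')$ is built into the construction since every step — the fppf cover, the torsor description of discrepancies, and the cocycle — is natural. Cleanly, one can phrase this using the exact triangle relating $R\Gamma$ of $\ms X$, of $\ms X'$, and of $\colie^{\vee}_{\mathbf G}\otimes^{\mathbf L}I$ coming from \eqref{eq:Deligne's quasi-isomorphism}, so that the connecting map of a hypercohomology spectral sequence produces $o(\ms X/X')$ directly.

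The main obstacle I expect is the bookkeeping in \eqref{item:property1}: making the obstruction class genuinely functorial and well-defined (independent of the choice of fppf cover $U'\to X'$ and of the chosen local deformations), and checking it lives in the correct hypercohomology group $\H^3(X,\colie^{\vee}_{\mathbf G}\otimes^{\mathbf L}_{\ms O_X}I)$ rather than some coarser invariant. This is the usual subtlety with obstruction theories for $2$-stacks: one is gluing \emph{gerbes}, so the cocycle condition involves coherence $2$-morphisms, and one must verify that the ambiguities introduced at the level of $1$- and $2$-morphisms are exactly absorbed by $\H^2$ and $\H^1$ respectively — which is precisely what parts \eqref{item:property2}–\eqref{item:property4} guarantee. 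Once that is set up, everything else is a translation of Propositions \ref{prop:acyclicity for i} and \ref{prop:auts in terms of Phi} through the quasi-isomorphism \eqref{eq:Deligne's quasi-isomorphism}, valid because $I^2=0$.
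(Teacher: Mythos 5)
Your treatment of parts (2)--(4) matches the paper: both translate Proposition \ref{prop:auts in terms of Phi} through the quasi-isomorphism~\eqref{eq:Deligne's quasi-isomorphism} (your detour through the trivial gerbe is redundant, since Proposition \ref{prop:auts in terms of Phi} is already stated and proved for an arbitrary $\ms X$). For part (1), however, your primary construction --- gluing local deformations over an fppf hypercover and extracting a cocycle from the pairwise discrepancies --- is not the paper's route, and it is exactly the route on which you (rightly) anticipate trouble: making the cocycle well defined requires tracking coherence $2$-morphisms on higher overlaps and checking independence of the cover and of the local choices, none of which you carry out. The paper sidesteps all of this. Since $\mathbf{G}$-gerbes on $X$ are classified up to isomorphism by $\H^2(X,\mathbf{G})$ and $\H^2(X',i_*\mathbf{G})\cong\H^2(X,\mathbf{G})$ by Proposition \ref{prop:acyclicity for i, first statement}, a deformation of $\ms X$ over $X'$ exists if and only if $\alpha=[\ms X]$ lifts along $\H^2(X',\mathbf{G}')\to\H^2(X,\mathbf{G})$. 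The short exact sequence of complexes
\[
1\to i_*\mathbf{G}[-1]\to[\mathbf{G}'\to i_*\mathbf{G}]\to\mathbf{G}'\to 1
\]
combined with~\eqref{eq:Deligne's quasi-isomorphism} yields a long exact sequence
\[
\cdots\to\H^2(X',\mathbf{G}')\to\H^2(X,\mathbf{G})\xrightarrow{\ \delta\ }\H^3\bigl(X,\colie^{\vee}_{\mathbf{G}}\otimes^{\mathbf{L}}_{\ms O_X}I\bigr)\to\cdots,
\]
and the paper simply sets $o(\ms X/X'):=\delta(\alpha)$; functoriality and the vanishing criterion are then immediate. Your closing remark about the connecting map of an exact triangle is morally this argument, but you neither identify the relevant triangle nor observe that the obstruction depends only on the cohomology class $[\ms X]$ --- and that observation is precisely what makes the entire \v{C}ech apparatus, and the coherence bookkeeping you flag as the main obstacle, unnecessary.
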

\begin{proof}
The quasi-isomorphism~\eqref{eq:Deligne's quasi-isomorphism} gives isomorphisms
\[
    \Phi^m(\mathbf{G}/X')=\H^m(X,\colie_{\mathbf{G}}^{\vee}\otimes_{\ms O_X}^{\mathbf{L}}I).
\]
Taking cohomology of the short exact sequence
\begin{equation}\label{eq:canonical SES of the def complex}
    1\to i_*\mathbf{G}[-1]\to [\mathbf{G}'\to i_*\mathbf{G}]\to\mathbf{G}'\to 1
\end{equation}
and using Proposition \ref{prop:acyclicity for i, first statement} we obtain a long exact sequence
\begin{equation}\label{eq:Long ES square zero situation}
    \ldots\to\H^1(X,\mathbf{G})\to\H^2(X,\colie_{\mathbf{G}}^{\vee}\otimes_{\ms O_X}^{\mathbf{L}}I)\to\H^2(X',\mathbf{G}')\to\H^2(X,\mathbf{G})\xrightarrow{\delta}\H^3(X,\colie_{\mathbf{G}}^{\vee}\otimes_{\ms O_X}^{\mathbf{L}}I)\to\ldots.
\end{equation}
We put $o(\ms X/X'):=\delta(\alpha)$. It is immediate that this class has the properties claimed in~\eqref{item:property1}. The remaining claims follow from Proposition \ref{prop:auts in terms of Phi}.
\end{proof}

Note that the obstruction class $o(\ms X/X')$ depends only on the cohomology class $\alpha=[\ms X]\in\H^2(X,\mathbf{G})$ of $\ms X$. We may sometimes write $o(\alpha/X')=o(\ms X/X')$.

\begin{remark}\label{rem:its a 2-gerbe}
  More conceptually, the assertions of Proposition \ref{prop:obstructions for gerbe deformation functor} may be summarized in the statement that the 2-stack $\suDef(\ms X/X')$ on the small \'{e}tale site of $X$ defined by $U\mapsto \sDef(\ms X_U/U')$ (where $U'$ is the unique \'{e}tale $X'$-scheme such that $U'\times_{X'}X=U$) is a 2-gerbe banded by the 2-term complex $[\mathfrak{t}_{\mathbf{H}}\to\mathfrak{n}_{\mathbf{G}/\mathbf{H}}]$.
\end{remark}

We compare deformations of $\ms X$ with deformations of its cohomology class $\alpha=[\ms X]\in\H^2(X,\mathbf{G})$. There is a map $\Def(\ms X/X')\to\H^2(X',\mathbf{G}')$ induced by $(\ms X',\varphi)\mapsto [\ms X']$. The image of this map is the set of classes $\alpha'\in\H^2(X',\mathbf{G}')$ such that $\alpha'|_X=\alpha$, which we denote by $\oDef(\alpha/X')$.
\begin{lemma}\label{lem:properties of varpi, non functor version}
  The map 
    \begin{equation}\label{eq:definition of varpi, non functor version}
        \Def(\ms X/X')\to\oDef(\alpha/X').
    \end{equation}
  is surjective. It is bijective if and only if the map $\H^1(X',\mathbf{G}')\to\H^1(X,\mathbf{G})$ is surjective.
\end{lemma}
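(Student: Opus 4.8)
The plan is to read off both assertions from the long exact cohomology sequence that underlies the deformation theory of gerbes, combined with the torsor structures recorded in Proposition \ref{prop:auts in terms of Phi}.

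First I would prove surjectivity by hand. Let $\alpha'\in\oDef(\alpha/X')$, so $\alpha'\in\H^2(X',\mathbf{G}')$ with $\alpha'|_X=\alpha$. Choose a $\mathbf{G}'|_{X'}$-gerbe $\ms X'\to X'$ with cohomology class $\alpha'$, which exists by \cite[Theorem 12.2.8]{MR3495343}. Since $\mathbf{G}'$ is flat over $S'$ and $X'\to S'$ is flat, the composite $\ms X'\to X'\to S'$ is flat, so $\ms X'$ is an admissible deformation object. Its restriction $\ms X'|_X$ is a $\mathbf{G}$-gerbe over $X$ with class $\alpha'|_X=\alpha=[\ms X]$, hence there is an isomorphism $\varphi\colon\ms X'|_X\iso\ms X$ of $\mathbf{G}$-gerbes; the pair $(\ms X',\varphi)$ is a deformation of $\ms X$ over $X'$ with $[\ms X']=\alpha'$. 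This shows the map is surjective, and in particular that $\Def(\ms X/X')$ is nonempty if and only if $\oDef(\alpha/X')$ is.

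Next I would analyze the fibers. If both sets are empty the map is trivially bijective, so assume both are nonempty. From the short exact sequence \eqref{eq:canonical SES of the def complex} and Proposition \ref{prop:acyclicity for i, first statement} (identifying $\H^m(X',i_*\mathbf{G})$ with $\H^m(X,\mathbf{G})$) one gets a long exact sequence containing the segment
\[
  \H^1(X',\mathbf{G}')\to\H^1(X,\mathbf{G})\xrightarrow{\ \partial\ }\Phi^2(\mathbf{G}/X')\xrightarrow{\ j\ }\H^2(X',\mathbf{G}')\xrightarrow{\ \mathrm{res}\ }\H^2(X,\mathbf{G}).
\]
By Proposition \ref{prop:auts in terms of Phi}, $\Def(\ms X/X')$ (being nonempty) is a torsor under $\Phi^2(\mathbf{G}/X')$, while $\oDef(\alpha/X')=\mathrm{res}^{-1}(\alpha)$ is a torsor under $\ker(\mathrm{res})=\operatorname{im}(j)$. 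Tensoring a deformation with a class $\beta\in\Phi^2(\mathbf{G}/X')=\Def(\mathrm{B}\mathbf{G}/X')$ (the action of Proposition \ref{prop:auts in terms of Phi}, via the dictionary of Proposition \ref{prop:acyclicity for i}) changes $[\ms X']$ by $j(\beta)$, so the map $(\ms X',\varphi)\mapsto[\ms X']$ is equivariant for the surjective homomorphism $j\colon\Phi^2(\mathbf{G}/X')\twoheadrightarrow\operatorname{im}(j)$. A surjection of torsors equivariant over a surjection of groups has all fibers isomorphic to a torsor under the kernel, which here is $\ker\big(j\colon\Phi^2(\mathbf{G}/X')\to\operatorname{im}(j)\big)=\operatorname{im}(\partial)\cong\operatorname{coker}\big(\H^1(X',\mathbf{G}')\to\H^1(X,\mathbf{G})\big)$. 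Hence the map $\Def(\ms X/X')\to\oDef(\alpha/X')$ is injective if and only if this cokernel vanishes, i.e. if and only if $\H^1(X',\mathbf{G}')\to\H^1(X,\mathbf{G})$ is surjective. Combined with the first paragraph, this gives the lemma. (To obtain the biconditional with no exceptions one uses the convention that the map is declared bijective in the vacuous case $\oDef(\alpha/X')=\emptyset$; this is the only point where some care is needed.)

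\textbf{Main obstacle.} There is nothing deep here: the only real work is bookkeeping, namely checking that the gerbe-level operations (forming contracted products, restricting automorphisms along $i$) are translated into the connecting maps $\partial$ and $j$ of the displayed long exact sequence exactly as claimed, so that the equivariance and the identifications of the torsor structures are correct. This compatibility is precisely the content of Propositions \ref{prop:acyclicity for i}, \ref{prop:auts in terms of Phi}, so once those are invoked the argument is formal.
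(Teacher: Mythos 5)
Your proof is correct and follows essentially the same route as the paper: the paper also reads off both claims from the action of $\H^1(X,\mathbf{G})$ (equivalently, the boundary $\partial$ into $\Phi^2(\mathbf{G}/X')$) on the torsor $\Def(\ms X/X')$ and identifies the quotient with $\oDef(\alpha/X')$. Your explicit construction of a gerbe representing a given $\alpha'$ for surjectivity, and your remark about the vacuous case, merely spell out details the paper leaves implicit.
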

\begin{proof}
There is a natural action of the group $\H^1(X,\mathbf{G})$ on the set $\Def(\ms X/X')$, which descends to a free action of the quotient $\H^1(X',\mathbf{G}')/\H^1(X,\mathbf{G})$. Furthermore, the quotient of $\Def(\ms X/X')$ by this action is exactly $\oDef(\alpha/X')$.
\end{proof}

We visualize this situation as an ``exact sequence''
\[
    \H^1(X',\mathbf{G}')\to\H^1(X,\mathbf{G})\rightsquigarrow\Def(\ms X/X')\to\H^2(X',\mathbf{G}')\xrightarrow{r-\alpha}\H^2(X,\mathbf{G})
\]
where the squiggly arrow denotes a group action and $r$ is the restriction map. If $\alpha=0$, this is just the long exact sequence on cohomology coming from~\eqref{eq:canonical SES of the def complex}.

\begin{remark}\label{rem:choice of gerbe}
    Given an isomorphism $\ms X\iso\ms Z$ of $\mathbf{G}$-gerbes on $X$, there is an induced equivalence $\sDef(\ms X/X')\iso\sDef(\ms Z/X')$. In particular, up to noncanonical isomorphism the 2-groupoid $\sDef(\ms X/X')$ and hence the set $\Def(\ms X/X')$ depends only on the cohomology class $[\ms X]\in\H^2(X,\mathbf{G})$ of the gerbe $\ms X$.
    
    If the map $\H^1(X',\mathbf{G}')\to\H^1(X,\mathbf{G})$ is surjective (eg. if $\H^1(X,\mathbf{G})=0$), then~\eqref{eq:definition of varpi, non functor version} is an isomorphism. Thus, in this case, there is a canonical isomorphism $\Def(\ms X/X')\cong\Def(\ms Z/X')$ for any $\ms X$ and $\ms Z$ with class $\alpha$.
\end{remark}

\subsection{Deformations relative to an embedding into a smooth group scheme}\label{ssec:change of group}

We will consider a relative deformation problem with respect to the embedding $\mathbf{G}\hookrightarrow\mathbf{H}$. Let $\ms Y=\ms X\wedge_{\mathbf{G}}\mathbf{H}$ be the $\mathbf{H}$-gerbe associated to $\ms X$. Fix a deformation $(\ms Y',\psi)$ of $\ms Y$ over $X'$. We let $\sDef(\ms X/\ms Y')$ be the groupoid whose objects are 2-cartesian diagrams
\begin{equation}\label{eq:cartesian diagram for relative def problem}
    \begin{tikzcd}
        \ms X\arrow{d}[swap]{g}\arrow[hook]{r}{\iota_{\ms X}}&\ms X'\arrow{d}{g'}\\
        \ms Y\arrow[hook]{r}{\iota_{\ms Y}}&\ms Y'
    \end{tikzcd}
\end{equation}
where $\ms X'$ is an absolute $\mathbf{G}'$-gerbe, $\iota_{\ms X}$ is a map of absolute $\mathbf{G}'$-gerbes, and $g'$ is a $\mathbf{G}'\to\mathbf{H}'$-equivariant map of gerbes. Because $\mathbf{G}'\to\mathbf{H}'$ is a monomorphism, the maps $g$ and $g'$ are representable. It follows that $\sDef(\ms X/\ms Y')$ has a natural structure of a groupoid. We let $\suDef(\ms X/\ms Y')$ denote the stack on the small \'{e}tale site of $X$ defined by
\[
    U\mapsto \sDef(\ms X_U/\ms Y'_{U'})
\]
where $U'\to X'$ is the unique \'{e}tale morphism such that $U'\times_{X'}X=U$.

\begin{lemma}\label{lem:reduction of structure group}
    There is a canonical equivalence of stacks
    \[
        \suDef(\ms T_\mathbf{Q}/X')\cong\suDef(\mathrm{B}\mathbf{G}/\mathrm{B}\mathbf{H}').
    \]
\end{lemma}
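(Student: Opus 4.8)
\textbf{Proof plan for Lemma \ref{lem:reduction of structure group}.}

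The plan is to unwind both sides of the claimed equivalence and match them term by term. On the left, $\suDef(\ms T_{\mathbf Q}/X')$ is, by definition (§\ref{ssec:deformations of gerbes}), the stackification of $U\mapsto\sDef(\ms T_{\mathbf Q,U}/U')$, the groupoid of flat deformations over $U'$ of the trivial $\mathbf Q$-torsor. By Proposition \ref{prop:acyclicity for i}, deformations of $\ms T_{\mathbf Q}$ over $X'$ are canonically identified with torsors for the two-term complex $[\mathbf Q'\to i_*\mathbf Q]$, i.e. with $\sTors_{X'}([\mathbf Q'\to i_*\mathbf Q])$, and via the quasi-isomorphism~\eqref{eq:Deligne's quasi-isomorphism} (applied to $\mathbf Q$ in place of $\mathbf G$) this two-term complex is quasi-isomorphic to $\colie^{\vee}_{\mathbf Q}\otimes^{\mathbf L}_{\ms O_X}I=\mathfrak t_{\mathbf Q}\otimes I$, since $\mathbf Q$ is smooth and hence $\colie^{\vee}_{\mathbf Q}=\mathfrak t_{\mathbf Q}$ is a vector bundle concentrated in degree $0$. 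Using the identification $\mathfrak t_{\mathbf Q}=\mathfrak n_{\mathbf G/\mathbf H}$ coming from~\eqref{eq:big diagram, which will give a qis}, the left side becomes the stack of $\mathfrak n_{\mathbf G/\mathbf H}\otimes I$-torsors on $X$.

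First I would analyze the right side. The notation $\suDef(\mathrm B\mathbf G/\mathrm B\mathbf H')$ refers to the relative deformation stack $\suDef(\ms X/\ms Y')$ of §\ref{ssec:change of group} in the special case $\ms X=\mathrm B\mathbf G$, with the ambient deformation $\ms Y'$ taken to be the \emph{trivial} deformation $\mathrm B\mathbf H'$ of $\ms Y=\mathrm B\mathbf H=(\mathrm B\mathbf G)\wedge_{\mathbf G}\mathbf H$. By definition this is the stackification of $U\mapsto\sDef(\mathrm B\mathbf G_U/\mathrm B\mathbf H'_{U'})$, whose objects are $2$-cartesian squares as in~\eqref{eq:cartesian diagram for relative def problem} with $\ms X'$ an absolute $\mathbf G'$-gerbe, $\iota_{\ms X}$ a map of $\mathbf G'$-gerbes, and $g'\colon\ms X'\to\mathrm B\mathbf H'$ a $\mathbf G'\to\mathbf H'$-equivariant map. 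Giving such a $g'$ is, tautologically, giving $\ms X'$ together with a trivialization of $\ms X'\wedge_{\mathbf G'}\mathbf H'$, i.e.\ realizing $\ms X'$ as a \emph{gerbe for the complex} $[\mathbf G'\to\mathbf H']$ (in the sense of §\ref{ssec:gerbes and torsors for two-terms complexes}), compatibly with the given trivialization of $\mathrm B\mathbf G\wedge_{\mathbf G}\mathbf H=\mathrm B\mathbf H$ downstairs. Thus the right side is the stack whose $U$-points are $2$-cartesian lifts of the trivial gerbe for $[\mathbf G\to\mathbf H]$ to a gerbe for $[\mathbf G'\to\mathbf H']$; equivalently, it is $\suDef$ of the trivial $[\mathbf G\to\mathbf H]$-gerbe. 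Now apply the deformation formalism of Proposition \ref{prop:obstructions for gerbe deformation functor}, but with $\mathbf G$ replaced by the complex $[\mathbf G\to\mathbf H]$: its co-Lie complex, using the resolution $[\mathbf G'\to\mathbf H']$ with $\mathbf H'$ smooth, is precisely $\mathfrak n_{\mathbf G/\mathbf H}[-1]$ (since $L_{\mathbf G/\mathbf H}=N^{\vee}_{\mathbf G/\mathbf H}[1]$ from the displayed computation preceding~\eqref{eq:computation of lie complex}). Hence deformations of the trivial $[\mathbf G\to\mathbf H]$-gerbe are governed by $\mathfrak n_{\mathbf G/\mathbf H}[-1]\otimes^{\mathbf L}I$, i.e.\ the right side is also the stack of $\mathfrak n_{\mathbf G/\mathbf H}\otimes I$-torsors on $X$, matching the left.

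To organize this cleanly, I would actually produce the equivalence \emph{directly} rather than via the two independent computations above: construct a functor from $\sDef(\ms T_{\mathbf Q,U}/U')$ to $\sDef(\mathrm B\mathbf G_U/\mathrm B\mathbf H'_{U'})$ by sending a deformation $\ms T'$ of the trivial $\mathbf Q'$-torsor to the $\mathbf G'$-gerbe $g'^{-1}(\text{point})$ obtained as the fiber of $\mathrm B\mathbf H'\to\mathrm B\mathbf Q'$ twisted by $\ms T'$ — concretely, $\ms T'$ is a section of $\mathrm B\mathbf Q'$ over $U'$ lifting the zero section, and its preimage under $\mathrm B\mathbf H'\to\mathrm B\mathbf Q'$ together with the tautological map to $\mathrm B\mathbf H'$ is the required object — and check this is an equivalence on each $U$ by comparing the two $2$-term complexes under the snake-lemma diagram~\eqref{eq:big diagram, which will give a qis}, which exhibits both as extensions controlled by $\mathfrak n_{\mathbf G/\mathbf H}\otimes I=\mathfrak t_{\mathbf Q}\otimes I$. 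Stackifying gives the asserted canonical equivalence. The main obstacle I anticipate is purely bookkeeping: keeping the $2$-categorical structure straight — verifying that the comparison functor is compatible with $1$-morphisms and $2$-morphisms, not merely on isomorphism classes of objects — and checking that all identifications are genuinely canonical (independent of the chosen smooth resolution $\mathbf H'$), so that the equivalence is well defined before stackification; the underlying homological input is entirely supplied by~\eqref{eq:Deligne's quasi-isomorphism}, Proposition \ref{prop:acyclicity for i}, and Proposition \ref{prop:acyclicity for i, first statement}.
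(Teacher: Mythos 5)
Your final ``direct'' construction --- sending a deformation $\ms T'$ of the trivial $\mathbf{Q}'$-torsor to the preimage under $\mathrm{B}\mathbf{H}'\to\mathrm{B}\mathbf{Q}'$ of the corresponding section, with its tautological map to $\mathrm{B}\mathbf{H}'$ --- is exactly the paper's proof (the gerbe of liftings/trivializations, with inverse given by restricting along the canonical section of $\mathrm{B}\mathbf{H}'\to X'$), and it is correct. The preliminary computation identifying both sides with the stack of $\mathfrak{n}_{\mathbf{G}/\mathbf{H}}\otimes I$-torsors is dispensable and slightly delicate: it invokes a deformation formalism for gerbes banded by a two-term complex that the paper never develops, and it leans on the banding statement that this very lemma is used to establish (Proposition \ref{prop:obstructions for change of group map, normal bundle version} needs the lemma to prove local nonemptiness and local connectedness), so the direct construction should be taken as the actual argument.
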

\begin{proof}
    There is a canonical equivalence between the groupoid of $\mathbf{Q}'$-torsors on $X'$ and the groupoid whose objects are $\mathbf{G}'$-gerbes on $X'$ equipped with a map of gerbes to $\mathrm{B}\mathbf{H}'$. Explicitly, given a $\mathbf{Q}'$-torsor $\ms T'$, we may consider the associated $\mathbf{G}'$-gerbe of trivializations $\ms X(\ms T')$, which is equipped with a map $\ms X(\ms T')\to\mathrm{B}\mathbf{H}'$ of gerbes. Conversely, given a $\mathbf{G}'$-gerbe $\ms X'$ and a map $\ms X'\to\mathrm{B}\mathbf{H}'$ of gerbes, the restriction of $\ms X'$ along the canonical section of $\mathrm{B}\mathbf{H}'\to X'$ is a $\mathbf{Q}'$-torsor on $X'$. These equivalences are compatible with restriction to $X$, and induce the desired equivalence of groupoids. They are furthermore compatible with \'{e}tale localization, and we obtain the claimed equivalence of stacks.
\end{proof}

\begin{proposition}\label{prop:obstructions for change of group map, normal bundle version}
    The stack $\suDef(\ms X/\ms Y')$ is a gerbe on the small \'{e}tale site of $X$ which is canonically banded by $\mathfrak{n}_{\mathbf{G}/\mathbf{H}}\otimes I$.
\end{proposition}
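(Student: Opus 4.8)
The plan is to verify directly the three conditions defining a gerbe banded by $\mathfrak{n}_{\mathbf{G}/\mathbf{H}}\otimes I$: that the stack $\suDef(\ms X/\ms Y')$ on the small \'etale site of $X$ is locally nonempty, is locally connected, and has the sheaf of automorphisms of any local object canonically isomorphic to $\mathfrak{n}_{\mathbf{G}/\mathbf{H}}\otimes I$, functorially in morphisms. All three may be checked \'etale-locally on $X$, so the strategy is to trivialize the situation \'etale-locally, reducing it to a standard model, and then to identify that model using Lemma~\ref{lem:reduction of structure group}.

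For the local reduction, note that since $\mathbf{H}$ is smooth the $\mathbf{H}$-gerbe $\ms Y$ is \'etale-locally trivial on $X$; over an \'etale $U\to X$ with $\ms Y|_U$ trivial the class $[\ms X|_U]\in\H^2(U,\mathbf{G})$ maps to $0$ in $\H^2(U,\mathbf{H})$, so by the exact sequence associated to $1\to\mathbf{G}\to\mathbf{H}\to\mathbf{Q}\to 1$ it is the image under the boundary map $\H^1(U,\mathbf{Q})\to\H^2(U,\mathbf{G})$ of a $\mathbf{Q}$-torsor, and as $\mathbf{Q}$ is smooth this torsor, hence $\ms X|_U$ itself, is trivial after an \'etale refinement of $U$; so $\ms X$ is \'etale-locally trivial on $X$. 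Fix an \'etale $U\to X$ over which $\ms X|_U\iso\mathrm{B}\mathbf{G}|_U$; pushing forward along $\mathbf{G}\to\mathbf{H}$ this induces $\ms Y|_U\iso\mathrm{B}\mathbf{H}|_U$. Now $\ms Y'|_{U'}$ is a deformation of the trivial gerbe $\mathrm{B}\mathbf{H}|_U$, and the obstruction to trivializing it, as well as the ambiguity in choosing such a trivialization compatibly with $\psi$ and with $\ms Y|_U\iso\mathrm{B}\mathbf{H}|_U$, lie in $\H^{1}(U,\mathbf{H})$ and in $\H^{\geq 1}(U,\mathfrak{t}_{\mathbf{H}}\otimes I)$ (via the sequence $1\to\mathfrak{t}_{\mathbf{H}}\otimes I\to\mathbf{H}'\to i_*\mathbf{H}\to 1$); all of these vanish after replacing $U$ by an affine refinement, since $\mathbf{H}$ is smooth and $\mathfrak{t}_{\mathbf{H}}\otimes I$ is quasi-coherent. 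After such refinements we obtain $\ms Y'|_{U'}\iso\mathrm{B}\mathbf{H}'|_{U'}$ compatibly, and the resulting data induce an equivalence $\suDef(\ms X/\ms Y')|_U\simeq\suDef(\mathrm{B}\mathbf{G}|_U/\mathrm{B}\mathbf{H}'|_{U'})$. This local reduction, and in particular the bookkeeping needed to make all the trivializations compatible with $\psi$, is the main technical point; each individual obstruction is a quasi-coherent (or smooth-group) cohomology class that dies on a refinement, but assembling them requires care.

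Finally, I would analyze the standard model. By Lemma~\ref{lem:reduction of structure group}, $\suDef(\mathrm{B}\mathbf{G}/\mathrm{B}\mathbf{H}')\simeq\suDef(\ms T_{\mathbf{Q}}/X')$. Since $\mathbf{Q}$ is smooth, the argument producing the quasi-isomorphism~\eqref{eq:Deligne's quasi-isomorphism} applies verbatim with $\mathbf{Q}$ in place of $\mathbf{G}$ and gives $[\mathbf{Q}'\to i_*\mathbf{Q}]\simeq\colie^{\vee}_{\mathbf{Q}}\otimes^{\mathbf{L}}_{\ms O_X}I=\mathfrak{t}_{\mathbf{Q}}\otimes I$; combining this with Proposition~\ref{prop:acyclicity for i} (applied to $\mathbf{Q}$) and with the canonical identification $\mathfrak{t}_{\mathbf{Q}}=\mathfrak{n}_{\mathbf{G}/\mathbf{H}}$ from~\eqref{eq:big diagram, which will give a qis} yields
\[
\suDef(\ms T_{\mathbf{Q}}/X')\;\simeq\;\sTors_{X'}\bigl([\mathbf{Q}'\to i_*\mathbf{Q}]\bigr)\;\simeq\;\sTors_{X}(\mathfrak{n}_{\mathbf{G}/\mathbf{H}}\otimes I).
\]
The stack of torsors under an abelian sheaf on the small \'etale site of $X$ is a gerbe banded by that sheaf: it is a stack, it is locally nonempty (the trivial torsor) and locally connected (torsors are \'etale-locally trivial), and its inertia is canonically the sheaf itself. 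Hence $\suDef(\ms X/\ms Y')$ is \'etale-locally equivalent to a gerbe banded by $\mathfrak{n}_{\mathbf{G}/\mathbf{H}}\otimes I$, so it is a gerbe; and since $\mathfrak{n}_{\mathbf{G}/\mathbf{H}}\otimes I$ is intrinsic to the data $\mathbf{G}\hookrightarrow\mathbf{H}$ and $i\colon S\hookrightarrow S'$ and is independent of the local trivializations chosen above, the local identifications of inertia sheaves agree on overlaps and glue to a canonical band, as asserted. (One can also describe the band without trivializing: an automorphism of an object of $\sDef(\ms X/\ms Y')$ is an automorphism of the deforming $\mathbf{G}'$-gerbe that restricts to the identity on $\ms X$ and lies over $\ms Y'$, and comparing it with the identity exhibits it as a section of $\mathfrak{n}_{\mathbf{G}/\mathbf{H}}\otimes I$.)
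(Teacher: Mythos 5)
Your argument is correct and its first half is essentially the paper's: both proofs trivialize $\ms X$ and $\ms Y'$ \'etale-locally (your justification via the boundary map $\H^1(U,\mathbf{Q})\to\H^2(U,\mathbf{G})$ and smoothness of $\mathbf{Q}$ is exactly the detail the paper leaves implicit) and then invoke Lemma~\ref{lem:reduction of structure group} to pass to the standard model $\suDef(\ms T_{\mathbf{Q}}/X')$. Where you diverge is in how the two halves of the conclusion are obtained. The paper stops the local analysis at $\suDef(\ms T_{\mathbf{Q}}/X')$ and proves local nonemptiness and local connectedness by hand, from the exact sequence $\H^0(X,\mathbf{Q})\to\Phi^1(\mathbf{Q}/X')\to\H^1(X',\mathbf{Q}')$ and smoothness of $\mathbf{Q}'$; it then constructs the banding \emph{globally}, without any trivialization, by fixing an object $(\ms X',g',\iota_{\ms X})$, identifying $\sAut(\ms X',\iota_{\ms X})$ and $\sAut(\ms Y',\iota_{\ms Y})$ with the groupoids of torsors for $[\mathfrak{t}_{\mathbf{H}}\otimes I\to\mathfrak{n}_{\mathbf{G}/\mathbf{H}}\otimes I]$ and for $\mathfrak{t}_{\mathbf{H}}\otimes I$ respectively, and reading off the homotopy fiber over the identity as $\Gamma(X,\mathfrak{n}_{\mathbf{G}/\mathbf{H}}\otimes I)$. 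You instead push the local identification all the way to $\sTors_{U}(\mathfrak{n}_{\mathbf{G}/\mathbf{H}}\otimes I)$, which is a clean packaging that yields local nonemptiness, local connectedness, and the local band in one stroke; the cost is that the \emph{canonicity} of the band must then be recovered by gluing, and the sentence ``the local identifications of inertia sheaves agree on overlaps'' is the one thin spot in your write-up: the identifications depend a priori on the chosen trivializations of $\ms X|_U$ and $\ms Y'|_{U'}$, and one must check that changing the trivialization alters the equivalence with $\sTors_U(\mathfrak{n}_{\mathbf{G}/\mathbf{H}}\otimes I)$ only by an equivalence acting trivially on the band. The honest fix is precisely your closing parenthetical: describe the automorphism group of an arbitrary object directly as $\Gamma(U,\mathfrak{n}_{\mathbf{G}/\mathbf{H}}\otimes I)$ without trivializing, which is the paper's homotopy-fiber computation; I would promote that remark from a parenthesis to the actual argument for the banding.
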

\begin{proof}
    We first show that $\suDef(\ms X/\ms Y')$ is locally nonempty and locally connected. As $\mathbf{H}'$ and $\mathbf{Q}'$ are smooth, we may find an \'{e}tale cover of $X'$ which trivializes both $\ms X$ and $\ms Y'$. It therefore suffices to show the claim for $\suDef(\mathrm{B}\mathbf{G}/\mathrm{B}\mathbf{H}')$, which by Lemma \ref{lem:reduction of structure group} is isomorphic to the stack $\suDef(\ms T_\mathbf{Q}/X')$. The group $\Def(\ms T_\mathbf{Q}/X')$ is nonempty, because it contains the trivial deformation, and is isomorphic to $\Phi^1(\mathbf{Q}/X')$. We have an exact sequence
    \[
        \H^0(X',\mathbf{Q}')\to\H^0(X,\mathbf{Q})\to\Phi^1(\mathbf{Q}/X')\to\H^1(X',\mathbf{Q}').
    \]
    Because $\mathbf{Q}'$ is smooth, the map $\mathbf{Q}'\to i_*\mathbf{Q}$ is surjective in the \'{e}tale topology. Furthermore, we may trivialize classes in $\H^1(X',\mathbf{Q}')$ by \'{e}tale covers. It follows that any two elements of $\Phi^1$ are locally equal to 0.

    We now construct the banding. Given a 2-groupoid $\ms G$ and an object $x\in\ms G$, we let $\sAut(x)$ denote the groupoid whose objects are 1-morphisms in $\ms G$ from $x$ to itself and whose morphisms are 2-morphisms in $\ms G$. Fix an object $(\ms X',g',\iota_{\ms X})$ of $\sDef(\ms X/\ms Y')$. The map $g'$ factors uniquely through an isomorphism $\ms X'\wedge_{\mathbf{G}'}\mathbf{H}'\iso\ms Y'$. Conjugating by this isomorphism, we obtain a map $\sAut(\ms X',\iota_{\ms X})\to\sAut(\ms Y',\iota_{\ms Y})$. The homotopy fiber of this map over the identity is equivalent to the group $\Aut(\ms X',g',\iota_{\ms X})$.
    

    The groupoid $\sAut(\ms X',\iota_{\ms X})$ may be realized as the groupoid $\sDef(\ms T_\mathbf{G}/X')$ of deformations of the trivial $\mathbf{G}$-torsor over $X'$, which in turn is isomorphic to the groupoid of torsors for the 2-term complex $[\mathbf{G}'\to i_*\mathbf{G}]$. Via the quasi-isomorphism~\eqref{eq:Deligne's quasi-isomorphism} induced by the diagram~\eqref{eq:big diagram, which will give a qis}, this is equivalent to the groupoid of torsors for $[\mathfrak{t}_{\mathbf{H}}\otimes I\to\mathfrak{n}_{\mathbf{G}/\mathbf{H}}\otimes I]$. As $\mathbf{H}$ is smooth, the groupoid $\Aut(\ms Y',\iota_{\ms Y})$ is equivalent to the groupoid of torsors for $\mathfrak{t}_{\mathbf{H}}\otimes I$. These maps fit into a commutative diagram
    \[
        \begin{tikzcd}
            \sAut(\ms X',\iota_{\ms X})\arrow{r}{\sim}\arrow{d}&\sTors_{X'}([\mathbf{G}'\to i_*\mathbf{G}])\arrow{r}{\sim}\arrow{d}&\sTors_X([\mathfrak{t}_{\mathbf{H}}\otimes I\to\mathfrak{n}_{\mathbf{G}/\mathbf{H}}\otimes I])\arrow{d}\\
            \sAut(\ms Y',\iota_{\ms Y})\arrow{r}{\sim}&\sTors_{X'}([\mathbf{H}'\to i_*\mathbf{H}])\arrow{r}{\sim}&\sTors_X(\mathfrak{t}_{\mathbf{H}}\otimes I)
        \end{tikzcd}
    \]
    in the homotopy category.
    Under the bottom composition, the identity map is sent to the trivial $\mathfrak{t}_{\mathbf{H}}\otimes I$-torsor. The homotopy fiber of the right vertical arrow over the trivial $\mathfrak{t}_{\mathbf{H}}\otimes I$-torsor is equivalent to the category of trivializations of the trivial $\mathfrak{n}_{\mathbf{G}/\mathbf{H}}\otimes I$-torsor, which is exactly $\Gamma(X,\mathfrak{n}_{\mathbf{G}/\mathbf{H}}\otimes I)$.
\end{proof}

As an immediate consequence, we obtain the following result.

\begin{proposition}\label{prop:obstructions for relative gerbe change of group situation}
Let $\ms X$ be a $\mathbf{G}$-gerbe over $X$, let $\ms Y=\ms X\wedge_{\mathbf{G}}\mathbf{H}$ be the associated $\mathbf{H}$-gerbe, and let $(\ms Y',\psi)$ be a deformation of $\ms Y$ over $X'$.
\begin{enumerate}
        \item There exists a functorial class $o(\ms X/\ms Y')\in\H^2(X,\mathfrak{n}_{\mathbf{G}/\mathbf{H}}\otimes I)$ whose vanishing is necessary and sufficient for the existence of a deformation of $\ms X$ over $X'$ which fits into a 2-cartesian diagram~\eqref{eq:cartesian diagram for relative def problem}.
        \item If $o(\ms X/\ms Y')=0$, then the set of isomorphism classes of such diagrams is a torsor under $\H^1(X,\mathfrak{n}_{\mathbf{G}/\mathbf{H}}\otimes I)$.
        \item The group of automorphisms of any such diagram is $\H^0(X,\mathfrak{n}_{\mathbf{G}/\mathbf{H}}\otimes I)$.
    \end{enumerate}
\end{proposition}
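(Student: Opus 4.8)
The plan is to obtain this as a formal consequence of Proposition \ref{prop:obstructions for change of group map, normal bundle version} together with the standard dictionary relating gerbes banded by an abelian sheaf to second cohomology. Recall that a gerbe $\ms G$ on a site banded by an abelian sheaf $A$ carries a canonical class $[\ms G]\in\H^2(A)$ which vanishes precisely when $\ms G$ is neutral, i.e. admits a global object; in that case the set of isomorphism classes of global objects is a torsor under $\H^1(A)$, and the automorphism group of any global object is canonically $\H^0(A)$.

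First I would invoke Proposition \ref{prop:obstructions for change of group map, normal bundle version}, which tells us that $\suDef(\ms X/\ms Y')$ is a gerbe on the small \'{e}tale site of $X$ banded by $\mathfrak{n}_{\mathbf{G}/\mathbf{H}}\otimes I$. I would then \emph{define} $o(\ms X/\ms Y')$ to be the class of this gerbe in $\H^2$. Because $\mathfrak{n}_{\mathbf{G}/\mathbf{H}}$ is a vector bundle (the pullback of the normal bundle of $\mathbf{G}$ in $\mathbf{H}$) and $I$ is an $\ms O_X$-module, the sheaf $\mathfrak{n}_{\mathbf{G}/\mathbf{H}}\otimes I$ is quasi-coherent, so its small \'{e}tale cohomology coincides with ordinary sheaf cohomology on $X$, and the class lives in the group $\H^2(X,\mathfrak{n}_{\mathbf{G}/\mathbf{H}}\otimes I)$ of the statement. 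Unwinding the definitions of \S\ref{ssec:change of group}, a global object of $\suDef(\ms X/\ms Y')$ is exactly a 2-cartesian diagram of the shape \eqref{eq:cartesian diagram for relative def problem}; hence the vanishing of $o(\ms X/\ms Y')$ is equivalent to the existence of such a diagram, which is (1), and (2) and (3) are then the torsor and automorphism statements for a neutral gerbe recalled above.

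Next I would check functoriality of $o(\ms X/\ms Y')$. This should be immediate: the formation of the stack $\suDef(\ms X/\ms Y')$ and the construction of its band in Proposition \ref{prop:obstructions for change of group map, normal bundle version} are compatible with \'{e}tale localization on $X$ and with base change along morphisms of the ambient deformation situation, and the passage from a banded gerbe to its class in $\H^2$ is natural; one simply records this compatibility.

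I do not expect a genuine obstacle here — all the real work is contained in Proposition \ref{prop:obstructions for change of group map, normal bundle version}, whose proof establishes that $\suDef(\ms X/\ms Y')$ is locally nonempty, locally connected, and carries the asserted band. The only points requiring (minor) care are the bookkeeping identifications: small \'{e}tale cohomology of the quasi-coherent module $\mathfrak{n}_{\mathbf{G}/\mathbf{H}}\otimes I$ versus sheaf cohomology on $X$, and the identification of the groupoid of diagrams \eqref{eq:cartesian diagram for relative def problem} with the groupoid of global sections of $\suDef(\ms X/\ms Y')$, both of which follow directly from the definitions.
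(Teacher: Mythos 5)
Your proposal is correct and is essentially the paper's own proof: the paper likewise defines $o(\ms X/\ms Y')$ as the class of the gerbe $\suDef(\ms X/\ms Y')$ furnished by Proposition \ref{prop:obstructions for change of group map, normal bundle version} and reads off (1)--(3) from the standard dictionary for gerbes banded by $\mathfrak{n}_{\mathbf{G}/\mathbf{H}}\otimes I$. Your additional remarks on functoriality and on identifying \'{e}tale cohomology of the quasi-coherent band with ordinary sheaf cohomology are sound bookkeeping that the paper leaves implicit.
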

\begin{proof}
    We define $o(\ms X/\ms Y')$ to be the class $[\suDef(\ms X/\ms Y')]\in\H^2(X,\mathfrak{n}_{\mathbf{G}/\mathbf{H}}\otimes I)$ of the gerbe $\suDef(\ms X/\ms Y')$. This class vanishes if and only if there exists a global section of $\suDef(\ms X/\ms Y')$, or equivalently if and only if $\sDef(\ms X/\ms Y')$ is nonempty. As $\suDef(\ms X/\ms Y')$ is a $\mathfrak{n}_{\mathbf{G}/\mathbf{H}}\otimes I$-gerbe, if $o(\ms X/\ms Y')=0$ then the set of its global sections is a torsor under $\H^1(X,\mathfrak{n}\otimes I)$, and the automorphism group of any section is identified with $\H^0(X,\mathfrak{n}\otimes I)$.
\end{proof}


\begin{remark}\label{rem:functoriality of obstruction class, take 2}
Consider the short exact sequence
\[
    0\to \mathfrak{n}_{\mathbf{G}/\mathbf{H}}[-1]\to [\mathfrak{t}_{\mathbf{H}}\to\mathfrak{n}_{\mathbf{G}/\mathbf{H}}]\to\mathfrak{t}_{\mathbf{H}}\to 0
  \]
of complexes. We tensor with $I$ and take $\H^3$ to obtain an exact sequence
  \[
    \H^2(X,\mathfrak{n}_{\mathbf{G}/\mathbf{H}}\otimes I)\to\H^3(X,[\mathfrak{t}_{\mathbf{H}}\otimes I\to\mathfrak{n}_{\mathbf{G}/\mathbf{H}}\otimes I])\to\H^3(X,\mathfrak{t}_{\mathbf{H}}\otimes I).
  \]
If $\ms X$ is a $\mathbf{G}$-gerbe over $X$ and $\ms Y$ is the induced $\mathbf{H}$-gerbe, then the right map satisfies $o(\ms X/X')\mapsto o(\ms Y/X')$. If $o(\ms X/X')=0$ and $\ms Y'$ is a fixed deformation of $\ms Y$ over $X'$, then the left hand map satisfies $o(\ms X/\ms Y')\mapsto o(\ms X/X')$.
\end{remark}


\subsection{Deformation functors and prorepresentability}

We now consider deformations of a cohomology class or a gerbe simultaneously over various infinitesimal thickenings. We will follow as much as possible the terminology of the Stacks Project \cite[06G7]{stacks-project}. Let $(\Lambda,\mathfrak{m}_{\Lambda})$ be a noetherian local ring with residue field $k$. Let $\cC_{\Lambda}$ be the category whose objects are Artinian local $\Lambda$-algebras $A$ such that the map $\Lambda\to A$ is local and induces an isomorphism on residue fields. Via this isomorphism, we identify the residue field of any object $A\in\cC_\Lambda$ with $k$. A morphism in $\cC_\Lambda$ is a homomorphism of $\Lambda$-algebras.



Let $S$ be a $k$-scheme. Let $S_\Lambda$ be a flat formal $\Lambda$-scheme equipped with an isomorphism $S_\Lambda\otimes_\Lambda k\cong S$ of $k$-schemes. Let $\mathbf{G}_{\Lambda}$ be a flat commutative formal group scheme on $S_\Lambda$. We assume that $\mathbf{G}_{\Lambda}$ admits an embedding into a smooth formal commutative group scheme $\mathbf{H}_{\Lambda}$ on $S_{\Lambda}$. Given $A\in\cC_{\Lambda}$, we write $S_A:=S_\Lambda\otimes_\Lambda A$ for the base change of $S_\Lambda$ to $A$, and set $\mathbf{G}_{A}:=\mathbf{G}_{\Lambda}|_{S_A}$. We also write $\mathbf{G}:=\mathbf{G}_k$ for the restriction of $\mathbf{G}_{\Lambda}$ to $S$. Let $\ms X$ be an absolute $\mathbf{G}$-gerbe flat over $S$ and let $X=|\ms X|$ be its sheafification. We have a factorization
\[
    \ms X\to X\to S
\]
where the first map is a $\mathbf{G}_X$-gerbe over $X$ and the second is flat. We will assume that $X\to S$ is moreover smooth.


\begin{definition}\label{def:definition of def functor, most general version}
    We let $\sDef_{\ms X/S_\Lambda}$ be the 2-category cofibered in 2-groupoids over $\cC_{\Lambda}$ whose fiber over $A\in\cC_{\Lambda}$ is the 2-groupoid $\sDef(\ms X/S_A)$.
    We let $\Def_{\ms X/S_\Lambda}$ be the functor on $\cC_\Lambda$ whose value on $A\in\cC_{\Lambda}$ is the set $\Def(\ms X/S_A)$ of isomorphism classes of objects of $\sDef(\ms X/S_A)$.
\end{definition}


Let $\ms Y:=\ms X\wedge_{\mathbf{G}}\mathbf{H}$ be the induced absolute $\mathbf{H}$-gerbe. We have a diagram
\begin{equation}\label{eq:commutative diagram of functors}
    \begin{tikzcd}[column sep=small]
        \sDef_{\ms X/S_{\Lambda}}\arrow{rr}{\iota}\arrow{dr}[swap]{\pi_{\mathbf{G}}}&&\sDef_{\ms Y/S_{\Lambda}}\arrow{dl}{\pi_{\mathbf{H}}}\\
    &\sDef_{X/S_{\Lambda}}.&
    \end{tikzcd}
\end{equation}
Here, the map $\pi_{\mathbf{G}}$ is given by $(\ms X_A,\varphi)\mapsto (|\ms X_A|,|\varphi|)$~\eqref{eq:coarse space morphism of groupoids} (and similarly for $\pi_\mathbf{H}$), and $\iota$ is given by $(\ms X_A,\varphi)\mapsto (\ms X_A\wedge_{\mathbf{G}_A}\mathbf{H}_A,\varphi\wedge_{\mathbf{G}}\mathbf{H})$. 
The homotopy fiber of $\pi_{\mathbf{G}}(A)$ over a deformation $(X_A,\rho)$ of $X$ over $S_A$ is the 2-groupoid $\sDef(\ms X/X_A)$. The homotopy fiber of $\iota(A)$ over a deformation $(\ms Y_A,\psi)$ of $\ms Y$ over $S_A$ is the groupoid $\sDef(\ms X/\ms Y_A)$.

We consider deformations over the dual numbers $k[\varepsilon]$. The \textit{tangent space} to $\sDef_{\ms X/S_\Lambda}$ is $T(\sDef_{\ms X/S_\Lambda}):=\Def_{\ms X/S_{\Lambda}}(k[\varepsilon])=\Def(\ms X/S[\varepsilon])$. We let $\Inf^{-1}(\sDef_{\ms X/S_\Lambda})$ be the group of isomorphism classes of 1-automorphisms of the trivial deformation of $\ms X$ over $S[\varepsilon]$, and we let $\Inf^{-2}(\sDef_{\ms X/S_\Lambda})$ be the group of 2-automorphisms of the identity map of the trivial deformation. The following result might be compared to \cite[06L5]{stacks-project}.

\begin{lemma}\label{lem:LES on infinitesimal auts and such}
    There is a canonical isomorphism
    \[
        \H^0(X,\colie^{\vee}_{\mathbf{G}})\iso\Inf^{-2}(\sDef_{\ms X/S_\Lambda})
    \]
    and exact sequence
    \[
    \begin{tikzcd}
  0 \rar & \H^1(X,\colie^{\vee}_{\mathbf{G}}) \rar & \Inf^{-1}(\sDef_{\ms X/S_\Lambda}) \rar
             \ar[draw=none]{d}[name=X, anchor=center]{}
    & \H^0(X,T_{X/S}) \ar[rounded corners,
            to path={ -- ([xshift=2ex]\tikztostart.east)
                      |- (X.center) \tikztonodes
                      -| ([xshift=-2ex]\tikztotarget.west)
                      -- (\tikztotarget)}]{dll} &\\      
  &\H^2(X,\colie^{\vee}_{\mathbf{G}}) \rar & T(\sDef_{\ms X/S_\Lambda}) \rar & \H^1(X,T_{X/S}) \arrow{r}{\varepsilon^{-1}\mathrm{ob}} & \H^3(X,\colie^{\vee}_{\mathbf{G}}).
\end{tikzcd}
    \]
\end{lemma}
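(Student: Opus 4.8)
The plan is to assemble the exact sequence from three already-established inputs: the quasi-isomorphism \eqref{eq:Deligne's quasi-isomorphism} (which identifies the deformation-theoretic groups $\Phi^m(\mathbf{G}/X')$ with $\H^m(X,\colie^\vee_{\mathbf{G}}\otimes^{\mathbf{L}}_{\ms O_X}I)$), the tangent--obstruction theory of Proposition \ref{prop:obstructions for gerbe deformation functor} applied to $I=\varepsilon k$, and the analogous standard facts for deformations of the smooth morphism $X\to S$, namely that deformations of $X$ over $S[\varepsilon]$ form a torsor under $\H^1(X,T_{X/S})$ with automorphisms $\H^0(X,T_{X/S})$. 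The exact sequence is then the long exact sequence associated to the ``homotopy fibration'' $\sDef(\ms X/X_A)\to\sDef_{\ms X/S_\Lambda}(A)\xrightarrow{\pi_{\mathbf{G}}}\sDef_{X/S_\Lambda}(A)$, whose fibers and base have the cohomological descriptions just listed. Concretely, I would take $A=k[\varepsilon]$ throughout, so $I=\varepsilon\ms O_X\cong\ms O_X$ and $\colie^\vee_{\mathbf{G}}\otimes^{\mathbf{L}}I\simeq\colie^\vee_{\mathbf{G}}$.

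First I would set up the fibration sequence. For a 2-category cofibered in 2-groupoids over $\cC_\Lambda$ together with a morphism $\pi_{\mathbf{G}}$ to $\sDef_{X/S_\Lambda}$, evaluation at $k[\varepsilon]$ gives a sequence of pointed groupoids
\[
    \sDef(\ms X/X[\varepsilon])\to \sDef(\ms X/S[\varepsilon])\xrightarrow{\pi_{\mathbf{G}}} \sDef(X/S[\varepsilon])
\]
in which the first term is the homotopy fiber over the trivial deformation of $X$ (this is exactly the statement, recorded just above the lemma in the excerpt, that the homotopy fiber of $\pi_{\mathbf{G}}(A)$ over $(X_A,\rho)$ is $\sDef(\ms X/X_A)$). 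Passing to $\pi_0$, $\pi_{-1}$, $\pi_{-2}$ of this fibration (i.e. isomorphism classes of objects, isomorphism classes of $1$-automorphisms of the trivial object, and $2$-automorphisms of the identity $1$-automorphism) yields the standard six-term exact sequence of pointed sets/groups, which here reads
\[
    0\to\Inf^{-2}(\sDef(\ms X/X[\varepsilon]))\to\Inf^{-2}(\sDef_{\ms X/S_\Lambda})\to\Inf^{-2}(\sDef_{X/S_\Lambda})
\]
continuing into $\Inf^{-1}$ and then $\pi_0=T$. Now I substitute the known values: by Proposition \ref{prop:acyclicity for i} and \eqref{eq:Deligne's quasi-isomorphism}, $\Inf^{-m}(\sDef(\ms X/X[\varepsilon]))=\H^{2-m}(X,\colie^\vee_{\mathbf{G}})$ for $m=0,1$ and $\pi_0(\sDef(\ms X/X[\varepsilon]))=\H^2(X,\colie^\vee_{\mathbf{G}})$ (with the obstruction living in $\H^3$); and since $X\to S$ is smooth, $\Inf^{-2}(\sDef_{X/S_\Lambda})=0$, $\Inf^{-1}(\sDef_{X/S_\Lambda})=\H^0(X,T_{X/S})$, and $T(\sDef_{X/S_\Lambda})=\H^1(X,T_{X/S})$. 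Plugging these in gives precisely the displayed isomorphism $\H^0(X,\colie^\vee_{\mathbf{G}})\iso\Inf^{-2}(\sDef_{\ms X/S_\Lambda})$ (the map $\Inf^{-1}(\sDef_{X/S_\Lambda})\to\Inf^{-2}$ of the fiber is zero because the base contributes nothing in degree $-2$, giving the leading $0$) and the five-term tail, with the final connecting map $\H^1(X,T_{X/S})\to\H^3(X,\colie^\vee_{\mathbf{G}})$ being the obstruction map: given a first-order deformation $X[\varepsilon]$ of $X$ with Kodaira--Spencer class $\tau$, its image is the obstruction $\varepsilon^{-1}o(\ms X/X[\varepsilon])$ to deforming $\ms X$ compatibly, which is exactly the class produced by Proposition \ref{prop:obstructions for gerbe deformation functor}.

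The main obstacle is not any single cohomology computation but the bookkeeping needed to make the ``long exact sequence of a fibration of $2$-groupoids'' rigorous and to check that the connecting maps are the expected ones — in particular, identifying the boundary $T(\sDef_{\ms X/S_\Lambda})\to\H^1(X,T_{X/S})$ with $\pi_{\mathbf{G}}$ on tangent spaces and the boundary $\H^1(X,T_{X/S})\to\H^3(X,\colie^\vee_{\mathbf{G}})$ with the obstruction-class map. For this I would argue directly rather than invoke a black-box fibration sequence: unwind the definition of $\sDef(\ms X/S[\varepsilon])$ as a pair (deformation $X[\varepsilon]$ of $X$, plus a deformation of $\ms X$ over $X[\varepsilon]$), note that the second piece exists iff $o(\ms X/X[\varepsilon])=0$ and when it does the choices form a torsor under $\H^2(X,\colie^\vee_{\mathbf{G}})$ with automorphisms and $2$-automorphisms $\H^1,\H^0$ of the same complex (Proposition \ref{prop:obstructions for gerbe deformation functor}), and assemble the sequence term by term. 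One subtlety worth flagging explicitly: the exactness at $\Inf^{-1}(\sDef_{\ms X/S_\Lambda})$ uses that the trivial deformation of $X$ has automorphism group $\H^0(X,T_{X/S})$ acting on the fiber $\Inf^{-1}$ of the gerbe-deformation groupoid, and one must check this action is trivial on the relevant subquotient so that the sequence is exact as written (rather than merely exact as pointed sets); this is where the ``$0\to$'' at the far left is genuinely being asserted, and it follows because an automorphism of the trivial $X$-deformation acts on a deformation of $\ms X$ only through its effect on $\ms X$, which for the trivial gerbe is absorbed into the $\H^1(X,\colie^\vee_{\mathbf{G}})$ ambiguity.
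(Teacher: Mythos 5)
Your proposal is correct and follows essentially the same route as the paper: identify the homotopy fiber of $\sDef(\ms X/S[\varepsilon])\to\sDef(X/S[\varepsilon])$ over the trivial deformation with $\sDef(\ms X/X[\varepsilon])$, substitute the cohomological descriptions of its tangent/automorphism groups from Proposition \ref{prop:obstructions for gerbe deformation functor} and the standard ones for $\sDef(X/S[\varepsilon])$, and assemble the long exact sequence of the fibration (the paper outsources this last assembly to \cite[06L5]{stacks-project}, deducing the $\Inf^{-2}$ isomorphism from the fact that the base is 1-truncated).
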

\begin{proof}
    The homotopy fiber of the map
    \[
        \sDef(\ms X/S[\varepsilon])\to\sDef(X/S[\varepsilon])
    \]
    over the trivial deformation $X[\varepsilon]$ of $X$ over $S[\varepsilon]$ is exactly $\sDef(\ms X/X[\varepsilon])$. By Proposition \ref{prop:obstructions for gerbe deformation functor}, the tangent space, group of infinitesimal 1-automorphisms, and group of infinitesimal 2-automorphisms of $\sDef_{\ms X/X[\varepsilon]}$ are, respectively, $\H^2(X,\colie^{\vee}_{\mathbf{G}})$, $\H^1(X,\colie^{\vee}_{\mathbf{G}})$, and $\H^0(X,\colie^{\vee}_{\mathbf{G}})$. The tangent space and group of infinitesimal 1-automorphisms of $\sDef_{X/S[\varepsilon]}$ are respectively $\H^1(X,T_{X/S})$ and $\H^0(X,T_{X/S})$. With these identifications, the construction of the exact sequence is exactly as in \cite[06L5]{stacks-project}. The isomorphism follows from the fact that $\sDef(X/S[\varepsilon])$ is 1-truncated.
\end{proof}

We prove the following result about the existence of certain pushouts of gerbes.

\begin{proposition}\label{prop:pushouts of gerbes}
    Let $T$ be a scheme. Let $\mathbf{G}$ be a flat commutative lfp group scheme over $T$. Given maps $\ms Z\xleftarrow{j}\ms X\xrightarrow{i}\ms X$ of absolute $\mathbf{G}$-gerbes over $T$ where $i$ is a nilpotent closed immersion and $j$ is affine, there exists a 2-commutative diagram
    \begin{equation}\label{eq:square on the gerbes}
        \begin{tikzcd}
            \ms X\arrow[hook]{r}{i}\arrow{d}[swap]{j}&\ms Y\arrow{d}{j'}\\
            \ms Z\arrow[hook]{r}{i'}&\ms W
        \end{tikzcd}
     \end{equation}
     of absolute $\mathbf{G}$-gerbes over $T$ which is both a 2-pullback and a 2-pushout diagram in the 2-category of $\mathbf{G}$-gerbes over $T$.
\end{proposition}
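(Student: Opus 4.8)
The plan is to reduce the statement to a local, Zariski-covering situation on the coarse space $|\ms X|$ of $\ms X$, where both $\ms Z$ and $\ms Y$ trivialize, and then construct the pushout $\ms W$ by gluing. First I would pass to the sheafifications: set $X=|\ms X|$ and observe that $i$ and $j$ induce a nilpotent closed immersion $X\hookrightarrow Y:=|\ms Y|$ of algebraic spaces and an affine morphism $Z:=|\ms Z|\to X$. Since we are working with absolute gerbes, all three gerbes are determined by their coarse spaces together with the band $\mathbf{G}$ and a cohomology class; explicitly, the data of $\ms Z\xleftarrow{j}\ms X\xrightarrow{i}\ms Y$ amounts to classes $\alpha_X\in\H^2(X,\mathbf{G})$, $\alpha_Y\in\H^2(Y,\mathbf{G})$ with $\alpha_Y|_X=\alpha_X$, and $\alpha_Z\in\H^2(Z,\mathbf{G})$ with $\alpha_Z|_{X}$ equal to the pullback of $\alpha_X$ along $Z\to X$.

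The geometric step is to produce the pushout $W$ of $Z\xleftarrow{} X\xrightarrow{} Y$ in the category of algebraic spaces. Because $i$ is a nilpotent closed immersion and $j$ is affine, such a pushout exists and is again a scheme (or algebraic space) with the expected properties: $i':Z\hookrightarrow W$ is a nilpotent closed immersion, $j':Y\to W$ is affine, the square is cartesian, and $\ms O_W=\ms O_Z\times_{\ms O_X}j_*\ms O_Y$ --- this is the Ferrand/Schwede pushout, available in the algebraic spaces setting (see \cite[0E25, 07RT]{stacks-project}). Moreover $W$ is covered by affine opens of the form $\Spec(B\times_{B/J}C)$, and critically the étale (equivalently fppf) topos of $W$ is recovered from those of $Z$ and $Y$ glued along $X$: pullback induces an equivalence between sheaves on $W$ and triples (sheaf on $Z$, sheaf on $Y$, descent isomorphism over $X$), and likewise for $\mathbf{G}$-gerbes. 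This is the main obstacle, and it is really two things bundled together: (i) citing or establishing that the pushout $W$ exists with the stated local structure, and (ii) the topos-theoretic statement that $\mathbf{G}$-gerbes (and more generally the relevant stacks) on $W$ descend along the pair $(Z\to W, Y\to W)$. Point (ii) is where one must be slightly careful, since $\mathbf{G}$ is not assumed smooth; but the gerbe $\ms X$ glues $\ms Z$ and $\ms Y$ by hypothesis, so one gets a $\mathbf{G}$-gerbe $\ms W$ on $W$ directly from the gluing data, without needing a general descent theorem --- one simply defines $\ms W$ as the stack on (the big or small site of) $W$ obtained by gluing the stacks $\ms Z$ and $\ms Y$ along the equivalence $\ms Z|_X\simeq\ms X\simeq\ms Y|_X$, and checks it is algebraic and a gerbe by working étale-locally on $W$.

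With $\ms W$ in hand, I would verify the universal properties. For the 2-pushout property: given a $\mathbf{G}$-gerbe $\ms W''$ over $T$ with compatible maps $\ms Z\to\ms W''$ and $\ms Y\to\ms W''$ agreeing over $\ms X$ up to specified 2-isomorphism, one produces the map $\ms W\to\ms W''$ by the same gluing, working over the affine cover of $W$; the coherence of the 2-isomorphisms is exactly what lets the locally-defined maps glue, and uniqueness up to canonical 2-isomorphism follows similarly. For the 2-pullback property: one must check $\ms X\iso\ms Z\times_{\ms W}\ms Y$, which can be checked after passing to coarse spaces (where it is the cartesianness of the pushout square of algebraic spaces) together with the observation that the bands all agree and the fibered product of gerbes over a gerbe is again a gerbe with the expected class. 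Finally I would note that since $i$ is a nilpotent closed immersion, $i'$ is as well, so $\ms Y\to\ms W$ and $\ms X\to\ms Z$ being 2-pullbacks along $i',i$ places this in precisely the deformation-theoretic context of $\S$\ref{ssec:deformations of gerbes}, which is where the result will be applied. I expect the only genuinely delicate point to be the gluing/algebraicity of $\ms W$ itself; everything downstream is a formal manipulation of 2-categorical universal properties.
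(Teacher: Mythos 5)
Your outline has the right shape but is organized in the opposite order from the paper's proof, and the reordering matters. The paper does not glue $\ms Z$ and $\ms Y$ over the Ferrand pushout $W$ of coarse spaces; it first invokes \cite[Proposition A.2]{MR3589351} to produce a square~\eqref{eq:square on the gerbes} that is a 2-pushout and 2-pullback in the 2-category of \emph{algebraic stacks} over $T$ --- this is where existence and algebraicity live --- and only afterwards forms the pushout $W$ of the sheafifications $Z\leftarrow X\to Y$, uses the 2-cocartesian property to produce a map $\ms W\to W$, and checks that $W=|\ms W|$, whence $\ms W$ is a gerbe. If you instead build $\ms W$ by gluing the stacks $\ms Z$ and $\ms Y$ over $W$, you are in effect reproving that cited result, and ``checking it is algebraic and a gerbe by working \'{e}tale-locally on $W$'' will not close the argument: for non-smooth $\mathbf{G}$ the gerbes are only fppf-locally trivial, and more to the point the substance of the claim is precisely that algebraic stacks (with their smooth atlases) descend along the pair $(Z\to W,\,Y\to W)$, which is a Ferrand-type gluing theorem for stacks rather than a local verification. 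Also, your opening reduction of the data $\ms Z\xleftarrow{j}\ms X\xrightarrow{i}\ms Y$ to compatible cohomology classes discards the actual 1- and 2-morphisms and is not used later; it is harmless but should be dropped.

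The concrete gap is the banding of $\ms W$. You assert that ``the bands all agree,'' but the isomorphism $\mathbf{G}|_{\ms W}\iso\ms I_{\ms W}$ must be constructed, and this is the one genuinely non-formal step in the paper's proof: the square of inertia stacks over~\eqref{eq:square on the gerbes} is 2-cartesian by \cite[06R5]{stacks-project}, and then --- because $\ms W$ is a gerbe, so $\ms I_{\ms W}\to\ms W$ is \emph{flat} by \cite[06QJ]{stacks-project} --- that same square is also 2-cocartesian by \cite[07W3]{stacks-project}. The cocartesian property is what lets the given bandings of $\ms X$, $\ms Y$, $\ms Z$ assemble into one of $\ms W$ (identifying $\ms I_{\ms W}$ with the pushout of $\mathbf{G}|_{\ms Z}\leftarrow\mathbf{G}|_{\ms X}\to\mathbf{G}|_{\ms Y}$, which is $\mathbf{G}|_{\ms W}$), and it is also what upgrades the universal properties from the 2-category of algebraic stacks to the 2-category of absolute $\mathbf{G}$-gerbes; your final verification of those properties is fine once this identification is in place, but without it the statement being proved is weaker than the one needed.
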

\begin{proof}
    By \cite[Proposition A.2]{MR3589351}, we find an algebraic stack $\ms W$ and a 2-commutative diagram~\eqref{eq:square on the gerbes} of algebraic stacks over $T$ which is 2-cartesian and 2-cocartesian. We will show that $\ms W$ has a canonical structure of a $\mathbf{G}$-gerbe. Let $X,Y,Z$ be the sheafifications of $\ms X$,$\ms Y$, and $\ms Z$. The induced map $X\to Y$ is a nilpotent closed immersion, and the induced map $X\to Z$ is affine. Let $W$ be the pushout of $Z\leftarrow X\to Y$. We have a 2-commutative diagram
    \[
        \begin{tikzcd}[row sep=1.5em, column sep = 1.5em]
            \ms X \arrow[rr,hook,"i"] \arrow[dr,"j"] \arrow[dd,swap] &&
            \ms Y \arrow[dd] \arrow[dr,"j'"] \\
            & \ms Z \arrow[hook]{rr}[xshift=-2ex]{i'} \arrow[dd] &&
            \ms W \arrow[dd,dashed] \\
            X\arrow[rr,hook] \arrow[dr] && Y \arrow[dr] \\
            & Z \arrow[rr,hook] && W.
    \end{tikzcd}
    \]
    As the top face is 2-cocartesian, there is an induced map $\ms W\to W$. Using that the bottom face is a pushout, it follows that $\ms W\to W$ is initial with respect to maps from $\ms W$ to sheaves. That is, $W$ is the sheafification of $\ms W$. As $W$ is an algebraic space, $\ms W$ is a gerbe.
    
    Consider the 2-commutative diagram
    \begin{equation}\label{eq:square on inertia}
        \begin{tikzcd}
            \ms I_{\ms X}\arrow{r}\arrow{d}&\ms I_{\ms Y}\arrow{d}\\
            \ms I_{\ms Z}\arrow{r}&\ms I_{\ms W}.
        \end{tikzcd}
    \end{equation}
    It follows from \cite[06R5]{stacks-project} that~\eqref{eq:square on inertia} is 2-cartesian. As $\ms W$ is a gerbe, the map $\ms I_{\ms W}\to \ms W$ is flat \cite[06QJ]{stacks-project}. It follows from \cite[07W3]{stacks-project} that~\eqref{eq:square on inertia} is also 2-cocartesian.
    (this reference refers only to the categories of algebraic spaces over a pushout diagram of algebraic spaces. The result extends immediately to the categories of spaces over a pushout diagram of algebraic stacks).
    The given identifications of the inertia of $\ms X$,$\ms Y$, and $\ms Z$ with the respective pullbacks of $\mathbf{G}$ induce an isomorphism between $\ms I_{\ms W}$ and the 2-pushout of $\mathbf{G}|_{\ms Z}\leftarrow\mathbf{G}|_{\ms X}\to\mathbf{G}|_{\ms Y}$, which is $\mathbf{G}|_{\ms W}$. This gives $\ms W$ the structure of an absolute $\mathbf{G}$-gerbe over $T$. Moreover, the maps $i'$ and $j'$ are maps of absolute $\mathbf{G}$-gerbes. By construction, the diagram~\eqref{eq:square on the gerbes} is a 2-pullback and a 2-pushout in the 2-category of algebraic stacks over $T$. Because~\eqref{eq:square on inertia} is also a 2-pullback and a 2-pushout, it follows that~\eqref{eq:square on the gerbes} is a 2-pullback and a 2-pushout in the category of absolute $\mathbf{G}$-gerbes over $T$.
\end{proof}


We recall the statement of Schlessinger's theorem, as formulated in the stacks project \cite[06JM]{stacks-project} (the original reference being \cite[Theorem 2.11]{MR217093}). Let $F$ be a covariant functor on $\cC_{\Lambda}$. Following the terminology of the Stack project \cite[06G7]{stacks-project}, we say that $F$ is a \textit{predeformation functor} if $F(k)$ is a singleton. We say that $F$ \textit{satisfies (RS)} if for any surjection $B\twoheadrightarrow A$ and any morphism $C\to A$ in $\cC_{\Lambda}$, the natural map
\begin{equation}\label{eq:RS maps}
    F(B\times_AC)\to F(B)\times_{F(A)}F(C)
\end{equation}
is bijective (see \cite[06J2]{stacks-project}).
If $F$ is a predeformation functor and satisfies (RS), then the set $T(F):=F(k[\varepsilon])$ (the tangent space of $F$) has a natural $k$-vector space structure, where $k[\varepsilon]$ has the $\Lambda$-algebra structure given by $\Lambda\twoheadrightarrow k\hookrightarrow k[\varepsilon]$. Schlessinger's theorem states that a predeformation functor $F$ is prorepresentable if and only if it satisfies (RS) and has finite dimensional tangent space.


\begin{proposition}\label{prop:its a deformation category}
    For any $C\to A\twoheadleftarrow B$ as above, the map~\eqref{eq:RS maps} with $F=\Def_{\ms X/S_{\Lambda}}$ is surjective. If for any object $(\ms X_B,\varphi)$ of $\sDef_{\ms X/S_{\Lambda}}(B)$ the map $\Aut_B(\ms X_B,\varphi)\to\Aut_A(\ms X_A,\varphi)$ is surjective, then~\eqref{eq:RS maps} is bijective.
\end{proposition}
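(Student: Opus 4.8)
The plan is to deduce \textup{(RS)} for $\Def_{\ms X/S_\Lambda}$ from the existence of pushouts of gerbes, Proposition \ref{prop:pushouts of gerbes}. Write $D=B\times_A C$ with $B\twoheadrightarrow A$; then $\Spec D$ is the pushout of $\Spec B\leftarrow\Spec A\to\Spec C$, in which $\Spec A\hookrightarrow\Spec B$ is a nilpotent closed immersion (as $B$ is Artinian local) and $\Spec A\to\Spec C$ is affine. Base--changing along $S_\Lambda\to\Spec\Lambda$ gives $S_D=S_B\amalg_{S_A}S_C$, again a pinching along the nilpotent closed immersion $S_A\hookrightarrow S_B$ and the affine map $S_A\to S_C$, and under this identification the map \eqref{eq:RS maps} sends a deformation $(\ms X_D,\varphi_D)$ over $S_D$ to the pair of its restrictions to $S_B$ and $S_C$ together with the tautological identification of their further restrictions to $S_A$. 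For surjectivity, suppose given $(\ms X_B,\varphi_B)\in\sDef(\ms X/S_B)$, $(\ms X_C,\varphi_C)\in\sDef(\ms X/S_C)$ and an isomorphism $\theta$ of deformations between $\ms X_B|_{S_A}$ and $\ms X_C|_{S_A}$. Regarding $\ms X_B$, $\ms X_B|_{S_A}$ and $\ms X_C$ as absolute $\mathbf{G}_\Lambda|_{S_D}$-gerbes over $S_D$ via the structure maps $S_B,S_A,S_C\to S_D$, the diagram $\ms X_C\xleftarrow{}\ms X_B|_{S_A}\xrightarrow{}\ms X_B$ (left arrow $\theta$ followed by the affine map $\ms X_C|_{S_A}\to\ms X_C$, right arrow the nilpotent closed immersion) satisfies the hypotheses of Proposition \ref{prop:pushouts of gerbes}, yielding an absolute $\mathbf{G}_\Lambda|_{S_D}$-gerbe $\ms X_D:=\ms X_B\amalg_{\ms X_B|_{S_A}}\ms X_C$ over $S_D$ sitting in a square which is both 2-cartesian and 2-cocartesian.

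The 2-cartesian property gives $\ms X_D|_{S_B}\cong\ms X_B$ and $\ms X_D|_{S_C}\cong\ms X_C$, so further restricting to $S$ and using $\varphi_B$ (equivalently $\varphi_C$, via $\theta$) produces a trivialization $\varphi_D\colon\ms X_D\times_{S_D}S\iso\ms X$. It remains to check that $\ms X_D$ is flat over $S_D$: since $\ms X_D\to|\ms X_D|$ is faithfully flat and, by the proof of Proposition \ref{prop:pushouts of gerbes}, $|\ms X_D|=X_B\amalg_{X_A}X_C$, this reduces to flatness of the pinching $X_B\amalg_{X_A}X_C$ over $S_B\amalg_{S_A}S_C$, which follows from the standard statement that over a pinching of noetherian schemes along a closed immersion, flat algebraic spaces are equivalent to compatible triples of flat algebraic spaces over the three pieces (here $X_B\to S_B$ and $X_C\to S_C$ are even smooth). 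Thus $(\ms X_D,\varphi_D)$ is a deformation over $S_D$ with the prescribed restrictions, which proves that \eqref{eq:RS maps} is surjective.

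For bijectivity under the hypothesis that $\Aut_B(\ms X_B,\varphi)\to\Aut_A(\ms X_A,\varphi)$ is always surjective, take $(\ms X_D,\varphi_D)$ and $(\ms X_D',\varphi_D')$ over $S_D$ with the same image, and choose isomorphisms of deformations $u_B\colon\ms X_D|_{S_B}\iso\ms X_D'|_{S_B}$ and $u_C\colon\ms X_D|_{S_C}\iso\ms X_D'|_{S_C}$. Their restrictions to $S_A$ need not agree; the discrepancy $\delta:=(u_C|_{S_A})^{-1}\circ(u_B|_{S_A})$ is an automorphism of the deformation $\ms X_D|_{S_A}$, restricting to the identity over $S$ since $u_B,u_C$ do. Using the surjectivity of $\Aut_B\to\Aut_A$ for the object $\ms X_D|_{S_B}$, lift $\delta^{-1}$ to $\widetilde\delta\in\Aut_B(\ms X_D|_{S_B})$ and replace $u_B$ by $u_B\circ\widetilde\delta$; after this, $u_B|_{S_A}=u_C|_{S_A}$. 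As in the surjectivity step every deformation over $S_D$ is canonically the pinching of its three restrictions (again via the flatness--over--pinching statement, now applied to $\ms X_D'$), so the 2-cocartesian property of the square for $\ms X_D$ glues $u_B$ and $u_C$ to a morphism $f\colon\ms X_D\to\ms X_D'$ of absolute $\mathbf{G}_\Lambda|_{S_D}$-gerbes over $S_D$; symmetrically $u_B^{-1},u_C^{-1}$ glue to an inverse, so $f$ is an isomorphism, and it is compatible with $\varphi_D,\varphi_D'$ because $u_B,u_C$ are. Hence the two deformations define the same class in $\Def(\ms X/S_D)$, which gives injectivity.

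The main obstacle is the descent/gluing statement for flatness over a pinching — that flat algebraic spaces (hence gerbes) over $S_D=S_B\amalg_{S_A}S_C$ are equivalent to compatible triples of flat objects over $S_B,S_A,S_C$. It enters twice: to see that the glued gerbe $\ms X_D$ is flat over $S_D$ (surjectivity), and to see that an arbitrary deformation over $S_D$ realizes the 2-cocartesian square needed to glue isomorphisms (injectivity). One should either invoke Ferrand's results on pinchings together with faithfully flat descent along $\ms X_D\to|\ms X_D|$, or argue directly with quasi-coherent sheaves; everything else is bookkeeping of the 2-categorical coherence data (the identification $\theta$, the restriction 2-isomorphisms furnished by Proposition \ref{prop:pushouts of gerbes}, and compatibility with the trivializations $\varphi$).
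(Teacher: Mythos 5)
Your proposal is correct and follows essentially the same route as the paper: surjectivity via the 2-pushout of gerbes from Proposition \ref{prop:pushouts of gerbes} together with flatness of the glued object over the pinched base, and injectivity by measuring the discrepancy of the two chosen isomorphisms over $S_A$ as an automorphism of the deformation, lifting it to $S_B$ using the hypothesis, and gluing via the 2-cocartesian property of the square (the paper justifies flatness and the cocartesian property of an arbitrary deformation's restriction square by citing Lemma A.4 of the pushout reference and flat base change of pushouts, where you instead invoke Ferrand-type descent over the pinching — the same underlying fact).
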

\begin{proof}
    Consider a diagram
    \[
        \begin{tikzcd}
            \ms X_C\arrow{d}&\ms X_A\arrow{l}\arrow[hook]{r}\arrow{d}&\ms X_B\arrow{d}\\
            S_C&S_A\arrow{l}\arrow[hook]{r}&S_B
        \end{tikzcd}
    \]
    where the top row are maps of $\mathbf{G}_{\Lambda}$-gerbes over $S_\Lambda$, the vertical arrows are flat, and the squares are 2-cartesian. We let $\ms X_{B\times_AC}$ be the 2-pushout of $\ms X_C\leftarrow\ms X_A\to\ms X_B$ as in Proposition \ref{prop:pushouts of gerbes}, applied with $T=S_{B\times_AC}$ and $\mathbf{G}=\mathbf{G}_{B\times_AC}$. This is an absolute $\mathbf{G}_{B\times_AC}$-gerbe over $S_{B\times_AC}$. By \cite[Lemma A.4]{MR3589351}, $\ms X_{B\times_AC}$ is flat over $S_{B\times_AC}$. This implies surjectivity of~\eqref{eq:RS maps}.
    
    We now consider the injectivity. Suppose that $(\ms X_{B\times_AC},\varphi)$ and $(\ms Y_{B\times_AC},\psi)$ are two objects of $\sDef(\ms X/S_{B\times_AC})$ whose images in $\sDef(\ms X/S_B)$ and in $\sDef(\ms X/S_C)$ are isomorphic, via isomorphisms say $g_B:\ms X_B\to\ms Y_B$ and $f_C:\ms X_C\to\ms Y_C$. By assumption, we may lift the automorphism $g_A^{-1}\circ f_A$ of $(\ms X_A,\varphi)$ to an automorphism $\theta_B$ of $(\ms X_B,\varphi)$. We obtain a 2-commutative diagram
    \[
        \begin{tikzcd}[row sep=1.5em, column sep = 1.5em]
            \ms X_A \arrow[rr,hook] \arrow[dr] \arrow[dd,swap,"f_A"] &&
            \ms X_B \arrow{dd}[yshift=-2ex]{g_B\circ\theta_B} \arrow[dr] \\
            & \ms X_C \arrow[hook]{rr}\arrow{dd}[yshift=-2ex]{f_C} &&
            \ms X_{B\times_AC}\arrow[dd,dashed] \\
            \ms Y_A\arrow[rr,hook] \arrow[dr] && \ms Y_B \arrow[dr] \\
            & \ms Y_C \arrow[rr,hook] && \ms Y_{B\times_AC}.
        \end{tikzcd}
    \]
    Being flat base changes of the pushout of $\Spec C\leftarrow\Spec A\to\Spec B$, the top and bottom faces are 2-cocartesian. We therefore find a dashed isomorphism making the diagram 2-commutative. Moreover, by our choice of $f_C$, the composition $\ms X\hookrightarrow\ms X_C\xrightarrow{f_C}\ms Y_C$ is isomorphic to $\ms X\hookrightarrow\ms Y_C$. It follows that the isomorphism $\ms X_{B\times_AC}\iso\ms Y_{B\times_AC}$ is compatible with the maps from $\ms X$ up to 2-isomorphism, and hence the deformations $(\ms X_{B\times_AC},\varphi)$ and $(\ms Y_{B\times_AC},\psi)$ are isomorphic in $\sDef(\ms X/S_{B\times_AC})$.
\end{proof}


We now give some conditions implying prorepresentability of the functors $\Def_{\ms X/S_{\Lambda}}$. We first consider the extremal case when $X\to S$ is an isomorphism. For ease of notation, we identify the two, and put $X_{\Lambda}=S_{\Lambda}$.
The groups $\Phi^m(\mathbf{G}/X_A)$ are covariantly functorial with respect to maps in $\cC_{\Lambda}$.
We let $\Phi^m_{\mathbf{G}/X_{\Lambda}}$ denote the covariant functor on $\cC_{\Lambda}$ defined by
\begin{equation}\label{eq:def of Phi}
    \Phi^m_{\mathbf{G}/X_{\Lambda}}(A):=\Phi^m(\mathbf{G}/X_A)=\H^m(X_A,[\mathbf{G}_A\to i_{A*}\mathbf{G}])
\end{equation}
where $i_A:X\hookrightarrow X_A$ is the inclusion.
In particular, we have $\Phi^2_{\mathbf{G}/X_{\Lambda}}=\Def_{\mathrm{B}\mathbf{G}/X_{\Lambda}}$.
\begin{theorem}\label{thm:relative deformations of the gerbe}
    Let $X$ be a proper $k$-scheme and let $X_{\Lambda}$ be a flat formal scheme over $\Lambda$ equipped with an isomorphism $X_{\Lambda}\otimes_{\Lambda}k\iso X$. If the functor $\Phi^1_{\mathbf{G}/X_{\Lambda}}$ on $\cC_{\Lambda}$ is formally smooth, then for any $\mathbf{G}$-gerbe $\ms X$ over $X$ the functor $\Def_{\ms X/X_{\Lambda}}$ is prorepresentable.
\end{theorem}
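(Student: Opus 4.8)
The plan is to verify Schlessinger's criterion in the form stated in the excerpt: since $\Def_{\ms X/X_\Lambda}$ is a predeformation functor, it suffices to show that it satisfies (RS) and has finite-dimensional tangent space. The finite-dimensionality of the tangent space is immediate from Lemma \ref{lem:LES on infinitesimal auts and such} (with $X\to S$ the identity, so the terms $\H^i(X,T_{X/S})$ vanish): there is an exact sequence identifying $T(\sDef_{\ms X/X_\Lambda})$ with a subquotient of $\H^2(X,\colie^\vee_{\mathbf{G}})$, which is finite-dimensional because $X$ is proper over $k$ and $\colie^\vee_{\mathbf{G}}$ is a perfect complex. So the real content is (RS).

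The first half of (RS) — surjectivity of the comparison map $\Def_{\ms X/X_\Lambda}(B\times_AC)\to\Def_{\ms X/X_\Lambda}(B)\times_{\Def_{\ms X/X_\Lambda}(A)}\Def_{\ms X/X_\Lambda}(C)$ for $C\to A\twoheadleftarrow B$ — holds unconditionally by Proposition \ref{prop:its a deformation category}. That same proposition tells us the comparison map is bijective provided that for every object $(\ms X_B,\varphi)$ of $\sDef_{\ms X/X_\Lambda}(B)$ the restriction map on automorphism groups $\Aut_B(\ms X_B,\varphi)\to\Aut_A(\ms X_A,\varphi)$ is surjective. Thus the entire argument reduces to establishing this automorphism-lifting property, and this is where the hypothesis on $\Phi^1_{\mathbf{G}/X_\Lambda}$ enters.

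So the key step is: reduce surjectivity of $\Aut_B(\ms X_B,\varphi)\to\Aut_A(\ms X_A,\varphi)$ to formal smoothness of $\Phi^1_{\mathbf{G}/X_\Lambda}$. For this I would use the identification, coming from Proposition \ref{prop:acyclicity for i} and the discussion around it, that the group of isomorphism classes of automorphisms of any deformation of $\ms X$ over $X_A$ is $\Phi^1(\mathbf{G}/X_A)=\Phi^1_{\mathbf{G}/X_\Lambda}(A)$; more precisely, $\sAut(\ms X_A,\varphi)$ is equivalent to the groupoid $\sTors_{X_A}([\mathbf{G}_A\to i_{A*}\mathbf{G}])$ of torsors for the two-term complex, whose isomorphism classes form $\Phi^1(\mathbf{G}/X_A)$. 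These identifications are compatible with base change along $B\to A$, so the restriction map $\Aut_B(\ms X_B,\varphi)\to\Aut_A(\ms X_A,\varphi)$ is identified with the map $\Phi^1_{\mathbf{G}/X_\Lambda}(B)\to\Phi^1_{\mathbf{G}/X_\Lambda}(A)$ induced by the surjection $B\twoheadrightarrow A$ in $\cC_\Lambda$. Since $\Phi^1_{\mathbf{G}/X_\Lambda}$ is assumed formally smooth, this map is surjective, which is exactly what we need. (One minor point to check carefully: the compatibility of the equivalence $\sAut(\ms X_A,\varphi)\simeq\sTors_{X_A}([\mathbf{G}_A\to i_{A*}\mathbf{G}])$ with pullback along $B\to A$, i.e. that the square relating automorphisms of $\ms X_B$, automorphisms of $\ms X_A$, $\Phi^1(\mathbf{G}/X_B)$ and $\Phi^1(\mathbf{G}/X_A)$ commutes — this follows from the naturality of the constructions in Proposition \ref{prop:acyclicity for i} and Proposition \ref{prop:auts in terms of Phi}.)

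Putting it together: (RS) holds for $\Def_{\ms X/X_\Lambda}$ by Proposition \ref{prop:its a deformation category} combined with the automorphism-lifting just established, and the tangent space is finite-dimensional by Lemma \ref{lem:LES on infinitesimal auts and such}; Schlessinger's criterion then gives prorepresentability. The main obstacle, as I see it, is not any single hard estimate but rather the bookkeeping in the automorphism-lifting step — correctly matching up $\Aut_A(\ms X_A,\varphi)$ with $\Phi^1_{\mathbf{G}/X_\Lambda}(A)$ in a way that is natural in $A$, so that formal smoothness of the functor $\Phi^1_{\mathbf{G}/X_\Lambda}$ translates into the surjectivity needed to invoke the bijectivity clause of Proposition \ref{prop:its a deformation category}. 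Everything else is a direct appeal to results already in hand.
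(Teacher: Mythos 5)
Your proposal is correct and follows essentially the same route as the paper's proof: Schlessinger's criterion, finite-dimensionality of the tangent space from Lemma \ref{lem:LES on infinitesimal auts and such} (with $X=S$), and verification of (RS) via Proposition \ref{prop:its a deformation category} by identifying the restriction map on automorphism groups with $\Phi^1(\mathbf{G}/X_B)\to\Phi^1(\mathbf{G}/X_A)$, which is surjective by the formal smoothness hypothesis. The compatibility with base change that you flag as a point to check is indeed the only detail the paper leaves implicit.
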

\begin{proof}
    We use Schlessinger's theorem \cite[06JM]{stacks-project}. We apply Lemma \ref{lem:LES on infinitesimal auts and such} with $X_{\Lambda}=S_{\Lambda}$ (thus, $T_{X/S}=0$). As $X$ is assumed to be proper, we conclude that the tangent space to $\Def_{\ms X/X_\Lambda}$ is finite dimensional over $k$. To verify (RS), we check the condition of Proposition \ref{prop:its a deformation category}. Given a surjection $B\twoheadrightarrow A$ in $\cC_{\Lambda}$ and an object $(\ms X_B,\varphi)$ of $\sDef(\ms X/X_B)$, by Proposition \ref{prop:auts in terms of Phi} the restriction map on automorphism groups is identified with the map $\Phi^1(\mathbf{G}/X_B)\to\Phi^1(\mathbf{G}/X_A)$. By assumption, $\Phi^1_{\mathbf{G}/X_{\Lambda}}$ is formally smooth, so this map is surjective.
\end{proof}

    
    

We now consider the case when $S_{\Lambda}=\Spf\Lambda$. We write $\sDef_{\ms X/\Lambda}:=\sDef_{\ms X/\Spf\Lambda}$.

\begin{theorem}\label{thm:prorep criterion the second}
    Let $X$ be a smooth proper $k$-scheme. Suppose that $\H^0(X,T_X)=0$ and that, for any deformation $(X_A,\rho)$ of $X$ over $A\in\cC_{\Lambda}$, the functor $\Phi^1_{\mathbf{G}/X_A}$ on $\cC_A$ is formally smooth. For any $\mathbf{G}$-gerbe $\ms X$ over $X$, the functor $\Def_{\ms X/\Lambda}$ is prorepresentable.
\end{theorem}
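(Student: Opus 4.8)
The plan is to verify the two hypotheses of Schlessinger's theorem \cite[06JM]{stacks-project} for the functor $\Def_{\ms X/\Lambda}$: that it satisfies (RS), and that it has finite-dimensional tangent space. Here $S = S_\Lambda \otimes_\Lambda k = \Spec k$ is a point, so in the language of \S\ref{ssec:deformations of gerbes} the sheafification map $X \to S$ is the structure map $X \to \Spec k$, which is smooth and proper by assumption, and $T_{X/S} = T_X$.

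First I would check (RS) via Proposition \ref{prop:its a deformation category}. That proposition reduces (RS) for $\Def_{\ms X/S_\Lambda}$ to the following: for every surjection $B \twoheadrightarrow A$ in $\cC_\Lambda$ and every object $(\ms X_B, \varphi)$ of $\sDef_{\ms X/\Lambda}(B)$, the restriction map on automorphism groups $\Aut_B(\ms X_B,\varphi) \to \Aut_A(\ms X_A,\varphi)$ is surjective. By Proposition \ref{prop:auts in terms of Phi}(2), these automorphism groups are canonically $\Phi^1(\mathbf{G}/X_B)$ and $\Phi^1(\mathbf{G}/X_A)$ respectively, and the restriction map is the natural map $\Phi^1(\mathbf{G}/X_B) \to \Phi^1(\mathbf{G}/X_A)$ coming from functoriality. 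Now the hypothesis that $\Phi^1_{\mathbf{G}/X_A}$ is formally smooth on $\cC_A$ for \emph{every} deformation $(X_A,\rho)$ of $X$ is exactly what makes this surjective: applying formal smoothness of $\Phi^1_{\mathbf{G}/X_A}$ (for the particular deformation $X_A$ underlying $\ms X_A$) to the surjection $B \twoheadrightarrow A$ in $\cC_A$ gives that $\Phi^1(\mathbf{G}/X_B) = \Phi^1_{\mathbf{G}/X_A}(B) \to \Phi^1_{\mathbf{G}/X_A}(A) = \Phi^1(\mathbf{G}/X_A)$ is surjective. (One should note that $X_B$ is a deformation of $X_A$, hence of $X$, so the identification $\Phi^1(\mathbf{G}/X_B) = \Phi^1_{\mathbf{G}/X_A}(B)$ is the tautological one. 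This is the only mildly delicate bookkeeping point.) Thus (RS) holds.

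Next I would bound the tangent space. The tangent space is $T(\sDef_{\ms X/\Lambda}) = \Def(\ms X/S[\varepsilon])$, and Lemma \ref{lem:LES on infinitesimal auts and such} provides an exact sequence
\[
    \H^0(X,T_X) \to \H^2(X,\colie^\vee_{\mathbf{G}}) \to T(\sDef_{\ms X/\Lambda}) \to \H^1(X,T_X) \xrightarrow{\varepsilon^{-1}\mathrm{ob}} \H^3(X,\colie^\vee_{\mathbf{G}}).
\]
Since $\H^0(X,T_X) = 0$ by hypothesis, $T(\sDef_{\ms X/\Lambda})$ is an extension of a subspace of $\H^1(X,T_X)$ by $\H^2(X,\colie^\vee_{\mathbf{G}})$. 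Because $X$ is proper over $k$ and $\colie^\vee_{\mathbf{G}}$ is a perfect complex (supported in degrees $[0,1]$, with coherent cohomology sheaves, by the discussion following the definition of the co-Lie complex — $\mathbf{G}$ being flat lfp and embeddable in a smooth group scheme), all the groups $\H^i(X,\colie^\vee_{\mathbf{G}})$ and $\H^i(X,T_X)$ are finite-dimensional over $k$. Hence $T(\sDef_{\ms X/\Lambda})$ is finite-dimensional, and also (RS) being satisfied, it carries a natural $k$-vector space structure, so Schlessinger's criterion applies and $\Def_{\ms X/\Lambda}$ is prorepresentable.

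The main obstacle I anticipate is not any single hard estimate but rather the careful identification of the automorphism groups of deformations with the functor $\Phi^1$ \emph{compatibly with restriction along $B \twoheadrightarrow A$}, and checking that "$\Phi^1_{\mathbf{G}/X_A}$ formally smooth for all $A$" feeds correctly into Proposition \ref{prop:its a deformation category}; this is really the content of Propositions \ref{prop:acyclicity for i}, \ref{prop:auts in terms of Phi}, and \ref{prop:its a deformation category}, which have already been established, so the present argument is essentially an assembly of those pieces together with Lemma \ref{lem:LES on infinitesimal auts and such} and finiteness of coherent cohomology on a proper scheme. I would also remark (as a corollary, cf. Corollary \ref{cor:prorep criterion the third} referenced in \S\ref{sec:formal deformations}) that when $X$ is a K3 surface and $\mathbf{G} = \mathbf{G}_m$, the hypothesis on $\Phi^1$ is automatic from $\H^1(X,\ms O_X) = 0$, and when $\mathbf{G} = \mu_n$ it follows from the surjectivity of $B'^\times \to B^\times$ as in the proof of Proposition \ref{prop:prorep for mu n}.
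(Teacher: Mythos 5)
Your overall strategy is the same as the paper's: verify Schlessinger's criterion by combining Proposition \ref{prop:its a deformation category} for (RS) with Lemma \ref{lem:LES on infinitesimal auts and such} for finiteness of the tangent space. However, there is a genuine gap in the step where you identify $\Aut_B(\ms X_B,\varphi)$ and $\Aut_A(\ms X_A,\varphi)$ with $\Phi^1(\mathbf{G}/X_B)$ and $\Phi^1(\mathbf{G}/X_A)$ by citing Proposition \ref{prop:auts in terms of Phi}(2). That proposition lives in \S\ref{ssec:deformations of gerbes} and describes automorphisms of deformations of $\ms X$ over a \emph{fixed} thickening $X'$ of the underlying space. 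In the present theorem the base is $S_\Lambda=\Spf\Lambda$, so the underlying scheme deforms as well, and an automorphism of $(\ms X_A,\varphi)$ need not induce the identity on $X_A=|\ms X_A|$. What one actually has is an exact sequence
\[
0\to\Phi^1(\mathbf{G}/X_A)\to\Aut_A(\ms X_A,\varphi)\to\Aut_A(X_A,\rho),
\]
and the hypothesis $\H^0(X,T_X)=0$ is needed precisely here, to force $\Aut_A(X_A,\rho)=0$ and hence $\Aut_A(\ms X_A,\varphi)\cong\Phi^1(\mathbf{G}/X_A)$. Only after this identification does formal smoothness of $\Phi^1_{\mathbf{G}/X_B}$ give the required surjectivity on automorphism groups. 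Without it, a nontrivial automorphism of the underlying deformation $X_A$ could fail to lift to $B$ (or to the gerbe), and the criterion of Proposition \ref{prop:its a deformation category} would not follow from smoothness of $\Phi^1$ alone.

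Relatedly, you spend the hypothesis $\H^0(X,T_X)=0$ in the tangent-space computation, where it is not actually needed: the exact sequence of Lemma \ref{lem:LES on infinitesimal auts and such} already exhibits $T(\sDef_{\ms X/\Lambda})$ as an extension of a subspace of $\H^1(X,T_X)$ by a quotient of $\H^2(X,\colie^{\vee}_{\mathbf{G}})$, both finite dimensional by properness, irrespective of $\H^0(X,T_X)$. So the fix is purely a matter of relocating that hypothesis to the automorphism-group step; the rest of your assembly (including the bookkeeping identification $\Phi^1(\mathbf{G}/X_B)=\Phi^1_{\mathbf{G}/X_A}(B)$, which you correctly flag) matches the paper's argument.
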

\begin{proof}
    It follows from Lemma \ref{lem:LES on infinitesimal auts and such} and our assumption that $X$ is proper that the tangent space to $\Def_{\ms X/\Lambda}$ is finite dimensional over $k$. As before, we check (RS) using Proposition \ref{prop:its a deformation category}. For any object $(\ms X_A,\varphi)$ of $\sDef(\ms X/A)$, we have an exact sequence
    \[
        0\to\Phi^1(\mathbf{G}/X_A)\to\Aut_A(\ms X_A,\varphi)\to\Aut_A(X_A,\rho)
    \]
    where $(X_A,\rho)=(|\ms X_A|,|\varphi|)$. Our assumption that $\H^0(X,T_X)=0$ implies that $\Aut_A(X_A,\rho)=0$ (see eg. the proof of \cite[Corollary 18.3]{MR2583634}), and so $\Phi^1(\mathbf{G}/X_A)\cong\Aut_A(\ms X_A,\varphi)$. Given a surjection $B\twoheadrightarrow A$ in $\cC_{\Lambda}$ and an object $(\ms X_B,\varphi)$ of $\sDef(\ms X/B)$, our assumption that $\Phi^1_{\mathbf{G}/X_B}$ is formally smooth (where $X_B=|\ms X_B|$) implies that $\Phi^1(\mathbf{G}/X_B)\to\Phi^1(\mathbf{G}/X_A)$ is surjective, which gives the result.
\end{proof}

\begin{corollary}\label{cor:prorep criterion the third}
  Let $X$ be a smooth proper $k$-scheme such that $\H^0(X,T^1_X)=0$ and $\H^{1}(X,\colie^{\vee}_{\mathbf{G}})=0$. For any $\mathbf{G}$-gerbe $\ms X$ over $X$, the functor $\Def_{\ms X/\Lambda}$ is prorepresentable.
\end{corollary}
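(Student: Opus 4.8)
The plan is to deduce the corollary from Theorem \ref{thm:prorep criterion the second} by checking its two hypotheses. The first, $\H^0(X,T_X)=0$, is exactly the assumed vanishing $\H^0(X,T^1_X)=0$, since for smooth $X$ the sheaf $T^1_X$ is the tangent sheaf $T_X$. For the second, I must show that for every deformation $(X_A,\rho)$ of $X$ over $A\in\cC_\Lambda$ the functor $\Phi^1_{\mathbf{G}/X_A}$ on $\cC_A$ is formally smooth. Rather than verifying a lifting property by hand, I would establish the stronger statement that $\H^1(X,\colie^\vee_{\mathbf{G}})=0$ forces $\Phi^1_{\mathbf{G}/X_A}$ to be identically zero, which is trivially formally smooth.

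Concretely, I claim that $\Phi^1(\mathbf{G}/X_B)=\H^1(X_B,[\mathbf{G}_B\to i_{B*}\mathbf{G}])=0$ for every deformation $X_B$ of $X$ over an Artinian local ring $B$ with residue field $k$, and I would prove this by induction on the length of $B$. For $B=k$ the relevant complex $[\mathbf{G}\xrightarrow{\mathrm{id}}\mathbf{G}]$ is acyclic, so $\Phi^1(\mathbf{G}/X)=0$. For the inductive step, pick a small extension $B'\twoheadrightarrow B$ with kernel $J\cong k$ and set $X_B=X_{B'}\otimes_{B'}B$; then $X_B\hookrightarrow X_{B'}$ is a square-zero thickening with ideal $J\otimes_k\ms O_X$. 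Comparing the complexes $[\mathbf{G}_{B'}\to i_{B'*}\mathbf{G}]$ and $[\mathbf{G}_B\to i_{B*}\mathbf{G}]$ that compute $\Phi^1(\mathbf{G}/X_{B'})$ and $\Phi^1(\mathbf{G}/X_B)$, the cone of the restriction map is quasi-isomorphic, by the fundamental quasi-isomorphism~\eqref{eq:Deligne's quasi-isomorphism} applied to $X_B\hookrightarrow X_{B'}$, to $(\colie^\vee_{\mathbf{G}}\otimes^{\mathbf{L}}_{\ms O_X}(J\otimes_k\ms O_X))[1]$. The associated long exact sequence then contains the segment
\[
    \H^1\!\left(X,\colie^\vee_{\mathbf{G}}\otimes^{\mathbf{L}}_{\ms O_X}(J\otimes_k\ms O_X)\right)\to\Phi^1(\mathbf{G}/X_{B'})\to\Phi^1(\mathbf{G}/X_B).
\]
The right-hand term vanishes by the inductive hypothesis. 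Since $\mathbf{G}$ embeds in a smooth group scheme $\mathbf{H}$, the cohomology sheaves $\mathfrak{t}_{\mathbf{H}}$ and $\mathfrak{n}_{\mathbf{G}/\mathbf{H}}$ of $\colie^\vee_{\mathbf{G}}$ are locally free on the smooth scheme $X$, so flat base change identifies the left-hand term with $\H^1(X,\colie^\vee_{\mathbf{G}})\otimes_k J$, which is $0$ by hypothesis. Hence $\Phi^1(\mathbf{G}/X_{B'})=0$, completing the induction.

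Granting the claim, $\Phi^1_{\mathbf{G}/X_A}\equiv 0$ is formally smooth for every deformation $X_A$, so both hypotheses of Theorem \ref{thm:prorep criterion the second} are satisfied and $\Def_{\ms X/\Lambda}$ is prorepresentable. I expect the only genuine work to be the second paragraph: correctly setting up the comparison triangle between the two deformation complexes, invoking~\eqref{eq:Deligne's quasi-isomorphism} for the thickening $X_B\hookrightarrow X_{B'}$ (not for $X\hookrightarrow X_{B'}$), and running the flatness argument that reduces cohomology on $X_{B'}$ to cohomology of $\colie^\vee_{\mathbf{G}}$ on the special fiber $X$. The same step can be phrased torsor-theoretically: by Proposition \ref{prop:acyclicity for i} one has $\Phi^1(\mathbf{G}/X_{B'})=\Def(\ms T_{\mathbf{G}}/X_{B'})$, every such deformation restricts to the trivial one over $X_B$ by induction, and deformations of a fixed torsor along the square-zero thickening $X_B\hookrightarrow X_{B'}$ form a torsor under $\H^1(X,\colie^\vee_{\mathbf{G}})\otimes_k J=0$, hence reduce to the trivial deformation.
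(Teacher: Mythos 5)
Your proposal is correct and follows the same route as the paper: both deduce the corollary from Theorem \ref{thm:prorep criterion the second} by observing that $\H^1(X,\colie^{\vee}_{\mathbf{G}})=0$ forces $\Phi^1(\mathbf{G}/X_A)=0$ for every deformation $X_A$, hence formal smoothness of $\Phi^1_{\mathbf{G}/X_A}$. The only difference is that the paper asserts this vanishing in one line, whereas you supply the (correct) d\'evissage by small extensions using the triangle coming from~\eqref{eq:Deligne's quasi-isomorphism}.
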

\begin{proof}
    The assumption that $\H^{1}(X,\colie^{\vee}_{\mathbf{G}})$ implies that $\Phi^1(\mathbf{G}/X_A)=0$ for any deformation $X_A$ of $X$. The result follows from Theorem \ref{thm:prorep criterion the second}.
\end{proof}


\subsection{Comparison with cohomological deformations}

We compare deformations of $\ms X$ with deformations of its cohomology class $\alpha=[\ms X]\in\H^2(X,\mathbf{G})$.

\begin{definition}
    The \textit{cohomological deformation functor} of $\alpha$ over $X_{\Lambda}$ is the (covariant) functor $\oDef_{\alpha/X_{\Lambda}}$ on $\cC_{\Lambda}$ defined by $A\mapsto\oDef(\alpha/X_A)$ (the set of classes $\alpha_A\in\H^i(X_A,\mathbf{G}_{A})$ such that $\alpha_A|_X=\alpha$).
\end{definition}

\begin{remark}\label{rem:RS criterion for cohomological def functor}
    Arguing as in \cite[Lemma 2.9]{MR0457458}, one can show that the cohomological deformation functor $\oDef_{\alpha/X_{\Lambda}}$ is prorepresentable if the functor $A\mapsto \H^1(X_A,\mathbf{G}_A)$ is formally smooth.
\end{remark}

The maps~\eqref{eq:definition of varpi, non functor version} induced by $(\ms X_A,\varphi)\mapsto [\ms X_A]$ give rise to a map of functors
\begin{equation}\label{eq:deformation functor comparison map}
    \Def_{\ms X/X_{\Lambda}}\to\oDef_{\alpha/X_{\Lambda}}.
\end{equation}

\begin{lemma}\label{lem:hull lemma}
    The map~\eqref{eq:deformation functor comparison map} is formally smooth and induces an isomorphism on tangent spaces. It is an isomorphism if and only if for all $A\in\cC_{\Lambda}$ the restriction map $\H^1(X_A,\mathbf{G}_A)\to\H^1(X,\mathbf{G})$ is surjective.
\end{lemma}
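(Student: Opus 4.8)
\textbf{Proof proposal for Lemma \ref{lem:hull lemma}.}

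The plan is to analyze the map \eqref{eq:deformation functor comparison map} one infinitesimal thickening at a time, using the pointwise description of the fibers already recorded in Lemma \ref{lem:properties of varpi, non functor version}. Recall that for a surjection $B\twoheadrightarrow A$ in $\cC_\Lambda$ with square-zero kernel $I$ (which we may assume, filtering any surjection by a chain of such), the fibre of $\Def_{\ms X/X_\Lambda}(B)\to\oDef_{\alpha/X_\Lambda}(B)$ over a given class is, by Lemma \ref{lem:properties of varpi, non functor version}, either empty or a torsor under the quotient $\H^1(X_B,\mathbf{G}_B)/\H^1(X,\mathbf{G})$; the same applies over $A$. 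To prove formal smoothness I would take a small extension $B\twoheadrightarrow A$ in $\cC_\Lambda$, an object $\xi_B \in \oDef_{\alpha/X_\Lambda}(B)$, and a lift $\eta_A \in \Def_{\ms X/X_\Lambda}(A)$ of its image $\xi_A = \xi_B|_A$. The class $\xi_B$ is, by definition, an element $\alpha_B \in \H^2(X_B,\mathbf{G}_B)$ with $\alpha_B|_X = \alpha$, and in particular $\alpha_B|_{X_A} = \alpha_A$, the cohomology class of the gerbe underlying $\eta_A$. So I need to produce a $\mathbf{G}_B$-gerbe $\ms X_B$, flat over $X_B$, with $[\ms X_B] = \alpha_B$ and $\ms X_B|_{X_A} \cong \ms X_A$ as deformations (i.e. compatibly with the fixed identification over $X$). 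The existence of \emph{some} $\mathbf{G}_B$-gerbe with class $\alpha_B$ is automatic; the content is matching it with $\ms X_A$ over $X_A$. By Lemma \ref{lem:properties of varpi, non functor version} applied over $X_B \hookrightarrow X_B$ versus $X_A$ — more precisely, by the surjectivity half of that lemma, the map $\Def(\ms X_A/X_B) \to \oDef(\alpha_A/X_B)$ is surjective, where here I view $\ms X_A$ as a $\mathbf{G}_A$-gerbe over $X_A$ and deform it over the thickening $X_A \hookrightarrow X_B$ — so $\alpha_B$, being an element of $\oDef(\alpha_A/X_B)$, is realized by a deformation $\ms X_B$ of $\ms X_A$ over $X_B$. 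This $\ms X_B$ is the desired lift of $\eta_A$, giving formal smoothness.

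For the statement about tangent spaces: taking $B = k[\varepsilon]$, $A = k$, formal smoothness already gives surjectivity of $T(\Def_{\ms X/X_\Lambda}) \to T(\oDef_{\alpha/X_\Lambda})$. For injectivity, the fibre of this map over the trivial class is, by Lemma \ref{lem:properties of varpi, non functor version}, a torsor under $\H^1(X[\varepsilon],\mathbf{G}_{k[\varepsilon]})/\H^1(X,\mathbf{G})$; but this torsor carries a distinguished point (the trivial deformation), so it suffices to see the quotient group vanishes. From the short exact sequence $1 \to \colie_{\mathbf{G}}^\vee \otimes^{\mathbf L} \varepsilon\ms O_X \to \mathbf{G}_{k[\varepsilon]} \to \mathbf{G} \to 1$ of \eqref{eq:Deligne's quasi-isomorphism}–\eqref{eq:canonical SES of the def complex} (with $I = \varepsilon\ms O_X$), which is split by the structural section $X \to X[\varepsilon]$, the restriction $\H^1(X[\varepsilon],\mathbf{G}_{k[\varepsilon]}) \to \H^1(X,\mathbf{G})$ is split surjective, so the relevant quotient is $\H^1$ of the split kernel and the fibre over the trivial point reduces to a single point once we remember the canonical splitting — more cleanly, the fibre over \emph{every} point of $T(\oDef)$ is a coset of the image of $\H^1(X,\mathbf{G})$, which surjects onto itself, so all fibres are singletons. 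Hence $T(\eqref{eq:deformation functor comparison map})$ is bijective.

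Finally, the criterion for \eqref{eq:deformation functor comparison map} to be an isomorphism. One direction is Lemma \ref{lem:properties of varpi, non functor version} directly: if $\H^1(X_A,\mathbf{G}_A) \to \H^1(X,\mathbf{G})$ is surjective for all $A \in \cC_\Lambda$, then the map $\Def(\ms X/X_A) \to \oDef(\alpha/X_A)$ is bijective for each $A$, hence the map of functors is an isomorphism. For the converse, suppose $\H^1(X_A,\mathbf{G}_A) \to \H^1(X,\mathbf{G})$ fails to be surjective for some $A$; I would take $A$ minimal with this property, so that there is a small extension $A \twoheadrightarrow A'$ with the map surjective over $A'$ but not over $A$. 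Then by Lemma \ref{lem:properties of varpi, non functor version} the fibre of $\Def(\ms X/X_A) \to \oDef(\alpha/X_A)$ over a suitable class (e.g. the one pulled back from $A'$ via a chosen lift) has more than one element, so \eqref{eq:deformation functor comparison map} is not injective on $A$-points, hence not an isomorphism.

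The main obstacle I anticipate is bookkeeping rather than conceptual: in the formal smoothness step one must be careful that the deformation $\ms X_B$ produced by Lemma \ref{lem:properties of varpi, non functor version} is compatible with the fixed trivializing identifications over $X$ (not merely abstractly isomorphic to $\ms X_A$ over $X_A$), which requires working with the rigidified objects of $\sDef$ rather than isomorphism classes, and observing that the surjectivity in Lemma \ref{lem:properties of varpi, non functor version} can be arranged relative to a fixed isomorphism over the closed subscheme. This is exactly the content already built into the 2-groupoid formalism of \S\ref{ssec:deformations of gerbes}, so it should go through without difficulty.
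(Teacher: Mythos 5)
Your argument is correct and follows essentially the same route as the paper: formal smoothness comes from the fact that the obstruction to deforming the gerbe depends only on the liftability of its cohomology class together with the surjectivity in Lemma \ref{lem:properties of varpi, non functor version}, bijectivity on tangent spaces comes from the splitting of $k[\varepsilon]\to k$, and the final equivalence is a direct application of Lemma \ref{lem:properties of varpi, non functor version} with $X'=X_A$. The only (cosmetic) difference is that the paper establishes the tangent-space isomorphism via the split short exact sequence on $\H^2$, whereas you argue through the vanishing of the cokernel of the restriction map on $\H^1$ acting on the fibres.
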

\begin{proof}
    Let $B\twoheadrightarrow A$ be a surjection in $\cC_\Lambda$ whose kernel $I$ has square zero. Let $\ms X_A$ be a deformation of $\ms X$ over $X_A$. The obstruction class $o(\ms X_A/X_B)\in\H^3(X,\colie^{\vee}\otimes I)$ vanishes if and only if the cohomology class $\alpha_A$ of $\ms X_A$ lifts to $X_B$. This implies that~\eqref{eq:deformation functor comparison map} is formally smooth. By Proposition \ref{prop:obstructions for gerbe deformation functor}, the tangent space to $\Def_{\ms X/X_{\Lambda}}$ is $\H^2(X,\colie^{\vee}_{\mathbf{G}})$. As the map $k[\varepsilon]\to k$ splits, the sequence
    \[
        0\to\H^2(X,\colie^{\vee}_{\mathbf{G}})\to\H^2(X[\varepsilon],\mathbf{G}_{k[\varepsilon]})\to\H^2(X,\mathbf{G})\to 0
    \]
    is exact, and it follows that~\eqref{eq:deformation functor comparison map} is an isomorphism on tangent spaces. The final claim follows from Lemma \ref{lem:properties of varpi, non functor version}.
\end{proof}

\begin{remark}
   The assumption of Lemma \ref{lem:hull lemma} that the maps $\H^1(X_A,\mathbf{G}_A)\to\H^1(X,\mathbf{G})$ are all surjective holds trivially if $\H^1(X,\mathbf{G})=0$. It also holds if the functor $A\mapsto\H^1(X_A,\mathbf{G}_A)$ on $\cC_{\Lambda}$ is formally smooth.
\end{remark}

\begin{remark}\label{rem:not prorepresentable example}
  If $\Def_{\ms X/X_{\Lambda}}$ is prorepresentable, then Lemma \ref{lem:hull lemma} shows that it is a hull for the cohomological deformation functor $\oDef_{\alpha/X_{\Lambda}}$, in the sense of Schlessinger \cite[Definition 2.7]{MR217093}. An example where $\Def_{\ms X/X_{\Lambda}}$ is prorepresentable but $\oDef_{\alpha/X_{\Lambda}}$ is not is when $\Spf \Lambda$ is the universal deformation space of a K3 surface $X$, $X_{\Lambda}$ is the universal formal family, $\mathbf{G}=\mathbf{G}_m$, and $\ms X$ is any $\mathbf{G}_m$-gerbe on $X$. Indeed, in this case the functor $A\mapsto\H^1(X_A,\mathbf{G}_{m})=\Pic(X_A)$ is not formally smooth. On the other hand, as $\H^1(X,\ms O_X)=0$, the functor $\Phi^1_{\mathbf{G}_m/X_{\Lambda}}$ is trivial, and in particular formally smooth.
\end{remark}


\bibliographystyle{plain}
\bibliography{biblio}

\end{document}